\documentclass[11pt,reqno]{amsart}
\usepackage{hyperref}
\hypersetup{hypertexnames=false}
\usepackage{mathrsfs}
\usepackage{amsfonts}
\usepackage{amssymb}
\usepackage{amsthm}
\usepackage{amsmath}
\usepackage{tikz}
\usepackage{geometry}
\usepackage{tabu}
\usepackage{float}
\usepackage{color}
\usepackage{cite}
\usepackage{mathtools, xcolor}
\usepackage{titletoc}
\usepackage{xcolor}
\allowdisplaybreaks

\newtheorem{theorem}{Theorem}[section]
\newtheorem{proposition}[theorem]{Proposition}
\newtheorem{lemma}[theorem]{Lemma}

\newtheorem{remark}[theorem]{Remark}

\makeatletter \@addtoreset{equation}{section} \makeatother

\def\p{\partial}

\def\tilde{\widetilde}

\newcommand{\beq}{\begin{equation}}
\newcommand{\eeq}{\end{equation}}

\newcommand{\Rmnum}[1]{\expandafter\@slowromancap\romannumeral #1@}

\makeatletter
\renewcommand{\subjclass}[2][2020]{\def\@subjclass{#2}}

\makeatother
\begin{document}

\title[2D compressible MHD system]
{Global well-posedness of  2D non-resistive compressible MHD system}
\author[Yi Zhu]{Yi Zhu}

\address{Department of Mathematics, East China University of Science and Technology, Shanghai 200237,  P.R. China}

\email{zhuyim@ecust.edu.cn}

\date{}
\subjclass[2020]{35Q35, 35A01, 35A02, 76W05}
\keywords{ Background magnetic field; Compressible fluids; Non-resistive MHD system; Global classical solutions}

\begin{abstract}
This paper investigates the non-resistive compressible magnetohydrodynamic (MHD) equations in $\mathbb{R}^2$. We establish the global existence and stability of classical solutions for initial data sufficiently close to a constant equilibrium state. A distinguishing feature of our result is that global stability is derived solely from pure $H^s$ energy estimates and an intrinsic $L^2$ time-decay mechanism, thereby bypassing the traditional requirement for the initial data of $L^1$ integrability or negative-order Sobolev norm regularity.
To achieve this goal, we first introduce a specific quantity motivated by the effective viscous flux, which intrinsically couples the density and magnetic field perturbations.
Secondly, to overcome the critical time-decay obstacle arising from the absence of negative-order regularity, we develop a novel pseudo-negative-derivative technique. Moreover, we regard the wildest nonlinear term as a whole and bypass the need to obtain time-decay estimate for individual components.
These approaches enable us to close the higher-order energy estimate entirely within standard Sobolev spaces.
\end{abstract}

\maketitle

\titlecontents{section}[1.5em]{\vspace{0.5ex}}{\contentslabel{1.5em}}{}{\titlerule*[0.8pc]{.}\contentspage}
\titlecontents{subsection}[3em]{\vspace{0.5ex}}{\contentslabel{1.5em}}{}{\titlerule*[0.8pc]{.}\contentspage}
\tableofcontents

\section{Introduction}

\subsection{Background and motivation}
This paper focuses on the two-dimensional (2D) viscous and non-resistive compressible magnetohydrodynamic (MHD) system in $\mathbb{R}^2$\textcolor{blue}{:}
\begin{equation}\label{mhd0}
	\begin{cases}
		\tilde{\rho}_t +\nabla\cdot(\tilde{\rho}u)=0,  \quad\quad  (t, x) \in \mathbb{R}^+ \times \mathbb{R}^2,\\
		\tilde{\rho}u_t + \tilde{\rho}u\cdot\nabla u - \mu \Delta u-\lambda \nabla\nabla\cdot u + \nabla P(\tilde{\rho}) = B\cdot\nabla B - \frac{1}{2} \nabla |B|^2,\\
		B_t+u\cdot\nabla B +B\nabla\cdot u=B\cdot\nabla u,\\
		\nabla\cdot B=0,\\
		\tilde{\rho}(0,x)=\tilde{\rho}_0(x), \qquad u(0,x)=u_0(x), \qquad B(0,x)=B_0(x),
	\end{cases}
\end{equation}
where $\tilde{\rho} = \tilde{\rho} (t,x) \in \mathbb{R}^{+}$ represents the fluid density,  $u = u(t,x)$ and $B =B(t,x)$ denote the velocity and the magnetic field, respectively. $P=P(\tilde{\rho}) \in \mathbb{R}^{+}$ is the scalar pressure\textcolor{blue}{,} which is a strictly increasing function of $\tilde{\rho}$. Here, $\lambda$ and $\mu$ are the viscosity coefficients satisfying
$$ \mu >0, \qquad \lambda +\mu >0.$$

The main objectives of this article can be elaborated from two perspectives. First, from a physical perspective, the magnetohydrodynamic equations constitute a profoundly important class of models. The MHD systems reflect the fundamental physical laws governing the motion of electrically conducting fluids, such as plasmas, liquid metals, and electrolytes.
The velocity field obeys the Navier-Stokes equations with the Lorentz force generated by the magnetic field, while the magnetic field satisfies Maxwell's equations of electromagnetism. The MHD equations have played key roles in the study of numerous phenomena in geophysics, astrophysics, cosmology, and engineering (see, e.g., \cite{MaBe, Tao, Alf, Bis, Davi, Pri}).
Our first goal is to understand the well-posedness and stability of perturbations near a background magnetic field. This study is partially motivated by the remarkable stabilizing phenomenon observed in physical experiments on electrically conducting fluids. These experiments have revealed that a background magnetic field can effectively stabilize MHD flows (see, e.g., \cite{Alex, Bur, Davi0, Davi1, Davi, Gall, Gall2}).
 Although similar properties have been extensively studied in the incompressible case, many aspects remain unexplored in compressible MHD systems.
 We aim to fully understand the underlying mechanisms and rigorously establish the corresponding stabilizing phenomena as mathematically rigorous results for the compressible MHD equations.

On the other hand, the most essential and important objective of this article is to mathematically address the following question:
 {\bf How can we weaken the requirement on the initial data space and the $L^1$ time-decay integrability while ensuring the global well-posedness for the concerned compressible MHD system? }

To better contextualize this core objective, we first briefly review related previous works.
Focusing on the Cauchy problem for the viscous and non-resistive compressible MHD system \eqref{mhd0}, assuming that the magnetic field is perturbed near a non-trivial equilibrium state and the density is perturbed around  $\bar \rho_0 = 1$, Wu and Wu \cite{WuWu} were the first to present a systematic approach to the small-data global existence and stability problem.
The authors in \cite{WuWu} exploited the extra stabilizing effects by converting the governing system into a system of wave equations and employing extensive Fourier analysis. Due to the limitations of their method, the initial data in \cite{WuWu} requires the following smallness:
\begin{equation}\label{wuwu1}
  \| \langle\nabla\rangle^M (n_0,u_0,\nabla \psi_0)\|_{L^2_{xy}} +  \| \langle\nabla\rangle^5 (n_0,u_0,\nabla \psi_0)\|_{L^1_{xy}} \lesssim \delta,
\end{equation}
which then ensure the corresponding decay estimates\textcolor{blue}{:}
\begin{equation}\label{wuwu2}
  \|n(t)\|_{L_{xy}^{\infty}} \lesssim \delta t^{-\frac{1}{2}}; \quad \|u(t)\|_{L_{xy}^{\infty}} \lesssim \delta t^{-1};\quad \|\nabla\psi(t)\|_{L_{xy}^{\infty}} \lesssim \delta t^{-\frac{1}{2}}.
\end{equation}
Naturally, to ensure the global existence of classical solutions, one expects all variables to exhibit better dissipation, as seen in \eqref{wuwu2}.
It is also natural that a stronger space for the initial data yields better dissipation, as required in \eqref{wuwu1}.
In short, the known results indicate that the $L^1(\mathbb{R}^2)$ norm requirement on the initial data ensures the $t^{-1}$ time-decay rate for the velocity field, which ultimately guarantees the global well-posedness of the system under consideration.

However, it remains unknown whether a global classical solution exists for the concerned compressible MHD system when the initial data are merely small in the traditional Sobolev  space $H^s(\mathbb{R}^2)$ (for some $s > 0$),  without any requirements on negative-order Sobolev norms or $L^1(\mathbb{R}^2)$ space requirement. This paper provides a positive answer to this question.
Thus, the motivation behind this article is clear.  {\bf Our goal is to determine the largest possible initial data space\textcolor{blue}{---}particularly by relaxing the assumptions on the low-frequency component of the initial data\textcolor{blue}{---}that admits global classical solutions. Equivalently, we seek the slowest admissible time-decay rate of the variables in system \eqref{mhd0} that still ensures global existence and stability.}

Through a simple energy analysis, we can see that if we merely assume the smallness of the $H^s(\mathbb{R}^2)$ (for some $s > 0$) norms, the corresponding time-decay rate reduces to  $t^{-\frac{1}{2}}$. In this scenario, obtaining global solutions becomes highly challenging.

\subsection{Formulation and main result}
Before stating our main result, we briefly review some related works.
Although this paper focuses on the compressible magnetohydrodynamic equations, it is worth noting that in recent years, the study of incompressible magnetohydrodynamic equations has attracted the attention of numerous scholars and yielded many significant results.
When the density $\tilde{\rho}$ is constant, \eqref{mhd0} reduces to the viscous and non-resistive incompressible MHD system, which has been the subject of numerous investigations (see, e.g., \cite{in1, in2, in3, CW1, DZ, in5, LXZ, in8, in9, in11, feffer, feffer2, chemin}).
The compressible MHD equations with both viscous dissipation and magnetic diffusion have also been the focus of many recent studies (see, e.g., \cite{com4, hong, com1, com2, HXJ, wuguochun}).

In contrast, relatively few results are currently available for the compressible viscous MHD equations without magnetic diffusion.
In the absence of magnetic diffusivity, the well-posedness problem (even for small initial data) as well as the stability problem near a background magnetic field becomes extremely difficult.
Hu and Lin \cite{hu} established the well-posedness for the 2D compressible MHD equations with a special class of initial data near a background magnetic field.
As mentioned previously, the work of Wu and Wu \cite{WuWu} was the first to investigate the global classical solutions of the concerned compressible MHD system on $\mathbb{R}^2$. They presented a novel and systematic approach to the well-posedness and stability problem for equations \eqref{mhd0}. They also introduced a diagonalization process to understand the spectral structure and large-time behavior.
Wu and Zhu \cite{wuzhu} investigated this system on the bounded domain $\mathbb{T}^2$ and solved the problem using pure energy estimates, which helps reduce the complexity present in other approaches.
Recently, Dong, Wu, and Zhai \cite{dong} proved the global existence of strong solutions to the $2\frac{1}{2}$-D compressible non-resistive MHD equations with small initial data.
The compressible and incompressible non-resistive MHD equations in a bounded domain were studied in \cite{tanwang}.
Jiang and Jiang \cite{JJ}  derived the stability/instability criteria for the stratified compressible magnetic Rayleigh--Taylor problem in Lagrangian coordinates for the three-dimensional case.
Jiang and Zhang \cite{jiangzhang} studied the non-resistive limit and the magnetic boundary layer of the 1D compressible MHD equations.
For more related works on the compressible MHD equations, we refer to \cite{wuzhai, liuzhang, lisun, lisun2, zhong, com3, HuangXinYan} and the references therein.
Needless to say, the references listed above represent only a very small fraction of the vast literature on this subject.

Without loss of generality, we assume that $\mu = 1$, $\lambda= 0$, and $P'(1) = 1$. It is clear that a special solution of \eqref{mhd0} is given by the zero velocity field, the constant density $1$, and the background magnetic field $B_0=e_2$, where $e_2=(0,1)$.
The perturbation $(\rho, u, b)$  around this equilibrium, with $\rho = \tilde{\rho}-1$ and $b= B - e_2$\textcolor{blue}{,} then obeys
\begin{equation}\label{mhd1}
	\begin{cases}
		\rho_t +\nabla\cdot u = - \nabla\cdot(\rho u),  \qquad\qquad \qquad\qquad \qquad \qquad (t,x) \in \mathbb{R}^+ \times \mathbb{R}^2, \\
		u_t - \Delta u  - \left(\begin{array}{c}
			\nabla^{\bot} \cdot b \\
			0 \\
		\end{array}\right) +  \frac{1}{\rho  + 1}\nabla P = - u\cdot \nabla u - \frac{\rho}{\rho + 1}  \Delta u \\
		\qquad \qquad  +   \frac{1}{\rho + 1} b\cdot\nabla b- \frac{1}{2(\rho+1)}\nabla |b|^2 - \frac{\rho}{\rho + 1}\left(
		\begin{array}{c}
			\nabla^{\bot} \cdot b \\
			0 \\
		\end{array}
		\right), \\
		b_t - \nabla^{\bot}u_1=- u\cdot\nabla b + b\cdot\nabla u - b\nabla\cdot u,\\
		\nabla\cdot b=0,
	\end{cases}
\end{equation}
where $\nabla^{\bot}=(\partial_2,-\partial_1)$. In comparison with the original MHD system, \eqref{mhd1} contains two extra terms
$
-\frac1{\rho+1} \left(\begin{array}{c}
	\nabla^{\bot} \cdot b \\
	0 \\
\end{array}\right)
$  and $-\nabla^{\bot}u_1$ on the left-hand sides of the equations for $u$ and $b$, respectively. The first term is further decomposed into the dominant linear term $ - \left(\begin{array}{c}
	\nabla^{\bot} \cdot b \\
	0 \\
\end{array}\right)$ and the small nonlinear term $- \frac{\rho}{\rho + 1}\left(
\begin{array}{c}
\nabla^{\bot} \cdot b \\
0 \\	\end{array}
\right)$ on the right-hand side of the velocity equation. These two terms, resulting from the expansion near the background magnetic field, help enhance the dissipation and stabilization in system \eqref{mhd1}.

Due to the lack of diffusion or damping mechanism in the equations for the magnetic field and density, the well-posedness and stability problem considered here appears to be extremely challenging. Standard direct approaches are insufficient to solve this problem. This paper presents several key observations and novel ideas to exploit the enhanced dissipation induced by the background magnetic field and the special coupling structures within the MHD system \eqref{mhd1}. Our main result can be stated as follows.

\begin{theorem}\label{thm1}
	Assume that the initial data $(\rho_0, u_0, b_0) \in H^3(\mathbb{R}^2)$ satisfy
	$$	\nabla \cdot b_0 = 0.
	$$
	Then there exists a sufficiently small constant $\epsilon>0$ such that, if
	\begin{equation*}
		\|\rho_0\|_{H^3} + \|u_0\|_{H^3} + \|b_0\|_{H^3} \leq \epsilon,
	\end{equation*}
	system \eqref{mhd1} admits a unique global classical solution $(\rho, u, b)$ satisfying the following uniform bound for all $t>0$:
	\begin{align*}
		\|u(t)\|_{H^3}^2 + \|b(t)\|_{H^3}^2 + \|\rho(t)\|_{H^3}^2 \leq C \epsilon^2.
    \end{align*}
Furthermore, this solution satisfies the following dissipation estimate:
\begin{equation*}
  \begin{split}
    \int_0^\infty \left(\|\nabla u(\tau)\|_{H^3}^2 + \|\partial_2 \rho(\tau)\|_{H^2}^2 + \|\partial_2 b(\tau)\|_{H^2}^2\right) \,d\tau\leq C \epsilon^2.
  \end{split}
\end{equation*}
Here, $C >0$ denotes a generic constant that may change from line to line but is independent of the solution and time.
\end{theorem}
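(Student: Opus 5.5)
Our plan is the usual bootstrap argument. Local well-posedness in $H^3$ is standard for this hyperbolic--parabolic system. Global existence then follows once we establish an a priori estimate of the form
\begin{equation*}
\mathcal{E}(t)+\mathcal{D}(t)\le C\mathcal{E}(0)+C\mathcal{E}(t)^{1/2}\mathcal{D}(t)+C\,\mathcal{N}(t),
\end{equation*}
where $\mathcal{E}(t)=\sup_{0\le\tau\le t}\|(\rho,u,b)(\tau)\|_{H^3}^2$ and
\begin{equation*}
\mathcal{D}(t)=\int_0^t\big(\|\nabla u\|_{H^3}^2+\|\partial_2\rho\|_{H^2}^2+\|\partial_2 b\|_{H^2}^2\big)\,d\tau .
\end{equation*}
The term $\mathcal{N}(t)$ collects the few nonlinear contributions that cannot be absorbed by $\mathcal{D}$ alone. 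The whole difficulty lies in controlling $\mathcal{N}$.

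\textbf{Step 1 (basic energy).} Testing the $\partial^\alpha$-differentiated system ($|\alpha|\le 3$) against $(P'(1+\rho)\partial^\alpha\rho/(1+\rho),\,\partial^\alpha u,\,\partial^\alpha b)$ makes the linear coupling terms $\partial_1$-type cross terms cancel. This yields control of $\|\nabla u\|_{H^3}^2$ in the dissipation. The only dangerous terms are the top-order transport terms $u\cdot\nabla\partial^3\rho$, $u\cdot\nabla\partial^3 b$ and $b\,\partial^3\nabla\cdot u$; after integration by parts these reduce to $\int\|\nabla u\|_{L^\infty}\|(\rho,b)\|_{H^3}^2$.

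\textbf{Step 2 (hidden dissipation).} Following the abstract, we introduce the effective-flux-type quantity $G=\rho+b_2$, possibly with a $\Delta^{-1}\nabla\cdot u$ correction. By the divergence-free condition $\partial_1 b_1=-\partial_2 b_2$, the linear part of $\partial_t u_1$ reads
\begin{equation*}
\Delta u_1-\partial_1(\rho+b_2)+\partial_2 b_1 .
\end{equation*}
Testing the velocity equation against $-\partial_2\partial^\alpha(\rho,b)$-type multipliers, with $|\alpha|\le 2$, and using the $\rho$ and $b$ equations to trade time derivatives, produces $\|\partial_2\rho\|_{H^2}^2+\|\partial_2 b\|_{H^2}^2$ in $\mathcal{D}$. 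The cost is a small multiple of $\|\nabla u\|_{H^3}^2$ and nonlinear terms of cubic type. The combination $G$ is chosen precisely so that the $\partial_1$-part, which has no dissipation, decouples.

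\textbf{Step 3 (the obstacle).} The main obstacle is the term $\int_0^t\|\nabla u\|_{L^\infty}\|(\rho,b)\|_{H^3}^2\,d\tau$. It would be harmless if $\|\nabla u\|_{L^\infty}\in L^1_t$. Without $L^1$ data, however, one expects only $\|\nabla u\|_{L^2}\lesssim t^{-1/2}$-type decay, which is not integrable.

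Our first attempt is to establish $L^2$ time-decay $\|\nabla u(t)\|_{H^1}^2\lesssim \epsilon^2(1+t)^{-1-\sigma}$ through a weighted energy inequality $\frac{d}{dt}E_1+D_1\le$ (small). Here we would use the pseudo-negative-derivative idea: estimate $\Lambda^{-s}$ applied only to the nonlinear forcing, with $0<s<1$. The nonlinearities are in divergence or product form, so $\|\Lambda^{-s}(fg)\|_{L^2}\lesssim\|f\|_{L^2}\|g\|_{L^{2/s}}$ is controlled by $H^3$ energy without any negative-norm assumption on the data. The linear evolution of low frequencies is handled by splitting the data part, which gives only $o(1)$ decay but remains integrable after interpolation with the time-weighted dissipation.

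Rather than requiring decay of each factor, we keep the worst term $\int \partial^3 b\cdot(b\cdot\nabla)\partial^3 u$ as a single whole. We rewrite $\partial_1$ derivatives via the equations, for instance $\partial_1 b_1=-\partial_2 b_2$ and $\nabla^\perp u_1=b_t+\dots$, so that every surviving term contains a $\partial_2$ derivative of $(\rho,b)$ or a $\nabla u$. It is then bounded by $\mathcal{E}^{1/2}\mathcal{D}$ plus time-derivative terms absorbed into a modified energy. Combining Steps 1--3 gives $\mathcal{E}+\mathcal{D}\le C\epsilon^2+C(\mathcal{E}+\mathcal{E}^{1/2})\mathcal{D}$, and continuity closes the bootstrap. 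Uniqueness follows from a standard $L^2$ difference estimate.
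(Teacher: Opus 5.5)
Your outline finds the right obstacle but does not overcome it. Step 3 is where the proof actually lives, and neither of your two mechanisms closes it.

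\textbf{The decay route.} For data that are only small in $H^3$, the obstruction is the \emph{linear} low-frequency evolution. Applying $\Lambda^{-s}$ to the forcing alone is therefore beside the point, and ``$o(1)$ decay made integrable by interpolation'' is not a valid step. A counterexample exists already for the heat flow in $\mathbb{R}^2$. Take $\hat u_0(\xi)=i\xi_1|\xi|^{-1}\phi(|\xi|)$ with $\phi(r)=r^{-1}(\log(e/r))^{-1}$ on $r<1$. This datum lies in $H^3$ and can be made arbitrarily small. Yet $\|\nabla e^{t\Delta}u_0\|_{L^2}^2\sim t^{-1}\log^{-2}t$, and $\int_0^\infty|\partial_1 e^{t\Delta}u_0(0)|\,dt=c\int_0^1\phi(r)\,dr=\infty$. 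So neither $(1+t)^{-1-\sigma}$ decay nor $\nabla u\in L^1_tL^\infty_x$ is available.

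\textbf{The ``single whole'' rewrite.} This does not give $\mathcal{E}^{1/2}\mathcal{D}$ either. A cubic term with only \emph{one} factor of type $\nabla u$ or $\partial_2(\rho,b)$ is bounded by something like $\mathcal{E}\int_0^t\|\nabla u\|_{H^3}\,d\tau\le \mathcal{E}\,t^{1/2}\mathcal{D}^{1/2}$, not by $\mathcal{E}^{1/2}\mathcal{D}$; you need two factors in $L^2_t$. The term you single out, $\int\partial^3 b\cdot(b\cdot\nabla)\partial^3 u$, is actually harmless: it cancels against the Lorentz-force term $\int(b\cdot\nabla)\partial^3 b\cdot\partial^3 u$ by $\nabla\cdot b=0$. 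The genuinely bad terms are $\int\partial_1u_1|\partial_1^3(\rho,b_2)|^2$, $\int\nabla\cdot u\,|\partial_1^3(\rho,b_2)|^2$, $\int\partial_1\rho\,\partial_1^3u_1\,\partial_1^3\rho$ and similar. In these, the $\partial_1^3(\rho,b_2)$ factors have no time integrability at all.

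\textbf{What the substitution generates.} Substituting $\partial_1u_1=-(\partial_t+u\cdot\nabla)b_2+b_1\partial_1u_2-b_2\partial_1u_1$ and $\nabla\cdot u=-(\partial_t+u\cdot\nabla)\rho-\rho\nabla\cdot u$ is the right instinct, and the paper does exactly this. But it produces two new families of terms that you do not address.

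First, it produces derivative-losing terms such as $\int b_2\,\partial_1^3\rho\,\partial_1^4u_1$. The paper handles these through the flux $\Omega=\partial_2b_1(1+b_2)-\partial_1\Gamma$, writing $2\partial_1^2u_1=\partial_t\Omega+u\cdot\nabla\Omega-\partial_2^2u_1-\partial_1\partial_2u_2-Q$. This requires a separate dissipation bound on $\int_0^t\|\Omega\|_{H^2}^2$ and an exact cancellation of the transport term $u\cdot\nabla\partial_1^2\Omega$.

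Second, it produces terms $\int b_1\,\partial_1u_2\,|\partial_1^3(\rho,b_2)|^2$ and $\int b_2\,\nabla\cdot u\,|\partial_1^3(\rho,b_2)|^2$. Their coefficients are not controlled in $L^2_tL^\infty_x$ by the available dissipation, because $\dot H^1\not\subset L^\infty$ in 2D. For instance, $\|b_1\|_{L^\infty}\lesssim\|b_1\|_{L^2}^{1/4}\|\partial_2 b\|_{H^1}^{3/4}$ gives only a $7/4$ power of the dissipation.

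\textbf{How the paper handles the second family.} It carries $\int_0^t\int(|u_1|^2+|b_1|^2+\Gamma^2)|\partial_1^3(\rho,b_2)|^2$ as additional unknowns, with $\Gamma=b_2+P(1+\rho)-P(1)+\tfrac12 b_2^2$. It bounds them in two ways:
\begin{itemize}
\item For $b_1$, it writes $|b_1|^2=2\int_{-\infty}^{x_2}b_1\partial_2b_1\,dy_2$ and replaces $\partial_2 b_1$ using the $u_1$-equation.
\item For $\Gamma$, it uses the elliptic equation $\Delta\Gamma=-\nabla\cdot u_t+\Delta\nabla\cdot u+\mathcal{H}$.
\end{itemize}
This physical-space integration in $x_2$, not $\Lambda^{-s}$ applied to the forcing, is the pseudo-negative-derivative device.

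The same obstruction already appears in your Step 2. The $\partial_2\rho$ estimate obtained from the $u_2$-equation contains $\int b_1\,\partial_1^3b_2\,\partial_1^2\partial_2\rho$, so the ``cubic'' remainders there are not automatically bounded by $\mathcal{E}^{1/2}\mathcal{D}$ either. Without these ingredients, your closing inequality $\mathcal{E}+\mathcal{D}\le C\epsilon^2+C(\mathcal{E}+\mathcal{E}^{1/2})\mathcal{D}$ is asserted rather than derived.
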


\begin{remark}
A distinguishing feature of our result is that global stability is established solely through pure $H^s$ energy estimates and an intrinsic $L^2$ time-decay mechanism. Consequently, our global stability result does not rely on the traditional $t^{-1}$ decay rate (see \eqref{wuwu2}) required in \cite{WuWu}.
\end{remark}

\begin{remark}
Given that our assumption on the initial data excludes negative-order Sobolev norms and relies solely on the $L^2$ time-decay mechanism, closing the energy estimates becomes challenging. We must overcome the difficulties caused by the failure of the critical Sobolev embedding theorem in $\mathbb{R}^2$; for instance,
\[
\|b_1\|_{L^\infty} \not\lesssim \|b_1\|_{\dot H^1} = \|\partial_2 b\|_{L^2}.
\]
To address this, we introduce an auxiliary quantity and treat it as a single entity. It then satisfies the following estimate\textcolor{blue}{:}
\begin{equation*}
\int_0^\infty \int_{\mathbb{R}^2} |b_1|^2 |\partial_1^3 (\rho, b_2)|^2 \,dx\,d\tau \leq C \epsilon^2.
\end{equation*}
\end{remark}

\begin{remark}
Furthermore, motivated by the effective viscous flux from compressible fluid dynamics, we introduce the quantity
\begin{equation}\nonumber
\Gamma \triangleq b_2 + P(1+\rho) - P(1) + \tfrac{1}{2}|b_2|^2,
\end{equation}
which intrinsically couples the density and magnetic field perturbations. We can then establish that
\begin{equation}\nonumber
  \int_0^\infty \|\nabla \Gamma(\tau)\|_{L^2}^2\; d\tau \leq C \epsilon^2.
\end{equation}
\end{remark}

\vskip .1in

\subsection{Enhanced dissipative structure and construction of energy functional}\label{subsection-dissipative-structure}
In this subsection, we outline the key ideas and structural observations underlying the proof of Theorem \ref{thm1}. The analysis is nontrivial due to the absence of explicit diffusion or damping mechanisms in the equations for the density perturbation $\rho$ and the magnetic field perturbation $b$. To establish global stability, one must carefully exploit enhanced dissipation mechanisms to counteract the potential growth of these undamped components. Our approach relies on the following five key observations.
Each observation addresses a core mathematical difficulty, which we will elaborate on alongside the corresponding observation.

$\bullet$ \textbf{Directional dissipation of $\partial_2 b$ induced by the background field $\mathbf{B}_0 = \mathbf{e}_2$.}
The absence of magnetic diffusion renders the well-posedness problem exceptionally challenging.
However, perturbations around the constant background magnetic field generate extra dissipation in the $x_2$-direction, mirroring the stabilizing effect observed in physical experiments. Mathematically, linearizing \eqref{mhd1} around the equilibrium yields:
\[
\begin{cases}
    \partial_t \rho = -\nabla\cdot u, \\
    \partial_t u = \Delta u - \nabla \rho + \begin{pmatrix} \nabla^\perp \cdot b \\ 0 \end{pmatrix}, \\
    \partial_t b = \nabla^\perp u_1,
\end{cases}
\]
where $\nabla \rho$ approximates $\nabla P(\rho+1)$ since $P'(1)=1$. Differentiating with respect to $t$ and eliminating variables leads to a system of damped wave equations\textcolor{blue}{:}
\begin{align*}
    \partial_{tt} u_1 - \Delta \partial_t u_1 - \Delta u_1 &= \partial_1 \nabla\cdot u, \\
    \partial_{tt} u_2 - \Delta \partial_t u_2 &= \partial_2 \nabla\cdot u, \\
    \partial_{tt} \rho - \Delta \partial_t \rho - \Delta \rho &= -\partial_1 \nabla^\perp \cdot b, \\
    \partial_{tt} b - \Delta \partial_t b - \Delta b &= -\partial_1 \nabla^\perp \rho.
\end{align*}
Furthermore, the equation for $b$ can be reduced to a fourth-order partially damped wave equation\textcolor{blue}{:}
\[
    (\partial_{tt} - \Delta \partial_t - \Delta)^2 b = \partial_1^2 \Delta b.
\]
The operator identity $\Delta^2 - \partial_1^2\Delta = \partial_2^2\Delta$ explicitly reveals the regularization mechanism acting along the $x_2$-direction.

$\bullet$ \textbf{Directional dissipation of $\partial_2 \rho$.}
Due to the absence of any dissipation term in the density equation, obtaining dissipation estimate for the density is difficult.
Therefore, we seek to uncover hidden properties of the density promoted by certain favorable quantities.
A direct coupling between the linearized equations for $\rho$ and $u_2$,
\begin{equation*}
\begin{cases}
    \partial_t \rho = -\partial_2 u_2 - \partial_1 u_1, \\
    \partial_t u_2 = -\partial_2 \rho + \Delta u_2,
\end{cases}
\end{equation*}
exhibits a partially damped wave structure.
This provides dissipation specifically for $\partial_2\rho$. Although this dissipation is anisotropic, unlike the full dissipation in the compressible Navier--Stokes equations, it is nevertheless sufficient for our stability analysis.

$\bullet$ \textbf{Improved behavior of $\nabla\cdot u$ and $\partial_1 u_1$.}
Despite the compressibility of the fluid, the velocity divergence $\nabla\cdot u$ enjoys better control than the vorticity $\nabla\times u$. From the continuity equation $\nabla\cdot u = -(\partial_t + u\cdot\nabla)\rho - \rho\nabla\cdot u$, we can express $\nabla\cdot u$ in terms of time derivative terms and higher-order nonlinear terms. This allows us to close the estimates for terms such as (see Lemma \ref{lemma-divu-r2}):
\[
\int_0^t \int_{\mathbb{R}^2} \nabla\cdot u  \big(|\partial_1^3 \rho|^2 + |\partial_1^3 b_2|^2\big) \,dx\,d\tau.
\]
Moreover, the perturbation around $\mathbf{B}_0=\mathbf{e}_2$ ensures that the derivative in the $x_2$-direction exhibits stronger dissipative properties than the derivative in the $x_1$-direction.
Although $\partial_1 u_1 \neq -\partial_2 u_2$ in the compressible regime, the induction equation for $b_2$ permits the substitution:
\[
\partial_1 u_1 = \partial_t b_2 + u\cdot\nabla b_2 - b\cdot\nabla u_2 + b_2\nabla\cdot u,
\]
which skillfully transfers $\partial_1 u_1$ into controllable terms (see Lemma \ref{lemma-p1u1-r2}).
However, this observation can only handle lower-order derivatives. To control higher-order terms such as $\partial_1^3 u_1$, we introduce a refined structural quantity via the effective viscous flux in the following subsection.

$\bullet$ \textbf{Effective viscous flux adapted to the perturbed MHD system.}
For the compressible MHD equations, the standard effective viscous flux is defined by\textcolor{blue}{:}
$$ F \triangleq (2\mu + \lambda) \nabla \cdot u - \big(P(1+\rho) - P(1)\big) -\frac{1}{2}|B|^2.$$
Since the magnetic field is now perturbed near the equilibrium $e_2$ and the velocity $u$ behaves sufficiently well due to viscosity, it is natural to investigate the following scalar quantity:
\begin{equation}\label{defgamma}
\Gamma \triangleq b_2 + P(1+\rho) - P(1) + \tfrac{1}{2}|b_2|^2,
\end{equation}
which intrinsically couples the density and magnetic field perturbations. For analytical convenience, we introduce the auxiliary variable:
\begin{equation}\label{defomega}
\Omega \triangleq \nabla^\perp \cdot b - \partial_1 P - \tfrac{1}{2}\partial_1|b|^2 + b\cdot\nabla b_1 = \partial_2 b_1(1+b_2) - \partial_1 \Gamma.
\end{equation}
The momentum equation for $u_1$ then becomes\textcolor{blue}{:}
\begin{equation} \label{equ1}
\partial_t u_1 + u\cdot\nabla u_1 - \Delta u_1 - \Omega = -\frac{\rho}{\rho+1}\big(\Delta u_1 + \Omega \big).
\end{equation}
Applying the divergence operator to the momentum equation \eqref{mhd1} yields the following important elliptic equation, which is crucial for estimating the energies $\mathcal{E}_4$ and $\mathcal{E}_5$:
\begin{equation}\label{eq-div-rho+b2}
\begin{split}
 \Delta \Gamma
 =& -\nabla \cdot u_t + \Delta \nabla \cdot u - \nabla \cdot (u \cdot \nabla u) + \nabla \cdot \big( b\cdot \nabla b - \tfrac{1}{2} \nabla |b_1|^2\big) \\
&+ \nabla \cdot \Big[\frac{\rho}{\rho + 1}\Big\{\nabla P - \Delta u - b \cdot \nabla b + \tfrac{1}{2} \nabla |b|^2 - \left(
		\begin{array}{c}
			\nabla^\perp \cdot b \\
			0 \\
		\end{array}
		\right) \Big\} \Big] \\
=& -\nabla \cdot u_t + \Delta \nabla \cdot u + \mathcal{H}.
\end{split}
\end{equation}
Here\textcolor{blue}{,}
\begin{equation}\label{H}
\begin{split}
\mathcal{H} &\triangleq - \nabla \cdot (u \cdot \nabla u) + \nabla \cdot \big( b\cdot \nabla b - \tfrac{1}{2} \nabla |b_1|^2\big) \\
&\quad + \nabla \cdot \Big[\frac{\rho}{\rho + 1}\left(
		\begin{array}{c}
			-\Omega - \Delta u_1 \\
			\partial_2 P - \Delta u_2 - b \cdot \nabla b_2 + \tfrac{1}{2} \partial_2 |b|^2 \\
		\end{array}
		\right) \Big].
\end{split}
\end{equation}
To reveal the underlying dissipative mechanism, we can calculate the evolution of $\Omega$ by considering the evolution of each term as follows:
\begin{equation}\nonumber
\begin{split}
&(\nabla^{\bot} \cdot b)_t + u \cdot \nabla (\nabla^{\bot} \cdot b )- \Delta u_1 = - \partial_2 u\cdot \nabla b_1 + \partial_1 u\cdot \nabla b_2 +  \nabla^{\bot}\cdot \big(  b \cdot \nabla u - b \nabla \cdot u \big),\\
&\big(\frac{1}{2}\partial_1 |b|^2 \big)_t + \frac{1}{2} u \cdot \nabla (\partial_1 |b|^2 )= \; \frac{1}{2} \partial_1 u \cdot \nabla (|b|^2 )+ \partial_1 \big( b \cdot \nabla^{\bot} u_1 + b \cdot ( b \cdot \nabla u - b \nabla \cdot u ) \big), \\
&(b\cdot \nabla b_1)_t + u \cdot \nabla (b \cdot \nabla b_1) =  \; b \cdot \nabla (\partial_2 u_1 )+ \nabla^{\bot} u_1 \cdot \nabla b_1 - b \cdot \nabla (u \cdot \nabla b_1 )\\
& \qquad\qquad\qquad + b \cdot \nabla (b \cdot \nabla u_1 - b_1 \nabla \cdot u) + (b \cdot \nabla u - b \nabla \cdot u) \cdot \nabla b_1, \\
&(\partial_1 P)_t + u\cdot \nabla (\partial_1 P) + \partial_1 \nabla \cdot u = - \partial_1 u \cdot \nabla P - \partial_1 \big(P'(\tilde \rho) \tilde \rho - 1 \big) \nabla \cdot u.
\end{split}
\end{equation}
Without loss of generality, we have assumed that $P'(1)=1$. Combining the time derivatives of each component above and using \eqref{mhd1}, we obtain:
\begin{equation}\label{eqomega}
\partial_t \Omega + u\cdot\nabla \Omega - 2\partial_1^2 u_1 = \partial_2^2 u_1 + \partial_1\partial_2 u_2 + Q,
\end{equation}
where $Q$ collects the nonlinear interactions (the explicit form is given in \eqref{Q} below):
\begin{equation}\label{Q}
\begin{split}
Q \triangleq \; &\partial_1 u\cdot \nabla b_2 - \frac{1}{2} \partial_1 u \cdot \nabla (|b|^2 )+ \partial_1 u \cdot \nabla P + \nabla^{\bot} \cdot (b \cdot \nabla u)\\
& - \partial_2 u \cdot \nabla b_1 - \partial_1 (b \cdot \nabla^{\bot} u_1)   + b \cdot \nabla (\partial_2 u_1) + \nabla^{\bot} u_1 \cdot \nabla b_1\\
&  + b \cdot \nabla(b \cdot \nabla u_1)- \partial_1 \big( b \cdot (b \cdot \nabla u) \big)- \nabla^{\bot} \cdot (b \nabla \cdot u) + \partial_1 (|b|^2 \nabla \cdot u)  \\
& + \partial_1  \Big\{  \big(P'(\tilde \rho) \tilde \rho - 1 \big) \nabla \cdot u \Big\}  - \nabla \cdot u (b \cdot \nabla b_1) - b \cdot \nabla (b_1 \nabla \cdot u).
\end{split}
\end{equation}
Linearizing \eqref{eqomega} and \eqref{equ1} together then yields the coupled system\textcolor{blue}{:}
\[
\begin{cases}
    \partial_t u_1 = \Delta u_1 + \Omega,\\
    \partial_t \Omega = 2\partial_1^2 u_1 + \partial_2^2 u_1 + \partial_1\partial_2 u_2,
\end{cases}
\]
which exhibits a wave-like structure that provides essential regularization for $\partial_1 u_1$ and $\Omega$, enabling the closure of higher-order energy estimates.

$\bullet$ \textbf{Critical time-decay obstacle and the pseudo-negative-derivative technique.}
When dealing with the nonlinear term $\langle u\cdot\nabla b, b\rangle_{H^3}$, one may encounter the critical integral (see Lemma \ref{lemma-p1u1-r2})\textcolor{blue}{:}
\[
\int_0^t \int_{\mathbb{R}^2} |b_1|^2 |\partial_1^3 b_2|^2 \,dx\,d\tau.
\]
Since our assumptions on the initial data exclude negative-order Sobolev norms, standard $L^2$-based decay estimates are insufficient to bound $\|b_1\|_{L_t^2 L_x^\infty}$.
Indeed, the critical Sobolev embedding theorem fails in $\mathbb{R}^2$, for example,
\[
\|b_1\|_{L^\infty} \not\lesssim \|b_1\|_{\dot H^1} = \|\partial_2 b\|_{L^2}.
\]
This then prevents direct control of the above integral by the natural energy dissipation. To overcome this difficulty, we treat the problematic term as a unified structure and bound it via a pseudo-negative-derivative technique.
This leads to the introduction of the auxiliary energy\textcolor{blue}{:}
\[
\int_0^t \int_{\mathbb{R}^2} \big(|u_1|^2 + |b_1|^2\big) |\partial_1^3 (\rho, b_2)|^2 \,dx\,d\tau.
\]
The core idea can be illustrated by the linearized relation $\partial_t u_1 - \Delta u_1 = \partial_2 b_1 - \partial_1(\rho + b_2)$. By integrating the $x_2$-derivative structure, we recover\textcolor{blue}{:}
\begin{equation}\nonumber
\int_{\mathbb{R}^2} \underbrace{\int_{-\infty}^{x_2} \partial_2 b_1(x_1,y_2)\, b_1(x_1,y_2) \,dy_2}_{\text{pseudo-negative-derivative term}} |\partial_1^3 b_2|^2 \,dx = \cdots
\end{equation}

\vskip 0.1in
Thus far, we have introduced the five key observations obtained while studying the structure of concerned compressible MHD system.
The above observations are systematically incorporated into the construction of a composite energy functional. To capture the anisotropic dissipation, we define:
\begin{equation}\label{energy-set}
\begin{split}
\mathcal{E}_0(t) \triangleq& \sup_{0 \leq \tau \leq t} \big(\|u(\tau)\|_{H^3}^2 + \|b(\tau)\|_{H^3}^2 + \|\rho(\tau)\|_{H^3}^2\big) + \int_0^t \|\nabla u(\tau)\|_{H^3}^2 \,d\tau, \\
\mathcal{E}_1(t) \triangleq& \int_0^t \|\partial_2 \rho(\tau)\|_{H^2}^2 \,d\tau, \qquad
\mathcal{E}_2(t) \triangleq \int_0^t \|\partial_2 b(\tau)\|_{H^2}^2 \,d\tau, \\
\mathcal{E}_3(t) \triangleq& \int_0^t \big(\|\Omega(\tau)\|_{H^2}^2 + \|\nabla \Gamma(\tau)\|_{L^2}^2\big) \,d\tau, \\
\mathcal{E}_4(t) \triangleq& \int_0^t \int_{\mathbb{R}^2} \big(|u_1|^2 + |b_1|^2\big) |\partial_1^3 (\rho, b_2)|^2 \,dx\,d\tau, \\
\mathcal{E}_5(t) \triangleq& \int_0^t \int_{\mathbb{R}^2} \Gamma^2 |\partial_1^3 (\rho, b_2)|^2 \,dx\,d\tau, \\
\mathcal{E}_6(t) \triangleq& \int_0^t \big\|(\partial_t \rho, \partial_t b, \partial_t u, \partial_t \Omega, \partial_t \Gamma)\big\|_{L^2}^2 \,d\tau
\end{split}
\end{equation}
Here\textcolor{blue}{,} $\Omega$ and $\Gamma$ are given by \eqref{defomega} and \eqref{defgamma}\textcolor{blue}{,} respectively. Additionally, we define the total energy as\textcolor{blue}{:}
\begin{equation}\nonumber
\mathcal{E}_{\mathrm{total}}(t) \triangleq \sum_{i=0}^6 \mathcal{E}_i(t).
\end{equation}

\vskip .3in
\section{Preliminaries}

\subsection{Notations}
Throughout the paper, we work in $\mathbb{R}^2$ and denote the spatial gradient by $\nabla = (\partial_1, \partial_2)$. For a Banach space $X$ on $\mathbb{R}^2$ and $1 \leq p \leq \infty$, the mixed space-time norm is defined by
\begin{equation}\nonumber
\| f \|_{L_t^p (X)} \triangleq \big\| \| f(\tau, \cdot) \|_{X} \big\|_{L^p([0,t])}.
\end{equation}
Given two Banach spaces $X$ and $Y$, the norm for the intersection $X \cap Y$ is given by $\| f \|_{X \cap Y} \triangleq \| f \|_X + \| f \|_Y$. For vector-valued or paired functions, we adopt the shorthand $\| (f, g) \|_X \triangleq \| f \|_X + \| g \|_X$. For a multi-index $\alpha = (\alpha_1, \alpha_2) \in \mathbb{N}^2$ with $|\alpha| = \alpha_1 + \alpha_2$, we write $\partial^\alpha = \partial_1^{\alpha_1} \partial_2^{\alpha_2}$. The symbol $\nabla^k$ denotes the collection of all spatial derivatives for order $k$. For an integer $m \geq 0$, the standard and homogeneous Sobolev norms are defined as
\begin{equation}\nonumber
\| f \|_{H^m}^2 \triangleq \sum_{k=0}^m \| \nabla^k f \|_{L^2}^2, \qquad \| f \|_{\dot{H}^m}^2 \triangleq \| \nabla^m f \|_{L^2}^2.
\end{equation}
Throughout the paper, the notation $A \lesssim B$ signifies that $A \leq C B$ for some generic constant $C > 0$ which may change from line to line but is independent of the solution and time.

\subsection{Basic propositions}
This subsection presents two propositions to be used in the proof of the lemmas in the subsequent sections. The following proposition provides a powerful tool to control the triple products in terms of anisotropic upper bounds.

\begin{proposition}\label{prop-anisotropic-est}
The following inequalities hold when the right-hand sides are all bounded,
\begin{equation}\nonumber
\begin{split}
\left(\int_{\mathbb{R}^2} |f g|^2 \;dx\right)^\frac{1}{2} \lesssim \;&\|f\|_{L^2}^{\frac{1}{2}} \|\partial_1 f\|_{L^2}^{\frac{1}{2}} \|g\|_{L^2}^{\frac{1}{2}} \|\partial_2 g\|_{L^2}^{\frac{1}{2}}, \\
\|f\|_{L^\infty(\mathbb{R}^2)} \lesssim \; & \|f\|_{L^2}^{\frac{1}{4}} \|\partial_1 f\|_{L^2}^{\frac{1}{4}}\|\p_2 f\|_{L^2}^{\frac{1}{4}} \|\partial_1 \p_2 f\|_{L^2}^{\frac{1}{4}} \\
\lesssim \;& \|f\|_{H^1}^\frac{1}{2} \|\p_2 f\|_{H^1}^\frac{1}{2}.
\end{split}
\end{equation}
\end{proposition}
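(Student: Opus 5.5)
The plan is to prove both inequalities of Proposition \ref{prop-anisotropic-est} with the one-dimensional fundamental theorem of calculus applied separately in each direction, followed by Hölder's inequality. By density it suffices to treat $f,g\in C_c^\infty(\mathbb{R}^2)$, and extend to general functions with finite right-hand sides by approximation.

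\emph{First inequality.} For fixed $x_2$, write $f(x_1,x_2)^2 = 2\int_{-\infty}^{x_1} f\,\partial_1 f\,dy_1$. This gives
\[
\sup_{x_1}|f(x_1,x_2)|^2 \le 2\int_{\mathbb{R}} |f\,\partial_1 f|(y_1,x_2)\,dy_1 .
\]
Similarly, for fixed $x_1$,
\[
\sup_{x_2}|g(x_1,x_2)|^2\le 2\int_{\mathbb{R}}|g\,\partial_2 g|(x_1,y_2)\,dy_2 .
\]
We then bound
\[
\int_{\mathbb{R}^2}|fg|^2\,dx \le \int_{\mathbb{R}}\sup_{x_1}|f|^2\,dx_2\cdot\int_{\mathbb{R}}\sup_{x_2}|g|^2\,dx_1 ,
\]
since the integrand is at most $\sup_{x_1}|f(\cdot,x_2)|^2\,\sup_{x_2}|g(x_1,\cdot)|^2$, and this product factorizes in $(x_1,x_2)$. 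Inserting the two one-dimensional bounds and applying Cauchy--Schwarz in the full plane gives $\|f\|_{L^2}\|\partial_1 f\|_{L^2}\|g\|_{L^2}\|\partial_2 g\|_{L^2}$. Taking square roots yields the claim.

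\emph{Second inequality.} We iterate the same device. Apply the one-dimensional bound in $x_1$ to get
\[
|f(x)|^2\le 2\int_{\mathbb{R}}|f\,\partial_1 f|(y_1,x_2)\,dy_1 .
\]
Then apply the $x_2$-version to each of $f$ and $\partial_1 f$:
\[
\sup_{x_2}|h(y_1,x_2)|\lesssim \Big(\int_{\mathbb{R}}|h\,\partial_2 h|\,dy_2\Big)^{1/2}, \qquad h\in\{f,\partial_1 f\}.
\]
This gives
\[
\|f\|_{L^\infty}^2\lesssim \int_{\mathbb{R}}\Big(\int_{\mathbb{R}}|f\partial_2 f|\,dy_2\Big)^{1/2}\Big(\int_{\mathbb{R}}|\partial_1 f\,\partial_1\partial_2 f|\,dy_2\Big)^{1/2}dy_1 .
\]
Cauchy--Schwarz in $y_1$, followed by Cauchy--Schwarz in the plane for each factor, gives
\[
\|f\|_{L^\infty}^2\lesssim \|f\|_{L^2}^{1/2}\|\partial_2 f\|_{L^2}^{1/2}\|\partial_1 f\|_{L^2}^{1/2}\|\partial_1\partial_2 f\|_{L^2}^{1/2},
\]
which is the first line after taking square roots.

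The final bound by $\|f\|_{H^1}^{1/2}\|\partial_2 f\|_{H^1}^{1/2}$ is immediate from $\|f\|_{L^2},\|\partial_1 f\|_{L^2}\le\|f\|_{H^1}$ and $\|\partial_2 f\|_{L^2},\|\partial_1\partial_2 f\|_{L^2}\le\|\partial_2 f\|_{H^1}$.

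The only point requiring care is the order of the sup/integral exchanges in the first inequality: the sup must be taken in the variable complementary to the one that remains integrated. No genuine obstacle is expected beyond this bookkeeping and the routine density argument.
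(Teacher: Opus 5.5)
Your proof is correct and is essentially the argument the paper intends. The paper gives no proof of its own: it only invokes the one-dimensional inequality $\|f\|_{L^\infty(\mathbb{R})}\le\sqrt{2}\|f\|_{L^2(\mathbb{R})}^{1/2}\|f'\|_{L^2(\mathbb{R})}^{1/2}$ and cites \cite{CW1, wuzhu}, and your direction-by-direction use of $|f|^2\le 2\int|f\,\partial_i f|$ followed by H\"older/Cauchy--Schwarz (with the density step and Fatou's lemma for general functions) is exactly that standard argument.
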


The proof of Proposition \ref{prop-anisotropic-est} relies on the following one-dimensional interpolation inequality, for $f \in H^1(\mathbb R)$ there holds
\begin{equation}\nonumber
\|f\|_{L^\infty (\mathbb{R})} \leq \sqrt{2} \|f\|_{L^2(\mathbb{R})}^\frac{1}{2} \|f'\|_{L^2(\mathbb{R})}^\frac{1}{2}.
\end{equation}
A detailed proof of this proposition can be found in \cite{CW1, wuzhu}.

\begin{proposition}\label{prop-nonlinear-func}
Let $f \in C^\infty(\mathbb{R})$ be a smooth function, assume that $v \in H^2(\mathbb{R}^2)$ with $\|v\|_{H^2} < 1$, then the following inequalities hold:
\begin{equation}\label{prop11}
\begin{split}
\|f(v)\|_{L^\infty} &\lesssim |f(0)| + \|v\|_{L^\infty}, \quad \text{if } f(0) \neq 0,\\
\|f(v)\|_{L^{p}} &\lesssim \|v\|_{L^{p}}, \quad \forall\, 1 \leq p \leq \infty, \quad \text{if } f(0) = 0,\\
\|f(v)\|_{L^{p}} &\lesssim \|v^2\|_{L^{p}}, \quad\forall\, 1 \leq p \leq \infty, \quad \text{if } f(0) = f'(0) = 0,\\
\|\nabla^k f(v)\|_{L^2} &\lesssim \|\nabla^k v\|_{L^2}, \quad k=1,2.
\end{split}
\end{equation}
Similarly, let $g \in C^\infty(\mathbb{R}^2)$ be a smooth function, assume $v, w \in H^2(\mathbb{R}^2)$ with
$$\|v\|_{H^2}, \|w\|_{H^2} < 1,$$
then there holds,
\begin{equation*}
\begin{split}
\|g(v, w)\|_{L^\infty} &\lesssim |g(0,0)| + \|v\|_{L^\infty} + \|w\|_{L^\infty}, \quad \text{if } g(0,0) \neq 0,\\
\|g(v, w)\|_{L^{p}} &\lesssim \|v\|_{L^{p}} + \|w\|_{L^{p}}, \quad\forall\, 1 \leq p \leq \infty, \quad \text{if } g(0,0) = 0,\\
\|g(v, w)\|_{L^{p}} &\lesssim \|v^2\|_{L^{p}} + \|w^2\|_{L^{p}}, \;\;\forall\, 1 \leq p \leq \infty, \;\;\text{if } g(0,0) = \partial_1 g(0,0) = \partial_2 g(0,0) = 0,\\
\|\nabla^k g(v, w)\|_{L^2} &\lesssim \|\nabla^k v\|_{L^2} + \|\nabla^k w\|_{L^2}, \quad k=1,2.
\end{split}
\end{equation*}
\end{proposition}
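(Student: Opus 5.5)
The plan is a Moser-type composition argument: after reducing to the case $f(0)=0$, bound the zeroth-order quantities with the mean value theorem and the derivative terms with the chain rule. The smallness $\|v\|_{H^2}<1$ is used only through the 2D embedding $H^2(\mathbb{R}^2)\hookrightarrow L^\infty(\mathbb{R}^2)$, which gives $\|v\|_{L^\infty}\le C_0\|v\|_{H^2}\le C_0$. Hence $v$ takes values in the fixed compact interval $I=[-C_0,C_0]$. On $I$ every derivative $f^{(j)}$ with $j\le 2$ is bounded by a constant depending only on $f$ and $C_0$, and these constants are what the $\lesssim$ hides.

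\emph{Pointwise bounds.} For the first line, write $f(v)=f(0)+\big(f(v)-f(0)\big)$ and use $|f(v)-f(0)|\le \sup_I|f'|\,|v|$; this gives $\|f(v)\|_{L^\infty}\lesssim |f(0)|+\|v\|_{L^\infty}$. If $f(0)=0$, the same mean value bound gives the pointwise estimate $|f(v(x))|\le \sup_I|f'|\,|v(x)|$, and taking $L^p$ norms gives the second line for every $1\le p\le\infty$. If in addition $f'(0)=0$, Taylor's formula with integral remainder gives $|f(v)|\le \tfrac12\sup_I|f''|\,|v|^2$, and taking $L^p$ norms gives the third line.

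\emph{Derivative bounds.} For $k=1$ we have $\nabla f(v)=f'(v)\nabla v$, so $\|\nabla f(v)\|_{L^2}\le \sup_I|f'|\,\|\nabla v\|_{L^2}$. For $k=2$ we have
\[
\nabla^2 f(v)=f'(v)\nabla^2 v+f''(v)\nabla v\otimes\nabla v .
\]
The first term is bounded by $\sup_I|f'|\,\|\nabla^2 v\|_{L^2}$. The second term is the main obstacle. We need to bound $\|\,|\nabla v|^2\|_{L^2}=\|\nabla v\|_{L^4}^2$ by $\|\nabla^2 v\|_{L^2}$ alone, which requires the smallness hypothesis. By the Ladyzhenskaya/Gagliardo--Nirenberg inequality in 2D (which also follows from the first inequality of Proposition \ref{prop-anisotropic-est} with $f=g=\nabla v$),
\[
\|\nabla v\|_{L^4}^2\lesssim \|\nabla v\|_{L^2}\|\nabla^2 v\|_{L^2}\le \|v\|_{H^2}\|\nabla^2 v\|_{L^2}\le \|\nabla^2 v\|_{L^2}.
\]
Therefore $\|\nabla^2 f(v)\|_{L^2}\lesssim \|\nabla^2 v\|_{L^2}$, as claimed.

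\emph{Two-variable case.} The statements for $g(v,w)$ follow the same route, with the pair $(v,w)$ ranging in the compact box $I\times I$. For the zeroth-order bounds, the mean value theorem gives $|g(v,w)-g(0,0)|\lesssim |v|+|w|$. When the gradient of $g$ also vanishes at the origin, second-order Taylor expansion gives $|g(v,w)|\lesssim |v|^2+|w|^2$, and taking $L^p$ norms yields $\|v^2\|_{L^p}+\|w^2\|_{L^p}$. For the derivatives, the chain rule gives $\nabla g=\partial_1g\,\nabla v+\partial_2g\,\nabla w$, and $\nabla^2 g$ consists of first derivatives of $g$ times $\nabla^2 v,\nabla^2 w$, plus second derivatives of $g$ times the products $\nabla v\otimes\nabla v$, $\nabla v\otimes\nabla w$, $\nabla w\otimes\nabla w$. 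The mixed product is handled by Cauchy--Schwarz, $\|\nabla v\,\nabla w\|_{L^2}\le\|\nabla v\|_{L^4}\|\nabla w\|_{L^4}$. The same $L^4$ interpolation, together with $\|v\|_{H^2},\|w\|_{H^2}<1$, then bounds everything by $\|\nabla^2 v\|_{L^2}+\|\nabla^2 w\|_{L^2}$.
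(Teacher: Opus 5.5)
Your proposal is correct and follows essentially the same route as the paper: the mean value theorem and second-order Taylor expansion on the bounded range of $v$ (guaranteed by $H^2(\mathbb{R}^2)\hookrightarrow L^\infty$) give the zeroth-order bounds, and for $k=2$ the chain rule is combined with the 2D bound $\|\,|\nabla v|^2\|_{L^2}\lesssim\|\nabla v\|_{L^2}\|\nabla^2 v\|_{L^2}$ and $\|\nabla v\|_{L^2}<1$, with the two-variable case handled identically. The only slip is expository: your opening remark that smallness is used only through the $L^\infty$ embedding is contradicted by your own (correct) $k=2$ step, which also uses $\|\nabla v\|_{L^2}\le\|v\|_{H^2}<1$.
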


\begin{proof}
We restrict our attention to the demonstration of \eqref{prop11}, as the estimates for the two-variable function follow by applying identical arguments to the respective partial derivatives. Since $\|v\|_{L^\infty} \lesssim \|v\|_{H^2}$ and $f$ possesses sufficient smoothness, the compositions $f(v(x))$, $f'(v(x))$, and $f''(v(x))$ remain uniformly bounded for all $x \in \mathbb{R}^2$. Invoking the mean value theorem, we deduce
$$f(v) = f(0) + f'(r_1(v)) v \quad \text{and} \quad f(v) = f(0) + f'(0)v + \frac{f''(r_2(v))}{2!}v^2$$
for some intermediate values satisfying $|r_1(v)|, |r_2(v)| \lesssim \|v\|_{L^\infty}$. Consequently, we obtain the following estimates:
\begin{itemize}
\item If $f(0) = 0$ and $f'(0) = 0$:
\begin{equation}\nonumber
\|f(v)\|_{L^{p}} \lesssim \; \|v^2\|_{L^{p}}, \quad \forall \; 1 \leq p \leq \infty.
\end{equation}
\item If $f(0) = 0$:
\begin{equation}\nonumber
\|f(v)\|_{L^{p}} \lesssim \; \|v\|_{L^{p}}, \quad \forall \; 1 \leq p \leq \infty.
\end{equation}
\item If $f(0) \neq 0$:
\begin{equation}\nonumber
\|f(v)\|_{L^\infty} \lesssim \; |f(0)| + \|v\|_{L^\infty}.
\end{equation}
\end{itemize}

Turning our attention to the gradient estimate in \eqref{prop11}, we only provide the detailed analysis for the case $k = 2$, since the remaining cases can be handled analogously. An application of H\"{o}lder's inequality and the Sobolev embedding theorem readily yields
\begin{equation}\nonumber
\begin{split}
\|\nabla^2 f(v)\|_{L^2} =& \; \|\nabla (f'(v) \nabla v)\|_{L^2} \\
\lesssim& \; \| f''(v) |\nabla v|^2 \|_{L^2} +\| f'(v) \nabla^2 v\|_{L^2}\\
\lesssim& \; \|f''(v)\|_{L^\infty}\|\nabla v\|_{L^2} \|\nabla^2 v\|_{L^2} + \|f'(v)\|_{L^\infty} \|\nabla^2 v\|_{L^2} \\
\lesssim& \; \|\nabla^2 v\|_{L^2}.
\end{split}
\end{equation}
This finalizes the proof.
\end{proof}

\section{Estimates for the nonlinear terms involving $\p_1 u_1$, $\p_2 u_2$, and $\nabla \cdot u$}\label{lemma-pre}

Before embarking on the estimate for energies $\mathcal{E}_0(t)$ to $\mathcal{E}_6(t)$ defined in \eqref{energy-set}, we shall first focus on some intricate nonlinear terms. These nonlinear terms will appear frequently throughout the subsequent analytical procedures. Therefore, the lemmas established in this section are indispensable for subsequent estimates. Throughout this section, we consistently assume that
\begin{equation}\label{ansatz}
\mathcal{E}_0(t) \leq 1 \quad \text{and} \quad \|\rho\|_{L^\infty} \leq \frac{1}{2}, \qquad \text{for all } t \in [0, T].
\end{equation}
Since the proof of Theorem \ref{thm1} relies on a bootstrap argument, condition \eqref{ansatz} serves as our standard \textit{a priori} assumption.

\subsection{Lemmas for lower-order derivatives}
The following lemmas provide estimates for the nonlinear terms involving $\p_1 u_1$ and $(\rho, b_2)$, excluding any higher-order terms in $(u_1, \rho, b_2)$.

\begin{lemma}\label{lemma-p2u2}
Let $f(r, s)$ be a smooth function in the case $f(0,0)=\p_r f(0,0)=\p_s f(0,0)=0$; i.e., its Taylor expansion around the origin contains only quadratic and higher-order terms in $(r,s)$. Then, for any $r,s\in\{b_2,\rho\}$, we have the following estimate:
\begin{equation}\nonumber
\begin{split}
\Big|\int_{\mathbb{R}^2} \p_2 u_2 \, f(r,s)\,dx \Big| \lesssim \|u_2\|_{L^2}^{\frac12} \|\p_1 u_2\|_{L^2}^{\frac12} \big\|(r,s)\big\|_{L^2}^{\frac12} \big\|(\p_2 r,\p_2 s)\big\|_{L^2}^{\frac32}.
\end{split}
\end{equation}
\end{lemma}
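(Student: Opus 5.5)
Integrating by parts in $x_2$ moves the derivative off $u_2$:
\[
\int_{\mathbb{R}^2} \partial_2 u_2\, f(r,s)\,dx = -\int_{\mathbb{R}^2} u_2\,\big(\partial_r f(r,s)\,\partial_2 r + \partial_s f(r,s)\,\partial_2 s\big)\,dx .
\]
This is legitimate because all functions lie in $H^3$ and $f(r,s)$ vanishes quadratically at the origin, so the boundary terms disappear. Since $\partial_r f(0,0)=\partial_s f(0,0)=0$, the coefficients $\partial_r f$ and $\partial_s f$ are smooth functions vanishing at the origin. The a priori assumption \eqref{ansatz} gives $\|(r,s)\|_{H^2}\lesssim 1$ (shrink constants if needed so the hypotheses of Proposition \ref{prop-nonlinear-func} hold). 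The second line of Proposition \ref{prop-nonlinear-func} then yields pointwise $|\partial_r f(r,s)|+|\partial_s f(r,s)|\lesssim |r|+|s|$, which is the mean value theorem applied with bounded second derivatives on the range of $(r,s)$. Hence
\[
\Big|\int_{\mathbb{R}^2} \partial_2 u_2\, f(r,s)\,dx\Big| \lesssim \int_{\mathbb{R}^2} |u_2|\,\big(|r|+|s|\big)\,\big(|\partial_2 r|+|\partial_2 s|\big)\,dx .
\]

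Next I apply Cauchy--Schwarz, putting $\partial_2(r,s)$ in $L^2$:
\[
\int_{\mathbb{R}^2} |u_2|\,|(r,s)|\,|\partial_2(r,s)|\,dx \le \big\| u_2\,(r,s)\big\|_{L^2}\,\big\|\partial_2(r,s)\big\|_{L^2}.
\]
The first factor is handled by the anisotropic inequality in Proposition \ref{prop-anisotropic-est}, with $f=u_2$, which takes the $\partial_1$ derivative, and $g=r$ or $s$, which takes the $\partial_2$ derivative:
\[
\|u_2\, r\|_{L^2}\lesssim \|u_2\|_{L^2}^{\frac12}\|\partial_1 u_2\|_{L^2}^{\frac12}\|r\|_{L^2}^{\frac12}\|\partial_2 r\|_{L^2}^{\frac12},
\]
and similarly for $s$. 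Combining the two bounds gives exactly
\[
\|u_2\|_{L^2}^{\frac12}\|\partial_1 u_2\|_{L^2}^{\frac12}\|(r,s)\|_{L^2}^{\frac12}\|(\partial_2 r,\partial_2 s)\|_{L^2}^{\frac32}.
\]

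The point that needs care is the pointwise bound $|\nabla f(r,s)|\lesssim |(r,s)|$, which must have uniform constants. This uses smallness in $L^\infty$, and the smallness comes from $\|(r,s)\|_{L^\infty}\lesssim \|(r,s)\|_{H^2}\lesssim \mathcal{E}_0^{1/2}$ together with \eqref{ansatz}.

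The other essential choice is which variable carries which direction in Proposition \ref{prop-anisotropic-est}. The $\partial_2$ must fall on $(r,s)$, since only $\partial_2\rho$ and $\partial_2 b$ have dissipation, and the $\partial_1$ must fall on $u_2$, since $\nabla u$ is fully dissipated. Any other assignment would not produce the stated exponents.
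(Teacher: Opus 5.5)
Your proof is correct and follows the paper's argument: integrate by parts in $x_2$, apply H\"older, and use Proposition \ref{prop-anisotropic-est} with $\partial_1$ on $u_2$ and $\partial_2$ on the $(r,s)$-factor. The only difference is the order of steps. You first use the pointwise bound $|\partial_r f(r,s)|+|\partial_s f(r,s)|\lesssim |r|+|s|$ and then apply the anisotropic inequality to $u_2\,r$ and $u_2\,s$. The paper instead applies it to $u_2\,\partial_r f(r,s)$, which then requires the chain rule for $\partial_2\partial_r f(r,s)$ together with Proposition \ref{prop-nonlinear-func}, so your ordering is slightly more economical.
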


\begin{proof}
Performing integration by parts and utilizing the chain rule, we deduce
\begin{equation}\nonumber
\begin{split}
\int_{\mathbb{R}^2} \p_2 u_2 \, f(r,s)\,dx = -\int_{\mathbb{R}^2} u_2 \bigl[\p_r f(r,s)\,\p_2 r + \p_s f(r,s)\,\p_2 s\bigr]\,dx.
\end{split}
\end{equation}
As a direct consequence, the application of H\"{o}lder's inequality and Proposition \ref{prop-anisotropic-est} reveals that
\begin{equation}\nonumber
\begin{split}
\Big|\int_{\mathbb{R}^2} \p_2 u_2 \, f(r,s)\,dx \Big| &\lesssim \|u_2\,\p_r f(r,s)\|_{L^2}\|\p_2 r\|_{L^2} + \|u_2\,\p_s f(r,s)\|_{L^2}\|\p_2 s\|_{L^2} \\
&\lesssim \|u_2\|_{L^2}^{\frac12} \|\p_1 u_2\|_{L^2}^{\frac12} \|\p_r f(r,s)\|_{L^2}^{\frac12} \|\p_2\p_r f(r,s)\|_{L^2}^{\frac12} \|\p_2 r\|_{L^2} \\
&\quad + \|u_2\|_{L^2}^{\frac12} \|\p_1 u_2\|_{L^2}^{\frac12} \|\p_s f(r,s)\|_{L^2}^{\frac12} \|\p_2\p_s f(r,s)\|_{L^2}^{\frac12} \|\p_2 s\|_{L^2}.
\end{split}
\end{equation}
It is worth noting that
\begin{equation}\nonumber
\begin{split}
\p_2\p_r f(r,s) &= \p_r^2 f(r,s)\,\p_2 r + \p_r\p_s f(r,s)\,\p_2 s,
\end{split}
\end{equation}
and in a similar vein,
\begin{equation}\nonumber
\begin{split}
\p_2\p_s f(r,s) &= \p_s^2 f(r,s)\,\p_2 s + \p_s\p_r f(r,s)\,\p_2 r.
\end{split}
\end{equation}
Substituting these identities back into the previous inequality, the desired estimate then follows directly from Proposition~\ref{prop-nonlinear-func}, which thereby completes the demonstration of this lemma.
\end{proof}

\begin{lemma}\label{lemma-p1u1}
Let $f(r, s)$ be a smooth function satisfying $f(0, 0) = \partial_{r} f(0, 0) = \partial_{s}f(0, 0) = 0$ (which indicates that the Taylor expansion of $f$ around $(0,0)$ contains only quadratic and higher-order terms). Specifically, there exists intermediate points $q_1(r,s), q_2(r,s)$ such that $f(r,s) = \frac{1}{2!} \sum_{i,j \in \{r,s\}} \partial_i \partial_j f(q_1(r, s), q_2(r, s)) \, ij.$ Then we have the following estimates:
\begin{equation}\label{prop-p1u1-b2b2}
\begin{split}
& \Big|\int_{\mathbb{R}^2} \partial_1 u_1 f(b_2, b_2)\;dx + \frac{d}{dt}\int_{\mathbb{R}^2} b_2 f(b_2,b_2)\;dx \\
& \quad - \sum_{i,j \in \{r,s\}} \frac{d}{dt}\int_{\mathbb{R}^2} \frac{\partial_i \partial_j f(q_1(b_2, b_2), q_2(b_2, b_2))}{3} b_2^3\;dx \Big| \\
\lesssim& \;\|(u, b)\|_{H^2} \big(\|\nabla u\|_{H^1}^2 + \|\partial_2 b\|_{H^2}^2 + \|\partial_t b\|_{L^2}^2\big),
\end{split}
\end{equation}
and
\begin{equation}\label{prop-p1u1-rhorho}
\begin{split}
&\Big| \int_{\mathbb{R}^2} \partial_1 u_1 f(\rho, \rho)\;dx+ \frac{d}{dt}\int_{\mathbb{R}^2} b_2 f(\rho, \rho)\;dx + \frac{d}{dt}\int_{\mathbb{R}^2} b_2 \tilde f(\rho, b_2)\;dx\\
& \quad - \frac{d}{dt}\int_{\mathbb{R}^2} \frac{\partial_s^2 \tilde f(\tilde q_1(\rho, b_2), \tilde q_2(\rho, b_2))}{3} b_2^3\;dx - \frac{1}{2}\frac{d}{dt}\int_{\mathbb{R}^2} \partial_r \partial_s \tilde f(\tilde q_1(\rho, b_2), \tilde q_2(\rho, b_2)) b_2^2 \rho\;dx \\
&\quad + \frac{1}{2}\frac{d}{dt}\int_{\mathbb{R}^2} b_2 \tilde {\!\tilde {f}}(b_2,b_2)\;dx - \sum_{i,j \in \{r,s\}} \frac{d}{dt}\int_{\mathbb{R}^2} \frac{\partial_i \partial_j \tilde {\!\tilde {f}}(\tilde {\!\tilde {q_1}}(b_2, b_2), \tilde {\!\tilde {q_2}}(b_2, b_2))}{3!} b_2^3\;dx\Big| \\
\lesssim& \; \|(\rho, u, b)\|_{H^2}\big(\|(\partial_2\rho, \nabla u, \partial_2 b)\|_{H^2}^2 + \|(\partial_t \rho, \partial_t b_2)\|_{L^2}^2\big).
\end{split}
\end{equation}
Here, $\tilde f(r,s) \triangleq -(rs+ r^2 s) \sum_{i,j \in \{r,s\}} \partial_i \partial_j f(r, s)$ and $\;\tilde {\!\tilde {f}}(r,s) \triangleq -\partial_r \partial_s \tilde f(q_1(r,s), q_2(r,s)) s^2$.
Assume further that $\partial_r^2 f(0,0) = 0$, then we arrive at
\begin{equation}\label{prop-p1u1-rhob2}
\begin{split}
&\Big|\int_{\mathbb{R}^2} \partial_1 u_1 f(\rho, b_2)\;dx + \frac{d}{dt}\int_{\mathbb{R}^2} b_2 f(\rho, b_2)\;dx - \frac{d}{dt}\int_{\mathbb{R}^2} \frac{\partial_s^2 f(q_1(\rho, b_2), q_2(\rho, b_2))}{3} b_2^3\;dx \\
&\quad - \frac{1}{2}\frac{d}{dt}\int_{\mathbb{R}^2} \partial_r \partial_s f(q_1(\rho, b_2), q_2(\rho, b_2)) b_2^2 \rho\;dx + \frac{1}{2} \frac{d}{dt}\int_{\mathbb{R}^2} b_2 \tilde {\!\tilde {f}}(b_2,b_2)\;dx \\
& \quad - \sum_{i,j \in \{r,s\}} \frac{d}{dt}\int_{\mathbb{R}^2} \frac{\partial_i \partial_j \tilde {\!\tilde {f}}(\tilde {\!\tilde {q_1}}(b_2, b_2), \tilde {\!\tilde {q_2}}(b_2, b_2))}{3!} b_2^3\;dx \Big| \\
\lesssim& \; \|(\rho, u, b)\|_{H^2}\big(\|(\partial_2\rho, \nabla u, \partial_2 b)\|_{H^2}^2 + \|(\partial_t \rho, \partial_t b_2)\|_{L^2}^2\big).
\end{split}
\end{equation}
In which we still let ${\tilde {\!\tilde {f}}}(r,s) \triangleq -\partial_r \partial_s f(q_1(r,s), q_2(r,s)) s^2$.
\end{lemma}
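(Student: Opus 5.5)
The plan is to trade $\partial_1 u_1$ for a time derivative of $b_2$ through the induction equation for $b_2$. From the third equation of \eqref{mhd1},
\[
\partial_1 u_1 = -\partial_t b_2 - u\cdot\nabla b_2 + b\cdot\nabla u_2 - b_2\nabla\cdot u
\]
(up to the sign convention fixed by $\nabla^\perp u_1$; I will track the sign that reproduces the $+\frac{d}{dt}\int b_2 f$ in the statement). Substituting this into $\int \partial_1 u_1\, f\,dx$ leaves the main piece $-\int \partial_t b_2\, f\,dx$ and three cubic remainders.

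The remainders $\int (u\cdot\nabla b_2 - b\cdot\nabla u_2 + b_2\nabla\cdot u) f\,dx$ are handled with Proposition \ref{prop-anisotropic-est} and Proposition \ref{prop-nonlinear-func}.
\begin{itemize}
\item In $b\cdot\nabla u_2$ and $b_2\nabla\cdot u$ there is already a factor $\nabla u$. Since $f=O(|r|^2+|s|^2)$, the product $f\cdot b$ is at least cubic. One $b_2$ factor can be placed in $L^\infty$ using the anisotropic bound $\|b_2\|_{L^\infty}\lesssim\|b_2\|_{H^1}^{1/2}\|\partial_2 b_2\|_{H^1}^{1/2}$, and one factor goes in $L^2$ with an anisotropic $\partial_2$ factor. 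This gives $\|(u,b)\|_{H^2}(\|\nabla u\|_{H^1}^2+\|\partial_2 b\|_{H^2}^2)$.
\item The term $u_1\partial_1 b_2$ has no $\partial_2$. I will use $\partial_1 b_2=-\partial_2 b_1$ (from $\nabla\cdot b=0$), which turns it into a $\partial_2 b$ factor.
\item The term $u_2\partial_2 b_2$ is already good.
\item For $f(\rho,\cdot)$ the same works with $\partial_2\rho$, since $f$ is quadratic and every term carries one $\partial_2$ derivative or one $\nabla u$ together with another good factor. The paper's $\|\partial_t b\|_{L^2}^2$ on the right suggests that some terms are instead absorbed by Young's inequality with $\partial_t b_2$.
\end{itemize}

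The main term is $-\int\partial_t b_2\, f\,dx$. I will write it as $-\frac{d}{dt}\int b_2 f + \int b_2\,\partial_t f$, and $\partial_t f = \partial_r f\,\partial_t r+\partial_s f\,\partial_t s$ is again a quadratic expression containing $\partial_t b_2$ or $\partial_t\rho$.
\begin{itemize}
\item For $f(b_2,b_2)$, the Taylor form $f=\frac12\sum\partial_i\partial_j f(q)\,b_2^2$ makes $b_2\,\partial_t f$ essentially $\partial_t(b_2^3)$ times a smooth coefficient. Integrating by parts in time once more produces the correction term $\frac{d}{dt}\int\frac{\partial_i\partial_j f(q)}{3}b_2^3$. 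What remains is the time derivative falling on the coefficient $\partial_i\partial_j f(q(b_2,b_2))$, which is quartic and carries $\partial_t b_2$. Rather than iterate further, I will bound it by $\|b\|_{L^\infty}^2\|b_2\|_{L^2}\|\partial_t b_2\|_{L^2}$. This is not yet quadratic in dissipative quantities, so at this point I will insert the equation $\partial_t b_2 = -\partial_1 u_1+\dots$ once more, making it $\lesssim\|b\|_{H^2}\|\nabla u\|\,\|\partial_2 b\|$-type. Alternatively, the $L^\infty$ norm of $b$ can be split anisotropically to get a $\partial_2 b$ factor, and Young gives $\|\partial_t b\|_{L^2}^2$.
\item For $f(\rho,\rho)$, the factor $\partial_t\rho$ appears, and I will substitute $\partial_t\rho=-\nabla\cdot u-\nabla\cdot(\rho u)$. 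I will further split $\nabla\cdot u=\partial_1u_1+\partial_2u_2$. The $\partial_2 u_2$ part is handled by Lemma \ref{lemma-p2u2}. The $\partial_1 u_1$ part regenerates a term of the form $\int\partial_1 u_1\,\tilde f(\rho,b_2)$ with $\tilde f$ as defined in the statement. This explains the nested structure $\tilde f$, $\tilde{\tilde f}$: I will apply the argument recursively, first to $\tilde f(\rho,b_2)$ via the mixed case \eqref{prop-p1u1-rhob2}, then to $\tilde{\tilde f}(b_2,b_2)$ via \eqref{prop-p1u1-b2b2}.
\end{itemize}

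Accordingly, I will prove \eqref{prop-p1u1-b2b2} first, then \eqref{prop-p1u1-rhob2}, and finally \eqref{prop-p1u1-rhorho}. The hypothesis $\partial_r^2 f(0,0)=0$ in \eqref{prop-p1u1-rhob2} ensures that no pure $\rho^2$ term arises there, since such a term would require $\partial_t\rho$ again and start an infinite regress. Every $\rho$ appearance is therefore paired with a $b_2$, and its time derivative can be routed through $b_2$.

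The main obstacle is bookkeeping the recursion so that it terminates. At each stage the degree increases and I must verify that the final leftover terms, which are quartic or higher with a $\partial_t$ factor, are controlled by the stated right-hand side, using $\|\cdot\|_{H^2}$ smallness from \eqref{ansatz} together with the anisotropic $L^\infty$ bound.
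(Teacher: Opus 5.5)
The overall architecture matches the paper's. You substitute the $b_2$-equation, move $\partial_t$ off $b_2$, and use the Taylor form to extract $\frac{d}{dt}\int\frac{\partial_i\partial_j f(q)}{3}b_2^3$. You bound the leftover quartic terms by $\|\partial_t(\rho,b_2)\|_{L^2}$ times two anisotropic $L^\infty$ factors plus Young. You run the recursion $(\rho,\rho)\to(\rho,b_2)\to(b_2,b_2)$, using Lemma \ref{lemma-p2u2} for the $\partial_2u_2$ pieces.

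\textbf{The gap: the convective term.} You treat $-\int u\cdot\nabla b_2\,f\,dx$ as an independent cubic remainder, and the identity you invoke for it is false. $\nabla\cdot b=0$ gives $\partial_1b_1=-\partial_2b_2$, while $\partial_2b_1-\partial_1b_2=\nabla^\perp\cdot b$ is the current, which is not dissipated.
\begin{itemize}
\item For $f(b_2,b_2)$ this is harmless: $u\cdot\nabla b_2\,f(b_2,b_2)=u\cdot\nabla F(b_2)$ with $F$ cubic, and one integration by parts leaves $-\nabla\cdot u\,F(b_2)$.
\item Once $f$ contains $\rho b_2$ or $\rho^2$, the isolated terms $\int u_1\partial_1b_2\,\rho b_2\,g\,dx$ and $\int u_1\partial_1b_2\,\rho^2g\,dx$ are \emph{not} bounded by the right-hand sides of \eqref{prop-p1u1-rhob2} and \eqref{prop-p1u1-rhorho}. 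Every H\"older/Proposition \ref{prop-anisotropic-est} splitting (e.g.\ $\|u_1\partial_1b_2\|_{L^2}\|\rho\|_{L^\infty}\|b_2\|_{L^2}$) gives only $3/2$ powers of dissipative norms, and $\|u_1\|_{L^\infty}$ cannot be traded for $\|\nabla u\|$ in $\mathbb{R}^2$.
\item Concretely, take $(\rho,b_2)$ of size $\epsilon$ in $L^2$, varying on scale $1$ in $x_1$ and $\mu^{-1}$ in $x_2$. Take $u_1$ of size $\epsilon$ in $L^2$ but constant on their support (scale $10\mu^{-1}$). The term is then of order $\epsilon^4\mu^{3/2}$, while the right-hand side is of order $\epsilon^3\mu^2$.
\item Your main term has the same defect. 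Substituting $\partial_t\rho=-\nabla\cdot u-\nabla\cdot(\rho u)$ produces $\int b_2^2\,u\cdot\nabla\rho\,g$ (resp.\ $\int b_2\rho\,u\cdot\nabla\rho\,g$), which is equally uncontrollable on its own.
\end{itemize}

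\textbf{The missing idea: material-derivative structure.} The paper keeps $-\int u\cdot\nabla b_2\,f$ together with $\int b_2\,\partial_tf$ (both sit in its $\mathcal{R}_2$). It then uses $(\partial_t+u\cdot\nabla)\rho=-(1+\rho)\nabla\cdot u$, so the convective pieces assemble into exact divergences, for example
\[
b_2^2\,\partial_t\rho-\rho\,u\cdot\nabla b_2^2=-b_2^2\,\nabla\cdot u-\nabla\cdot\big(\rho\,b_2^2\,u\big)
\]
and
\[
b_2\,\partial_t\rho^2-\rho^2\,u\cdot\nabla b_2=-2(1+\rho)\rho\,b_2\,\nabla\cdot u-u\cdot\nabla\big(b_2\rho^2\big).
\]
After integrating by parts against the smooth Taylor coefficient, only $\nabla\cdot u$ and $u\cdot\nabla(\text{coefficient})$ remain.
\begin{itemize}
\item The $\partial_1u_1$ part of $\nabla\cdot u$ feeds the recursion; this is exactly what generates $\tilde f$ and $\tilde{\tilde f}$.
\item The $\partial_2u_2$ part goes to Lemma \ref{lemma-p2u2}.
\item The term $u\cdot\nabla(\text{coefficient})$ carries an extra factor and closes via $\|u\cdot\nabla(\rho,b_2)\|_{L^2}\|(\rho,b_2)\|_{L^\infty}^2\|(\rho,b_2)\|_{L^2}$.
\end{itemize}

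\textbf{A smaller inaccuracy.} The hypothesis $\partial_r^2f(0,0)=0$ does not eliminate $\rho^2$-type terms from $f(\rho,b_2)$. It makes their coefficient $O(|\rho|+|b_2|)$, so $\int b_2\,O(|\rho|+|b_2|)\,\rho\,\partial_t\rho$ is quartic and closes with $\|\partial_t\rho\|_{L^2}$ and two $L^\infty$ factors. A cubic $\int b_2\rho\,\partial_t\rho$, bounded directly, would again give only $3/2$ dissipative powers.
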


\begin{proof}
Recalling the equation for $b_2$,
\begin{equation}\nonumber
-\partial_1 u_1 = \partial_t b_2 + u\cdot \nabla b_2 - b \cdot \nabla u_2 + b_2 \nabla \cdot u,
\end{equation}
we can infer that for $r, s \in \{\rho, b_2\}$ there holds
\begin{equation}\nonumber
\begin{split}
\int_{\mathbb{R}^2} \partial_1 u_1 f(r, s)\;dx =& -\int_{\mathbb{R}^2} \big(\partial_t b_2 + u\cdot \nabla b_2 - b \cdot \nabla u_2 + b_2 \nabla \cdot u\big) f(r,s) \; dx \\
=& -\frac{d}{dt}\int_{\mathbb{R}^2} b_2 f(r,s)\;dx\\
& +\int_{\mathbb{R}^2} \big(b_2 \partial_t f(r,s)- u \cdot \nabla b_2 f(r,s)\big)\;dx\\
& -\int_{\mathbb{R}^2} \big( - b \cdot \nabla u_2 + b_2 \nabla \cdot u \big) f(r,s) \; dx \\
\triangleq& \; \mathcal{R}_{1} + \mathcal{R}_{2} + \mathcal{R}_{3}.
\end{split}
\end{equation}
Terms involving the total time derivative $\frac{d}{dt}$, such as $\mathcal{R}_1$, will be moved to the left-hand side of the estimates. We can now proceed to estimate $\mathcal{R}_2$ and $\mathcal{R}_3$ directly.

\paragraph{\bf Estimate of $\mathcal{R}_{3}$:}
Since $f(0, 0) = \partial_{r} f(0, 0) = \partial_{s}f(0, 0) = 0$, an immediate consequence of Proposition \ref{prop-nonlinear-func} is that
\begin{equation}\nonumber
\begin{split}
\mathcal{R}_{3} \lesssim \|\nabla u\|_{L^2} \|b\|_{L^2} \|f(r,s)\|_{L^\infty}\lesssim \|\nabla u\|_{L^2} \|b\|_{L^2} \|(r,s)\|_{L^\infty}^2.
\end{split}
\end{equation}
Furthermore, employing Proposition \ref{prop-anisotropic-est} allows us to refine this bound to
\begin{equation}\label{R3}
\begin{split}
\mathcal{R}_{3} \lesssim& \;\|\nabla u\|_{L^2} \|b\|_{L^2} \|(r,s)\|_{H^1}\|\partial_2(r,s)\|_{H^1}.
\end{split}
\end{equation}

\paragraph{{\bf Estimate of $\mathcal{R}_2$:}}
The treatment of the second term $\mathcal{R}_2$
 requires a more delicate analysis, which we will present case by case.
\subparagraph*{\bf Case $r = b_2, s = b_2$:}
The Taylor expansion of $f$ reads
$$f(b_2, b_2) = \sum_{i,j \in \{r,s\}} \frac{\partial_i \partial_j f\big(q_1(b_2, b_2), q_2(b_2, b_2)\big)}{2!} b_2^2,$$
where $|q_1(b_2, b_2)|, |q_2(b_2, b_2)| \lesssim \|b_2\|_{L^\infty}$. In light of this expansion, we rewrite $\mathcal{R}_2$ as
\begin{equation}\nonumber
\begin{split}
\mathcal{R}_2 =& \sum_{i,j \in \{r,s\}} \int_{\mathbb{R}^2} \frac{\partial_i \partial_j f(q_1, q_2)}{2!} b_2 \partial_t b_2^2\;dx \\
&+ \sum_{i,j \in \{r,s\}} \int_{\mathbb{R}^2} \frac{(\partial_i \partial_j \partial_r f)(\partial_r q_1 + \partial_s q_1) + (\partial_i \partial_j \partial_s f)(\partial_r q_2 + \partial_s q_2)}{2!} b_2^3 \partial_t b_2 \;dx \\
& + \sum_{i,j \in \{r,s\}} \int_{\mathbb{R}^2} \frac{\partial_i \partial_j f(q_1, q_2)}{3!} (\nabla \cdot u) b_2^3 \;dx \\
&+ \sum_{i,j \in \{r,s\}} \int_{\mathbb{R}^2} (u \cdot \nabla b_2) \frac{\partial_i \partial_j \partial_r f (\partial_r q_1 + \partial_s q_1) + \partial_i \partial_j \partial_s f (\partial_r q_2 + \partial_s q_2)}{3!} b_2^3 \;dx. \\
\end{split}
\end{equation}
Executing integration by parts with respect to the temporal derivative, it yields
\begin{equation}\label{R2}
\begin{split}
\mathcal{R}_2 =& \sum_{i,j \in \{r,s\}} \frac{d}{dt}\int_{\mathbb{R}^2} \frac{\partial_i \partial_j f(q_1, q_2)}{3} b_2^3\;dx \\
&+ \frac{1}{6}\sum_{i,j \in \{r,s\}} \int_{\mathbb{R}^2} \Big[(\partial_i \partial_j \partial_r f)(\partial_r q_1 + \partial_s q_1) + (\partial_i \partial_j \partial_s f)(\partial_r q_2 + \partial_s q_2)\Big] b_2^3 \partial_t b_2 \;dx \\
& + \sum_{i,j \in \{r,s\}} \int_{\mathbb{R}^2} \frac{\partial_i \partial_j f(q_1, q_2)}{3!} (\nabla \cdot u) b_2^3 \;dx \\
&+ \sum_{i,j \in \{r,s\}} \int_{\mathbb{R}^2} (u \cdot \nabla b_2) \frac{\partial_i \partial_j \partial_r f (\partial_r q_1 + \partial_s q_1) + \partial_i \partial_j \partial_s f (\partial_r q_2 + \partial_s q_2)}{3!} b_2^3 \;dx.
\end{split}
\end{equation}
Regarding the second term on the right-hand side, we can bound it as follows:
\begin{equation}\nonumber
\begin{split}
&\big(\big\|(\partial_i \partial_j \partial_r f)(\partial_r q_1 + \partial_s q_1)\big\|_{L^\infty} + \big\|(\partial_i \partial_j \partial_s f)(\partial_r q_2 + \partial_s q_2)\big\|_{L^\infty}\big)\|\partial_t b_2\|_{L^2} \|b_2\|_{L^2} \|b_2\|_{L^\infty}^2 \\
\lesssim& \; \|\partial_t b_2\|_{L^2} \|b_2\|_{L^2} \|b_2\|_{H^1}\|\partial_2 b_2\|_{H^1},
\end{split}
\end{equation}
where we have utilized the anisotropic bounds from Proposition \ref{prop-anisotropic-est}.
Moving on to the third term on the right-hand side of \eqref{R2}, a repeated application of Proposition \ref{prop-anisotropic-est} ensures that it can be easily controlled by
\begin{equation}\nonumber
\begin{split}
 \|\nabla \cdot u\|_{L^2} \|b_2\|_{L^2} \|b_2\|_{L^\infty}^2
\lesssim \|\nabla \cdot u\|_{L^2} \|b_2\|_{L^2} \|b_2\|_{H^1}\|\partial_2 b_2\|_{H^1}.
\end{split}
\end{equation}
Meanwhile, concerning the last term on the right-hand side of \eqref{R2}, following a parallel reasoning via H\"older's inequality and Proposition \ref{prop-anisotropic-est}, we obtain
\begin{equation}\nonumber
\begin{split}
& \|u \cdot \nabla b_2\|_{L^2}\|b_2\|_{L^2} \|b_2^2\|_{L^\infty} \\
& \lesssim \|u\|_{L^2}^{{\frac{1}{2}}} \|\partial_1 u\|_{L^2}^{{\frac{1}{2}}} \|\nabla b_2\|_{L^2}^{{\frac{1}{2}}} \|\partial_2 \nabla b\|_{L^2}^{{\frac{1}{2}}} \|b_2\|_{L^2}\|b_2\|_{H^1}\|\partial_2 b_2\|_{H^1} \\
& \lesssim \big(\|u\|_{L^2} + \|b_2\|_{H^1}\big)\big(\|\partial_1 u\|_{L^2}^2 + \|\partial_2 b\|_{H^1}^2\big).
\end{split}
\end{equation}
Synthesizing these individual bounds, and operating under the \textit{a priori} assumption \eqref{ansatz}, we finally conclude that
\begin{equation}\label{case1}
\begin{split}
&\Big|\mathcal{R}_2 - {\sum_{i,j \in \{r,s\}}} \frac{d}{dt}\int_{\mathbb{R}^2} \frac{\partial_i \partial_j f(q_1, q_2)}{3} b_2^3\;dx \Big| \\
\lesssim& \;\big(\|b\|_{H^2} + \|u\|_{H^2}\big)\big(\|\nabla u\|_{H^1}^2 + \|\partial_2 b\|_{H^2}^2 + \|\partial_t b_2\|_{L^2}^2\big).
\end{split}
\end{equation}

\subparagraph*{\bf Case $r = \rho, s = b_2$:}

Since $f(0,0) = \partial_r f(0,0) = \partial_s f(0,0) = \partial_r^2f(0,0) = 0$, we have
\begin{equation}\nonumber
\begin{split}
f(r,s) =& \frac{\partial_s^2 f(q_1, q_2) s^2 + 2 \partial_r \partial_s f(q_1, q_2) rs +\partial_r^2 f(q_1, q_2) r^2 }{2!} \\
=& \frac{\partial_s^2 f(q_1, q_2) s^2 + 2 \partial_r \partial_s f(q_1, q_2) rs + [\partial_r^3 f(\bar q_1, \bar q_2)q_1 +\partial_r^2 \partial_s f(\bar q_1, \bar q_2)q_2]  r^2} {2!} \\
\end{split}
\end{equation}
where $|q_1(r, s)|, |q_2(r, s)|,|\bar q_1(r, s)|, |\bar q_2(r, s)| \lesssim \max\{|r|, |s|\}$.
Accordingly, $\mathcal{R}_2$ can be reformulated as
\begin{equation}\label{R2b2rho}
\begin{split}
\mathcal{R}_2 =& \int_{\mathbb{R}^2} \frac{\partial_s^2 f(q_1, q_2)}{2!} \big(b_2 \partial_t b_2^2 - (u \cdot \nabla b_2) b_2^2\big)\;dx \\
&+ \int_{\mathbb{R}^2} \frac{\partial_r \partial_s^2 f (\partial_r q_1 \partial_t\rho+\partial_s q_1 \partial_t b_2) + \partial_s^3 f(\partial_r q_2 \partial_t \rho + \partial_s q_2 \partial_t b_2)}{2!} b_2^3 \;dx \\
& + \int_{\mathbb{R}^2} \partial_r \partial_s f(q_1, q_2) \big(b_2 { \partial_t(b_2 \rho)} - (u \cdot \nabla b_2) (b_2 \rho)\big)\;dx \\
&+ \int_{\mathbb{R}^2} \big(\partial_r^2 \partial_s f (\partial_r q_1 \partial_t\rho+\partial_s q_1 \partial_t b_2) + \partial_r \partial_s^2 f(\partial_r q_2 \partial_t \rho + \partial_s q_2 \partial_t b_2) \big) b_2^2 \rho \;dx \\
& + \int_{\mathbb{R}^2} b_2 [\partial_r^3 f(\bar q_1, \bar q_2)q_1 +\partial_r^2 \partial_s f(\bar q_1, \bar q_2)q_2] \rho \partial_t\rho\;dx \\
&+ \int_{\mathbb{R}^2} \frac{b_2 [\partial_r^4f(\bar q_1 , \bar q_2)(\partial_r \bar q_1 \partial_t \rho + \partial_s \bar q_1  \partial_t b_2) q_1 + \partial_r^3 \partial_s f(\bar q_1 , \bar q_2)(\partial_r \bar q_2 \partial_t \rho + \partial_s \bar q_2  \partial_t b_2) q_1] \rho^2}{2!}\;dx \\
&+ \int_{\mathbb{R}^2} \frac{b_2 [\partial_r^3 \partial_s f(\bar q_1 , \bar q_2)(\partial_r \bar q_1 \partial_t \rho + \partial_s \bar q_1  \partial_t b_2) q_2 + \partial_r^2 \partial_s^2 f(\bar q_1 , \bar q_2)(\partial_r \bar q_2 \partial_t \rho + \partial_s \bar q_2  \partial_t b_2) q_2] \rho^2}{2!}\;dx \\
&+ \int_{\mathbb{R}^2} \frac{b_2 [\partial_r^3f(\bar q_1 , \bar q_2)(\partial_r q_1 \partial_t \rho + \partial_s  q_1  \partial_t b_2) + \partial_r^2 \partial_s f(\bar q_1 , \bar q_2)(\partial_r  q_2 \partial_t \rho + \partial_s q_2  \partial_t b_2)] \rho^2}{2!}\;dx \\
&-\int_{\mathbb{R}^2} \frac{u \cdot \nabla b_2 [\partial_r^3 f(\bar q_1, \bar q_2)q_1 +\partial_r^2 \partial_s f(\bar q_1, \bar q_2)q_2]  \rho^2}{2!} \; dx \\
\triangleq& \;\mathcal{R}_{2,1} + \mathcal{R}_{2,2} + \mathcal{R}_{2,3} + \mathcal{R}_{2,4} + \mathcal{R}_{2,5} + \mathcal{R}_{2,6} + \mathcal{R}_{2,7} + \mathcal{R}_{2,8} + \mathcal{R}_{2,9}.
\end{split}
\end{equation}
 For $\mathcal{R}_{2,5}$, $\mathcal{R}_{2,6}$, $\mathcal{R}_{2,7}$, $\mathcal{R}_{2,8}$ and $\mathcal{R}_{2,9}$, applying H\"{o}lder's inequality and the Sobolev embedding theorem, together with the \textit{a priori} assumption \eqref{ansatz}, directly yields
\begin{equation}\nonumber
\begin{split}
  &\mathcal{R}_{2,5} + \mathcal{R}_{2,6} + \mathcal{R}_{2,7} + \mathcal{R}_{2,8} + \mathcal{R}_{2,9} \\
  \lesssim& \|b_2\|_{L^\infty} \|(q_1, q_2)\|_{L^2} \|\rho\|_{L^\infty} \|(\partial_t \rho, \partial_t b_2) \|_{L^2} + \|b_2\|_{L^2} \|(\partial_t \rho, \partial_t b_2)\|_{L^2} \|\rho\|_{L^\infty}^2 \\
  &+ \|u\|_{L_{x_1}^\infty L_{x_2}^2} \|\nabla b_2\|_{L_{x_1}^2 L_{x_2}^\infty} \|(q_1, q_2)\|_{L^2} \|\rho\|_{L^\infty}^2\\
  \lesssim& \|(\rho, b_2, u)\|_{H^2}^2 (\|(\partial_t \rho, \partial_t b_2)\|_{L^2}^2 +  \|(\p_2\rho, \p_2 b, \nabla u)\|_{H^1}^2).
\end{split}
\end{equation}

For $\mathcal{R}_{2,1}$, $\mathcal{R}_{2,2}$ and $\mathcal{R}_{2,4}$, by employing the same strategy as in \eqref{R2}, we obtain
\begin{equation}\nonumber
\begin{split}
&\Big|\mathcal{R}_{2,1} + \mathcal{R}_{2,2} + \mathcal{R}_{2,4} - \frac{d}{dt}\int_{\mathbb{R}^2} \frac{\partial_s^2 f(q_1, q_2)}{3} b_2^3\;dx \Big| \\
\lesssim& \; \|(\rho, u, b)\|_{H^2}\big(\|(\partial_2\rho, \nabla u, \partial_2 b)\|_{H^2}^2 + \|(\partial_t \rho, \partial_t b_2)\|_{L^2}^2\big).
\end{split}
\end{equation}
To proceed, we decompose $\mathcal{R}_{2,3}$ by shifting the temporal derivative:
\begin{equation}\nonumber
\begin{split}
\mathcal{R}_{2,3} =& \int_{\mathbb{R}^2} \partial_r \partial_s f(q_1, q_2) \Big( \frac{1}{2} \partial_t (b_2^2 \rho) + \frac{1}{2} b_2^2 \partial_t \rho - (u \cdot \nabla b_2) (b_2 \rho)\Big)\;dx \\
=& \;\frac{1}{2}\frac{d}{dt}\int_{\mathbb{R}^2} \partial_r \partial_s f(q_1, q_2) b_2^2 \rho\;dx \\
& - \frac{1}{2}\int_{\mathbb{R}^2} \big(\partial_r^2 \partial_s f (\partial_r q_1 \partial_t \rho + \partial_s q_1 \partial_t b_2) + \partial_r \partial_s^2 f (\partial_r q_2 \partial_t \rho + \partial_s q_2 \partial_t b_2)\big) b_2^2 \rho\;dx \\
&+ \frac{1}{2}\int_{\mathbb{R}^2} \partial_r \partial_s f(q_1, q_2) \big(b_2^2 \partial_t \rho - (u \cdot \nabla |b_2|^2) \rho\big)\;dx \\
\triangleq& \;\mathcal{R}_{2,3,1} + \mathcal{R}_{2,3,2} + {\mathcal{R}_{2,3,3}}.
\end{split}
\end{equation}
Following the analytical framework applied to the second term in \eqref{R2}, we can bound $\mathcal{R}_{2,3,2}$ by
\begin{equation}\nonumber
\begin{split}
\mathcal{R}_{2,3,2} \lesssim \|(\rho, b)\|_{H^2} \big(\|(\partial_2\rho, \partial_2b)\|_{H^2}^2 + \|(\partial_t \rho, \partial_t b_2)\|_{L^2}^2\big).
\end{split}
\end{equation}
As for $\mathcal{R}_{2,3,3}$, substituting the equation of $\rho$ and performing integration by parts, we get
\begin{equation}\nonumber
\begin{split}
\mathcal{R}_{2,3,3} =& \;\frac{1}{2}\int_{\mathbb{R}^2} \partial_r \partial_s f(q_1, q_2) \Big(b_2^2 (-\nabla \cdot u - \rho \nabla \cdot u - u \cdot \nabla \rho) - (u \cdot \nabla |b_2|^2) \rho\Big)\;dx \\
=& - \frac{1}{2}\int_{\mathbb{R}^2} \partial_r \partial_s f(q_1, q_2) \p_1 u_1 b_2^2 \;dx \\
& +\frac{1}{2}\int_{\mathbb{R}^2} \p_2 (\partial_r \partial_s f(q_1, q_2)b_2^2) u_2 \;dx + \frac{1}{2}\int_{\mathbb{R}^2} u \cdot \nabla \big(\partial_r \partial_s f(q_1, q_2)\big) |b_2|^2 \rho\;dx \\
\triangleq& \; \mathcal{R}_{2,3,3,1} + \mathcal{R}_{2,3,3,2}.
\end{split}
\end{equation}
In regard to $\mathcal{R}_{2,3,3,1}$, appealing to \eqref{prop-p1u1-b2b2} there is
\begin{equation}\nonumber
\begin{split}
& \Big|\mathcal{R}_{2,3,3,1} + \frac{1}{2}\frac{d}{dt}\int_{\mathbb{R}^2} b_2 \;\tilde {\!\tilde {f}}(b_2,b_2)\;dx \\
& \quad - \sum_{i,j \in \{r,s\}} \frac{d}{dt}\int_{\mathbb{R}^2} \frac{\partial_i \partial_j \tilde {\!\tilde {f}}\big(\;\tilde {\!\tilde {q_1}}(b_2, b_2), \tilde {\!\tilde {q_2}}(b_2, b_2)\big)}{3!} b_2^3\;dx \Big| \\
\lesssim& \;\|(u, b)\|_{H^2} \big(\|\nabla u\|_{H^1}^2 + \|\partial_2 b\|_{H^2}^2 + \|\partial_t b\|_{L^2}^2\big),
\end{split}
\end{equation}
where $\tilde {\!\tilde {f}}(r,s) \triangleq -\partial_r \partial_s f\big(q_1(r,s), q_2(r,s)\big) s^2$.
On the other hand, for $\mathcal{R}_{2,3,3,2}$, in a similar vein as the estimate of the fourth term in \eqref{R2}, we obtain
\begin{equation}\nonumber
\begin{split}
\mathcal{R}_{2,3,3,2} \lesssim \|(\rho, u, b)\|_{H^2} \|(\partial_2\rho, \nabla u, \partial_2b)\|_{H^2}^2.
\end{split}
\end{equation}
Aggregating these derived inequalities, we finally get
\begin{equation}\label{case2}
\begin{split}
&\Big|\mathcal{R}_{2}- \frac{d}{dt}\int_{\mathbb{R}^2} \frac{\partial_s^2 f(q_1, q_2)}{3} b_2^3\;dx - \frac{1}{2}\frac{d}{dt}\int_{\mathbb{R}^2} \partial_r \partial_s f(q_1, q_2) b_2^2 \rho\;dx \\
&\quad + \frac{1}{2}\frac{d}{dt}\int_{\mathbb{R}^2} b_2 \tilde {\!\tilde {f}}(b_2,b_2)\;dx - \sum_{i,j \in \{r,s\}} \frac{d}{dt}\int_{\mathbb{R}^2} \frac{\partial_i \partial_j \tilde {\!\tilde {f}}(\;\tilde {\!\tilde {q_1}}(b_2, b_2), \tilde {\!\tilde {q_2}}(b_2, b_2))}{3!} b_2^3\;dx \Big| \\
\lesssim& \, \|(\rho, u, b)\|_{H^2}\big(\|(\partial_2\rho, \nabla u, \partial_2 b)\|_{H^2}^2 + \|(\partial_t \rho, \partial_t b_2)\|_{L^2}^2\big).
\end{split}
\end{equation}

\subparagraph*{{\bf Case $r = \rho, s = \rho$:}}
Following an analogous procedure like the case $r = b_2, s = b_2$, we rewrite $\mathcal{R}_2$ as follows
\begin{equation}\label{R2rhorho}
\begin{split}
\mathcal{R}_2 =& {\sum_{i,j \in \{r,s\}}} \int_{\mathbb{R}^2} \frac{\partial_i \partial_j f(q_1(\rho, \rho), q_2(\rho, \rho))}{2!} \big(b_2 \partial_t \rho^2 - (u \cdot \nabla b_2) \rho^2\big)\;dx \\
&+{\sum_{i,j \in \{r,s\}}} \int_{\mathbb{R}^2} \frac{(\partial_i \partial_j \partial_r f)(\partial_r q_1 \partial_t \rho + \partial_s q_1 \partial_t \rho) + (\partial_i \partial_j \partial_s f)(\partial_r q_2 \partial_t \rho + \partial_s q_2 \partial_t \rho)}{2!} b_2 \rho^2 \;dx \\
\triangleq& \; \mathcal{R}_{2,1} + \mathcal{R}_{2,2}.
\end{split}
\end{equation}
For $\mathcal{R}_{2,1}$, exploiting once again the equation of $\rho$, we have
\begin{equation}\nonumber
\begin{split}
\mathcal{R}_{2,1} =& - {\sum_{i,j \in \{r,s\}}} \int_{\mathbb{R}^2} \frac{\partial_i \partial_j f(q_1, q_2)}{2!} \Big[2 b_2 \big(\nabla \cdot u + \nabla \cdot (\rho u)\big) \rho + (u \cdot \nabla b_2) \rho^2\Big]\;dx \\
= &-2{\sum_{i,j \in \{r,s\}}} \int_{\mathbb{R}^2} \partial_1 u_1 (b_2 \rho + b_2 \rho^2) \frac{\partial_i \partial_j f(q_1, q_2)}{2!}\;dx \\
& -2{\sum_{i,j \in \{r,s\}}} \int_{\mathbb{R}^2} \partial_2 u_2 (b_2 \rho + b_2 \rho^2) \frac{\partial_i \partial_j f(q_1, q_2)}{2!} \;dx\\
& - {\sum_{i,j \in \{r,s\}}} \int_{\mathbb{R}^2} \frac{\partial_i \partial_j f(q_1, q_2)}{2!} \big(b_2 u \cdot \nabla |\rho|^2+ (u \cdot \nabla b_2) \rho^2\big)\;dx \\
\triangleq& \; \mathcal{R}_{2,1,1} + \mathcal{R}_{2,1,2} + \mathcal{R}_{2,1,3}.
\end{split}
\end{equation}
By virtue of \eqref{prop-p1u1-rhob2} (notice here in fact we have proved \eqref{prop-p1u1-rhob2} already since Case $r = \rho, s = b_2$ for $\mathcal{R}_{2}$ has been proved before), we have
\begin{equation}\nonumber
\begin{split}
&\Big| \mathcal{R}_{2,1,1} + \frac{d}{dt}\int_{\mathbb{R}^2} b_2 \tilde f(\rho, b_2)\;dx - \frac{d}{dt}\int_{\mathbb{R}^2} \frac{\partial_s^2 \tilde f(\tilde q_1, \tilde q_2)}{3} b_2^3\;dx\\
&\quad - \frac{1}{2}\frac{d}{dt}\int_{\mathbb{R}^2} \partial_r \partial_s \tilde f(\tilde q_1, \tilde q_2) b_2^2 \rho\;dx + \frac{1}{2} \frac{d}{dt}\int_{\mathbb{R}^2} b_2 \tilde {\!\tilde {f}}(b_2,b_2)\;dx \\
& \quad - \sum_{i,j \in \{r,s\}} \frac{d}{dt}\int_{\mathbb{R}^2} \frac{\partial_i \partial_j \tilde {\!\tilde {f}}(\; \tilde {\!\tilde {q_1}}(b_2, b_2), \tilde {\!\tilde {q_2}}(b_2, b_2))}{3!} b_2^3\;dx\Big| \\
\lesssim& \; \|(\rho, u, b)\|_{H^2}\big(\|(\partial_2\rho, \nabla u, \partial_2 b)\|_{H^2}^2 + \|(\partial_t \rho, \partial_t b_2)\|_{L^2}^2\big).
\end{split}
\end{equation}
Here, the auxiliary functions are defined as
$$\tilde f(r,s)\triangleq -(rs+ r^2 s) \sum_{i,j \in \{r,s\}} \partial_i \partial_j f(r, s)$$
and
$$\tilde {\!\tilde {f}}(r,s) \triangleq -\partial_r \partial_s \tilde f(q_1(r,s), q_2(r,s)) s^2.$$
One can readily verify that $\tilde f(0, 0) = \partial_r \tilde f(0,0) = \partial_s \tilde f(0,0) = \partial_r^2 \tilde f(0,0) = 0.$
Consequently, invoking Lemma \ref{lemma-p2u2}, we can obtain
\begin{equation}\nonumber
\begin{split}
\mathcal{R}_{2,1,2} \lesssim& \;\|u_2\|_{L^2}^{{\frac{1}{2}}} \|\partial_1 u_2\|_{L^2}^{{\frac{1}{2}}} \|(b_2, \rho)\|_{L^2}^{{\frac{1}{2}}} {\|(\partial_2 b_2, \partial_2 \rho)\|_{L^2}^{\frac{3}{2}}}.
\end{split}
\end{equation}
Furthermore, addressing $\mathcal{R}_{2,1,3}$, integration by parts gives
\begin{equation}\nonumber
\begin{split}
\mathcal{R}_{2,1,3} =& {\sum_{i,j \in \{r,s\}}} \int_{\mathbb{R}^2} \frac{\partial_i \partial_j f(q_1, q_2)}{2!} (\nabla \cdot u) b_2 |\rho|^2\;dx \\
& + {\sum_{i,j \in \{r,s\}}} \int_{\mathbb{R}^2} u \cdot \nabla \Big(\frac{\partial_i \partial_j f(q_1, q_2)}{2!}\Big) b_2 |\rho|^2\;dx.
\end{split}
\end{equation}
Replicating the same process as was done for the third and fourth terms in \eqref{R2}, we shall derive
\begin{equation}\nonumber
\begin{split}
\mathcal{R}_{2,1,3} \lesssim& \; \|(\rho, u, b)\|_{H^2}\|(\nabla u, \partial_2\rho, \partial_2 b)\|_{H^2}^2.
\end{split}
\end{equation}

It remains to handle $\mathcal{R}_{2,2}$. Drawing upon the techniques used for the second term in \eqref{R2}, we have
\begin{equation}\nonumber
\begin{split}
\mathcal{R}_{2,2} \lesssim \|(\rho, u, b)\|_{H^2}\big(\|(\partial_2\rho, \nabla u, \partial_2 b)\|_{H^2}^2 + \|(\p_t \rho, \p_t b_2)\|_{L^2}^2\big).
\end{split}
\end{equation}
Ultimately, consolidating all the pieces yields
\begin{equation}\label{case3}
\begin{split}
&\Big| \mathcal{R}_{2} + \frac{d}{dt}\int_{\mathbb{R}^2} b_2 \tilde f(\rho, b_2)\;dx - \frac{d}{dt}\int_{\mathbb{R}^2} \frac{\partial_s^2 \tilde f(\tilde q_1, \tilde q_2)}{3} b_2^3\;dx\\
&\quad - \frac{1}{2}\frac{d}{dt}\int_{\mathbb{R}^2} \partial_r \partial_s \tilde f(\tilde q_1, \tilde q_2) b_2^2 \rho\;dx + \frac{1}{2} \frac{d}{dt}\int_{\mathbb{R}^2} b_2 \tilde {\!\tilde {f}}(b_2,b_2)\;dx \\
& \quad - \sum_{i,j \in \{r,s\}} \frac{d}{dt}\int_{\mathbb{R}^2} \frac{\partial_i \partial_j \tilde {\!\tilde {f}}(\;\tilde {\!\tilde {q_1}}(b_2, b_2), \tilde {\!\tilde {q_2}}(b_2, b_2))}{3!} b_2^3\;dx\Big| \\
\lesssim& \; \|(\rho, u, b)\|_{H^2}\big(\|(\partial_2\rho, \nabla u, \partial_2 b)\|_{H^2}^2 + \|(\partial_t \rho, \partial_t b_2)\|_{L^2}^2\big).
\end{split}
\end{equation}

By amalgamating all the three cases analyzed above, namely \eqref{case1}, \eqref{case2} and \eqref{case3}, we then obtain the comprehensive estimate for $\mathcal{R}_{2}$. Together with the estimate of $\mathcal{R}_{3}$ established in \eqref{R3}, we then finish the proof of this lemma.
\end{proof}

\subsection{Lemmas for higher-order derivatives}
The bounds established in Lemmas \ref{lemma-p2u2} and \ref{lemma-p1u1} are restricted to lower-order derivatives.
When addressing higher-order scenarios (e.g., terms containing $\p_1^3 u_1$, $\p_1^3 \rho$, $\p_1^3 b_2$), the aforementioned methodology encounters the obstacle of derivative loss. Through the ingenious introduction of the auxiliary quantities $\Omega$ and $\Gamma$, we are able to successfully retrieve the missing regularity in the following lemmas.

\begin{lemma}\label{lemma-p13u1}
Let $f \in C^\infty(\mathbb{R}^2)$. For any choice of $r, s \in \{\rho, b_2\}$ and integers $0 \leq a \leq 1$, $0 \leq l, m \leq 3$ satisfying
\begin{itemize}
\item $l + m \leq 4$ if $a = 0$,
\item $l + m \leq 3$ if $a = 1$,
\end{itemize}
the following estimate holds:
\begin{equation}\nonumber
\begin{split}
&\Big|\int_{\mathbb{R}^2} \partial_1^{3+a} u_1 \; \partial_1^l r \;\partial_1^m s \; f(\rho, b_2) \, dx - \frac{1}{2} \frac{d}{dt}\int_{\mathbb{R}^2} \partial_1^{1+a} \Omega \; \partial_1^l r \; \partial_1^m s \; f(\rho, b_2) \, dx \Big| \\
\lesssim&\;\|(\rho, u, b)\|_{H^3} \big(\|(\partial_2 \rho, \partial_2 b, \Omega)\|_{H^2}^2 + \|\nabla u\|_{H^3}^2 + \|(\p_t \rho, \p_t b_2)\|_{L^2}^2\big).
\end{split}
\end{equation}
\begin{proof}
Invoking the evolution equation of $\Omega$ in \eqref{eqomega},
\begin{equation}\nonumber
\Omega_t + u \cdot \nabla \Omega - 2 \partial_1^2 u_1 = \partial_2^2 u_1 + \partial_1\partial_2 u_2 + Q,
\end{equation}
we proceed to reformulate the target integral in the following manner
\begin{equation}\nonumber
\begin{split}
&\int_{\mathbb{R}^2} \partial_1^{3+a} u_1 \; \partial_1^l r \; \partial_1^m s \; f(\rho, b_2) \, dx \\
=& \;\frac{1}{2}\int_{\mathbb{R}^2}\partial_1^{1+a}\big(\Omega_t + u \cdot \nabla \Omega - \partial_2^2 u_1 - \partial_1\partial_2 u_2 - Q\big) \; \partial_1^l r \; \partial_1^m s\; f(\rho, b_2) \, dx \\
=& \;\frac{1}{2}\int_{\mathbb{R}^2}\partial_1^{1+a} \Omega_t \; \partial_1^l r \; \partial_1^m s \; f(\rho, b_2) \, dx \\
& + \frac{1}{2}\int_{\mathbb{R}^2}\partial_1^{1+a}\big( - \partial_2^2 u_1 - \partial_1\partial_2 u_2\big) \; \partial_1^l r \;\partial_1^m s \; f(\rho, b_2) \, dx\\
& + \frac{1}{2}\int_{\mathbb{R}^2}\partial_1^{1+a}\big( u \cdot \nabla \Omega - Q\big) \; \partial_1^l r \; \partial_1^m s \; f(\rho, b_2) \, dx \\
\triangleq& \; \mathcal{P}_1 + \mathcal{P}_2 + \mathcal{P}_3.
\end{split}
\end{equation}

\paragraph{\bf Estimate of $\mathcal{P}_1$:}
Executing integration by parts with respect to the temporal variable, we decompose $\mathcal{P}_1$ into
\begin{equation}\nonumber
\begin{split}
\mathcal{P}_1=& \;\frac{1}{2} \frac{d}{dt}\int_{\mathbb{R}^2}\partial_1^{1+a} \Omega \; \partial_1^l r \; \partial_1^m s \; f(\rho, b_2) \, dx \\
& - \frac{1}{2} \int_{\mathbb{R}^2}\partial_1^{1+a} \Omega \; \partial_1^l r_t \; \partial_1^m s \; f(\rho, b_2) \, dx \\
& - \frac{1}{2} \int_{\mathbb{R}^2}\partial_1^{1+a} \Omega \; \partial_1^l r \; \partial_1^m s_t \; f(\rho, b_2) \, dx \\
& - \frac{1}{2} \int_{\mathbb{R}^2}\partial_1^{1+a} \Omega \; \partial_1^l r \; \partial_1^m s \; \big((\partial_r f) \rho_t + (\partial_s f) \partial_tb_2\big) \, dx \\
\triangleq& \; \mathcal{P}_{1,1} + \mathcal{P}_{1,2} + \mathcal{P}_{1,3} + \mathcal{P}_{1,4}.
\end{split}
\end{equation}
\textbf{Estimate of $\mathcal{P}_{1,2}$ and $\mathcal{P}_{1,3}$:}
We restrict our analysis to the specific configuration where $r = \rho, s = b_2$. The remaining configurations can be treated in an identical fashion.
\begin{equation}\nonumber
\begin{split}
\mathcal{P}_{1,2} + \mathcal{P}_{1,3} = & - \frac{1}{2} \int_{\mathbb{R}^2}\partial_1^{1+a} \Omega \; \partial_1^l \rho_t \; \partial_1^m b_2 \; f(\rho, b_2) \, dx \\
& - \frac{1}{2} \int_{\mathbb{R}^2}\partial_1^{1+a} \Omega \; \partial_1^l \rho \; \partial_1^m \partial_t b_2 \; f(\rho, b_2) \, dx.
\end{split}
\end{equation}
Plugging in the governing equations of $\rho$ and $b_2$ yields
\begin{equation}\label{transform-derivative-loss}
\begin{split}
\mathcal{P}_{1,2} + \mathcal{P}_{1,3} =& - \frac{1}{2} \int_{\mathbb{R}^2}\partial_1^{1+a} \Omega \; \partial_1^l \big( - \nabla \cdot u - \nabla \cdot (\rho u)\big) \; \partial_1^m b_2 \; f(\rho, b_2) \, dx \\
& - \frac{1}{2} \int_{\mathbb{R}^2}\partial_1^{1+a} \Omega \; \partial_1^l \rho \; \partial_1^m \big(-\partial_1 u_1 - u \cdot \nabla b_2 + b \cdot \nabla u_2 - b_2 \nabla \cdot u\big) \; f(\rho, b_2) \, dx.
\end{split}
\end{equation}
The most intricate scenario arises when $a = 1$ with $l = 3$ or $m = 3$. To circumvent the potential derivative loss, we redistribute the highest horizontal derivative via integration by parts as demonstrated below
\begin{equation}\nonumber
\begin{split}
& - \frac{1}{2} \int_{\mathbb{R}^2}\partial_1^{2} \Omega \; u \cdot \nabla \partial_1^3 \rho \; b_2 \; f(\rho, b_2) \, dx \\
=&\; \frac{1}{2} \int_{\mathbb{R}^2}u \cdot \nabla\partial_1^{2} \Omega \; b_2 \; f(\rho, b_2) \; \partial_1^3 \rho \, dx \\
& + \frac{1}{2} \int_{\mathbb{R}^2}\partial_1^{2} \Omega \; u \cdot \nabla \big( b_2 \; f(\rho, b_2)\big) \; \partial_1^3 \rho \, dx \\
& + \frac{1}{2} \int_{\mathbb{R}^2}\partial_1^{2} \Omega \, (\nabla \cdot u) \partial_1^3 \rho \; b_2 \, f(\rho, b_2) \, dx,
\end{split}
\end{equation}
and in a parallel manner for the $b_2$ component:
\begin{equation}\nonumber
\begin{split}
& - \frac{1}{2} \int_{\mathbb{R}^2}\partial_1^{2} \Omega \, u \cdot \nabla \partial_1^3 b_2 \, \rho \,f(\rho, b_2) \, dx \\
=& \;\frac{1}{2} \int_{\mathbb{R}^2}u \cdot \nabla \partial_1^{2} \Omega \, \rho \, f(\rho, b_2) \,\partial_1^3 b_2 \, dx \\
& + \frac{1}{2} \int_{\mathbb{R}^2}\partial_1^{2} \Omega \, u \cdot \nabla \big( \rho \, f(\rho, b_2)\big) \, \partial_1^3 b_2 \, dx \\
& + \frac{1}{2} \int_{\mathbb{R}^2}\partial_1^{2} \Omega \, (\nabla \cdot u) \,\partial_1^3 b_2 \, \rho \, f(\rho, b_2) \, dx.
\end{split}
\end{equation}
Consequently, by H\"{o}lder's inequality and  Sobolev embedding theorem there is
\begin{equation}\nonumber
\begin{split}
& \mathcal{P}_{1,2} + \mathcal{P}_{1,3} + \mathfrak{P} \\
\lesssim& \;\|\Omega\|_{H^2}\big(\|\nabla u\|_{H^3} + \|u\|_{H^3}^{\frac{1}{2}} \|\partial_1 u\|_{H^3}^{\frac{1}{2}} \|\rho\|_{H^2}^{\frac{1}{2}} \|\partial_2 \rho \|_{H^2}^{\frac{1}{2}} + \|u\|_{H^3}^{\frac{1}{2}} \|\partial_1 u\|_{H^3}^{\frac{1}{2}} \|b\|_{H^2}^{\frac{1}{2}} \|\partial_2 b \|_{H^2}^{\frac{1}{2}}\big) \\
& \cdot \|(\rho, u, b)\|_{H^3} \\
\lesssim& \; \|(\rho, u, b)\|_{H^3} \big(\|(\partial_2 \rho, \partial_2 b, \Omega)\|_{H^2}^2 + \|\nabla u\|_{H^3}^2\big).
\end{split}
\end{equation}
Here,
\begin{equation}\label{frakR}
\mathfrak{P} = \begin{cases} \frac{1}{2} \int_{\mathbb{R}^2}u \cdot \nabla\partial_1^{2} \Omega \,\partial_1^m b_2 \, f(\rho, b_2) \, \partial_1^3 \rho \, dx, & l = 3,\\ \frac{1}{2} \int_{\mathbb{R}^2}u \cdot \nabla \partial_1^{2} \Omega \, \partial_1^l \rho \, f(\rho, b_2) \, \partial_1^3 b_2 \, dx, & m = 3. \end{cases}
\end{equation}
The residual term $\mathfrak{P}$ will be elegantly canceled by the contribution from $\mathcal{P}_{3,3}$ in \eqref{cancel-r}. The remaining cases follow by analogous reasoning, hence, we omit the tedious details for the sake of brevity.

\paragraph{\bf Estimate of $\mathcal{P}_{1,4}$:}
Appealing to H\"{o}lder's inequality, Sobolev embedding, and the \textit{a priori} bound \eqref{ansatz}, we readily deduce
\begin{equation}\nonumber
\begin{split}
\mathcal{P}_{1,4} \lesssim& \; \|\Omega\|_{H^2}\|(\rho, b)\|_{H^3}\|(\partial_t \rho, \partial_t b_2)\|_{L^2}.
\end{split}
\end{equation}

\paragraph{\bf Estimate of $\mathcal{P}_2$:}
Performing integration by parts in $x_2$ to transfer the vertical derivative, we recast $\mathcal{P}_2$ as
\begin{equation}\nonumber
\begin{split}
\mathcal{P}_2 =& \; \frac{1}{2}\int_{\mathbb{R}^2}\partial_1^{1+a}\big(\partial_2 u_1 + \partial_1 u_2\big) \, \partial_2 \big(\partial_1^l r \, \partial_1^m s \, f(\rho, b_2)\big) \, dx.
\end{split}
\end{equation}
When $l = 3$ or $m = 3$, we again employ integration by parts to redistribute the highest order derivative. This manipulation leads to
\begin{equation}\nonumber
\begin{split}
\mathcal{P}_2 \lesssim& \;\|\nabla u\|_{H^3} \|(\partial_2 \rho, \partial_2 b)\|_{H^2} \|(\rho, b)\|_{H^3}.
\end{split}
\end{equation}

\paragraph{\bf Estimate of $\mathcal{P}_3$:}
Recalling the definition, we have
\begin{equation}\nonumber
\begin{split}
\mathcal{P}_3 = \frac{1}{2}\int_{\mathbb{R}^2}\partial_1^{1+a}\big( u \cdot \nabla \Omega - Q\big) \, \partial_1^l r \, \partial_1^m s \, f(\rho, b_2) \, dx.
\end{split}
\end{equation}
We then divide the proof into two cases.

\textbf{Case $\mathbf{a = 0}$:}
\begin{equation}\nonumber
\begin{split}
\mathcal{P}_3 =& \; \frac{1}{2}\int_{\mathbb{R}^2}\partial_1 u \cdot \nabla \Omega \,\partial_1^l r \, \partial_1^m s \, f(\rho, b_2) \, dx \\
& + \frac{1}{2}\int_{\mathbb{R}^2}u \cdot \nabla \partial_1 \Omega \, \partial_1^l r \, \partial_1^m s \, f(\rho, b_2) \, dx \\
& - \frac{1}{2}\int_{\mathbb{R}^2}\partial_1 Q \, \partial_1^l r \,\partial_1^m s \, f(\rho, b_2) \, dx \\
\triangleq & \; \mathcal{P}_{3,1} + \mathcal{P}_{3,2} + \mathcal{P}_{3,3}.
\end{split}
\end{equation}
It is straightforward to verify that for $\mathcal{P}_{3,1}$,
\begin{equation}\nonumber
\begin{split}
\mathcal{P}_{3,1} \lesssim&\; \|\partial_1 u\|_{L^\infty} \|\nabla \Omega\|_{L^4} \|(\rho, b_2)\|_{H^{2, 4}}\|(\rho, b_2)\|_{H^3} \|f(\rho, b_2)\|_{L^\infty} \\
\lesssim& \;\|(\rho, b_2)\|_{H^3}^2\|\partial_1 u\|_{H^2}\|\nabla \Omega\|_{H^1}.
\end{split}
\end{equation}
For $\mathcal{P}_{3,2}$, exploiting anisotropic type interpolation inequalities we derive
\begin{equation}\nonumber
\begin{split}
\mathcal{P}_{3,2} \lesssim& \;\|u\|_{L^\infty}\|\nabla^2 \Omega\|_{L^2} \Big(\|(\rho, b_2)\|_{H^{1, \infty}}\|(\rho, b_2)\|_{H^3} \\
& +\sum_{k_1 = 0}^2 \sum_{k_2 = 0}^2\|\partial_1^{k_1}(\rho, b_2)\|_{L_{x_1}^\infty L_{x_2}^2}\|\partial_1^{k_2}(\rho, b_2)\|_{L_{x_1}^2 L_{x_2}^\infty}\Big) \|f(\rho, b_2)\|_{L^\infty} \\
\lesssim& \; \|u\|_{L^2}^{\frac{1}{2}} \|\nabla^2 u\|_{L^2}^{\frac{1}{2}} \|\nabla^2 \Omega\|_{L^2} \|(\rho, b_2)\|_{H^2}^{\frac{1}{2}} \|\partial_2(\rho, b_2)\|_{H^2}^{\frac{1}{2}} \|(\rho, b_2)\|_{H^3}\\
& + \|u\|_{L^2}^{\frac{1}{2}} \|\nabla^2 u\|_{L^2}^{\frac{1}{2}} \|\nabla^2 \Omega\|_{L^2}\|(\rho, b_2)\|_{H^2}\|\partial_1(\rho, b_2)\|_{H^2}^{\frac{1}{2}}\|\partial_2(\rho, b_2)\|_{H^2}^{\frac{1}{2}} \\
\lesssim& \; \|(\rho, b_2, u)\|_{H^3}^2\|\nabla^2 u\|_{L^2}^{\frac{1}{2}} \|\nabla^2 \Omega\|_{L^2}\|\partial_2(\rho, b_2)\|_{H^2}^{\frac{1}{2}} .
\end{split}
\end{equation}
Turning our attention to $\mathcal{P}_{3,3}$, invoking the definition of $Q$ in \eqref{Q} and proceeding in a similar vein as above, we obtain
\begin{equation}\nonumber
\begin{split}
\mathcal{P}_{3,3} \lesssim& \; \|(\rho, b_2)\|_{H^3}^{\frac{3}{2}} \|\partial_1 Q \|_{L^2}\|\partial_2(\rho, b_2)\|_{H^2}^{\frac{1}{2}} \\
\lesssim& \;\|(\rho, b_2)\|_{H^3}^{\frac{3}{2}}\|(\rho, b_2)\|_{H^3}^{\frac{1}{2}}\|\partial_2 (\rho, b_2)\|_{H^2}^{\frac{1}{2}}\|\nabla u\|_{H^3}\|\partial_2(\rho, b_2)\|_{H^2}^{\frac{1}{2}} \\
\lesssim& \; \|(\rho, b_2)\|_{H^3}^2 \|\partial_2 (\rho, b_2)\|_{H^2}\|\nabla u\|_{H^3}.
\end{split}
\end{equation}
Aggregating these contributions ultimately furnishes
\begin{equation}\nonumber
\mathcal{P}_3 \lesssim \|(\rho, u, b_2)\|_{H^3}^2 \big(\| (\partial_2\rho, \partial_2 b_2)\|_{H^2}^2 + \|\nabla u\|_{H^3}^2 + \|\nabla \Omega\|_{H^1}^2\big).
\end{equation}

\textbf{Case $\mathbf{a = 1}$:}
This scenario presents additional complexities due to the potential derivative loss from $u \cdot \nabla \partial_1^2 \Omega$. Explicit cancellation is therefore imperative.
\begin{equation}\label{cancel-r}
\begin{split}
\mathcal{P}_3 =&\; \frac{1}{2}\int_{\mathbb{R}^2}\partial_1^2 u \cdot \nabla \Omega \, \partial_1^l r \, \partial_1^m s \, f(\rho, b_2) \, dx \\
& + \int_{\mathbb{R}^2}\partial_1 u \cdot \nabla \partial_1 \Omega \, \partial_1^l r \, \partial_1^m s \, f(\rho, b_2) \, dx \\
& + \frac{1}{2}\int_{\mathbb{R}^2}u \cdot \nabla \partial_1^2 \Omega \, \partial_1^l r \, \partial_1^m s \, f(\rho, b_2) \, dx \\
& - \frac{1}{2}\int_{\mathbb{R}^2}\partial_1^2 Q \, \partial_1^l r \, \partial_1^m s \, f(\rho, b_2) \, dx \\
\triangleq & \; \mathcal{P}_{3,1} + \mathcal{P}_{3,2} + \mathcal{P}_{3,3} + \mathcal{P}_{3,4}.
\end{split}
\end{equation}
The estimates for $\mathcal{P}_{3,1}$, $\mathcal{P}_{3,2}$, and $\mathcal{P}_{3,4}$ proceed identically to the $a=0$ case, yielding
\begin{equation}\nonumber
\begin{split}
\mathcal{P}_{3,1} + \mathcal{P}_{3,2} + \mathcal{P}_{3,4} \lesssim \|(\rho, u, b_2)\|_{H^3}^2 \big(\|\partial_2 (\rho, b_2)\|_{H^2}^2 + \|\nabla u\|_{H^3}^2 + \|\nabla \Omega\|_{H^1}^2\big).
\end{split}
\end{equation}
We now focus on the critical term $\mathcal{P}_{3,3}$. If $l < 3$ and $m < 3$, integration by parts transfers one $\partial_1$ derivative away from $\Omega$,
\begin{equation}\nonumber
\begin{split}
\mathcal{P}_{3,3} =& - \frac{1}{2}\int_{\mathbb{R}^2}\partial_1 u \cdot \nabla \partial_1 \Omega \, \partial_1^l r \, \partial_1^m s \, f(\rho, b_2) \, dx \\
& - \frac{1}{2}\int_{\mathbb{R}^2}u \cdot \nabla \partial_1 \Omega \, \partial_1^{l+1} r \, \partial_1^m s \, f(\rho, b_2) \, dx \\
& - \frac{1}{2}\int_{\mathbb{R}^2}u \cdot \nabla \partial_1 \Omega \, \partial_1^l r \, \partial_1^{m+1} s \, f(\rho, b_2) \, dx \\
& - \frac{1}{2}\int_{\mathbb{R}^2}u \cdot \nabla \partial_1 \Omega \, \partial_1^l r \, \partial_1^{m} s \, \big(\p_\rho f(\rho, b_2)\p_1\rho + \p_{b_2}f(\rho, b_2) \p_1 b_2\big) \, dx \\
\lesssim&\; \|(\rho, u, b_2)\|_{H^3}^2 \big(\|\partial_2 (\rho, b_2)\|_{H^2}^2 + \|\nabla u\|_{H^3}^2 + \|\nabla \Omega\|_{H^1}^2\big).
\end{split}
\end{equation}
When $l = 3$ or $m = 3$, the highest-order term $\mathcal{P}_{3,3}$ in \eqref{cancel-r} will be exactly canceled by $\mathfrak{P}$ (see \eqref{frakR}), which originated from $\mathcal{P}_{1,2} + \mathcal{P}_{1,3}$. This delicate cancellation mechanism completes the energy estimate of $\mathcal{P}_3$ and triumphantly overcomes the derivative loss problem. Synthesizing the above estimates and invoking the \textit{a priori} assumption \eqref{ansatz}, we conclude the proof of this lemma.
\end{proof}
\end{lemma}

\begin{lemma}\label{lemma-divu-rho-r2}
Let $r \in \{\partial_1^3 \rho, \partial_1^3 b_2\}$. Then the following inequality holds,
\begin{equation}\nonumber
\begin{split}
\Big|\int_{\mathbb{R}^2}(\nabla \cdot u) r^2 \rho \;dx + \frac{1}{2}\frac{d}{dt} \int_{\mathbb{R}^2}\rho^2 r^2 \;dx \Big| \lesssim \|(\rho, b_2)\|_{H^3}^2\|\partial_2 \rho\|_{H^1} \|\nabla u\|_{H^3}.
\end{split}
\end{equation}
\end{lemma}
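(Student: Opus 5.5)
The plan is to use the continuity equation $\nabla\cdot u = -\rho_t - u\cdot\nabla\rho - \rho\nabla\cdot u$ to turn $\int (\nabla\cdot u)\, r^2\rho\,dx$ into a total time derivative plus remainders. Substituting, we get
\begin{equation}\nonumber
\int_{\mathbb{R}^2}(\nabla\cdot u) r^2\rho\,dx = -\frac12\frac{d}{dt}\int_{\mathbb{R}^2}\rho^2 r^2\,dx + \int_{\mathbb{R}^2}\rho^2 r\, r_t\,dx - \int_{\mathbb{R}^2}(u\cdot\nabla\rho)\rho\, r^2\,dx - \int_{\mathbb{R}^2}\rho^2(\nabla\cdot u) r^2\,dx .
\end{equation}
The third term is rewritten as $-\frac12\int u\cdot\nabla(\rho^2) r^2\,dx = \frac12\int \rho^2\nabla\cdot(u r^2)\,dx$, which equals $\frac12\int\rho^2(\nabla\cdot u)r^2\,dx+\int\rho^2 r\,u\cdot\nabla r\,dx$. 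The transport part $u\cdot\nabla r$ is then combined with $r_t$.

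For $r=\partial_1^3\rho$, differentiating the continuity equation gives
\begin{equation}\nonumber
r_t + u\cdot\nabla r = -\partial_1^3\nabla\cdot u - (\rho+1)\partial_1^3\nabla\cdot u - [\partial_1^3, u\cdot\nabla]\rho - \sum_{k\ge1}\binom{3}{k}\partial_1^k\rho\,\partial_1^{3-k}\nabla\cdot u .
\end{equation}
For $r=\partial_1^3 b_2$, differentiating the $b_2$ induction equation gives $r_t+u\cdot\nabla r = \partial_1^4 u_1 + $ commutators $+\,\partial_1^3(b\cdot\nabla u_2 - b_2\nabla\cdot u)$. After this substitution, no term contains $\partial_1^4\rho$ or $\partial_1^4 b_2$. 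The remainders are of three types:
\begin{itemize}
\item[(i)] terms like $\rho^2 r\,\partial_1^3\nabla u$ and $\rho^2 r\,\partial_1^4 u_1$;
\item[(ii)] commutator terms $\rho^2 r\,\partial_1^k u\cdot\nabla\partial_1^{3-k}(\rho,b_2)$;
\item[(iii)] the leftover $\rho^2(\nabla\cdot u) r^2$.
\end{itemize}

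For type (i), I would use Hölder with $\|\rho\|_{L^\infty}^2\|r\|_{L^2}\|\nabla u\|_{H^3}$. The anisotropic bound of Proposition \ref{prop-anisotropic-est} then gives $\|\rho\|_{L^\infty}^2\lesssim \|\rho\|_{H^1}\|\partial_2\rho\|_{H^1}$, which produces exactly the factor $\|\rho\|_{H^3}^2\|\partial_2\rho\|_{H^1}\|\nabla u\|_{H^3}$. This is the reason only $\partial_2\rho$ appears on the right-hand side: one copy of $\rho$ is spent in $L^\infty$ through the anisotropic interpolation.

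The main obstacle is type (ii) and the $k=0$-type pieces where $\nabla\partial_1^{3-k}$ carries a $\partial_2$ on a fourth-order quantity. Here I would apply the same $L^\infty$ bound to $\rho^2$ and the smallness bound $\|\partial_1^k u\|_{L^\infty}\lesssim\|\nabla u\|_{H^3}$ for $k\ge1$. The remaining factors $r$ and $\nabla\partial_1^{3-k}(\rho,b_2)$ are placed in $L^2$, or in $L^4$ when both are of order $\ge2$, using $H^1\subset L^4$ and staying within $H^3$.

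For the leftover type (iii), I would note that $\|\nabla\cdot u\|_{L^\infty}\lesssim\|\nabla u\|_{H^2}$ and again bound $\|\rho^2\|_{L^\infty}$ anisotropically; the bound is in fact even better by one factor of $\|\rho\|_{H^3}$. Collecting all terms under the a priori assumption \eqref{ansatz} then yields the stated inequality.
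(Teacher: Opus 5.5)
Your proposal is correct and follows essentially the same route as the paper: substitute the continuity equation to extract $-\frac12\frac{d}{dt}\int\rho^2 r^2\,dx$ and reduce the remainder to $\frac12\int\rho^2(\partial_t+u\cdot\nabla)r^2\,dx-\frac12\int\rho^2(\nabla\cdot u)r^2\,dx$. Then expand the material derivative of $r^2$ through the $\rho$ and $b_2$ equations, and spend the two copies of $\rho$ via $\|\rho\|_{L^\infty}^2\lesssim\|\rho\|_{H^1}\|\partial_2\rho\|_{H^1}$ from Proposition~\ref{prop-anisotropic-est}, using \eqref{ansatz} to absorb the extra $\|b\|_{H^3}$ and $\|(\rho,b_2)\|_{H^3}$ factors.

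Three small points do not affect the estimate. In your formula for $r=\partial_1^3\rho$ the linear term should read $-(1+\rho)\partial_1^3\nabla\cdot u$; you counted $-\partial_1^3\nabla\cdot u$ twice. For $r=\partial_1^3b_2$ the leading term is $-\partial_1^4u_1$, not $+\partial_1^4u_1$. Finally, the type (ii) terms are not a real obstacle: the $k=0$ transport piece is already absorbed, so no fourth-order commutator terms remain.
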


\begin{proof}
Recalling the continuity equation for $\rho$,
$$\nabla \cdot u = - (\partial_t + u \cdot \nabla)\rho - \rho \nabla \cdot u,$$
we decompose the target integral as follows:
\begin{equation}\nonumber
\begin{split}
\int_{\mathbb{R}^2}(\nabla \cdot u) r^2 \rho \;dx =& - \frac{1}{2}\frac{d}{dt} \int_{\mathbb{R}^2}\rho^2 r^2 \;dx \\
& + \frac{1}{2}\int_{\mathbb{R}^2}\rho^2 (\partial_t + u \cdot \nabla) r^2 \;dx \\
& - \frac{1}{2}\int_{\mathbb{R}^2}\rho^2 (\nabla \cdot u) r^2\;dx \\
\triangleq& \; \mathcal{Q}_1 + \mathcal{Q}_2 + \mathcal{Q}_3.
\end{split}
\end{equation}

\paragraph{\bf Estimate of $\mathcal{Q}_2$:}
By virtue of H\"{o}lder's inequality and Proposition \ref{prop-anisotropic-est} we establish that
\begin{equation}\nonumber
\begin{split}
\mathcal{Q}_2 \lesssim& \;\|\rho\|_{L^\infty}^2 \|(\partial_t + u\cdot \nabla)r^2\|_{L^1} \\
\lesssim& \; \|\rho\|_{H^1}\|\partial_2 \rho\|_{H^1} \|(\partial_t + u\cdot \nabla)r^2\|_{L^1}.
\end{split}
\end{equation}
Utilizing the equations of $\rho$ and $b_2$, we can compute the material derivative of $r^2$  explicitly as
$$(\partial_t + u \cdot \nabla)|\partial_1^3 \rho|^2 = - 2\partial_1^3 \rho \partial_1^3 (\nabla \cdot u) - 2\partial_1^3\rho \partial_1^3(\rho \nabla \cdot u)- 2[\p_1^3, u\cdot \nabla]\rho \p_1^3\rho,$$
and
$$(\partial_t + u \cdot \nabla)|\partial_1^3 b_2|^2 = - 2\partial_1^3 b_2 \partial_1^3 (\partial_1 u_1) + 2\partial_1^3b_2 \partial_1^3(b\cdot \nabla u_2 - b_2 \nabla \cdot u)-2[\p_1^3, u\cdot \nabla]b_2 \p_1^3b_2.$$
Subsequently, applying H\"{o}lder's inequality and the Sobolev embedding theorem furnishes
\begin{equation}\nonumber
\begin{split}
\mathcal{Q}_2 \lesssim& \;\|\rho\|_{H^1}\|\partial_2 \rho\|_{H^1} \|r\|_{H^3} \|\nabla u\|_{H^3} \\
\lesssim& \; \|(\rho, b_2)\|_{H^3}^2\|\partial_2 \rho\|_{H^1} \|\nabla u\|_{H^3}.
\end{split}
\end{equation}

\paragraph{\bf Estimate of $\mathcal{Q}_3$:}
Following a parallel reasoning, we deduce
\begin{equation}\nonumber
\begin{split}
\mathcal{Q}_3 \lesssim& \;\|\rho\|_{L^\infty}^2 \|\nabla \cdot u\|_{L^\infty} \|r\|_{L^2}^2 \\
\lesssim& \;\|\rho\|_{H^1}\|\partial_2\rho\|_{H^1}\|\nabla \cdot u\|_{H^2} \|r\|_{L^2}^2 \\
\lesssim& \; \|(\rho, b_2)\|_{H^3}^2\|\partial_2\rho\|_{H^1}\|\nabla u\|_{H^3}.
\end{split}
\end{equation}
Amalgamating all the above estimates finalizes the proof of this lemma.
\end{proof}

At this juncture, we recall the effective viscous flux $\Gamma$ defined in \eqref{defgamma}. It will play a pivotal role in the following lemma.

\begin{lemma}\label{lemma-divu-b2-r2}
Let $r \in \{\partial_1^3 \rho, \partial_1^3 b_2\}$. Then the following control holds,
\begin{equation}\nonumber
\begin{split}
&\Big|\int_{\mathbb{R}^2}(\nabla \cdot u) r^2 b_2\;dx - \frac{1}{2}\frac{d}{dt} \int_{\mathbb{R}^2}\rho^2 r^2 \;dx \Big| \\
\lesssim& \int_{\mathbb{R}^2}\Gamma^2 r^2\;dx +\|(\rho, b_2)\|_{H^3}^2\big( \|(\partial_2 \rho, \partial_2 b_2)\|_{H^1}^2 + \|\nabla u\|_{H^3}^2\big).
\end{split}
\end{equation}
\end{lemma}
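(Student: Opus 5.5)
The plan is to use the definition of $\Gamma$ in \eqref{defgamma} to trade $b_2$ for $\rho$ modulo $\Gamma$, and then invoke Lemma \ref{lemma-divu-rho-r2}. Since $P'(1)=1$, we can write
\[
P(1+\rho)-P(1)=\rho+g(\rho), \qquad g(0)=g'(0)=0 .
\]
Hence
\[
b_2=\Gamma-\rho-g(\rho)-\tfrac12|b_2|^2 .
\]
Substituting this into the integrand gives
\begin{equation}\nonumber
\begin{split}
\int_{\mathbb{R}^2}(\nabla\cdot u)\, r^2 b_2\,dx =&\int_{\mathbb{R}^2}(\nabla\cdot u)\, r^2\,\Gamma\,dx-\int_{\mathbb{R}^2}(\nabla\cdot u)\, r^2\rho\,dx\\
&-\int_{\mathbb{R}^2}(\nabla\cdot u)\, r^2\big(g(\rho)+\tfrac12|b_2|^2\big)\,dx .
\end{split}
\end{equation}

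For the middle term, Lemma \ref{lemma-divu-rho-r2} says that $-\int(\nabla\cdot u) r^2\rho\,dx$ equals $\frac12\frac{d}{dt}\int\rho^2r^2\,dx$ up to an error bounded by $\|(\rho,b_2)\|_{H^3}^2\|\partial_2\rho\|_{H^1}\|\nabla u\|_{H^3}$. Young's inequality puts this error in the form $\|(\rho,b_2)\|_{H^3}^2(\|\partial_2\rho\|_{H^1}^2+\|\nabla u\|_{H^3}^2)$. This accounts exactly for the sign of the time-derivative term in the statement.

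For the first term, Cauchy--Schwarz gives
\[
\Big|\int(\nabla\cdot u) r^2\Gamma\,dx\Big|\le \Big(\int\Gamma^2r^2\,dx\Big)^{1/2}\Big(\int|\nabla\cdot u|^2r^2\,dx\Big)^{1/2}.
\]
We then use $\|\nabla\cdot u\|_{L^\infty}\lesssim\|\nabla u\|_{H^2}$ (Sobolev in $\mathbb{R}^2$) and $\|r\|_{L^2}\le\|(\rho,b_2)\|_{H^3}$. Young's inequality then bounds this term by
\[
\int\Gamma^2r^2\,dx+\|(\rho,b_2)\|_{H^3}^2\|\nabla u\|_{H^3}^2 .
\]
This explains why the weighted quantity $\int\Gamma^2r^2$, that is, the integrand of $\mathcal{E}_5$, appears on the right-hand side.

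The remaining term is the one I expect to be the main (though mild) obstacle, since it needs the $\partial_2$-anisotropic decay rather than isotropic Sobolev bounds. By Proposition \ref{prop-nonlinear-func}, $|g(\rho)|+\tfrac12|b_2|^2\lesssim|\rho|^2+|b_2|^2$, so the term is at most $\|\nabla\cdot u\|_{L^\infty}\|(\rho,b_2)\|_{L^\infty}^2\|r\|_{L^2}^2$. Proposition \ref{prop-anisotropic-est} gives $\|(\rho,b_2)\|_{L^\infty}^2\lesssim\|(\rho,b_2)\|_{H^1}\|\partial_2(\rho,b_2)\|_{H^1}$. Together with $\|\nabla\cdot u\|_{L^\infty}\lesssim\|\nabla u\|_{H^3}$, Young's inequality, and the a priori bound \eqref{ansatz} to absorb the extra factor $\|(\rho,b_2)\|_{H^1}\le1$, this yields the bound $\|(\rho,b_2)\|_{H^3}^2(\|\partial_2(\rho,b_2)\|_{H^1}^2+\|\nabla u\|_{H^3}^2)$. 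Collecting the three estimates gives the lemma.
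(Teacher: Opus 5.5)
Your proposal is correct and follows the paper's proof: the paper also substitutes $b_2=\Gamma-\rho-\big(P(1+\rho)-P(1)-\rho\big)-\tfrac12|b_2|^2$, handles the $\rho$-part with Lemma \ref{lemma-divu-rho-r2}, and controls the remaining pieces by Young's inequality together with Proposition \ref{prop-anisotropic-est}. The only difference is cosmetic: the paper applies Young's inequality to all the leftover terms at once, producing $\int|\nabla\cdot u|^2r^2$ and $\int\big[(P(1+\rho)-P(1)-\rho)^2+|b_2|^4\big]r^2$, whereas you keep $\nabla\cdot u$ in $L^\infty$ for the quadratic remainder.
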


\begin{proof}
In light of the definition of $\Gamma$ in \eqref{defgamma}, we can formulate the algebraic identity as follows,
$$ b_2 = \Gamma - \rho -\big( P(1+\rho) - P(1) - \rho\big) - \frac{1}{2}|b_2|^2. $$
Substituting this identity into the target integral and applying Young's inequality, we arrive at
\begin{equation}\nonumber
\begin{split}
\int_{\mathbb{R}^2}(\nabla \cdot u) r^2 b_2\;dx + \int_{\mathbb{R}^2}(\nabla \cdot u) r^2 \rho\;dx \leq& \int_{\mathbb{R}^2}|\nabla \cdot u|^2 r^2\;dx + \int_{\mathbb{R}^2} \Gamma^2 r^2\;dx \\
& + \int_{\mathbb{R}^2}\big[\big( P(1+\rho) - P(1) - \rho\big)^2 + |b_2|^4\big] r^2\;dx.
\end{split}
\end{equation}
Observing that $P(1+\rho) - P(1) - \rho = \mathcal{O}(\rho^2)$ under the \textit{a priori} assumption \eqref{ansatz}, and invoking Lemma \ref{lemma-divu-rho-r2} together with Proposition \ref{prop-anisotropic-est} to bound the nonlinear remainders, we readily obtain the desired estimate.
\end{proof}

\begin{lemma}\label{lemma-p1u1-rho-r2}
Let $r \in \{\partial_1^3 \rho, \partial_1^3 b_2\}$. Then the following two inequalities hold.
\begin{equation}\nonumber
\begin{split}
&\Big|\int_{\mathbb{R}^2}\partial_1 u_1 \, r^2 \rho \;dx +\frac{d}{dt}\int_{\mathbb{R}^2}b_2 \, r^2 \rho \;dx + \frac{1}{2}\frac{d}{dt} \int_{\mathbb{R}^2}\rho^2 r^2 \;dx\Big|\\
\lesssim& \int_{\mathbb{R}^2}\Gamma^2 r^2\;dx + \|(b_2, \rho)\|_{H^3}^2\|\partial_2(b_2, \rho)\|_{H^1} \| \nabla u\|_{H^3},
\end{split}
\end{equation}
and
\begin{equation}\nonumber
\begin{split}
&\Big| \int_{\mathbb{R}^2}\partial_1 u_1 \, r^2 b_2 \;dx + \frac{1}{2}\frac{d}{dt}\int_{\mathbb{R}^2}|b_2|^2 r^2 \;dx \Big| \\
\lesssim& \|(b_2, \rho)\|_{H^3}^2\|\partial_2(b_2, \rho)\|_{H^1} \| \nabla u\|_{H^3}.
\end{split}
\end{equation}
\end{lemma}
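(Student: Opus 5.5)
The plan is to follow the scheme of Lemma \ref{lemma-p1u1}: replace $\partial_1 u_1$ through the $b_2$ equation,
\[
-\partial_1 u_1 = \partial_t b_2 + u\cdot\nabla b_2 - b\cdot\nabla u_2 + b_2\nabla\cdot u .
\]
We then move the time derivative off $b_2$ and onto the weight $r^2\rho$ or $r^2 b_2$. The two inequalities differ only in which factor receives the time derivative, so we treat the second one first.

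\textbf{Second inequality.} Substitution gives
\[
\int \partial_1 u_1\, r^2 b_2\,dx = -\frac12\frac{d}{dt}\int b_2^2 r^2\,dx + \frac12\int b_2^2\,\partial_t r^2\,dx - \int (u\cdot\nabla b_2)\, b_2 r^2\,dx + \int (b\cdot\nabla u_2 - b_2\nabla\cdot u)\, b_2 r^2\,dx .
\]
We combine the second and third terms into $\frac12\int b_2^2 (\partial_t+u\cdot\nabla) r^2\,dx$ plus $\frac12\int(\nabla\cdot u)\,b_2^2 r^2\,dx$, the latter coming from integrating $u\cdot\nabla(b_2^2)$ by parts. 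The material derivative $(\partial_t+u\cdot\nabla)r^2$ is computed exactly as in Lemma \ref{lemma-divu-rho-r2}. It equals $r$ times third derivatives of $\nabla u$, plus commutators, so its $L^1$ norm is bounded by $\|(\rho,b_2)\|_{H^3}\|\nabla u\|_{H^3}$. The factor $\|b_2\|_{L^\infty}^2$ is handled by Proposition \ref{prop-anisotropic-est}:
\[
\|b_2\|_{L^\infty}^2 \lesssim \|b_2\|_{H^1}\|\partial_2 b_2\|_{H^1}.
\]
The remaining cubic terms carry $\nabla u$ in $L^\infty$, controlled by $\|\nabla u\|_{H^2}$, and $b_2$ and $b$ in $L^\infty$; we bound them the same way, using the bootstrap assumption \eqref{ansatz}. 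Altogether this produces $\|(b_2,\rho)\|_{H^3}^2\|\partial_2(b_2,\rho)\|_{H^1}\|\nabla u\|_{H^3}$.

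\textbf{First inequality.} We repeat the substitution with weight $r^2\rho$. This gives $-\frac{d}{dt}\int b_2 r^2\rho\,dx$ together with $\int b_2\,\partial_t(r^2\rho)\,dx$, and the latter splits into $\int b_2\rho\,\partial_t r^2\,dx + \int b_2 r^2\,\partial_t\rho\,dx$. The first of these is again handled by the material-derivative trick above: it is quadratic in $(b_2,\rho)$ in $L^\infty$ times $\nabla u$. For the second we use the continuity equation $\partial_t\rho = -\nabla\cdot u - \nabla\cdot(\rho u)$. This yields the dangerous linear term
\[
-\int (\nabla\cdot u)\, r^2 b_2\,dx ,
\]
which carries only one small factor $b_2$ in $L^\infty$ and so cannot be closed by $\partial_2$-dissipation alone. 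The leftover transport part $-\int b_2 r^2\, u\cdot\nabla\rho\,dx$ is combined with the $u\cdot\nabla b_2$ term and integrated by parts, as before.

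\textbf{Main obstacle.} The main obstacle is this linear term $-\int(\nabla\cdot u)\,r^2 b_2\,dx$. Here we invoke Lemma \ref{lemma-divu-b2-r2}, which bounds it by $\frac12\frac{d}{dt}\int\rho^2 r^2\,dx$, plus $\int\Gamma^2 r^2\,dx$, plus admissible terms. This is exactly the source of the $\frac12\frac{d}{dt}\int\rho^2 r^2$ correction and the $\int\Gamma^2 r^2$ on the right of the statement. I expect the delicate point to be tracking the sign of that time-derivative term consistently through Lemmas \ref{lemma-divu-rho-r2} and \ref{lemma-divu-b2-r2}. The Young-inequality remainder $\int|\nabla\cdot u|^2 r^2\,dx \lesssim \|\nabla u\|_{H^2}^2\|r\|_{L^2}^2$ is absorbed into the stated bound after using \eqref{ansatz}. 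If necessary, the $\|\nabla u\|_{H^3}^2$ contribution is written with the product form on the right-hand side, as in the statement, up to the harmless a priori smallness.
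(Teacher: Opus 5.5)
Your proposal follows the paper's proof essentially step for step. You replace $\partial_1 u_1$ via the $b_2$-equation, move $\partial_t$ onto the weight, and recombine the transport terms into $(\partial_t+u\cdot\nabla)r^2$, so that the quadratic weight ($b_2^2$ or $b_2\rho$) can be put in $L^\infty$ via Proposition~\ref{prop-anisotropic-est}. For the weight $r^2\rho$ you then use the continuity equation and Lemma~\ref{lemma-divu-b2-r2} on $-\int(\nabla\cdot u)\,r^2 b_2\,dx$. This is exactly the paper's $\mathcal{K}_{1,3,1}$, and it is the source of both the $\frac12\frac{d}{dt}\int\rho^2 r^2\,dx$ correction and the $\int\Gamma^2 r^2\,dx$ term. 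Your direct handling of the $b_2$-weight is just a shortcut for the paper's second time-shift in $\mathcal{K}_{1,3}$.

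One caveat concerns your last two sentences. The remainder inherited from Lemma~\ref{lemma-divu-b2-r2} is $\|(\rho,b_2)\|_{H^3}^2\big(\|\partial_2(\rho,b_2)\|_{H^1}^2+\|\nabla u\|_{H^3}^2\big)$, which includes the Young term $\int|\nabla\cdot u|^2r^2\,dx$. It cannot be converted into the product form $\|(b_2,\rho)\|_{H^3}^2\|\partial_2(b_2,\rho)\|_{H^1}\|\nabla u\|_{H^3}$ by any smallness assumption. Smallness gives no control on the ratio $\|\nabla u\|_{H^3}/\|\partial_2(b_2,\rho)\|_{H^1}$.

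The paper's proof has the same gap: it simply ``consolidates'' the squared bound for $\mathcal{K}_{1,3,1}$ into the stated form. What both arguments actually prove is the first inequality with this sum of squares added to the right-hand side. That weaker version is all that is needed downstream. The first inequality is only invoked in Lemma~\ref{lemma-divu-r2}, whose right-hand side already contains such squared terms, and after time integration they are bounded by $\mathcal{E}_{\mathrm{total}}^2(t)$.
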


\begin{proof}
From the evolution equation of $b_2$, we have
\begin{equation}\nonumber
- \partial_1 u_1 = \partial_t b_2 + u \cdot \nabla b_2 - b \cdot \nabla u_2 + b_2 \nabla \cdot u.
\end{equation}
Noting the relation $b \cdot \nabla u_2 - b_2 \nabla \cdot u = b_1 \partial_1 u_2 - b_2 \partial_1 u_1$, we infer that for any $s \in \{\rho, b_2\}$ there holds
\begin{equation}\nonumber
\begin{split}
\int_{\mathbb{R}^2}\partial_1 u_1 \, r^2 s \;dx =& - \int_{\mathbb{R}^2}\big[\partial_t b_2 + u \cdot \nabla b_2 - (b_1 \partial_1 u_2 - b_2 \partial_1 u_1)\big] r^2 s \;dx \\
=& - \int_{\mathbb{R}^2}(\partial_t b_2 + u \cdot \nabla b_2) r^2 s \;dx \\
& + \int_{\mathbb{R}^2}(b_1 \partial_1 u_2 - b_2 \partial_1 u_1) r^2 s \;dx \\
\triangleq& \; \mathcal{K}_1 + \mathcal{K}_2.
\end{split}
\end{equation}

\paragraph{\bf Estimate of $\mathcal{K}_1$:}
Transferring the material derivative, we decompose $\mathcal{K}_1$ into
\begin{equation}\nonumber
\begin{split}
\mathcal{K}_1 =& - \frac{d}{dt}\int_{\mathbb{R}^2}b_2 r^2 s \;dx \\
&+ \int_{\mathbb{R}^2}b_2 \, (\partial_t + u \cdot \nabla) r^2 \, s \;dx \\
&+ \int_{\mathbb{R}^2}b_2 \, r^2 (\partial_t + u \cdot \nabla)s \;dx \\
&+ \int_{\mathbb{R}^2}(\nabla \cdot u) b_2 r^2 s \;dx \\
\triangleq& \; \mathcal{K}_{1,1} + \mathcal{K}_{1,2} + \mathcal{K}_{1,3} + \mathcal{K}_{1,4}.
\end{split}
\end{equation}

\textbf{Estimate of $\mathcal{K}_{1,2}$:}

$\bullet$ \textbf{When $r = \partial_1^3\rho$:}
Utilizing the continuity equation,
$$(\partial_t + u \cdot \nabla)|\partial_1^3\rho|^2 = - 2 \partial_1^3\rho \, \partial_1^3\nabla \cdot u - 2 \partial_1^3\rho \, \partial_1^3( \rho \nabla \cdot u)- 2[\partial_1^3, u\cdot \nabla]\rho \, \partial_1^3\rho,$$
an application of H\"{o}lder's inequality, Proposition \ref{prop-anisotropic-est}, and the \textit{a priori} bound \eqref{ansatz} ensures that
\begin{equation}\nonumber
\begin{split}
\mathcal{K}_{1,2} \lesssim& \; \|b_2\|_{L^\infty}\|s\|_{L^\infty}\big(\|\partial_1^3 \rho \|_{L^2} \|\partial_1^3 \nabla \cdot u\|_{L^2} + \|\partial_1^3 \rho\|_{L^2}\|\rho\|_{H^3}\|\nabla u\|_{H^3} \big)\\
\lesssim& \; \|(b_2, s)\|_{H^1}\|\partial_2(b_2, s)\|_{H^1} \|\rho\|_{H^3}\| \nabla u\|_{H^3} \\
\lesssim& \; \|(b_2, \rho)\|_{H^3}^2\|\partial_2(b_2, \rho)\|_{H^1} \| \nabla u\|_{H^3}.
\end{split}
\end{equation}

$\bullet$ \textbf{When $r = \partial_1^3 b_2$:}
From the equation of $b_2$ we extract
\begin{equation}\nonumber
\begin{split}
(\partial_t + u \cdot \nabla)|\partial_1^3 b_2|^2 =& - 2 \partial_1^3b_2 \, \partial_1^4 u_1 + 2\partial_1^3 b_2 \, \partial_1^3(b \cdot \nabla u_2)\\
&- 2\partial_1^3 b_2 \, \partial_1^3(b_2 \nabla \cdot u)- 2[\partial_1^3, u\cdot \nabla]b_2 \, \partial_1^3b_2.
\end{split}
\end{equation}
Proceeding in a similar vein as the $r = \partial_1^3\rho$ case, we obtain
\begin{equation}\nonumber
\begin{split}
\mathcal{K}_{1,2} \lesssim \|(b_2, \rho)\|_{H^3}^2\|\partial_2(b_2, \rho)\|_{H^1} \| \nabla u\|_{H^3}.
\end{split}
\end{equation}

\textbf{Estimate of $\mathcal{K}_{1,3}$:}

$\bullet$ \textbf{When $s = \rho$:}
Substituting $(\partial_t + u \cdot \nabla) \rho = - \nabla \cdot u - \rho \nabla \cdot u$, we can split $\mathcal{K}_{1,3}$ into the following components
\begin{equation}\nonumber
\begin{split}
\mathcal{K}_{1,3} =& - \int_{\mathbb{R}^2}b_2 r^2 \nabla \cdot u \;dx - \int_{\mathbb{R}^2}b_2 r^2 \rho \nabla \cdot u\;dx \\
\triangleq& \; \mathcal{K}_{1,3,1} + \mathcal{K}_{1,3,2}.
\end{split}
\end{equation}
Applying Lemma \ref{lemma-divu-b2-r2} to $\mathcal{K}_{1,3,1}$ yields
\begin{equation}\nonumber
\begin{split}
&\Big|\mathcal{K}_{1,3,1} + \frac{1}{2}\frac{d}{dt} \int_{\mathbb{R}^2}\rho^2 r^2 \;dx \Big| \\
\lesssim& \int_{\mathbb{R}^2}\Gamma^2 r^2\;dx +\|(\rho, b_2)\|_{H^3}^2\big( \|\partial_2 (\rho, b_2)\|_{H^1}^2 + \|\nabla u\|_{H^3}^2\big).
\end{split}
\end{equation}
For $\mathcal{K}_{1,3,2}$, H\"{o}lder's inequality and Proposition \ref{prop-anisotropic-est} lead to
\begin{equation}\nonumber
\begin{split}
\mathcal{K}_{1,3,2} \lesssim& \;\|\nabla \cdot u\|_{L^\infty}\|b_2\|_{L^\infty}\|r\|_{L^2}^2 \|\rho\|_{L^\infty} \\
\lesssim& \;\|(b_2, \rho)\|_{H^1}\|\partial_2(b_2, \rho)\|_{H^1}\|\nabla u\|_{H^2} \|r\|_{L^2}^2 \\
\lesssim& \; \|(b_2, \rho)\|_{H^3}^3 \|\partial_2(b_2, \rho)\|_{H^1}\|\nabla u\|_{H^2}.
\end{split}
\end{equation}

$\bullet$ \textbf{When $s = b_2$:}
We rewrite $\mathcal{K}_{1,3}$ into the following parts,
\begin{equation}\nonumber
\begin{split}
\mathcal{K}_{1,3} =& \frac{1}{2}\int_{\mathbb{R}^2} r^2(\partial_t + u \cdot \nabla )|b_2|^2\; dx \\
=& \frac{1}{2}\frac{d}{dt}\int_{\mathbb{R}^2} r^2 |b_2|^2\; dx - \frac{1}{2}\int_{\mathbb{R}^2} (\partial_t + u \cdot \nabla ) r^2 \, |b_2|^2\; dx - \frac{1}{2}\int_{\mathbb{R}^2} r^2 (\nabla \cdot u) |b_2|^2\; dx\\
\triangleq& \; \mathcal{K}_{1,3,1} + \mathcal{K}_{1,3,2} + \mathcal{K}_{1,3,3}.
\end{split}
\end{equation}
Replicating the argument utilized for $\mathcal{K}_{1,2}$, we bound
\begin{equation}\nonumber
\begin{split}
\mathcal{K}_{1,3,2} \lesssim \|(b_2, \rho)\|_{H^3}^2\|\partial_2(\rho, b_2) \|_{H^1} \| \nabla u\|_{H^3}.
\end{split}
\end{equation}
For $\mathcal{K}_{1,3,3}$, a standard anisotropic type estimate yields
\begin{equation}\nonumber
\begin{split}
\mathcal{K}_{1,3,3} \lesssim& \; \|r\|_{L^2}^2 \|\nabla \cdot u\|_{L^\infty} \|b_2\|_{L^\infty}^2 \\
\lesssim& \;\|r\|_{L^2}^2 \|\nabla \cdot u\|_{H^2} \|b_2\|_{H^1}\|\partial_2 b_2\|_{H^1} \\
\lesssim& \; \|(\rho, b_2)\|_{H^3}^3 \|\nabla \cdot u\|_{H^2} \|\partial_2 b_2\|_{H^1}.
\end{split}
\end{equation}

\textbf{Estimate of $\mathcal{K}_{1,4}$:}
Following the same logic as above, it is evident that
\begin{equation}\nonumber
\begin{split}
\mathcal{K}_{1,4} \lesssim& \; \|\nabla \cdot u\|_{L^\infty}\|b_2\|_{L^\infty}\|r\|_{L^2}^2 \|s\|_{L^\infty} \\
\lesssim& \;\|(b_2, s)\|_{H^1}\|\partial_2(b_2, s)\|_{H^1}\|\nabla u\|_{H^2} \|r\|_{L^2}^2 \\
\lesssim& \;\|(b_2, \rho)\|_{H^3}^3 \|\partial_2(b_2, \rho)\|_{H^1}\|\nabla u\|_{H^2}.
\end{split}
\end{equation}

\paragraph{\bf Estimate of $\mathcal{K}_2$:}
Since $b_1 \partial_1 u_2 - b_2 \partial_1 u_1$ involves only $\partial_1-$derivative of $u$, we can readily derive
\begin{equation}\nonumber
\begin{split}
\mathcal{K}_2 \lesssim& \; \|(b, \rho)\|_{H^3}^3 \|\partial_2(b, \rho)\|_{H^1}\|\nabla u\|_{H^2}.
\end{split}
\end{equation}
Consolidating all the estimates for both cases above and invoking the \textit{a priori} assumption \eqref{ansatz}, we bring the proof of this lemma to a close.
\end{proof}

\begin{lemma}\label{lemma-divu-r2}
The following estimates hold when the right-hand sides are bounded.
\begin{equation}\nonumber
\begin{split}
&\Big|\int_{\mathbb{R}^2}\nabla \cdot u \, |\partial_1^3\rho|^2\;dx + \frac{d}{dt} \int_{\mathbb{R}^2}\rho \, |\partial_1^3\rho|^2 \;dx + \frac{d}{dt}\int_{\mathbb{R}^2}\partial_1^2 \Omega \, \partial_1^3 \rho \, \rho\;dx\\
&- 4\frac{d}{dt} \int_{\mathbb{R}^2}\rho^2 |\partial_1^3\rho|^2 \;dx -6 \frac{d}{dt}\int_{\mathbb{R}^2}b_2 |\partial_1^3\rho|^2 \rho \;dx \Big| \\
\lesssim& \int_{\mathbb{R}^2}\Gamma^2 |\partial_1^3\rho|^2\;dx + \|(\rho, u, b)\|_{H^3} \big(\|(\partial_2 \rho, \partial_2 b, \Omega)\|_{H^2}^2 + \|\nabla u\|_{H^3}^2+\|(\partial_t \rho, \partial_t b_2)\|_{L^2}^2\big) ,
\end{split}
\end{equation}
and
\begin{equation}\nonumber
\begin{split}
&\Big|\int_{\mathbb{R}^2}\nabla \cdot u \, |\partial_1^3b_2|^2\;dx + \frac{d}{dt} \int_{\mathbb{R}^2}\rho |\partial_1^3b_2|^2 \;dx + \frac{d}{dt}\int_{\mathbb{R}^2}\partial_1^2 \Omega \, \partial_1^3 b_2 \, \rho \;dx \\
& -8 \frac{d}{dt}\int_{\mathbb{R}^2}b_2 |\partial_1^3b_2|^2 \rho \;dx -4 \frac{d}{dt}\int_{\mathbb{R}^2} |\partial_1^3 b_2|^2 |b_2|^2\; dx\Big|\\
\lesssim& \int_{\mathbb{R}^2}\Gamma^2 |\partial_1^3b_2|^2\;dx+ \|(\rho, u, b)\|_{H^3} \big(\|(\partial_2 \rho, \partial_2 b, \Omega)\|_{H^2}^2 + \|\nabla u\|_{H^3}^2+\|(\partial_t \rho, \partial_t b_2)\|_{L^2}^2\big).
\end{split}
\end{equation}
\end{lemma}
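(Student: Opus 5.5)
We prove the two estimates in parallel, writing $r\in\{\partial_1^3\rho,\partial_1^3 b_2\}$ and starting from the continuity equation in the form
\[
\nabla\cdot u = -(\partial_t + u\cdot\nabla)\rho - \rho\,\nabla\cdot u .
\]
Substituting this into $\int_{\mathbb{R}^2}\nabla\cdot u\, r^2\,dx$ gives
\[
\int_{\mathbb{R}^2}\nabla\cdot u\, r^2\,dx = -\frac{d}{dt}\int_{\mathbb{R}^2}\rho\, r^2\,dx + \int_{\mathbb{R}^2}\rho\,(\partial_t+u\cdot\nabla) r^2\,dx + \int_{\mathbb{R}^2}\rho\,(\nabla\cdot u)\, r^2\,dx - \int_{\mathbb{R}^2}\rho\,(\nabla\cdot u)\, r^2\,dx .
\]
The third integral comes from integrating $u\cdot\nabla$ by parts, and the fourth is the $\rho\nabla\cdot u$ contribution. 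These last two cancel, leaving
\[
\int_{\mathbb{R}^2}\nabla\cdot u\, r^2\,dx = -\frac{d}{dt}\int_{\mathbb{R}^2}\rho\,r^2\,dx + \int_{\mathbb{R}^2}\rho\,(\partial_t+u\cdot\nabla)r^2\,dx .
\]
The first term on the right produces the $\frac{d}{dt}\int\rho\,r^2$ term in the statement. All the work lies in the material-derivative term.

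We expand $(\partial_t+u\cdot\nabla)r^2$ using the formulas already written in the proofs of Lemmas \ref{lemma-divu-rho-r2} and \ref{lemma-p1u1-rho-r2}. The commutator pieces $[\partial_1^3,u\cdot\nabla]$ and the quadratic pieces $\partial_1^3(\rho\nabla\cdot u)$, $\partial_1^3(b\cdot\nabla u_2 - b_2\nabla\cdot u)$ are handled by H\"older's inequality and Proposition \ref{prop-anisotropic-est}. Because they carry an extra factor $\rho$ with $\|\rho\|_{L^\infty}\lesssim\|\rho\|_{H^1}^{1/2}\|\partial_2\rho\|_{H^1}^{1/2}$, they are bounded by $\|(\rho,u,b)\|_{H^3}\big(\|\partial_2(\rho,b)\|_{H^2}^2+\|\nabla u\|_{H^3}^2\big)$.

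The dangerous linear pieces are $-2\int\rho\,\partial_1^3\rho\,\partial_1^3\nabla\cdot u$ and $-2\int\rho\,\partial_1^3 b_2\,\partial_1^4 u_1$. The $\partial_1^3\partial_2 u_2$ part of the first can be integrated by parts in $x_2$ onto $\rho\,\partial_1^3\rho$, producing $\partial_2$-derivatives that are controlled by $\mathcal{E}_1$ and $\mathcal{E}_2$. The genuinely critical quantity is therefore $\int\partial_1^4 u_1\,\partial_1^3 r\,\rho\,dx$. For it we apply Lemma \ref{lemma-p13u1} with $a=1$, $l=3$ (taking $r$ as $\rho$ or $b_2$), $m=0$, and $f$ chosen so that the product is $\partial_1^3 r\cdot\rho$. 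This converts it into $\frac12\frac{d}{dt}\int\partial_1^2\Omega\,\partial_1^3 r\,\rho$ plus admissible errors, which yields the $\frac{d}{dt}\int\partial_1^2\Omega\,\partial_1^3 r\,\rho$ term in the statement; the coefficients follow from the factor $2$.

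The remaining structure is lower order. After the integrations by parts, terms of the forms $\int\partial_1 u_1\, r^2\rho$, $\int\partial_1 u_1\, r^2 b_2$, $\int\nabla\cdot u\, r^2\rho$ and $\int\nabla\cdot u\, r^2 b_2$ appear, for instance from $\partial_1$ falling on $\rho$ and from the $b_2\nabla\cdot u$ terms. We treat them as follows.
\begin{itemize}
\item $\int\partial_1 u_1\, r^2\rho$ and $\int\partial_1 u_1\, r^2 b_2$ are handled by Lemma \ref{lemma-p1u1-rho-r2}.
\item $\int\nabla\cdot u\, r^2\rho$ is handled by Lemma \ref{lemma-divu-rho-r2}.
\item $\int\nabla\cdot u\, r^2 b_2$ is handled by Lemma \ref{lemma-divu-b2-r2}, which is where $\int\Gamma^2 r^2$ enters.
\end{itemize}
These produce the time-derivative terms $\frac{d}{dt}\int b_2 r^2\rho$, $\frac{d}{dt}\int\rho^2 r^2$ and $\frac{d}{dt}\int |b_2|^2 r^2$. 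Their multiplicities ($4$, $6$, $8$, $4$) come from counting how often each structure arises. Terms involving $\partial_t\rho$ or $\partial_t b_2$ come from Lemma \ref{lemma-p13u1} and account for $\|(\partial_t\rho,\partial_t b_2)\|_{L^2}^2$ on the right-hand side.

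The main obstacle is the top-order term $\int\partial_1^4 u_1\,\partial_1^3 r\,\rho$, where one derivative is lost. The plan relies entirely on the $\Omega$-cancellation in Lemma \ref{lemma-p13u1}. We must check that the configuration $l=3$, $a=1$ is admissible there, which it is, since $l+m=3$. We must also keep the bookkeeping consistent, so that every lower-order term lands in one of Lemmas \ref{lemma-divu-rho-r2}, \ref{lemma-divu-b2-r2} or \ref{lemma-p1u1-rho-r2}, since none of them may be estimated directly without dissipation in $\partial_1$.
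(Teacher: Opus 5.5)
Your plan is correct and matches the paper's proof. It uses the same continuity-equation substitution, leaving $-\frac{d}{dt}\int\rho\,r^2+\int\rho\,(\partial_t+u\cdot\nabla)r^2$. It then handles $-2\int\rho\,r\,\partial_1^4u_1$ by Lemma~\ref{lemma-p13u1} with $a=1$, $l=3$, $m=0$, treats the $\partial_1^3\partial_2u_2$ piece by integration by parts, and controls the cubic terms $\int\rho\,\nabla\cdot u\,r^2$ and $\int\rho\,\partial_1u_1\,r^2$ by Lemmas~\ref{lemma-divu-rho-r2} and~\ref{lemma-p1u1-rho-r2}.

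Only the bookkeeping needs tightening, on three points:

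\textbf{Which terms need the lemmas.} The cubic terms above arise precisely from the $3\partial_1u_1\,\partial_1^3$ part of the commutator and from the top-order parts of $\partial_1^3(\rho\nabla\cdot u)$ and $\partial_1^3(b_2\partial_1u_1)$. They are therefore \emph{not} covered by your H\"older bound, since a single factor $\rho$ in $L^\infty$ gives only $\|\partial_2\rho\|_{H^1}^{1/2}$.

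\textbf{The vertical integration by parts.} Moving $\partial_2$ off $\partial_1^3\partial_2u_2$ produces $\rho\,\partial_1^3\partial_2\rho$, which is not controlled by $\mathcal{E}_1$. You must follow it with an integration by parts in $x_1$ onto $\partial_1^4u_2$, as the paper does.

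\textbf{The coefficients in the $b_2$ case.} The count gives $-8\int\rho\,\partial_1u_1\,|\partial_1^3b_2|^2$, hence $8\frac{d}{dt}\int b_2\rho\,|\partial_1^3b_2|^2+4\frac{d}{dt}\int\rho^2|\partial_1^3b_2|^2$. So the $|b_2|^2$ in the stated second estimate, which you copied into your multiplicities, appears to be a typo for $\rho^2$; this does not affect Lemma~\ref{lemmaBasic}.
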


\begin{proof}
Invoking the continuity equation, we extract the identity $\nabla \cdot u = - (\partial_t + u \cdot \nabla)\rho - \rho \nabla \cdot u$. Substituting this relation into the target integral furnishes, for $r \in \{\p_1^3\rho, \p_1^3 b_2\}$, the following decomposition:
\begin{equation}\nonumber
\begin{split}
\int_{\mathbb{R}^2}\nabla \cdot u \, r^2 \;dx =& - \frac{d}{dt} \int_{\mathbb{R}^2}\rho r^2 \;dx \\
& + \int_{\mathbb{R}^2}\rho (\partial_t + u \cdot \nabla) r^2 \;dx\\
\triangleq& \; \mathcal{M}_1 + \mathcal{M}_2.
\end{split}
\end{equation}
It is worth emphasizing that the term $\mathcal{M}_1$ is seamlessly absorbed into the time derivative part on the left-hand side of the lemma's statement. Consequently, our primary focus shifts to bounding $\mathcal{M}_2$. ~ \\
~ \\
\textbf{Case $r = \partial_1^3\rho$:}
Recalling the material derivative expansion
$$(\partial_t + u \cdot \nabla)|\partial_1^3 \rho|^2 = - 2\partial_1^3 \rho \partial_1^3 \nabla \cdot u - 2\partial_1^3\rho \partial_1^3(\rho \nabla \cdot u) - 2[\partial_1^3, u \cdot \nabla]\rho \partial_1^3 \rho,$$
and meticulously expanding the commutator while collecting terms of similar structure, we recast $\mathcal{M}_2$ as follows
\begin{equation}\nonumber
\begin{split}
\mathcal{M}_2 =& -2\int_{\mathbb{R}^2} \rho \partial_1^3 \rho \partial_1^4 u_1 \;dx \\
& -2\int_{\mathbb{R}^2} \rho \partial_1^3 \rho \partial_1^3 \partial_2 u_2 \;dx -6 \int_{\mathbb{R}^2}\rho \partial_1 u_2 \partial_2 \partial_1^2 \rho \partial_1^3\rho\;dx\\
& -2\int_{\mathbb{R}^2} \rho \partial_1^3\rho \partial_1^3 \rho \nabla \cdot u\; dx \\
& -2\sum_{k = 0}^2 \mathcal{C}_3^k\int_{\mathbb{R}^2} \rho \partial_1^3\rho \partial_1^k \rho \partial_1^{3-k} \nabla \cdot u\; dx -2 \sum_{k = 2}^3 \mathcal{C}_3^k \int_{\mathbb{R}^2}\rho \partial_1^k u \cdot \nabla \partial_1^{3-k} \rho \partial_1^3\rho\;dx \\
&-6 \int_{\mathbb{R}^2}\rho \partial_1 u_1 \partial_1^3 \rho \partial_1^3\rho\;dx \\
\triangleq& \; \mathcal{M}_{2,1} + \mathcal{M}_{2,2} + \mathcal{M}_{2,3} + \mathcal{M}_{2,4}+\mathcal{M}_{2,5}.
\end{split}
\end{equation}
Turning our attention to $\mathcal{M}_{2,1}$, Lemma \ref{lemma-p13u1} directly supplies the bound
\begin{equation}\nonumber
\begin{split}
& \Big|\mathcal{M}_{2,1} + \frac{d}{dt}\int_{\mathbb{R}^2}\partial_1^2 \Omega \, \partial_1^3 \rho \, \rho\;dx \Big| \\
\lesssim& \|(\rho, u, b)\|_{H^3} \big(\|(\partial_2 \rho, \partial_2 b, \Omega)\|_{H^2}^2 + \|\nabla u\|_{H^3}^2 + \|(\partial_t \rho, \partial_t b_2)\|_{L^2}^2\big).
\end{split}
\end{equation}
For $\mathcal{M}_{2,2}$, executing integration by parts in $x_2$ yields
\begin{equation}\nonumber
\begin{split}
\mathcal{M}_{2,2} =& 2\int_{\mathbb{R}^2} \partial_2 \rho \partial_1^3 \rho \partial_1^3 u_2 \;dx - 2\int_{\mathbb{R}^2} \rho \partial_1^2 \partial_2 \rho \partial_1^4 u_2 \;dx \\
&- 2\int_{\mathbb{R}^2} \partial_1 \rho \partial_1^2 \partial_2 \rho \partial_1^3 u_2 \;dx -6 \int_{\mathbb{R}^2}\rho \partial_1 u_2 \partial_2 \partial_1^2 \rho \partial_1^3\rho\;dx.
\end{split}
\end{equation}
As a direct consequence, H\"{o}lder's inequality and the Sobolev embedding theorem ensure that
\begin{equation}\nonumber
\begin{split}
\mathcal{M}_{2,2} \lesssim& \|\rho\|_{H^3}\|\partial_2\rho\|_{H^2}\|\partial_1u_2\|_{H^3}.
\end{split}
\end{equation}
Moving on to $\mathcal{M}_{2,3}$, Lemma \ref{lemma-divu-rho-r2} provides the estimate
\begin{equation}\nonumber
\begin{split}
\Big| \mathcal{M}_{2,3} -\frac{d}{dt} \int_{\mathbb{R}^2}\rho^2 |\partial_1^3\rho|^2 \;dx \Big| \lesssim\|(\rho, b_2)\|_{H^3}^2\|\partial_2 \rho\|_{H^1} \|\nabla u\|_{H^3}.
\end{split}
\end{equation}
Furthermore, appealing to H\"{o}lder's inequality and Proposition \ref{prop-anisotropic-est}, we readily deduce for $\mathcal{M}_{2,4}$ that
\begin{equation}\nonumber
\begin{split}
\mathcal{M}_{2,4} \lesssim \|\rho\|_{H^3}^2 \|\partial_2\rho\|_{H^2} \|\nabla u\|_{H^3}.
\end{split}
\end{equation}
Finally, regarding $\mathcal{M}_{2,5}$, Lemma \ref{lemma-p1u1-rho-r2} furnishes
\begin{equation}\nonumber
\begin{split}
&\Big|\mathcal{M}_{2,5} -6 \frac{d}{dt}\int_{\mathbb{R}^2}b_2 |\partial_1^3\rho|^2 \rho \;dx -3\frac{d}{dt} \int_{\mathbb{R}^2}\rho^2 |\partial_1^3\rho|^2 \;dx\Big|\\
\lesssim& \int_{\mathbb{R}^2}\Gamma^2 |\partial_1^3\rho|^2\;dx + \|(b_2, \rho)\|_{H^3}^2\|\partial_2(b_2, \rho)\|_{H^1} \| \nabla u\|_{H^3}.
\end{split}
\end{equation}

\textbf{Case $r = \partial_1^3 b_2$:}
In a parallel manner, we evaluate the material derivative as
$$(\partial_t + u \cdot \nabla)|\partial_1^3 b_2|^2 = - 2\partial_1^3 b_2 \partial_1^4 u_1 + 2\partial_1^3b_2 \partial_1^3(b\cdot \nabla u_2 - b_2 \nabla \cdot u) - 2[\partial_1^3, u\cdot \nabla] b_2 \partial_1^3 b_2.$$
Expanding and meticulously grouping terms leads to the decomposition
\begin{equation}\label{M2}
\begin{split}
\mathcal{M}_{2} =& -2\int_{\mathbb{R}^2} \rho \partial_1^3 b_2 \partial_1^4u_1\;dx \\
&+2\int_{\mathbb{R}^2} \rho \partial_1^3 b_2 \partial_1^3(b_1 \partial_1 u_2) \;dx \\
& - 2\int_{\mathbb{R}^2} \rho \partial_1^3 b_2 \partial_1^3 b_2 \partial_1 u_1 \;dx \\
& -2\sum_{k = 0}^2 \mathcal{C}_3^k\int_{\mathbb{R}^2} \rho \partial_1^3 b_2 \partial_1^k b_2 \partial_1^{4-k} u_1\;dx -2\sum_{k = 2}^3 \mathcal{C}_3^k\int_{\mathbb{R}^2} \rho \partial_1^k u \cdot \nabla \partial_1^{3-k} b_2 \partial_1^3 b_2\;dx\\
& -6 \int_{\mathbb{R}^2} \rho \partial_1 u_1 \partial_1^3 b_2 \partial_1^3 b_2\;dx\\
& -6 \int_{\mathbb{R}^2} \rho \partial_1 u_2 \partial_1^2\partial_2 b_2 \partial_1^3 b_2\;dx\\
& \triangleq \mathcal{M}_{2,1} + \mathcal{M}_{2,2} + \mathcal{M}_{2,3} + \mathcal{M}_{2,4}+ \mathcal{M}_{2,5}+ \mathcal{M}_{2,6}.
\end{split}
\end{equation}
By virtue of Lemma \ref{lemma-p13u1}, the term $\mathcal{M}_{2,1}$ is controlled by
\begin{equation}\nonumber
\begin{split}
& \Big| \mathcal{M}_{2,1} + \frac{d}{dt}\int_{\mathbb{R}^2}\partial_1^2 \Omega \, \partial_1^3 b_2 \, \rho \;dx\Big| \\
\lesssim& \|(\rho, u, b)\|_{H^3} \big(\|(\partial_2 \rho, \partial_2 b, \Omega)\|_{H^2}^2 + \|\nabla u\|_{H^3}^2 + \|(\partial_t\rho, \partial_t b_2)\|_{L^2}^2\big).
\end{split}
\end{equation}
For the combined contribution of $\mathcal{M}_{2,3}+ \mathcal{M}_{2,5}$, Lemma \ref{lemma-p1u1-rho-r2} implies
\begin{equation}\nonumber
\begin{split}
&\Big|\mathcal{M}_{2,3}+ \mathcal{M}_{2,5} -8 \frac{d}{dt}\int_{\mathbb{R}^2}b_2 |\partial_1^3b_2|^2 \rho \;dx -4 \frac{d}{dt}\int_{\mathbb{R}^2} |\partial_1^3 b_2|^2 |b_2|^2\; dx\Big| \\
\lesssim& \int_{\mathbb{R}^2} \Gamma^2 |\partial_1^3 b_2|^2\;dx + \|(b_2, \rho)\|_{H^3}^2\|\partial_2(b_2, \rho)\|_{H^1} \| \nabla u\|_{H^3}.
\end{split}
\end{equation}
As for the residual terms in \eqref{M2}, a standard application of H\"{o}lder's inequality and the Sobolev embedding theorem reveals that
\begin{equation}\nonumber
\begin{split}
\mathcal{M}_{2,2} + \mathcal{M}_{2,4} + \mathcal{M}_{2,6}\lesssim& \|\rho\|_{H^2}\|b_2\|_{H^2}\|\partial_2 b_2\|_{H^2}\|\partial_1 u_1\|_{H^3}.
\end{split}
\end{equation}
Synthesizing the above bounds for both cases and utilizing the \textit{a priori} assumption \eqref{ansatz}, we bring the proof of this lemma to a close.
\end{proof}

Building upon the preceding foundational estimates, we now establish the most crucial lemma of this section.

\begin{lemma}\label{lemma-p1u1-r2}
The following estimates hold when the right-hand sides are bounded.
\begin{equation}\nonumber
\begin{split}
& \Big|\int_{\mathbb{R}^2}\partial_1 u_1 |\partial_1^3\rho|^2 \;dx + \frac{d}{dt}\int_{\mathbb{R}^2}b_2 |\partial_1^3\rho|^2 \;dx +\frac{d}{dt}\int_{\mathbb{R}^2}\partial_1^2 \Omega \, \partial_1^3 \rho \, b_2 \;dx \\
& +\frac{1}{2}\frac{d}{dt} \int_{\mathbb{R}^2}\rho^2 |\partial_1^3\rho|^2 \;dx-\frac{7}{2}\frac{d}{dt} \int_{\mathbb{R}^2}b_2^2 |\partial_1^3\rho|^2 \;dx \Big|\\
\lesssim& \int_{\mathbb{R}^2} (|b_1|^2 + \Gamma^2) |\partial_1^3(\rho, b_2)|^2\;dx + \|(\rho, u, b)\|_{H^3} \big(\|(\partial_2 \rho, \partial_2 b, \Omega)\|_{H^2}^2 + \|\nabla u\|_{H^3}^2\big), \\
\end{split}
\end{equation}
and
\begin{equation}\nonumber
\begin{split}
&\Big|\int_{\mathbb{R}^2}\partial_1 u_1 |\partial_1^3b_2|^2 \;dx + \frac{d}{dt}\int_{\mathbb{R}^2}b_2 |\partial_1^3b_2|^2 \;dx + \frac{d}{dt}\int_{\mathbb{R}^2}\partial_1^2 \Omega \, \partial_1^3 b_2 \, b_2 (1 + b_2) \;dx\\
& -\frac{1}{2} \frac{d}{dt}\int_{\mathbb{R}^2}\rho^2 |\partial_1^3b_2|^2\;dx -\frac{9}{2} \frac{d}{dt}\int_{\mathbb{R}^2}|b_2|^2 |\partial_1^3b_2|^2 \;dx\;dx \Big|\\
\lesssim& \int_{\mathbb{R}^2} (|b_1|^2 + \Gamma^2) |\partial_1^3(\rho, b_2)|^2\;dx + \|(\rho, u, b)\|_{H^3} \big(\|(\partial_2 \rho, \partial_2 b, \Omega)\|_{H^2}^2 + \|\nabla u\|_{H^3}^2\big).
\end{split}
\end{equation}
\end{lemma}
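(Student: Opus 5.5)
The plan follows the template of Lemma \ref{lemma-divu-r2}, except that the substitution now comes from the induction equation for $b_2$ rather than the continuity equation. For $r\in\{\partial_1^3\rho,\partial_1^3 b_2\}$ we write
\[
-\partial_1 u_1=(\partial_t+u\cdot\nabla)b_2-b_1\partial_1u_2+b_2\partial_1u_1,
\]
using $b\cdot\nabla u_2-b_2\nabla\cdot u=b_1\partial_1u_2-b_2\partial_1u_1$. Substituting into $\int\partial_1u_1\,r^2\,dx$ gives
\[
\int\partial_1 u_1\, r^2\,dx=-\frac{d}{dt}\int b_2 r^2\,dx+\int b_2(\partial_t+u\cdot\nabla)r^2\,dx+\int(\nabla\cdot u)\,b_2r^2\,dx+\int(b_1\partial_1u_2-b_2\partial_1u_1)r^2\,dx .
\]
The first term is the $\frac{d}{dt}\int b_2 r^2$ on the left-hand side of the claim. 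The $b_2\partial_1u_1 r^2$ piece is handled by the second inequality of Lemma \ref{lemma-p1u1-rho-r2}, giving $\frac12\frac{d}{dt}\int b_2^2r^2$. The $(\nabla\cdot u)b_2 r^2$ piece is handled by Lemma \ref{lemma-divu-b2-r2}, giving $\frac12\frac{d}{dt}\int\rho^2 r^2$ plus $\int\Gamma^2r^2$. The piece $b_1\partial_1u_2\,r^2$ is the dangerous one; I treat it in the last paragraph.

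Next I expand the material derivative exactly as in Lemma \ref{lemma-divu-r2}. For $r=\partial_1^3\rho$ the leading term is $-2\int b_2\,\partial_1^3\rho\,\partial_1^4u_1$. Lemma \ref{lemma-p13u1} with $a=1$, $l=3$, $m=0$, $f=b_2$ converts it into $-\frac{d}{dt}\int\partial_1^2\Omega\,\partial_1^3\rho\, b_2$ plus admissible errors. The companion term $-2\int b_2\,\partial_1^3\rho\,\partial_1^3\partial_2u_2$ is integrated by parts in $x_2$ as for $\mathcal M_{2,2}$, which puts $\partial_2$ on $\rho$ or $b_2$ and yields $\|\partial_2(\rho,b)\|_{H^2}\|\nabla u\|_{H^3}$.

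For $r=\partial_1^3b_2$ the leading term is $-2\int b_2\,\partial_1^3b_2\,\partial_1^4u_1$, together with the contribution $-2\int b_2\,\partial_1^3 b_2\, b_2\,\partial_1^4 u_1$ coming from $\partial_1^3(b_2\partial_1u_1)$. Together these give $-2\int b_2(1+b_2)\partial_1^3b_2\,\partial_1^4u_1$, and Lemma \ref{lemma-p13u1} with $f=b_2(1+b_2)$ produces the $\partial_1^2\Omega\,\partial_1^3b_2\,b_2(1+b_2)$ term.

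The commutator terms containing $\partial_1u_1\, b_2\, r^2$ again go to Lemma \ref{lemma-p1u1-rho-r2}. Those with $\nabla\cdot u$ go to Lemmas \ref{lemma-divu-rho-r2} and \ref{lemma-divu-b2-r2}. Everything else is bounded by H\"older's inequality and Proposition \ref{prop-anisotropic-est}, with one factor of $\partial_2(\rho,b_2)$ producing the $\|\partial_2\cdot\|$ norm. Collecting the multiples of $\frac{d}{dt}\int b_2^2r^2$ and $\frac{d}{dt}\int\rho^2 r^2$ from the repeated applications should yield the coefficients $\frac12,-\frac72$ and $-\frac12,-\frac92$. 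I would carry out this bookkeeping carefully but treat it as routine.

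The main obstacle is the term $\int b_1\,\partial_1u_2\,r^2\,dx$, and the analogous $b_1\partial_1^4u_2$-type terms arising from $\partial_1^3(b_1\partial_1u_2)$. Here $b_1$ carries no $\partial_2$-gain in $L^\infty$, since $\|b_1\|_{L^\infty}$ is not controlled by $\|\partial_2 b\|$ in $\mathbb R^2$. So I would not try to decay $b_1$. Instead I would use Young's inequality,
\[
\Big|\int b_1\partial_1u_2\,r^2\Big|\le \int |b_1|^2r^2\,dx+\|\partial_1u_2\|_{L^\infty}^2\|r\|_{L^2}^2\lesssim \int|b_1|^2 r^2\,dx+\|(\rho,b)\|_{H^3}\|\nabla u\|_{H^3}^2,
\]
which is exactly why $\int|b_1|^2|\partial_1^3(\rho,b_2)|^2$ appears on the right-hand side, to be controlled later via $\mathcal E_4$. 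For the higher-order piece $\int b_2 r\, b_1\partial_1^4u_2$, I would integrate by parts in $x_1$ or use $\|b_1 r\|_{L^2}\|\partial_1^4u_2\|_{L^2}\|b_2\|_{L^\infty}$ and Young's inequality. In the second case the factor $\|b_2\|_{L^\infty}$ gives smallness via Proposition \ref{prop-anisotropic-est}. I expect this step, together with checking that no $\partial_1^4(\rho,b_2)$ derivative loss survives outside the cancellation built into Lemma \ref{lemma-p13u1}, to be the delicate part.
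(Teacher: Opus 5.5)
Your proposal is correct and follows the paper's proof essentially step by step. It uses the same substitution of the $b_2$-equation, Lemma~\ref{lemma-p13u1} for the $\partial_1^4u_1$ terms (with weight $b_2(1+b_2)$ when $r=\partial_1^3b_2$), Lemmas~\ref{lemma-p1u1-rho-r2} and~\ref{lemma-divu-b2-r2} for the $b_2\partial_1u_1r^2$ and $(\nabla\cdot u)b_2r^2$ terms, and Young's inequality for $b_1\partial_1u_2r^2$; your coefficients $\tfrac12,-\tfrac72$ and $-\tfrac12,-\tfrac92$ match that bookkeeping.

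One small step is glossed over, in your proposal and in the paper alike. Lemma~\ref{lemma-p13u1} leaves an error $\|(\rho,u,b)\|_{H^3}\|(\partial_t\rho,\partial_tb_2)\|_{L^2}^2$ that does not appear on the stated right-hand side. You remove it using the continuity and $b_2$ equations together with Proposition~\ref{prop-anisotropic-est}, which give $\|(\partial_t\rho,\partial_tb_2)\|_{L^2}^2\lesssim\|\nabla u\|_{L^2}^2+\|(\rho,u,b)\|_{H^2}^2\|(\nabla u,\partial_2\rho,\partial_2b)\|_{H^1}^2$.
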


\begin{proof}
The proof follows the analytical framework established in Lemma \ref{lemma-p1u1-rho-r2}. Let $r \in \{\p_1^3 \rho, \p_1^3 b_2\}$. Exploiting the equation of $b_2$, we once again decompose the target integral in the following manner
\begin{equation}\nonumber
\begin{split}
\int_{\mathbb{R}^2}\partial_1 u_1 r^2\;dx =& - \int_{\mathbb{R}^2}\big[\partial_t b_2 + u \cdot \nabla b_2 - b \cdot \nabla u_2 + b_2 \nabla \cdot u\big] r^2 \;dx \\
=& - \int_{\mathbb{R}^2}(\partial_t b_2 + u \cdot \nabla b_2) r^2 \;dx \\
& + \int_{\mathbb{R}^2}(b_1 \partial_1 u_2) r^2 \;dx \\
& - \int_{\mathbb{R}^2}(b_2 \partial_1 u_1) r^2 \;dx \\
\triangleq& \; \mathcal{S}_1 + \mathcal{S}_2 + \mathcal{S}_3.
\end{split}
\end{equation}

\paragraph{\bf Estimate of $\mathcal{S}_1$:}
It's very natural that we rewrite $\mathcal{S}_1$ as
\begin{equation}\nonumber
\begin{split}
\mathcal{S}_1 =& - \frac{d}{dt}\int_{\mathbb{R}^2}b_2 r^2 \;dx \\
&+ \int_{\mathbb{R}^2}b_2 (\partial_t + u \cdot \nabla) r^2 \;dx \\
&+ \int_{\mathbb{R}^2}(\nabla \cdot u) b_2 r^2 \;dx \\
\triangleq& \; \mathcal{S}_{1,1} + \mathcal{S}_{1,2} + \mathcal{S}_{1,3}.
\end{split}
\end{equation}

\textbf{Estimate of $\mathcal{S}_{1,2}$:}

$\bullet$ \textbf{Case $r = \partial_1^3\rho$:}
Recalling the material derivative expansion
$$(\partial_t + u \cdot \nabla)|\partial_1^3\rho|^2 = - 2 \partial_1^3\rho \partial_1^3\nabla \cdot u - 2 \partial_1^3\rho \partial_1^3( \rho \nabla \cdot u)- 2[\partial_1^3, u\cdot \nabla]\rho \partial_1^3\rho,$$
we group the resulting terms systematically as
\begin{equation}\nonumber
\begin{split}
\mathcal{S}_{1,2} =& \underbrace{-2 \int_{\mathbb{R}^2}b_2 \partial_1^3\rho \partial_1^4 u_1 \;dx}_{\mathcal{S}_{1,2,1}} \underbrace{-2 \int_{\mathbb{R}^2}b_2 \partial_1^3\rho \partial_1^3 \partial_2 u_2 \;dx}_{\mathcal{S}_{1,2,2}} \\
&\underbrace{-2 \int_{\mathbb{R}^2}b_2 \partial_1^3\rho \partial_1^3\rho \nabla \cdot u \;dx}_{\mathcal{S}_{1,2,3}} \underbrace{-6 \int_{\mathbb{R}^2}b_2 \partial_1 u_1 \partial_1^3\rho \partial_1^3 \rho \;dx}_{\mathcal{S}_{1,2,4}}\\
& \underbrace{-2\sum_{k=0}^2 \mathcal{C}_3^k\int_{\mathbb{R}^2} b_2 \p_1^3 \rho \p_1^k \rho \p_1^{3-k} \nabla \cdot u \;dx-6 \int_{\mathbb{R}^2}b_2 \partial_1 u_2 \partial_1^2 \partial_2 \rho \partial_1^3 \rho \;dx}_{\mathcal{S}_{1,2,5}}\\
&\underbrace{- 2\sum_{k = 2}^3 \mathcal{C}_3^k \int_{\mathbb{R}^2} b_2\partial_1^k u \cdot \nabla \partial_1^{3-k}\rho \partial_1^3\rho \;dx}_{\mathcal{S}_{1,2,6}}.
\end{split}
\end{equation}
Invoking Lemma \ref{lemma-p13u1} for $\mathcal{S}_{1,2,1}$ yields
\begin{equation}\nonumber
\begin{split}
&\Big|\mathcal{S}_{1,2,1} + \frac{d}{dt}\int_{\mathbb{R}^2}\partial_1^2 \Omega \, \partial_1^3 \rho \, b_2 \;dx \Big| \\
\lesssim& \|(\rho, u, b)\|_{H^3} \big(\|(\partial_2 \rho, \partial_2 b, \Omega)\|_{H^2}^2 + \|\nabla u\|_{H^3}^2+ \|(\partial_t \rho, \partial_t b_2)\|_{L^2}^2\big).
\end{split}
\end{equation}
For $\mathcal{S}_{1,2,2}$, integration by parts readily gives
\begin{equation}\nonumber
\begin{split}
\mathcal{S}_{1,2,2} \lesssim& \|(b_2, \rho)\|_{H^3} \|\partial_2(b_2, \rho)\|_{H^2} \|\partial_1 u_2\|_{H^3}.
\end{split}
\end{equation}
Regarding $\mathcal{S}_{1,2,3}$, Lemma \ref{lemma-divu-b2-r2} provides the control
\begin{equation}\nonumber
\begin{split}
&\Big|\mathcal{S}_{1,2,3} +\frac{d}{dt} \int_{\mathbb{R}^2}\rho^2 r^2 \;dx \Big|\\
\lesssim& \int_{\mathbb{R}^2}\Gamma^2 r^2\;dx + \|(\rho, b_2)\|_{H^3}^2\big(\|(\partial_2 \rho, \partial_2b_2)\|_{H^1}^2 + \|\nabla u\|_{H^3}^2\big).
\end{split}
\end{equation}
Furthermore, Lemma \ref{lemma-p1u1-rho-r2} implies for $\mathcal{S}_{1,2,4}$ that
\begin{equation}\nonumber
\begin{split}
\Big|\mathcal{S}_{1,2,4}-3 \frac{d}{dt}\int_{\mathbb{R}^2}|b_2|^2 r^2 \;dx \Big|\lesssim \|(b_2, \rho)\|_{H^3}^2\|\partial_2(b_2, \rho)\|_{H^1} \| \nabla u\|_{H^3}.
\end{split}
\end{equation}
The remaining terms $\mathcal{S}_{1,2,5}$ and $\mathcal{S}_{1,2,6}$ can be bounded via direct computation, which furnishes
\begin{equation}\nonumber
\begin{split}
\mathcal{S}_{1,2,5} + \mathcal{S}_{1,2,6} \lesssim& \|(b_2, \rho)\|_{H^3}^2 \|\partial_2(b_2, \rho)\|_{H^2} \|\nabla u\|_{H^3}.
\end{split}
\end{equation}

$\bullet$ \textbf{Case $r = \partial_1^3 b_2$:}
We compute the material derivative once more as
\begin{equation}\nonumber
\begin{split}
(\partial_t + u \cdot \nabla)|\partial_1^3 b_2|^2 =& - 2 \partial_1^3b_2 \partial_1^4 u_1 + 2\partial_1^3 b_2 \partial_1^3(b \cdot \nabla u_2)\\
&- 2\partial_1^3 b_2 \partial_1^3(b_2 \nabla \cdot u) - 2[\partial_1^3, u \cdot \nabla]b_2 \partial_1^3 b_2\\
=& - 2 \partial_1^3b_2 \partial_1^4 u_1 + 2\partial_1^3 b_2 \partial_1^3(b_1 \partial_1 u_2 - b_2 \partial_1 u_1) - 2[\partial_1^3, u \cdot \nabla]b_2 \partial_1^3 b_2.
\end{split}
\end{equation}
Expanding and meticulously collecting terms will lead to the decomposition
\begin{equation}\nonumber
\begin{split}
\mathcal{S}_{1,2} =& -2 \int_{\mathbb{R}^2} b_2 \partial_1^3 b_2 \partial_1^4 u_1 \;dx - 2 \int_{\mathbb{R}^2} b_2 \partial_1^3b_2 b_2 \partial_1^4 u_1\;dx\\
&- 2 \int_{\mathbb{R}^2} b_2 \partial_1^3b_2 \partial_1^3 b_2 \partial_1 u_1\;dx- 6\int_{\mathbb{R}^2} b_2 \partial_1 u_1 \partial_1^3 b_2 \partial_1^3 b_2\;dx\\
&- 2\sum_{k = 1}^2 \mathcal{C}_3^k\int_{\mathbb{R}^2} b_2 \partial_1^3b_2 \partial_1^k b_2 \partial_1^{4-k} u_1\;dx+ 2 \int_{\mathbb{R}^2} b_2 \partial_1^3b_2 \partial_1^3(b_1 \partial_1 u_2)\;dx \\
&- 2\sum_{k = 2}^3 \mathcal{C}_3^k\int_{\mathbb{R}^2} b_2 \partial_1^k u \cdot \nabla \partial_1^{3-k} b_2 \partial_1^3 b_2\;dx - 6\int_{\mathbb{R}^2} b_2 \partial_1 u_2 \partial_1^2 \partial_2 b_2 \partial_1^3 b_2\;dx \\
\triangleq& \; \mathcal{S}_{1,2,1} + \mathcal{S}_{1,2,2} + \mathcal{S}_{1,2,3}+ \mathcal{S}_{1,2,4}.
\end{split}
\end{equation}
By virtue of Lemma \ref{lemma-p13u1}, we establish for $\mathcal{S}_{1,2,1}$ that
\begin{equation}\nonumber
\begin{split}
&\Big|\mathcal{S}_{1,2,1} + \frac{d}{dt}\int_{\mathbb{R}^2}\partial_1^2 \Omega \, \partial_1^3 b_2 \, b_2 (1 + b_2) \;dx \Big|\\
\lesssim&\|(\rho, u, b)\|_{H^3} \big(\|(\partial_2 \rho, \partial_2 b, \Omega)\|_{H^2}^2 + \|\nabla u\|_{H^3}^2 + \|(\partial_t\rho, \partial_t b_2)\|_{L^2}^2\big).
\end{split}
\end{equation}
For $\mathcal{S}_{1,2,2}$, Lemma \ref{lemma-p1u1-rho-r2} yields
\begin{equation}\nonumber
\begin{split}
\Big|\mathcal{S}_{1,2,2} -4\frac{d}{dt}\int_{\mathbb{R}^2}|b_2|^2 r^2 \;dx \Big| \lesssim \|(b_2, \rho)\|_{H^3}^2\|\partial_2(b_2, \rho)\|_{H^1} \| \nabla u\|_{H^3}.
\end{split}
\end{equation}
As for the residual terms $\mathcal{S}_{1,2,3} + \mathcal{S}_{1,2,4}$, direct computation reveals that
\begin{equation}\nonumber
\begin{split}
\mathcal{S}_{1,2,3} + \mathcal{S}_{1,2,4} \lesssim& \|b_2\|_{H^3}^2 \|\partial_2 b_2\|_{H^2} \|\nabla u\|_{H^3}.
\end{split}
\end{equation}

\paragraph{\bf Estimate of $\mathcal{S}_{1,3}$:}
Invoking Lemma \ref{lemma-divu-b2-r2}, we readily obtain
\begin{equation}\nonumber
\begin{split}
&\Big|\mathcal{S}_{1,3} -\frac{1}{2}\frac{d}{dt} \int_{\mathbb{R}^2}\rho^2 r^2 \;dx \Big|\\
\lesssim& \int_{\mathbb{R}^2}\Gamma^2 r^2\;dx +\|(\rho, b_2)\|_{H^3}^2\big(\|(\partial_2 \rho, \partial_2b_2)\|_{H^1}^2 + \|\nabla u\|_{H^3}^2\big).
\end{split}
\end{equation}

\paragraph{\bf Estimate of $\mathcal{S}_2$:}
By virtue of Young's inequality and the Sobolev embedding theorem, there holds
\begin{equation}\nonumber
\begin{split}
\mathcal{S}_2 \lesssim& \int_{\mathbb{R}^2} \big(b_1^2 + (\partial_1 u_2)^2\big) r^2 \; dx \\
\lesssim& \int_{\mathbb{R}^2} |b_1|^2 |\partial_1^3(\rho, b_2)|^2\;dx + \|\partial_1^3(\rho, b_2)\|_{L^2}^2 \|\partial_1 u_2\|_{H^2}^2.
\end{split}
\end{equation}

\paragraph{\bf Estimate of $\mathcal{S}_3$:}
According to Lemma \ref{lemma-p1u1-rho-r2}, we have
\begin{equation}\nonumber
\begin{split}
\Big|\mathcal{S}_3-\frac{1}{2}\frac{d}{dt}\int_{\mathbb{R}^2}|b_2|^2 r^2 \;dx\Big| \lesssim \|(b_2, \rho)\|_{H^3}^2\|\partial_2(b_2, \rho)\|_{H^1} \| \nabla u\|_{H^3}.
\end{split}
\end{equation}
Consolidating the above bounds for both cases and invoking the \textit{a priori} assumption \eqref{ansatz}, we conclude the proof of this lemma.
\end{proof}
\vskip .3in
\section{Upper bound for the basic energy functional}
\label{basic}

This section establishes the desired \textit{a priori} upper bound for the basic energy $\mathcal{E}_0(t)$ associated with the setting \eqref{ansatz}.

\begin{lemma}\label{lemmaBasic}
For the energies defined in \eqref{energy-set}, the following inequality holds for all positive time $t$:
\begin{equation}\nonumber
   \mathcal{E}_0(t)  \lesssim   \mathcal{E}_4(t) + \mathcal{E}_5(t)+\mathcal{E}_{\mathrm{total}}^\frac{3}{2}(t).
\end{equation}
\end{lemma}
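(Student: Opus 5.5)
We will carry out the standard $H^3$ energy estimate on \eqref{mhd1}. Apply $\partial^\alpha$ with $|\alpha|\le 3$ to each equation and pair with $\partial^\alpha\rho$, $\partial^\alpha u$, $\partial^\alpha b$. The pressure is handled through $\frac{1}{1+\rho}\nabla P = \nabla\rho + \nabla(h(\rho))$ with $h(\rho)=O(\rho^2)$. Equivalently, we weight the $\rho$-estimate by $P'(1+\rho)/(1+\rho)$ so that the top-order terms symmetrize.

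The linear couplings cancel exactly. The term $\langle \partial^\alpha\nabla\rho,\partial^\alpha u\rangle$ cancels against $\langle \partial^\alpha \nabla\cdot u,\partial^\alpha\rho\rangle$. Using $\nabla\cdot b=0$, the term $\langle (\nabla^\perp\cdot \partial^\alpha b,0),\partial^\alpha u\rangle$ cancels against $\langle \nabla^\perp\partial^\alpha u_1,\partial^\alpha b\rangle$. This yields
\[
\frac12\frac{d}{dt}\|(\rho,u,b)\|_{H^3}^2+\|\nabla u\|_{H^3}^2=\sum_{|\alpha|\le 3}\mathcal{N}_\alpha .
\]
Integrating in time, it then suffices to show
\[
\int_0^t\sum_{|\alpha|\le3}|\mathcal{N}_\alpha|\,d\tau\lesssim \mathcal{E}_4+\mathcal{E}_5+\mathcal{E}_{\rm total}^{3/2}.
\]
The small time-derivative boundary terms generated along the way are cubic in $\|(\rho,u,b)\|_{H^3}$ and are absorbed into $\mathcal{E}_{\rm total}^{3/2}$.

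Most of the nonlinear terms are routine. Any term containing a full $\nabla u$ factor together with at least one anisotropic dissipative factor ($\partial_2\rho$, $\partial_2 b$, $\Omega$, $\nabla\Gamma$, or a time derivative) is bounded by Hölder's inequality, Proposition \ref{prop-anisotropic-est}, and Proposition \ref{prop-nonlinear-func}. The typical form is $\|(\rho,u,b)\|_{H^3}\|\nabla u\|_{H^3}\|(\partial_2\rho,\partial_2 b)\|_{H^2}$, whose time integral is at most $\mathcal{E}_{\rm total}^{3/2}$. The viscous terms $\frac{\rho}{1+\rho}\Delta u$ and the Lorentz terms $\frac{1}{1+\rho}b\cdot\nabla b$ are treated this way after integrating by parts once, moving a derivative onto $\partial^\alpha u$.

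The dangerous terms are the transport and stretching contributions at top order in the purely horizontal direction. In $\rho$ and $b_2$ they are
\[
\int \nabla\cdot u\,|\partial_1^3\rho|^2\,dx,\qquad \int \nabla\cdot u\,|\partial_1^3 b_2|^2\,dx,
\]
which arise from $u\cdot\nabla\partial_1^3$ after integration by parts and from $\rho\,\partial_1^3\nabla\cdot u$. There are also terms with $\partial_1u_1$ multiplying $|\partial_1^3(\rho,b_2)|^2$, and the commutator term $\partial_1^3(b\cdot\nabla u_2)$. The $\partial_1^3$ derivatives of $\rho$ and $b_2$ carry no dissipation, so we cannot simply put the velocity factor in $L^\infty$. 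Instead we write $\nabla\cdot u=\partial_1u_1+\partial_2u_2$.

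The $\partial_2u_2$ parts are integrated by parts in $x_2$, which produces $\partial_2\partial_1^3(\rho,b_2)$ and then $\partial_2(\rho,b_2)\in H^2$, as in the proof of Lemma \ref{lemma-divu-r2}. For the full-divergence terms we apply Lemma \ref{lemma-divu-r2} directly. For the $\partial_1u_1$ terms we apply Lemma \ref{lemma-p1u1-r2}. These lemmas convert the terms into total time derivatives, such as $\frac{d}{dt}\int\partial_1^2\Omega\,\partial_1^3\rho\,\rho$, plus three kinds of remainders:
\begin{itemize}
\item the weighted integrals $\int(|b_1|^2+\Gamma^2)|\partial_1^3(\rho,b_2)|^2$, whose time integrals are bounded by $\mathcal{E}_4+\mathcal{E}_5$;
\item cubic terms bounded by $\mathcal{E}_{\rm total}^{3/2}$;
\item boundary terms at times $0$ and $t$, bounded by $\|(\rho,u,b)\|_{H^3}^3\lesssim\mathcal{E}_0^{3/2}$, using $\|\partial_1^2\Omega\|_{L^2}\lesssim\|b\|_{H^3}+\|\rho\|_{H^3}$.
\end{itemize}
Terms of the form $u_1\,\partial_1(\cdot)$ acting at top order, for instance $u_1\partial_1\partial_1^3 b_2\,\partial_1^3 b_2$ after integration by parts, fall into the same two categories. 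Terms involving $\partial_1^3u_1$ paired with lower-order products $f(\rho,b_2)$ are handled by Lemma \ref{lemma-p13u1} or Lemma \ref{lemma-p1u1}.

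We expect the main obstacle to be bookkeeping. Every top-order horizontal term must be routed into exactly one of the preceding lemmas, in a form where the leftover factor is either $\Gamma$ or $b_1$, which produces $\mathcal{E}_5$ or $\mathcal{E}_4$, or carries a $\partial_2$ or $\nabla u$ dissipation. The key identity for this is the $b_2$ equation, which expresses $b\cdot\nabla u_2-b_2\nabla\cdot u=b_1\partial_1u_2-b_2\partial_1u_1$. The $b_1\partial_1u_2$ piece, paired with $|\partial_1^3 b_2|^2$, is the case that forces $\mathcal{E}_4$: Young's inequality bounds it by $\int|b_1|^2|\partial_1^3b_2|^2+\|\partial_1u_2\|_{H^2}^2\|b\|_{H^3}^2$. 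With this routing fixed, summing over $\alpha$ and integrating in time gives the lemma.
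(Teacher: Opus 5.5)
Your overall architecture matches the paper's: linear cancellations, then routing the top-order horizontal terms through Lemmas \ref{lemma-p13u1}, \ref{lemma-divu-r2} and \ref{lemma-p1u1-r2}, with $\mathcal{E}_4$ and $\mathcal{E}_5$ absorbing the $b_1$- and $\Gamma$-weighted remainders. But one step you state explicitly fails. You split $\nabla\cdot u=\partial_1u_1+\partial_2u_2$ inside $\int\nabla\cdot u\,|\partial_1^3(\rho,b_2)|^2\,dx$ and integrate the $\partial_2u_2$ part by parts in $x_2$. This gives
\begin{equation*}
\int_{\mathbb{R}^2}\partial_2u_2\,|\partial_1^3\rho|^2\,dx=-2\int_{\mathbb{R}^2}u_2\,\partial_1^3\rho\,\partial_1^3\partial_2\rho\,dx .
\end{equation*}
Here $\partial_1^3\partial_2\rho$ is a fourth-order derivative. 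The norm $\|\partial_2\rho\|_{H^2}$ only reaches $\partial_1^2\partial_2\rho$, and moving one $\partial_1$ back just produces $u_2\,\partial_1^4\rho$. The device works at order zero (Lemma \ref{lemma-p2u2}). In Lemma \ref{lemma-divu-r2} it works only for $\int\rho\,\partial_1^3\rho\,\partial_1^3\partial_2u_2$, where the high derivatives sit on $u_2$ and can be traded for $\partial_1^4u_2$.

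At top order the paper goes the opposite way. The term $\int\nabla\cdot u\,|\partial_1^3(\rho,b_2)|^2$ is never split; it is treated as a whole through the continuity equation (Lemma \ref{lemma-divu-r2}). The standalone term $\int\partial_2u_2\,|\partial_1^3b_2|^2$ arising in $I_9+I_{13}$ is rewritten as $\int\nabla\cdot u\,|\partial_1^3b_2|^2-\int\partial_1u_1\,|\partial_1^3b_2|^2$ and sent to Lemmas \ref{lemma-divu-r2} and \ref{lemma-p1u1-r2}. Your own sentence about applying Lemma \ref{lemma-divu-r2} to full-divergence terms is the right move; the split should simply not be made.

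A second, smaller gap is that you differentiate with all $|\alpha|\le 3$. The purely horizontal intermediate orders $\partial_1$ and $\partial_1^2$ generate terms such as $\int\nabla\cdot u\,|\partial_1^k(\rho,b_2)|^2$ for $k=1,2$. These are just as critical: direct bounds give only one and a half dissipative factors. Yet the lemmas that treat $\nabla\cdot u$ and $\partial_1u_1$ against such squares (Lemmas \ref{lemma-divu-r2} and \ref{lemma-p1u1-r2}) are stated only for $|\partial_1^3(\rho,b_2)|^2$.

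The paper avoids these levels entirely by using $\|f\|_{H^3}\sim\|f\|_{L^2}+\|\partial_1^3f\|_{L^2}+\|\partial_2^3f\|_{L^2}$ and running only the $L^2$, $\partial_1^3$ and $\partial_2^3$ estimates. Mixed derivatives containing a $\partial_2$ would be harmless anyway. You should adopt that reduction, or else prove level-$k$ analogues of those lemmas. At those levels the remainders $\int\Gamma^2|\partial_1^k(\rho,b_2)|^2$ are harmless, since $\partial_1^k\partial_2(\rho,b_2)$ is controlled for $k\le 2$.
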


\begin{proof}
In light of the equivalence $\|f\|_{H^3} \sim \|f\|_{L^2} + \|f\|_{\dot H^3} \sim \|f\|_{L^2} + \|\p_1^3 f\|_{L^2} + \|\p_2^3 f\|_{L^2}$, we partition the proof into two distinct parts.
\medskip

\noindent{\bf Part 1: $L^2$ estimate}
\medskip
We commence with the $L^2$ estimate. Taking the $L^2$-inner product (on $\mathbb{R}^2$) of $(\rho, \tilde \rho u, b)$ with the corresponding equations in \eqref{mhd1}, we derive
\begin{equation}\nonumber
\begin{split}
  \frac{1}{2} \frac{d}{dt} \Big( \|\sqrt{\tilde \rho} u\|_{L^2}^2 + \|\rho\|_{L^2}^2 + \|b\|_{L^2}^2\Big) + \|\nabla u\|_{L^2}^2  = \sum_{i = 1}^3 I_i,
\end{split}
\end{equation}
where
\begin{equation}\nonumber
  \begin{split}
    I_1 = & -\int_{\mathbb{R}^2} (\nabla \cdot u ) \rho \, dx - \int_{\mathbb{R}^2} \nabla \rho \cdot u \, dx + \int_{\mathbb{R}^2} (\nabla^{\bot} \cdot b) u_1 \, dx \\
    & + \int_{\mathbb{R}^2} (\nabla^{\bot} u_1) \cdot b \, dx + \int_{\mathbb{R}^2} (b \cdot \nabla b) \cdot u \, dx + \int_{\mathbb{R}^2} (b \cdot \nabla u) \cdot b \, dx \\
    & - \int_{\mathbb{R}^2} b (\nabla \cdot u) b \, dx - \frac{1}{2}\int_{\mathbb{R}^2} (\nabla |b|^2)\cdot u \, dx - \int_{\mathbb{R}^2} (u \cdot \nabla b) \cdot b \, dx, \\
    I_2 = & - \int_{\mathbb{R}^2} (u \cdot \nabla \rho) \rho \, dx - \int_{\mathbb{R}^2} \rho (\nabla \cdot u) \rho \, dx, \qquad
    I_3 = \int_{\mathbb{R}^2} (\nabla \rho - \nabla P) \cdot u \, dx.
  \end{split}
\end{equation}

{\bf Part 1.1: Estimate of $I_1$ via cancellation}
\medskip
By virtue of integration by parts and the $\nabla \cdot b = 0$ condition, it is straightforward to verify that
\begin{equation}\label{I1}
 I_1 = 0.
\end{equation}

{\bf Part 1.2: Estimate of terms containing $\nabla \cdot u$}
\medskip

\quad $\bullet$ \textbf{Estimate of $I_2$:}
Integration by parts and H\"{o}lder's inequality reveal that
$$
 I_2 = -\frac{1}{2}\int_{\mathbb{R}^2} (\nabla \cdot u) \rho^2 \, dx.
$$
Decomposing $\nabla \cdot u$ into $\p_1 u_1 + \p_2 u_2$, we shall first estimate the term involving $\p_2 u_2$. Invoking Lemma \ref{lemma-p2u2}, this term can be controlled as follows:
\begin{equation}\label{I21}
  \begin{split}
    \int_0^t -\frac{1}{2}\int_{\mathbb{R}^2} \p_2 u_2 \rho^2 \, dx \, d\tau
    \lesssim& \sup_{0\leq \tau \leq t} \|(\rho, u_2)\|_{L^2} \int_0^t \|\p_1 u\|_{L^2}^\frac{1}{2} \|\p_2 \rho\|_{L^2}^\frac{3}{2} \, d\tau\\
    \lesssim& \;\mathcal{E}_{\mathrm{total}}^\frac{3}{2}(t).
  \end{split}
\end{equation}
Turning our attention to the term containing $\p_1 u_1$, Lemma \ref{lemma-p1u1} furnishes
\begin{equation}\label{I22}
  \begin{split}
    -\frac{1}{2}\int_{\mathbb{R}^2} \p_1 u_1 \rho^2 \, dx
    \lesssim& \sup_{0\leq \tau \leq t} \|(b_2, \rho)\|_{H^3}^3 \\
    &+ \sup_{0\leq \tau \leq t} \|(\rho, u, b)\|_{H^2}^2 \int_0^t \|(\p_2\rho, \nabla u, \p_2 b)\|_{H^2}^2 + \|(\p_t\rho, \p_t b_2)\|_{L^2}\, d\tau \\
    \lesssim& \;\mathcal{E}_{\mathrm{total}}^\frac{3}{2}(t).
  \end{split}
\end{equation}

\quad $\bullet$ \textbf{Estimate of $I_3$:}
To bound $I_3$, we introduce the auxiliary function
\begin{equation*}
	q(\rho)\triangleq (P(\rho+1) -P(1))-\rho = \int_0^\rho \big(P'(r + 1) - 1 \big) \, dr,
\end{equation*}
thereby reformulating $I_3$ as
$$
I_3 = -\int_{\mathbb R^2} q(\rho) \nabla\cdot u\,dx.
$$
Since $q(\rho)$ is a smooth function of $\rho$ with $q(0) = q'(0) = 0$ (due to $P'(1)=1$), we have the equivalence $q(\rho) \sim \rho^2$ for small $\rho$, provided that $\|\rho\|_{L^\infty}<1$. Consequently, following a parallel argument as for $I_2$ and invoking Lemmas \ref{lemma-p2u2} and \ref{prop-p1u1-rhorho}, we readily deduce
\begin{equation}\nonumber
  \begin{split}
    \int_0^t I_{3} \, d\tau
    \lesssim \mathcal{E}_{\mathrm{total}}^\frac{3}{2}(t).
  \end{split}
\end{equation}

Together with the previous estimates \eqref{I1}, \eqref{I21} and \eqref{I22}, we then arrive at the $L^2$ estimate of $\mathcal{E}_0(t)$.
\begin{equation}\label{I1I2I3}
  \begin{split}
    \int_0^t (I_{1}+ I_{2}+I_{3}) \, d\tau
    \lesssim \mathcal{E}_{\mathrm{total}}^\frac{3}{2}(t).
  \end{split}
\end{equation}

\noindent{\textbf{Part 2: $\dot H^3$ estimate}}

Applying the differential operators $(\p_1^3, \p_2^3)$ to the equations in \eqref{mhd1} and taking the $L^2$ inner product with $(\p_1^3(u, \rho, b), \p_2^3(u, \rho, b))$ respectively, we arrive at
\begin{equation}\nonumber
\begin{split}
  \frac{1}{2} \frac{d}{dt} \Big( \|\p_1^3 (u, \rho, b)\|_{L^2}^2 + \|\p_2^3 (u, \rho, b)\|_{L^2}^2\Big) + \|\p_1^3 u\|_{L^2}^2 + \|\p_2^3 u\|_{L^2}^2  = \sum_{i = 4}^{18} I_i,
\end{split}
\end{equation}
where
\begin{align*}
I_4 = & \;  \sum_{i = 1}^2\Big(- \int_{\mathbb{R}^2}  \partial_i^{3} \nabla \cdot u \partial_i^{3} \rho\; dx - \int_{\mathbb{R}^2} \partial_i^{3} \nabla \rho \cdot
\partial_i^{3} u\; dx \Big)\\
& +  \sum_{i = 1}^2\Big( \int_{\mathbb{R}^2} \partial_i^{3} \nabla^{\bot} \cdot b \partial_i^{3} u_1\; dx + \int_{\mathbb{R}^2} \partial_i^{3} \nabla^{\bot}  u_1\cdot \partial_i^{3} b \; dx\Big),\\
I_5 = & - \int_{\mathbb{R}^2} \partial_1^{3} (u \cdot \nabla \rho) \partial_1^{3} \rho \; dx , \quad I_6 = - \int_{\mathbb{R}^2} \partial_1^{3}(\rho \nabla \cdot u) \partial_1^{3} \rho \; dx , \\
I_7 = & - \int_{\mathbb{R}^2} \partial_1^{3} (u \cdot \nabla u) \cdot\partial_1^{3} u \; dx , \quad I_8 =   - \int_{\mathbb{R}^2} \partial_1^{3} \Big(\frac{\rho}{\rho + 1} \Delta u \Big)\cdot \partial_1^{3} u \; dx , \\
I_9 = &  \int_{\mathbb{R}^2} \partial_1^{3} \Big(\frac{1}{\rho + 1} b \cdot \nabla b\Big) \cdot\partial_1^{3} u \; dx , \quad I_{10} = - \int_{\mathbb{R}^2} \partial_1^{3} \Big(\frac{\rho}{\rho + 1} \nabla^{\bot} \cdot b \Big) \partial_1^{3} u_1 \; dx , \\
I_{11} = & \; \int_{\mathbb{R}^2} \partial_1^{3} \Big(\nabla \rho - \frac{1}{\rho + 1} \nabla P \Big) \cdot \partial_1^{3} u \; dx , \quad I_{12} =  - \int_{\mathbb{R}^2} \partial_1^{3} (u \cdot \nabla b)\cdot \partial_1^{3} b\; dx , \\
I_{13} = & \; \int_{\mathbb{R}^2} \partial_1^{3}(b \cdot \nabla u) \cdot\partial_1^{3} b \; dx , \quad I_{14} =  - \int_{\mathbb{R}^2} \partial_1^{3}(b \nabla \cdot u) \cdot\partial_1^{3} b \; dx ,  \\
I_{15} = & - \int_{\mathbb{R}^2} \partial_1^{3} \Big(\frac{1}{2(\rho+1)} \nabla |b|^2 \Big) \cdot\partial_1^{3} u \; dx, \\
I_{16} = & - \int_{\mathbb{R}^2} \partial_2^{3} (u \cdot \nabla \rho) \partial_2^{3} \rho \; dx - \int_{\mathbb{R}^2} \partial_2^{3} (u \cdot \nabla u) \cdot\partial_2^{3} u \; dx \\
& - \int_{\mathbb{R}^2} \partial_2^{3} \Big(\frac{\rho}{\rho + 1} \Delta u \Big)\cdot \partial_2^{3} u \; dx - \int_{\mathbb{R}^2} \partial_2^{3} (u \cdot \nabla b)\cdot \partial_2^{3} b\; dx, \\
I_{17} = & - \int_{\mathbb{R}^2} \partial_2^{3}(\rho \nabla \cdot u) \partial_2^{3} \rho \; dx + \int_{\mathbb{R}^2} \partial_2^{3} \Big(\frac{1}{\rho + 1} b \cdot \nabla b\Big) \cdot\partial_2^{3} u \; dx\\
 &- \int_{\mathbb{R}^2} \partial_2^{3} \Big(\frac{\rho}{\rho + 1} \nabla^{\bot} \cdot b \Big) \partial_2^{3} u_1 \; dx \\
 &+ \int_{\mathbb{R}^2} \partial_2^{3}(b \cdot \nabla u) \cdot\partial_2^{3} b \; dx - \int_{\mathbb{R}^2} \partial_2^{3}(b \nabla \cdot u) \cdot\partial_2^{3} b \; dx, \\
I_{18} = &  \int_{\mathbb{R}^2} \partial_2^{3} \Big(\nabla \rho - \frac{1}{\rho + 1} \nabla P \Big) \cdot \partial_2^{3} u \; dx - \int_{\mathbb{R}^2} \partial_2^{3} \Big(\frac{1}{2(\rho+1)} \nabla |b|^2 \Big) \cdot\partial_2^{3} u \; dx
\end{align*}

\textbf{Part 2.1: Estimate of linear terms}
\medskip
Executing integration by parts, we trivially observe that
\begin{equation}\label{I4}
  I_4 = 0.
\end{equation}

\textbf{Part 2.2: Estimate of favorable terms arising from the $\p_2^3$ derivatives}

$\bullet$ \textbf{Estimate of $I_{16}$:}
Utilizing the Leibniz rule and integration by parts, we can derive
\begin{equation}\nonumber
\begin{split}
I_{16} =& - \int_{\mathbb{R}^2} \sum_{k = 1}^3 \mathcal{C}_3^k \p_2^k u \cdot \nabla \p_2^{3-k} (\rho, u, b) \p_2^3(\rho, u, b) \; dx + \frac{1}{2}\int_{\mathbb{R}^2} \nabla \cdot u |\p_2^3(\rho, u, b)|^2 \; dx \\
& + \int_{\mathbb{R}^2} \p_2^2( \frac{\rho}{\rho + 1} \Delta u) \p_2^4 u \; dx
\end{split}
\end{equation}
Consequently, by virtue of H\"{o}lder's inequality and the Sobolev embedding theorem, together with Proposition \ref{prop-nonlinear-func}, there holds
\begin{equation}\label{I16}
\begin{split}
I_{16} \lesssim& \|(\rho, u, b)\|_{H^3} \|\p_2(\rho, u, b)\|_{H^2}^2 + \|\rho\|_{H^2} \|\nabla u\|_{H^3}^2.
\end{split}
\end{equation}

$\bullet$ \textbf{Estimate of $I_{17}$:}
In a similar vein as $I_{16}$, it is straightforward to obtain
\begin{equation}\label{I17}
\begin{split}
I_{17} \lesssim&  \|(\rho, u, b)\|_{H^3} \|\p_2(\rho, u, b)\|_{H^2}^2.
\end{split}
\end{equation}

$\bullet$ \textbf{Estimate of $I_{18}$:}
Let us define
\begin{equation}\label{def-w}
w(\rho)= \int_0^\rho \big(\frac{P'(r + 1)}{r + 1} - P'(1)\big) \; dr.
\end{equation}
It is readily verified that $w(0) = w'(0) = 0$. Employing integration by parts, we obtain
\begin{equation}\nonumber
\begin{split}
I_{18} =& \int_{\mathbb{R}^2} \partial_2^{2} \Big((\frac{P'(\rho + 1)}{\rho + 1} - 1) \nabla \rho \Big) \cdot \partial_2^{4} u \; dx + \int_{\mathbb{R}^2} \partial_2^{2} \Big(\frac{1}{2(\rho+1)} \nabla |b|^2 \Big) \cdot\partial_2^{4} u \; dx \\
=& \int_{\mathbb{R}^2} \partial_2^{2} \nabla w(\rho) \cdot \partial_2^{4} u \; dx + \int_{\mathbb{R}^2} \partial_2^{2} \Big(\frac{1}{2(\rho+1)} \nabla |b|^2 \Big) \cdot\partial_2^{4} u \; dx \\
\end{split}
\end{equation}
Appealing to H\"{o}lder's inequality and the Sobolev embedding theorem, we easily deduce
\begin{equation}\label{I18}
\begin{split}
I_{18} \lesssim \|(\rho, b)\|_{H^3}\|\p_2 (\rho, b)\|_{H^2} \|\nabla u\|_{H^3}.
\end{split}
\end{equation}

In what follows, we focus exclusively on the terms containing $\p_1^3$. By invoking the auxiliary lemmas established in Section \ref{lemma-pre}, we can effectively control these terms through the energies defined in \eqref{energy-set}.

\textbf{Part 2.3: Estimate of challenging terms generated by the $\p_1^3$ derivatives}

$\bullet$ \textbf{Estimate of $I_{5}$:}
Expanding the derivatives via the Leibniz rule, we decompose $I_5$ as
\begin{equation}\nonumber
\begin{split}
I_{5} =& -\int_{\mathbb{R}^2} \p_1^3 u \cdot \nabla \rho \p_1^3 \rho \; dx - 3\int_{\mathbb{R}^2} \p_1 u \cdot \nabla \p_1^2\rho \p_1^3 \rho \; dx\\
 &- 3\int_{\mathbb{R}^2} \p_1^2 u \cdot \nabla \p_1\rho \p_1^3 \rho \; dx - \int_{\mathbb{R}^2}  u \cdot \nabla \p_1^3\rho \p_1^3 \rho \; dx\\
=& \underbrace{-\int_{\mathbb{R}^2} \p_1^3 u_1 \p_1 \rho \p_1^3 \rho \; dx +\frac{3}{2}\int_{\mathbb{R}^2} \p_1^3 u_1 |\p_1^2 \rho|^2 \; dx}_{I_{5,1}} \\
&\underbrace{- 3\int_{\mathbb{R}^2} \p_1 u_1 \p_1^3\rho \p_1^3 \rho \; dx}_{I_{5,2}} + \underbrace{\frac{1}{2}\int_{\mathbb{R}^2}  \nabla \cdot u  \p_1^3\rho \p_1^3 \rho \; dx}_{I_{5,3}} \\
& \underbrace{-\int_{\mathbb{R}^2} \p_1^3 u_2 \cdot \p_2 \rho \p_1^3 \rho \; dx  - 3\int_{\mathbb{R}^2} \p_1 u_2 \p_2 \p_1^2\rho \p_1^3 \rho \; dx - 3\int_{\mathbb{R}^2} \p_1^2 u_2 \p_2 \p_1\rho \p_1^3 \rho \; dx}_{I_{5,4}}.
\end{split}
\end{equation}

 \quad $\diamond$ \textbf{Estimate of $I_{5,1}$:}
By virtue of Lemma \ref{lemma-p13u1}, we establish that
  \begin{equation}\nonumber
\begin{split}
&\int_0^t I_{5,1}  \; d\tau  \\
\lesssim& \sup_{0 \leq \tau \leq t} \int_{\mathbb{R}^2}\big|\p_1 \Omega  \cdot \p_1 \rho \cdot \p_1^3 \rho \big| + \big|\p_1 \Omega  \cdot \p_1^2 \rho \cdot \p_1^2 \rho \big|\;dx \\
 & + \sup_{0 \leq \tau \leq t}\|(\rho, u, b)\|_{H^3} \int_0^t (\|(\p_2 \rho, \p_2 b, \Omega)\|_{H^2}^2 + \|\nabla u\|_{H^3}^2 + \|(\p_t \rho, \p_t b_2)\|_{L^2}^2)
 \; d\tau \\
 \lesssim& \;\mathcal{E}_{\mathrm{total}}^\frac{3}{2}(t).
\end{split}
\end{equation}

 \quad $\diamond$ \textbf{Estimate of $I_{5,2}$:}
Invoking Lemma \ref{lemma-p1u1-r2}, we infer that
  \begin{equation}\nonumber
  \begin{split}
  \int_0^t I_{5,2}\; d\tau \lesssim& \sup_{0 \leq \tau \leq t} \int_{\mathbb{R}^2}\big| b_2 \cdot |\p_1^3 \rho|^2 \big| + \big|\p_1^2 \Omega  \cdot \p_1^3 \rho \cdot b_2 \big| +  (\rho^2+b_2^2) \cdot |\p_1^3\rho|^2\;dx \\
 & + \int_0^t \int_{\mathbb{R}^2} ( \Gamma^2+|b_1|^2) |\p_1^3(\rho, b_2)|^2\;dx\; d\tau \\
 & + \sup_{0 \leq \tau \leq t}\|(\rho, u, b)\|_{H^3} \cdot \int_0^t (\|(\p_2 \rho, \p_2 b, \Omega)\|_{H^2}^2 + \|\nabla u\|_{H^3}^2)\; d\tau \\
 \lesssim& \; \mathcal{E}_4(t) + \mathcal{E}_5(t)+\mathcal{E}_{\mathrm{total}}^\frac{3}{2}(t).
  \end{split}
  \end{equation}

 \quad $\diamond$ \textbf{Estimate of $I_{5,3}$:}
According to Lemma \ref{lemma-divu-r2}, it follows that
\begin{equation}\nonumber
  \begin{split}
    \int_0^t I_{5,3} \; d\tau \lesssim& \sup_{0\leq \tau \leq t} (\|\Omega\|_{H^2} \|\rho\|_{H^3}^2 + \|\rho\|_{H^3}^3) + \sup_{0\leq \tau \leq t} \|(\rho, u, b)\|_{H^3} \\
    & \cdot \int_0^t (\|(\p_2 \rho, \p_2 b, \Omega)\|_{H^2}^2 + \|\nabla u\|_{H^3}^2 + \|(\p_t\rho, \p_t b_2)\|_{L^2}^2) \; d\tau \\
    & + \int_0^t \int_{\mathbb{R}^2} \Gamma^2 |\p_1^3\rho|^2 \;dx d\tau \\
    \lesssim& \;\mathcal{E}_{\mathrm{total}}^\frac{3}{2}(t).
  \end{split}
\end{equation}

\quad $\diamond$ \textbf{Estimate of $I_{5,4}$:}
A direct application of H\"{o}lder's inequality ensures that
\begin{equation}\nonumber
  \begin{split}
    \int_0^t I_{5,4} \; d\tau \lesssim& \int_0^t \|\p_1 u_2\|_{H^3} \|\p_2 \rho\|_{H^2} \|\rho\|_{H^3} \; d\tau \\
    \lesssim& \;\mathcal{E}_{\mathrm{total}}^\frac{3}{2}(t).
  \end{split}
\end{equation}

Synthesizing the above bounds, we ultimately arrive at
\begin{equation}\label{I5}
  \begin{split}
    \int_0^t I_5 \; d\tau \lesssim \mathcal{E}_4(t) + \mathcal{E}_5(t)+\mathcal{E}_{\mathrm{total}}^\frac{3}{2}(t).
  \end{split}
\end{equation}

$\bullet$ \textbf{Estimate of $I_6$:}
We expand $I_6$ and regroup the terms as follows
\begin{equation}\nonumber
  \begin{split}
    I_6 =& -\int_{\mathbb{R}^2} \nabla \cdot u |\p_1^3 \rho|^2\;dx -3 \int_{\mathbb{R}^2} \p_1^2 \rho \p_1^2 u_1 \p_1^3 \rho\;dx-3 \int_{\mathbb{R}^2} \p_1 \rho \p_1^3 u_1 \p_1^3 \rho\;dx \\
    &-\int_{\mathbb{R}^2} \rho \p_1^4 u_1 \p_1^3 \rho\;dx -\sum_{l = 0}^2 \mathcal{C}_{3}^l \int_{\mathbb{R}^2} \p_1^l \rho \p_1^{3-l} \p_2 u_2 \p_1^3 \rho\;dx \\
    =& \underbrace{-\int_{\mathbb{R}^2} \nabla \cdot u |\p_1^3 \rho|^2\;dx}_{I_{6,1}} \\
    & \underbrace{+ \frac{3}{2} \int_{\mathbb{R}^2}  \p_1^3 u_1 |\p_1^2 \rho|^2\;dx-3 \int_{\mathbb{R}^2} \p_1 \rho \p_1^3 u_1 \p_1^3 \rho\;dx-\int_{\mathbb{R}^2} \rho \p_1^4 u_1 \p_1^3 \rho\;dx}_{I_{6,2}} \\
    &+\underbrace{\sum_{l = 0}^2 \mathcal{C}_{3}^l \int_{\mathbb{R}^2} \p_1^l \p_2 \rho \p_1^{3-l}  u_2 \p_1^3 \rho- \p_1^l  \rho \p_1^{4-l}  u_2 \p_1^2 \p_2 \rho- \p_1^{l+1}  \rho \p_1^{3-l}  u_2 \p_1^2 \p_2 \rho\;dx}_{I_{6,3}}.
  \end{split}
\end{equation}
Following an identical rationale as for $I_5$, we can readily deduce
\begin{equation}\label{I6}
  \begin{split}
    \int_0^t I_6 \; d\tau \lesssim \mathcal{E}_{\mathrm{total}}^\frac{3}{2}(t).
  \end{split}
\end{equation}

$\bullet$ \textbf{Estimate of $I_{12}$:}
We naturally split $I_{12}$ into two components:
\begin{equation}\nonumber
  \begin{split}
    I_{12} =& -\int_{\mathbb{R}^2} \p_1^3 (u \cdot \nabla b_1) \p_1^3 b_1 \;dx -\int_{\mathbb{R}^2} \p_1^3 (u \cdot \nabla b_2) \p_1^3 b_2 \;dx \\
    \triangleq& \, I_{12,1} + I_{12,2}.
  \end{split}
\end{equation}
Since $\p_1 b_1 = -\p_2 b_2$, the term $I_{12,1}$ can be straightforwardly estimated by
\begin{equation}\nonumber
  \begin{split}
    \int_0^t I_{12,1} \; d\tau \lesssim& \sup_{0 \leq \tau \leq t} \|b\|_{H^3} \int_0^t \|\p_1 u\|_{H^3} \|\p_2 b_2\|_{H^2} \; d\tau \\
    \lesssim& \;\mathcal{E}_{\mathrm{total}}^\frac{3}{2}(t).
  \end{split}
\end{equation}
For the term $I_{12,2}$, employing the same expansion technique as for $I_5$, we find
\begin{equation}\nonumber
\begin{split}
I_{12,2} =& -\int_{\mathbb{R}^2} \p_1^3 u \cdot \nabla b_2 \p_1^3 b_2 \; dx - 3\int_{\mathbb{R}^2} \p_1 u \cdot \nabla \p_1^2b_2 \p_1^3 b_2 \; dx - 3\int_{\mathbb{R}^2} \p_1^2 u \cdot \nabla \p_1b_2 \p_1^3 b_2 \; dx \\
&- \int_{\mathbb{R}^2}  u \cdot \nabla \p_1^3b_2 \p_1^3 b_2 \; dx\\
=& \underbrace{-\int_{\mathbb{R}^2} \p_1^3 u_1 \p_1 b_2 \p_1^3 b_2 \; dx +\frac{3}{2}\int_{\mathbb{R}^2} \p_1^3 u_1 |\p_1^2 b_2|^2 \; dx}_{I_{12,2,1}} \\
 &\underbrace{- 3\int_{\mathbb{R}^2} \p_1 u_1 \p_1^3b_2 \p_1^3 b_2 \; dx}_{I_{12,2,2}} + \underbrace{\frac{1}{2}\int_{\mathbb{R}^2}  \nabla \cdot u  \p_1^3b_2 \p_1^3 b_2 \; dx}_{I_{12,2,3}} \\
& \underbrace{-\int_{\mathbb{R}^2} \p_1^3 u_2 \cdot \p_2 b_2 \p_1^3 b_2 \; dx  - 3\int_{\mathbb{R}^2} \p_1 u_2 \p_2 \p_1^2b_2 \p_1^3 b_2 \; dx - 3\int_{\mathbb{R}^2} \p_1^2 u_2 \p_2 \p_1b_2 \p_1^3 b_2 \; dx}_{I_{12,2,4}}.
\end{split}
\end{equation}

\quad $\diamond$ \textbf{Estimate of $I_{12,2,1}$:}
Appealing to Lemma \ref{lemma-p13u1}, we establish that
  \begin{equation}\nonumber
\begin{split}
&\int_0^t I_{12,2,1} \; d\tau  \\
\lesssim& \sup_{0 \leq \tau \leq t} \int_{\mathbb{R}^2}\big|\p_1 \Omega  \cdot \p_1 b_2 \cdot \p_1^3 b_2 \big| + \big|\p_1 \Omega  \cdot \p_1^2 b_2 \cdot \p_1^2 b_2 \big|\;dx \\
 & + \sup_{0 \leq \tau \leq t}\|(\rho, u, b)\|_{H^3} \int_0^t (\|(\p_2 b_2, \p_2 b, \Omega)\|_{H^2}^2 + \|\nabla u\|_{H^3}^2 + \|(\p_t\rho, \p_t b_2)\|_{L^2}^2)
 \; d\tau \\
 \lesssim& \;\mathcal{E}_{\mathrm{total}}^\frac{3}{2}(t).
\end{split}
\end{equation}

\quad $\diamond$ \textbf{Estimate of $I_{12,2,2}$:}
By virtue of Lemma \ref{lemma-p1u1-r2}, there holds
    \begin{equation}\nonumber
  \begin{split}
  \int_0^t I_{12,2,2}\;d\tau \lesssim& \sup_{0 \leq \tau \leq t} \int_{\mathbb{R}^2} \big|b_2 \cdot |\p_1^3 b_2|^2\big|  + \big|\p_1^2 \Omega  \cdot \p_1^3 b_2 \cdot b_2 \cdot (1 + b_2) \big|+(\rho^2+|b_2|^2) \cdot |\p_1^3b_2|^2 \;dx \\
& +\int_0^t \int_{\mathbb{R}^2} (|b_1|^2 + \Gamma^2)|\p_1^3(\rho, b_2)|^2\;dx \; d\tau \\
 & + \sup_{0\leq \tau \leq t}\|(\rho, u, b)\|_{H^3} \int_0^t (\|(\p_2 \rho, \p_2 b, \Omega)\|_{H^2}^2 + \|\nabla u\|_{H^3}^2)\;d\tau \\
 \lesssim& \;  \mathcal{E}_4(t)+ \mathcal{E}_5(t)+\mathcal{E}_{\mathrm{total}}^\frac{3}{2}(t).
  \end{split}
  \end{equation}

\quad $\diamond$ \textbf{Estimate of $I_{12,2,3}$:}
Invoking Lemma \ref{lemma-divu-r2}, we readily obtain
  \begin{equation}\nonumber
\begin{split}
\int_0^t I_{12,2,3}\;d\tau \lesssim& \sup_{0\leq \tau \leq t}\int_{\mathbb{R}^2}\big| \rho |\p_1^3 b_2|^2\big| + \big|\p_1^2 \Omega  \cdot \p_1^3 b_2 \cdot \rho \big| +
2 \big|b_2 \cdot |\p_1^3b_2|^2 \cdot \rho \big| + |\p_1^3 b_2|^2 |b_2|^2\; dx \\
& +\int_0^t \int_{\mathbb{R}^2} \Gamma^2 |\p_1^3 b_2|^2 \;dx \;d\tau \\
& + \sup_{0\leq \tau \leq t}\|(\rho, u, b)\|_{H^3} \int_0^t (\|(\p_2 \rho, \p_2 b, \Omega)\|_{H^2}^2 + \|\nabla u\|_{H^3}^2 + \|(\p_t \rho, \p_t b_2)\|_{L^2}^2) \; d\tau \\
\lesssim& \;\mathcal{E}_5(t) +\mathcal{E}_{\mathrm{total}}^\frac{3}{2}(t) .
\end{split}
\end{equation}

\quad $\diamond$ \textbf{Estimate of $I_{12,2,4}$:}
A standard application of H\"{o}lder's inequality yields
\begin{equation}\nonumber
  \begin{split}
    \int_0^t I_{12,2,4} \; d\tau \lesssim& \int_0^t \|\p_1 u_2\|_{H^3} \|\p_2 b_2\|_{H^2} \|b_2\|_{H^3} \; d\tau \\
    \lesssim& \;\mathcal{E}_{\mathrm{total}}^\frac{3}{2}(t).
  \end{split}
\end{equation}

Aggregating these individual estimates, we conclude that
\begin{equation}\label{I12}
  \begin{split}
    \int_0^t I_{12} \; d\tau \lesssim  \mathcal{E}_4(t)+ \mathcal{E}_5(t)+\mathcal{E}_{\mathrm{total}}^\frac{3}{2}(t).
  \end{split}
\end{equation}

$\bullet$ \textbf{Estimate of $I_{14}$:}
By virtue of Leibniz's rule and integration by parts, we recast $I_{14}$ as
\begin{equation}\nonumber
\begin{split}
  I_{14} =& -\int_{\mathbb{R}^2} \nabla \cdot u  |\p_1^3b_2|^2\;dx -\int_{\mathbb{R}^2} \p_1^4 u_1  b_2 \p_1^3b_2 \;dx-\int_{\mathbb{R}^2} \p_1^3 \p_2 u_2  b_2 \p_1^3b_2 \;dx \\
  &-3 \int_{\mathbb{R}^2} \p_1^3 u_1 \p_1 b_2 \p_1^3b_2 \;dx-3\int_{\mathbb{R}^2} \p_1^2 \p_2 u_2  \p_1 b_2 \p_1^3b_2 \;dx \\
   & -3 \int_{\mathbb{R}^2} \p_1^2 u_1 \p_1^2 b_2 \p_1^3b_2 \;dx-3\int_{\mathbb{R}^2} \p_1 \p_2 u_2  \p_1^2 b_2 \p_1^3b_2 \;dx \\
   =& \underbrace{-\int_{\mathbb{R}^2} \nabla \cdot u |\p_1^3b_2|^2\;dx}_{I_{14,1}} \\
   &\underbrace{-\int_{\mathbb{R}^2} \p_1^4 u_1  b_2 \p_1^3b_2 \;dx-3 \int_{\mathbb{R}^2} \p_1^3 u_1 \p_1 b_2 \p_1^3b_2 \;dx + \frac{3}{2} \int_{\mathbb{R}^2} \p_1^3 u_1 |\p_1^2b_2|^2 \;dx}_{I_{14,2}} \\
   &\underbrace{-\int_{\mathbb{R}^2} \p_1^3 \p_2 u_2  b_2 \p_1^3b_2 \;dx-3\int_{\mathbb{R}^2} \p_1^2 \p_2 u_2  \p_1 b_2 \p_1^3b_2 \;dx-3\int_{\mathbb{R}^2} \p_1 \p_2 u_2  \p_1^2 b_2 \p_1^3b_2 \;dx}_{I_{14,3}} \\
   &\underbrace{-\int_{\mathbb{R}^2} \p_1^3(b_1 \nabla \cdot u) \p_1^3 b_1 \;dx}_{I_{14,4}}.
\end{split}
\end{equation}

 \quad $\diamond$ \textbf{Estimate of $I_{14,1}$:}
Invoking Lemma \ref{lemma-divu-r2} and replicating the arguments used for $I_{12,2,3}$, we find
\begin{equation}\nonumber
\begin{split}
\int_0^t I_{14,1}\;d\tau
\lesssim& \;\mathcal{E}_{\mathrm{total}}^\frac{3}{2}(t).
\end{split}
\end{equation}

 \quad $\diamond$ \textbf{Estimate of $I_{14,2}$:}
By virtue of Lemma \ref{lemma-p13u1}, there holds
\begin{equation}\nonumber
\begin{split}
\int_0^t I_{14,2}\;d\tau \lesssim& \sup_{0\leq \tau \leq t} \int_{\mathbb{R}^2}\big|\p_1^2 \Omega  \cdot b_2 \cdot \p_1^3 b_2 \big| + \big|\p_1 \Omega  \cdot \p_1 b_2 \cdot \p_1^3 b_2 \big| + \big|\p_1 \Omega  \cdot |\p_1^2 b_2|^2  \big|\;dx \\
 & + \sup_{0\leq \tau \leq t}\|(\rho, u, b)\|_{H^3} \int_0^t (\|(\p_2 \rho, \p_2 b, \Omega)\|_{H^2}^2 + \|\nabla u\|_{H^3}^2 + \|(\p_t \rho, \p_t b_2)\|_{L^2}^2)\;d\tau \\
 \lesssim& \;\mathcal{E}_{\mathrm{total}}^\frac{3}{2}(t).
\end{split}
\end{equation}

 \quad $\diamond$ \textbf{Estimate of $I_{14,3}$:}
Shifting the derivatives via integration by parts, we observe that
\begin{equation}\nonumber
  \begin{split}
    I_{14,3} =& \int_{\mathbb{R}^2} \p_1^3  u_2  \p_2 b_2 \p_1^3b_2 \;dx-\int_{\mathbb{R}^2} \p_1^4  u_2  b_2 \p_1^2\p_2 b_2 \;dx-\int_{\mathbb{R}^2} \p_1^3  u_2  \p_1 b_2 \p_1^2\p_2 b_2 \;dx\\
    &+3\int_{\mathbb{R}^2} \p_1^2 u_2  \p_1 \p_2 b_2 \p_1^3b_2 \;dx-3\int_{\mathbb{R}^2} \p_1^3 u_2  \p_1 b_2 \p_1^2\p_2 b_2 \;dx-6\int_{\mathbb{R}^2} \p_1^2 u_2  \p_1^2 b_2 \p_1^2\p_2 b_2 \;dx.
  \end{split}
\end{equation}
Consequently, applying H\"{o}lder's inequality and the Sobolev embedding theorem, we deduce
\begin{equation}\nonumber
  \begin{split}
    \int_0^t I_{14,3} \;d\tau \lesssim& \sup_{0\leq \tau \leq t} \|b_2\|_{H^3} \int_0^t \|\p_1 u_2\|_{H^3}^2 + \|\p_2 b_2\|_{H^2}^2 \;d\tau \\
    \lesssim& \;\mathcal{E}_{\mathrm{total}}^\frac{3}{2}(t).
  \end{split}
\end{equation}

\quad $\diamond$ \textbf{Estimate of $I_{14,4}$:}
Recalling the divergence-free condition $\p_1 b_1 = -\p_2 b_2$, it is straightforward to bound this term by
\begin{equation}\nonumber
  \begin{split}
    \int_0^t I_{14,4} \;d\tau \lesssim& \sup_{0\leq \tau \leq t} \|b_1\|_{H^3} \int_0^t \|\nabla \cdot u\|_{H^3}^2 + \|\p_2 b_2\|_{H^2}^2 \;d\tau \\
    \lesssim& \;\mathcal{E}_{\mathrm{total}}^\frac{3}{2}(t).
  \end{split}
\end{equation}

Gathering the estimates for $I_{14,1}$ through $I_{14,4}$, we secure the bound for $I_{14}$:
\begin{equation}\label{I14}
  \begin{split}
    \int_0^t I_{14} \; d\tau \lesssim \mathcal{E}_{\mathrm{total}}^\frac{3}{2}(t).
  \end{split}
\end{equation}

$\bullet$ \textbf{Estimate of $I_{15}$:}
Utilizing Leibniz's rule and integration by parts, we decompose $I_{15}$ as
\begin{equation}\nonumber
  \begin{split}
    I_{15} =& - \sum_{k = 1}^3 \mathcal{C}_{3}^k \int_{\mathbb{R}^2} \partial_1^k \frac{1}{2(\rho+1)} \nabla \partial_1^{3-k} |b|^2 \partial_1^{3} u \; dx \\
    & - \int_{\mathbb{R}^2} \frac{1}{2(\rho+1)} \partial_1^4 |b|^2 \partial_1^{3} u_1 \; dx  - \int_{\mathbb{R}^2} \frac{1}{2(\rho+1)} \partial_1^3\p_2 |b|^2 \partial_1^{3} u_2 \; dx \\
=& - \sum_{k = 1}^3 \mathcal{C}_{3}^k \int_{\mathbb{R}^2} \partial_1^k \frac{1}{2(\rho+1)} \nabla \partial_1^{3-k} |b|^2 \partial_1^{3} u \; dx +\int_{\mathbb{R}^2} \p_1 \frac{1}{2(\rho+1)} \partial_1^3 |b|^2 \partial_1^{3} u_1 \; dx \\
    & + \int_{\mathbb{R}^2} \frac{1}{2(\rho+1)} \partial_1^3 |b|^2 \partial_1^{4} u_1 \; dx \\
    & + \int_{\mathbb{R}^2} \p_1 \frac{1}{2(\rho+1)} \partial_1^2\p_2 |b|^2 \partial_1^{3} u_2 \; dx + \int_{\mathbb{R}^2} \frac{1}{2(\rho+1)} \partial_1^2\p_2 |b|^2 \partial_1^{4} u_2 \; dx \\
    \triangleq& \, I_{15,1} + I_{15,2} + I_{15,3}.
  \end{split}
\end{equation}

\quad $\diamond$ \textbf{Estimate of $I_{15,1}$:}
Invoking Propositions \ref{prop-anisotropic-est} and \ref{prop-nonlinear-func}, together with the \textit{a priori} assumption \eqref{ansatz}, we infer that
\begin{equation}\nonumber
\begin{split}
  I_{15,1} \lesssim& \|\p_1^3 \frac{1}{\rho +1}\|_{L^2} \|\nabla |b|^2\|_{L^\infty} \|\p_1^3 u\|_{L^2} + \|\p_1^2 \rho\|_{L_{x_1}^2L_{x_2}^\infty} \|\nabla \p_1 |b|^2\|_{L^2} \|\p_1^3 u\|_{L_{x_1}^\infty L_{x_2}^2}\\
  & + \|\p_1 \rho\|_{L^\infty} \|\nabla \p_1^2 |b|^2\|_{L^2} \|\p_1^3 u\|_{L^2} + \|\p_1\rho\|_{L^\infty} \|\p_1^3 |b|^2\|_{L^2} \|\p_1^3 u_1\|_{L^2} \\
  \lesssim&  \|(\rho, b)\|_{H^3} \|\p_2 (\rho, b)\|_{H^2} \|\p_1 u\|_{H^3}.
\end{split}
\end{equation}
Integrating this bound over time leads to
\begin{equation}\nonumber
\begin{split}
  \int_0^t I_{15,1}\; d\tau \lesssim& \sup_{0\leq \tau \leq t} \|(\rho, b)\|_{H^3} \int_0^t \|\p_2 (\rho, b)\|_{H^2} \|\p_1 u\|_{H^3} \; d\tau \\
  \lesssim& \;\mathcal{E}_{\mathrm{total}}^\frac{3}{2}(t).
\end{split}
\end{equation}

\quad $\diamond$ \textbf{Estimate of $I_{15,2}$:}
By virtue of Lemma \ref{lemma-p13u1}, it follows that
\begin{equation}\nonumber
\begin{split}
\int_0^t I_{15,2}\;d\tau \lesssim& \sup_{0\leq \tau \leq t} \sum_{k =0}^3 \int_{\mathbb{R}^2}\big|\p_1^2 \Omega  \cdot \p_1^k b_2 \cdot \p_1^{3-k} b_2 \cdot \frac{1}{\rho+1} \big| \;dx \\
 & + \sup_{0\leq \tau \leq t}\|(\rho, u, b)\|_{H^3} \int_0^t (\|(\p_2 \rho, \p_2 b, \Omega)\|_{H^2}^2 + \|\nabla u\|_{H^3}^2 + \|(\p_t \rho, \p_t b_2)\|_{L^2}^2)\;d\tau \\
 \lesssim& \;\mathcal{E}_{\mathrm{total}}^\frac{3}{2}(t).
\end{split}
\end{equation}

\quad $\diamond$ \textbf{Estimate of $I_{15,3}$:}
Following a parallel argument as for $I_{15,1}$, we readily obtain
\begin{equation}\nonumber
  \begin{split}
    \int_0^t I_{15,3}\;d\tau \lesssim& \sup_{0\leq \tau \leq t} \|b\|_{H^3} \int_0^t \|\p_2 b\|_{H^2} \|\p_1u_2\|_{H^3} \; d\tau \\
    \lesssim& \;\mathcal{E}_{\mathrm{total}}^\frac{3}{2}(t).
  \end{split}
\end{equation}

Summing up the bounds for $I_{15,1}$, $I_{15,2}$, and $I_{15,3}$ yields
\begin{equation}\label{I15}
  \begin{split}
    \int_0^t I_{15} \; d\tau \lesssim \mathcal{E}_{\mathrm{total}}^\frac{3}{2}(t).
  \end{split}
\end{equation}

$\bullet$ \textbf{Estimates of $I_7$ and $I_8$:}
Utilizing integration by parts and Proposition \ref{prop-anisotropic-est}, we directly arrive at
\begin{equation}\label{I7I8}
\begin{split}
\int_0^t( I_7 + I_8 )\; d\tau
\lesssim& \sup_{0 \leq \tau \leq t} \|\rho\|_{H^3} \int_0^t \|\nabla u\|_{H^3}^2 \; d\tau \\
\lesssim& \;\mathcal{E}_{\mathrm{total}}^\frac{3}{2}(t).
\end{split}
\end{equation}

$\bullet$ \textbf{Estimates of $I_9$ and $I_{13}$:}
By virtue of Leibniz's rule and integration by parts, and exploiting the condition $\nabla \cdot b = 0$, we can cancel the highest-order terms as follows:
\begin{equation}\nonumber
\begin{split}
I_9 + I_{13} =& \int_{\mathbb{R}^2} \sum_{k = 1}^3 \mathcal{C}_{3}^k\p_1^k (\frac{1}{\rho + 1} b) \cdot \nabla \p_1^{3-k} b \cdot \p_1^3 u \;dx \\
& + \int_{\mathbb{R}^2} \sum_{k = 1}^3 \mathcal{C}_{3}^k\p_1^k b \cdot \nabla \p_1^{3-k} u \cdot \p_1^3 b \;dx
- \int_{\mathbb{R}^2} \frac{\rho}{\rho + 1} b \cdot \nabla \p_1^3 b \cdot \p_1^3 u \;dx \\
& + \int_{\mathbb{R}^2}  b \cdot \nabla \p_1^3 b \cdot \p_1^3 u \;dx + \int_{\mathbb{R}^2} b \cdot \nabla \p_1^3 u \cdot \p_1^3 b \;dx \\
=& \int_{\mathbb{R}^2} \sum_{k = 1}^3 \mathcal{C}_{3}^k\p_1^k (\frac{1}{\rho + 1} b) \cdot \nabla \p_1^{3-k} b \cdot \p_1^3 u \;dx \\
& + \int_{\mathbb{R}^2} \sum_{k = 1}^3 \mathcal{C}_{3}^k\p_1^k b \cdot \nabla \p_1^{3-k} u \cdot \p_1^3 b \;dx
 - \int_{\mathbb{R}^2} \frac{\rho}{\rho + 1} b \cdot \nabla \p_1^3 b \cdot \p_1^3 u \;dx.
\end{split}
\end{equation}
Subsequently, we reformulate the expression in the following manner
\begin{equation}\label{I9I13}
\begin{split}
I_9 + I_{13}
=& \int_{\mathbb{R}^2} \sum_{k = 1}^3 \mathcal{C}_{3}^k\p_1^k \frac{1}{\rho + 1} b \cdot \nabla \p_1^{3-k} b \cdot \p_1^3 u \;dx \\
& + \int_{\mathbb{R}^2} \sum_{k = 1}^3 \mathcal{C}_{3}^k \sum_{l = 0}^{k-1}\mathcal{C}_{k}^{l} \p_1^l \frac{1}{\rho + 1}\p_1^{k-l} b \cdot \nabla \p_1^{3-k} b \cdot \p_1^3 u \;dx \\
& + \int_{\mathbb{R}^2} \sum_{k = 1}^3 \mathcal{C}_{3}^k\p_1^k b_1 \p_1^{4-k} u \cdot \p_1^3 b \;dx + \int_{\mathbb{R}^2} \sum_{k = 1}^2 \mathcal{C}_{3}^k\p_1^k b_2 \p_2\p_1^{3-k} u \cdot \p_1^3 b \;dx \\
& - \int_{\mathbb{R}^2} \frac{\rho}{\rho + 1} b \cdot \nabla \p_1^3 b \cdot \p_1^3 u \;dx + \int_{\mathbb{R}^2} \p_1^3 b_2 \p_2  u_1 \p_1^3 b_1 \;dx \\
& + \int_{\mathbb{R}^2} \p_1^3 b_2 \p_2  u_2 \p_1^3 b_2 \;dx
\end{split}
\end{equation}
Hence, appealing to H\"{o}lder's inequality and the Sobolev embedding theorem, all items above except the last one can be controlled by
\begin{equation}\nonumber
\begin{split}
\|(\rho, b)\|_{H^3} \|\p_2 (\rho, b)\|_{H^2} \|\nabla u\|_{H^3}.
\end{split}
\end{equation}
Here, we have used the \textit{a priori} assumption \eqref{ansatz}. For the last term on the right-hand side of \eqref{I9I13}, noting that $\p_2 u_2 = \nabla \cdot u - \p_1 u_1$, we observe that
\begin{equation}\nonumber
\begin{split}
\int_{\mathbb{R}^2} \p_1^3 b_2 \p_2  u_2 \p_1^3 b_2 \;dx =&
\int_{\mathbb{R}^2} \nabla \cdot u |\p_1^3 b_2|^2 \;dx - \int_{\mathbb{R}^2} \p_1 u_1 |\p_1^3 b_2|^2 \;dx
\end{split}
\end{equation}
In a similar vein as the estimates of $I_{14,1}$ and $I_{12,2,2}$, we establish that
\begin{equation}\nonumber
\begin{split}
\int_0^t \int_{\mathbb{R}^2} \p_1^3 b_2 \p_2  u_2 \p_1^3 b_2 \;dx \; d\tau \lesssim  \mathcal{E}_4(t) + \mathcal{E}_5(t)+\mathcal{E}_{\mathrm{total}}^\frac{3}{2}(t).
\end{split}
\end{equation}
Consolidating these bounds, we arrive at
\begin{equation}\label{I9I13}
\begin{split}
\int_0^t (I_9 + I_{13}) \;d\tau \lesssim  \mathcal{E}_4(t)+ \mathcal{E}_5(t)+\mathcal{E}_{\mathrm{total}}^\frac{3}{2}(t).
\end{split}
\end{equation}

$\bullet$ \textbf{Estimate of $I_{10}$:}
Recalling the identity $\nabla^\bot \cdot b = \p_2 b_1 - \p_1 b_2$, we rewrite $I_{10}$ as
\begin{equation}\nonumber
\begin{split}
I_{10} =& \int_{\mathbb{R}^2}  \p_1^2(\frac{\rho}{\rho + 1} \p_2 b_1) \p_1^4 u_1\;dx - \int_{\mathbb{R}^2}  \p_1^2(\frac{\rho}{\rho + 1} \p_1 b_2) \p_1^4 u_1\;dx
\end{split}
\end{equation}
For the first integral, it can be easily bounded by
\begin{equation}\nonumber
\|\rho\|_{H^3}  \|\p_2 b_1\|_{H^2} \|\p_1^4 u_1\|_{L^2}.
\end{equation}
For the second integral, invoking Lemma \ref{lemma-p13u1}, we finally deduce
\begin{equation}\label{I10}
\int_0^t I_{10}\; d\tau
\lesssim \mathcal{E}_{\mathrm{total}}^\frac{3}{2}(t).
\end{equation}

$\bullet$ \textbf{Estimate of $I_{11}$:}
Recalling the definition of $w$ in \eqref{def-w}, namely
    \begin{equation}\nonumber
w(\rho)= \int_0^\rho \big(\frac{P'(r + 1)}{r + 1} - P'(1)\big) \; dr,
\end{equation}
we can elegantly express $I_{11}$ as
\begin{equation}\nonumber
\begin{split}
I_{11} =& \int_{\mathbb{R}^2} \p_1^2(\frac{1}{\rho + 1} \nabla P - \nabla \rho) \p_1^4 u \; dx\\
=& \int_{\mathbb{R}^2} \p_1^2 \nabla w(\rho) \p_1^4 u \; dx\\
=& \int_{\mathbb{R}^2} \p_1^2 \p_2 w(\rho) \p_1^4 u_2 \; dx + \int_{\mathbb{R}^2} \p_1^3 w(\rho) \p_1^4 u_1 \; dx \\
\triangleq& \, I_{11,1} + I_{11,2}.
\end{split}
\end{equation}
Noting that $w(0) = w'(0) = 0$ and using Proposition \ref{prop-nonlinear-func}, we easily infer that
$$ I_{11,1} \lesssim \|\rho\|_{H^3} \|\p_2 \rho\|_{H^2} \|\p_1^4 u_2\|_{L^2}.$$
For the term $I_{11,2}$, utilizing Lemma \ref{lemma-p13u1} and the mean-value theorem $w'(\rho) = w''(r(\rho))\rho$, we find that
\begin{equation}\nonumber
\begin{split}
\int_0^t I_{11,2}\; d\tau \lesssim& \sup_{0\leq \tau \leq t} \int_{\mathbb{R}^2}\big|\p_1^2 \Omega   \p_1 \rho  \p_1 \rho  (w{'''}(\rho)\p_1\rho)\big|\; dx \\
& + \sup_{0\leq \tau \leq t} \int_{\mathbb{R}^2}3\big|\p_1^2 \Omega   \p_1 \rho \p_1^2 \rho  w{''}(\rho)\big|+ \big|\p_1^2 \Omega  \rho  \p_1^3 \rho w{''}(r(\rho))\big|\;dx \\
 & + \sup_{0\leq \tau \leq t} \|(\rho, u, b)\|_{H^3} \int_0^t \big(\|(\p_2 \rho, \p_2 b, \Omega)\|_{H^2}^2 + \|\nabla u\|_{H^3}^2 + \|(\p_t \rho, \p_t b_2)\|_{L^2}^2\big) \; d\tau \\
 \lesssim& \;\mathcal{E}_{\mathrm{total}}^\frac{3}{2}(t).
\end{split}
\end{equation}
Taking the estimates for both $I_{11,1}$ and $I_{11,2}$ into consideration we shall derive

\begin{equation}\label{I11}
  \int_0^t I_{11}\; d\tau \lesssim \;\mathcal{E}_{\mathrm{total}}^\frac{3}{2}(t).
\end{equation}

Collecting all the estimates derived in Part 1, namely \eqref{I1I2I3} and all the estimates derived in Part 2 for $\dot H^3$ norm, namely \eqref{I4}, \eqref{I5}, \eqref{I6}, \eqref{I7I8}, \eqref{I9I13}, \eqref{I10}, \eqref{I11}, \eqref{I12}, \eqref{I14}, \eqref{I15}, \eqref{I16}, \eqref{I17}, \eqref{I18}, we then complete the proof of this lemma.
\end{proof}

\section{Dissipation of density and magnetic field via vertical derivatives}

In this section, we shall fully exploit the dissipative structure of the system in the vicinity of a non-zero equilibrium state, as elaborated in Subsection \ref{subsection-dissipative-structure}. To this end, we establish the following crucial lemmas.

\begin{lemma}
For the energies defined in \eqref{energy-set}, the following estimate holds for all $t > 0$:
\begin{equation}\nonumber
  \begin{split}
    \mathcal{E}_1(t) \triangleq  \int_0^t \|\partial_2 \rho\|_{H^2}^2 \, d\tau \lesssim \mathcal{E}_0(t) + \mathcal{E}_4(t) + \mathcal{E}_{\mathrm{total}}^\frac{3}{2}(t).
  \end{split}
\end{equation}
\end{lemma}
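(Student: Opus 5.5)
We use the partially damped wave structure between $\rho$ and $u_2$. Rewrite the $u_2$-equation of \eqref{mhd1} as
\[
\p_2 \rho = -\p_t u_2 + \Delta u_2 + N_2,
\]
where $N_2$ collects $-u\cdot\nabla u_2$, $-\frac{\rho}{\rho+1}\Delta u_2$, the term $-\p_2 w(\rho)$ (i.e.\ $\frac{1}{\rho+1}\p_2 P-\p_2\rho$), and the magnetic terms $\frac{1}{\rho+1}\big(b\cdot\nabla b_2-\frac12\p_2|b|^2\big)$. We apply $\p^\alpha$ with $|\alpha|\le 2$ and take the $L^2$ inner product with $\p^\alpha\p_2\rho$. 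The dissipative linear part gives $\|\p^\alpha \p_2\rho\|_{L^2}^2$, and the viscous term is handled by Young: $\langle \p^\alpha\Delta u_2,\p^\alpha\p_2\rho\rangle\le \frac14\|\p^\alpha\p_2\rho\|^2+C\|\nabla u\|_{H^3}^2$.

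The time-derivative term is moved across in the standard way:
\[
-\langle \p^\alpha\p_t u_2,\p^\alpha\p_2\rho\rangle=-\frac{d}{dt}\langle \p^\alpha u_2,\p^\alpha\p_2\rho\rangle-\langle \p^\alpha\p_2 u_2,\p^\alpha\p_t\rho\rangle.
\]
We then substitute $\p_t\rho=-\nabla\cdot u-\nabla\cdot(\rho u)$. This gives $\langle\p^\alpha\p_2 u_2,\p^\alpha\nabla\cdot u\rangle\lesssim\|\nabla u\|_{H^2}^2$ plus a cubic piece.

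For the cubic piece $\langle \p^\alpha\p_2u_2,\p^\alpha(u\cdot\nabla\rho+\rho\nabla\cdot u)\rangle$ with $|\alpha|=2$, the term $u_1\p_1\p^\alpha\rho$ carries a $\p_1$ derivative of $\rho$ that is not dissipated. We integrate by parts in $x_1$ to move that derivative onto $\p^\alpha\p_2 u_2 \, u_1$, which is allowed since $\nabla u\in H^3$. This yields a bound $\|(\rho,u)\|_{H^3}\|\nabla u\|_{H^3}^2$. The boundary term $\sup_\tau|\langle \p^\alpha u_2,\p^\alpha\p_2\rho\rangle|$ is bounded by $\mathcal{E}_0(t)$. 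After integrating in time, the linear contributions give $\mathcal{E}_0(t)$ and the nonlinear ones give $\mathcal{E}_{\mathrm{total}}^{3/2}(t)$.

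The nonlinear terms $\langle\p^\alpha N_2,\p^\alpha\p_2\rho\rangle$ are treated by Hölder, Proposition \ref{prop-anisotropic-est} and Proposition \ref{prop-nonlinear-func}.
\begin{itemize}
\item The term $\p_2w(\rho)=w'(\rho)\p_2\rho$ with $w'(0)=0$ gives $\|\rho\|_{H^3}\|\p_2\rho\|_{H^2}^2$.
\item The term $\frac{\rho}{\rho+1}\Delta u_2$ gives $\|\rho\|_{H^2}\|\nabla u\|_{H^3}\|\p_2\rho\|_{H^2}$.
\item For the magnetic terms we use $b\cdot\nabla b_2-\frac12\p_2|b|^2=b_1\p_1b_2-b_1\p_2b_1=-b_1(\p_2 b_1-\p_1b_2)$, so every factor carries either $b_1$ or a derivative.
\end{itemize}

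\textbf{Main obstacle.} The main obstacle is the part of the magnetic terms where all derivatives fall on $\p_1 b_2$ (and on $\p_2 b_1$ via $\p_1b_1=-\p_2b_2$). Schematically this is $\int b_1\,\p_1^3 b_2\,\p_1^2\p_2\rho\,dx$. Neither $\p_1^3 b_2$ nor $b_1$ is directly dissipated, and $\|b_1\|_{L^\infty}$ is not controlled by $\|\p_2 b\|_{H^1}$. Here we treat $b_1\p_1^3b_2$ as a single quantity and apply Cauchy–Schwarz:
\[
\Big|\int b_1\p_1^3 b_2\,\p_1^2\p_2\rho\Big|\le \frac14\|\p_2\rho\|_{H^2}^2+C\int |b_1|^2|\p_1^3 b_2|^2dx .
\]
After time integration, the last integral is exactly the $\mathcal{E}_4(t)$ contribution in the statement. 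The lower-order distributions are handled anisotropically and give $\mathcal{E}_{\mathrm{total}}^{3/2}$. Absorbing the $\frac14$ terms and summing over $|\alpha|\le2$ yields $\mathcal{E}_1\lesssim\mathcal{E}_0+\mathcal{E}_4+\mathcal{E}_{\mathrm{total}}^{3/2}$.
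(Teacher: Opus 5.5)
Your proposal is correct and follows essentially the paper's route. You isolate $\partial_2\rho$ from the $u_2$-equation, pair with $\partial_2\rho$ in $H^2$, convert $\partial_t u_2$ into a total time derivative plus the continuity equation, and absorb the single critical product $b_1\partial_1^3 b_2$ by Young's inequality into $\mathcal{E}_4(t)$, exactly as the paper does for $N_{5,3}$.

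One small correction concerns the $x_1$-integration by parts. The factor $\|u_1\partial_1^2\rho\|_{L^2}$ should be bounded via Proposition \ref{prop-anisotropic-est} by $\|u_1\|_{L^2}^{1/2}\|\partial_1 u_1\|_{L^2}^{1/2}\|\partial_1^2\rho\|_{L^2}^{1/2}\|\partial_1^2\partial_2\rho\|_{L^2}^{1/2}$, because $\|u_1\|_{L^\infty}$ is not controlled by $\|\nabla u\|_{H^3}$ in 2D. That cubic piece is therefore $\lesssim\|(\rho,u)\|_{H^3}\|\nabla u\|_{H^3}^{3/2}\|\partial_2\rho\|_{H^2}^{1/2}$, not $\|(\rho,u)\|_{H^3}\|\nabla u\|_{H^3}^2$. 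It still integrates to $\mathcal{E}_{\mathrm{total}}^{3/2}(t)$.
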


\begin{proof}
We commence by rearranging the momentum equation for $u_2$ from \eqref{mhd1} to isolate the vertical derivative of the density:
\begin{equation}\nonumber
\begin{split}
\partial_2 \rho = & \; - \partial_t u_2+ \Big( 1- \frac{1}{\rho + 1} P'(\rho + 1)\Big) \partial_2 \rho + \frac{1}{\rho+1} \Delta u_2 -
u \cdot \nabla u_2  \\
&\quad + \frac{1}{\rho + 1} b \cdot \nabla b_2 - \frac{1}{2(\rho + 1)} \partial_2 |b|^2.
\end{split}
\end{equation}
Taking the $H^2$ inner product of this identity with $\partial_2 \rho$ yields the following energy equality:

\begin{equation}\label{p2_rho}
\begin{split}
\big\|\partial_2 \rho \big\|_{H^2}^2 = \sum_{i = 1}^{6} N_i,
\end{split}
\end{equation}
where
\begin{align*}
    N_1 =& -\sum_{k = 0}^2\int_{\mathbb{R}^2}  \nabla^k \partial_t u_2\, \nabla^k\partial_2 \rho\, dx,\\
    N_2 =& \sum_{k = 0}^2\int_{\mathbb{R}^2}    \nabla^k \Big( \big( 1- \frac{1}{\rho + 1} P'(\rho + 1) \big) \partial_2 \rho \Big)\,  \nabla^k\partial_2 \rho\,dx,\\
    N_3 =& \sum_{k = 0}^2\int_{\mathbb{R}^2}    \nabla^k \Big(\frac{1}{\rho+1} \Delta u_2 \Big)\,  \nabla^k\partial_2 \rho\,dx,\\
    N_4 =& -\sum_{k = 0}^2\int_{\mathbb{R}^2}   \nabla^k(u \cdot \nabla u_2)\, \nabla^k\partial_2 \rho\,dx,\\
    N_5 =& \sum_{k = 0}^2\int_{\mathbb{R}^2}   \nabla^k\Big(\frac{1}{\rho + 1} b \cdot \nabla b_2\Big)\, \nabla^k\partial_2 \rho\,dx,\\
    N_6 =& -\sum_{k = 0}^2\int_{\mathbb{R}^2}  \nabla^k \Big(\frac{1}{2(\rho + 1)} \partial_2 |b|^2 \Big)\, \nabla^k\partial_2 \rho\,dx.
 \end{align*}

$\bullet$ \textbf{Estimate of $N_1$:}
To control the time-derivative term $N_1$, we shift the temporal derivative via integration by parts and invoke the continuity equation for $\rho$, which yields
\begin{equation}\nonumber
\begin{split}
N_1 =& -\sum_{k = 0}^2 \frac{d}{dt} \int_{\mathbb{R}^2}  \nabla^k   u_2\, \nabla^k\partial_2 \rho\; dx
+ \sum_{k = 0}^2 \int_{\mathbb{R}^2}  \nabla^k  u_2\, \nabla^k\partial_2 \partial_t \rho\; dx\\
= & -\sum_{k = 0}^2 \frac{d}{dt} \int_{\mathbb{R}^2}  \nabla^k   u_2\, \nabla^k\partial_2 \rho\; dx
+ \sum_{k = 0}^2 \int_{\mathbb{R}^2}  \nabla^k   u_2\, \nabla^k\partial_2 (-\nabla \cdot u - \nabla \cdot (\rho u))\; dx.
\end{split}
\end{equation}
Consequently, applying standard Sobolev inequalities, we deduce that
\begin{equation}\label{N1}
\begin{split}
  &\Big|N_1 + \sum_{k = 0}^2 \frac{d}{dt} \int_{\mathbb{R}^2}  \nabla^k  u_2\, \nabla^k\partial_2 \rho\; dx \Big|\\
   \lesssim& \, \|\nabla u\|_{H^3}^2 + \|\rho\|_{H^3} \|\nabla u\|_{H^3}^2 + \|\partial_2 \rho\|_{H^2} \|\nabla u\|_{H^3}\|u\|_{H^3}.
\end{split}
\end{equation}

$\bullet$  \textbf{Estimates of $N_2$, $N_3$, $N_4$, and $N_6$:}
For these standard nonlinear terms, a routine application of H\"{o}lder's inequality and the Sobolev embedding theorem furnishes
\begin{equation}\label{N2346}
\begin{split}
N_2 + N_3 + N_4 + N_6 \lesssim& \;\|\rho\|_{H^3} \|\partial_2 \rho\|_{H^2}^2 + (1 + \|u\|_{H^3})\|\nabla u\|_{H^3} \|\partial_2 \rho\|_{H^2}\\
 &+ \|b\|_{H^3} \|\partial_2 b\|_{H^2} \|\partial_2 \rho\|_{H^2}.
\end{split}
\end{equation}

$\bullet$  \textbf{Estimate of $N_5$:}
Expanding the derivatives via the Leibniz rule first, we find that
\begin{equation}\nonumber
\begin{split}
N_5 =& \sum_{k = 0}^2\int_{\mathbb{R}^2}   \nabla^k\Big(\frac{1}{\rho + 1} b_1 \partial_1 b_2\Big)\, \nabla^k\partial_2 \rho\,dx
+ \sum_{k = 0}^2\int_{\mathbb{R}^2}   \nabla^k\Big(\frac{1}{\rho + 1} b_2 \partial_2 b_2\Big)\, \nabla^k\partial_2 \rho\,dx\\
=& \sum_{k = 0}^2\int_{\mathbb{R}^2}   \sum_{l = 1}^k \mathcal{C}_{k}^l \nabla^l b_1 \nabla^{k-l} \Big(\frac{1}{\rho + 1} \partial_1 b_2\Big)\, \nabla^k\partial_2 \rho\,dx\\
&+ \sum_{k = 0}^2\int_{\mathbb{R}^2}  b_1 \nabla^k \Big(\frac{1}{\rho + 1} \partial_1 b_2\Big)\, \nabla^k\partial_2 \rho\,dx + \sum_{k = 0}^2\int_{\mathbb{R}^2}   \nabla^k\Big(\frac{1}{\rho + 1} b_2 \partial_2 b_2\Big)\, \nabla^k\partial_2 \rho\,dx.
\end{split}
\end{equation}
To handle the highest-order derivatives carefully, we decompose $N_5$ into the following components:
\begin{equation}\nonumber
\begin{split}
N_5
=& \sum_{k = 0}^2\int_{\mathbb{R}^2}   \sum_{l = 1}^k \mathcal{C}_{k}^l \nabla^l b_1 \nabla^{k-l} \Big(\frac{1}{\rho + 1} \partial_1 b_2\Big)\, \nabla^k\partial_2 \rho\,dx\\
&+ \sum_{k = 0}^1\int_{\mathbb{R}^2}  b_1 \nabla^k \Big(\frac{1}{\rho + 1} \partial_1 b_2\Big)\, \nabla^k\partial_2 \rho\,dx + \int_{\mathbb{R}^2}  b_1  \sum_{l = 1}^2 \mathcal{C}_{2}^{l} \nabla^l\Big(\frac{1}{\rho + 1}\Big) \nabla^{2-l} \partial_1 b_2\, \nabla^2\partial_2 \rho\,dx\\
& + \int_{\mathbb{R}^2}  b_1  \Big(\frac{1}{\rho + 1} \nabla^2 \partial_1 b_2\Big)\, \nabla^2\partial_2 \rho\,dx \\
&+ \sum_{k = 0}^2\int_{\mathbb{R}^2}   \nabla^k\Big(\frac{1}{\rho + 1} b_2 \partial_2 b_2\Big)\, \nabla^k\partial_2 \rho\,dx\\
\triangleq& \, N_{5,1} + N_{5,2} + N_{5,3} + N_{5,4}.
\end{split}
\end{equation}
By virtue of standard Sobolev embeddings, the terms $N_{5,1}$, $N_{5,2}$, and $N_{5,4}$ satisfy
\begin{equation}\label{N51}
\begin{split}
N_{5,1} + N_{5,2} + N_{5,4} \lesssim& \|\nabla b_1\|_{H^2} (1 + \|\rho\|_{H^3}) \|\partial_1 b_2\|_{H^2} \|\partial_2 \rho\|_{H^2} \\
& + \|b_1\|_{L_{x_1}^\infty L_{x_2}^2} (1 + \|\rho\|_{H^3}) \|(\partial_1 b_2, \nabla \partial_1 b_2)\|_{L_{x_1}^2 L_{x_2}^\infty} \|\partial_2\rho\|_{H^2} \\
& + (1 + \|\rho\|_{H^3})\|b_2\|_{H^3} \|\partial_2 b_2\|_{H^2} \|\partial_2 \rho\|_{H^2}.
\end{split}
\end{equation}
It remains to estimate $N_{5,3}$. Applying Young's inequality, we obtain
\begin{equation}\label{N52}
\begin{split}
N_{5,3} \lesssim& \int_{\mathbb{R}^2} |b_1 \nabla^2 \partial_1 b_2 \nabla^2 \partial_2 \rho|\;dx \\
\lesssim & \, \frac{1}{c} \|b_1 \partial_1^3 b_2\|_{L^2}^2 + c\|\partial_2 \rho\|_{H^2}^2 + \|b\|_{H^3}\|(\partial_2 \rho, \partial_2 b_2)\|_{H^2}^2.
\end{split}
\end{equation}
Here, $c > 0$ should be chosen sufficiently small so that the highest-order contribution $c\|\partial_2 \rho\|_{H^2}^2$ can be absorbed by the left-hand side of \eqref{p2_rho}.

Integrating the identity \eqref{p2_rho} in time and synthesizing all the estimates derived above, namely \eqref{N1}, \eqref{N2346}, \eqref{N51} and \eqref{N52},  we ultimately deduce
\begin{equation}\nonumber
  \begin{split}
    \mathcal{E}_1(t) \triangleq&  \int_0^t \|\partial_2 \rho\|_{H^2}^2 \, d\tau \\
    \lesssim& \sup_{0 \leq \tau \leq t} \|u\|_{H^3} \|\rho\|_{H^3} + \int_0^t \|\nabla u\|_{H^3}^2 \; d\tau \\
&+ \sup_{0 \leq \tau \leq t} \|(\rho, b, u)\|_{H^3}^2 \cdot \int_0^t (\|\nabla u\|_{H^3}^2 + \|\partial_2(\rho, b)\|_{H^2}^2) \; d\tau \\
& + \int_0^t \int_{\mathbb{R}^2} |b_1|^2 |\partial_1^3 b_2|^2 \;dx \, d\tau \\
\lesssim& \;\mathcal{E}_0(t) + \mathcal{E}_4(t) + \mathcal{E}_{\mathrm{total}}^\frac{3}{2}(t).
  \end{split}
\end{equation}
This completes the proof of Lemma 5.1.
\end{proof}

\begin{lemma}
For the energies defined in \eqref{energy-set}, the following estimate holds for all $t > 0$:
\begin{equation}\nonumber
\begin{split}
  \mathcal{E}_2(t) \triangleq \int_0^t \|\partial_2 b\|_{H^2}^2 \; d\tau
  \lesssim \mathcal{E}_0(t) + \mathcal{E}_1(t) + \mathcal{E}_{\mathrm{total}}^\frac{3}{2}(t).
\end{split}
\end{equation}
\end{lemma}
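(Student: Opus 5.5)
We mirror the proof of the preceding lemma, now isolating the magnetic term in the $u_1$ momentum equation. From \eqref{mhd1} the first component reads
\begin{equation}\nonumber
\partial_2 b_1 - \partial_1 b_2 = \partial_t u_1 - \Delta u_1 + \frac{1}{\rho+1}\partial_1 P + u\cdot\nabla u_1 + \frac{\rho}{\rho+1}\big(\Delta u_1 + \nabla^\perp\cdot b\big) - \frac{1}{\rho+1} b\cdot\nabla b_1 + \frac{1}{2(\rho+1)}\partial_1|b|^2 .
\end{equation}
Since $\nabla\cdot b=0$, we have $\partial_1 b_2 = -\partial_1^2 b_1\,\Delta^{-1}$-type relations; more concretely $\|\partial_2 b\|_{H^2}\sim \|\nabla^\perp\cdot b\|_{H^1}$-controlled pieces are not directly available, so we use the identity $\partial_2 b_1 = -\partial_2\partial_1^{-1}\ldots$ only implicitly. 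The plan is rather to take the $H^2$ inner product of the above identity with $\partial_2 b_1$, and separately use the induction equation to control $\partial_2 b_2$. Indeed, $\partial_1 b_1=-\partial_2 b_2$ gives $\|\partial_2 b_2\|_{H^2}$ in terms of $\partial_1 b_1$, and pairing $-\partial_1 b_2$ with $\partial_2 b_1$ gives, after integration by parts twice, $\int \partial_2 b_2\,\partial_1 b_1 = -\|\partial_2 b_2\|^2$ (up to sign conventions), so that $\langle \nabla^\perp\cdot b,\partial_2 b_1\rangle_{H^2} = \|\partial_2 b_1\|_{H^2}^2 + \|\partial_2 b_2\|_{H^2}^2$. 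This produces the full $\|\partial_2 b\|_{H^2}^2$ on the left, which is the key structural step.

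Next the right-hand side terms. The time-derivative term $\langle\partial_t u_1,\partial_2 b_1\rangle_{H^2}$ is handled as $N_1$ previously: move $\frac{d}{dt}$ outside, producing the boundary term $\sup\|u\|_{H^3}\|b\|_{H^3}\lesssim\mathcal{E}_0$, and substitute $\partial_t b_1 = \partial_2 u_1 - u\cdot\nabla b_1 + b\cdot\nabla u_1 - b_1\nabla\cdot u$, integrating $\partial_2$ onto $u_1$, bounded by $\|\nabla u\|_{H^3}^2$ plus cubic terms. The viscous term $\Delta u_1$ is bounded by $\|\nabla u\|_{H^3}\|\partial_2 b\|_{H^2}$ and absorbed via Young. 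The pressure term $\frac{1}{\rho+1}\partial_1 P$ is the main obstacle: it contains the linear piece $\partial_1\rho$ paired with $\partial_2 b_1$; integrating by parts gives $\langle \partial_2\rho, \partial_1 b_1\rangle_{H^2}$, which by Young is $\le c\|\partial_2 b\|_{H^2}^2 + C\|\partial_2\rho\|_{H^2}^2$, and this is exactly why $\mathcal{E}_1$ appears on the right. The nonlinear pressure part is similar with an extra small factor.

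Finally, the remaining nonlinear terms $u\cdot\nabla u_1$, $\frac{\rho}{\rho+1}(\Delta u_1+\nabla^\perp\cdot b)$, $b\cdot\nabla b_1$ and $\partial_1|b|^2$ are estimated by H\"older, Propositions \ref{prop-anisotropic-est} and \ref{prop-nonlinear-func}, always arranging that at least one factor carries a $\partial_2$ derivative on $(\rho,b)$ or a derivative on $u$, giving bounds of the form $\|(\rho,u,b)\|_{H^3}(\|\nabla u\|_{H^3}^2+\|\partial_2(\rho,b)\|_{H^2}^2)$. Where a bare $\partial_1 b_2$ appears at top order (e.g.\ $b_1\partial_1^3 b_2$ against $\partial_2 b_1$), we rewrite $\partial_1 b_2$ via $\partial_2 b_1 - \nabla^\perp\cdot b$ or integrate by parts in $x_1$ to move it onto $\partial_2 b_1$ and use $\partial_1 b_1=-\partial_2 b_2$; I expect this to avoid $\mathcal{E}_4$. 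Integrating in time and absorbing $c\|\partial_2 b\|_{H^2}^2$ yields $\mathcal{E}_2\lesssim\mathcal{E}_0+\mathcal{E}_1+\mathcal{E}_{\mathrm{total}}^{3/2}$.
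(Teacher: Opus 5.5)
Your framework is the paper's. Testing the $u_1$-equation against $\partial_2 b_1$ in $H^2$ is, up to one integration by parts, the paper's step of applying $\partial_2$ and testing against $b_1$. Your identity $\langle\nabla^\perp\cdot b,\partial_2 b_1\rangle_{H^2}=\|\partial_2 b\|_{H^2}^2$ is the paper's $\langle\partial_2\nabla^\perp\cdot b,b_1\rangle_{H^2}=-\|\nabla b_1\|_{H^2}^2$. Your handling of the time-derivative, viscous and pressure terms matches $K_1$, $K_3$, $K_4$; the pressure, written as $\partial_1 w(\rho)$ and swapped to $\langle\partial_2 w(\rho),\partial_1 b_1\rangle_{H^2}$, is where $\mathcal{E}_1$ enters.

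The gap is in the nonlinear terms containing the non-dissipative factor $\partial_1 b_2$. These are $\frac{\rho}{\rho+1}\nabla^\perp\cdot b$ and, since $b\cdot\nabla b_1-\frac12\partial_1|b|^2=b_2\nabla^\perp\cdot b$, also $\frac{b_2}{\rho+1}\nabla^\perp\cdot b$. Your example $b_1\partial_1^3 b_2$ does not occur here: $b\cdot\nabla b_1=-b_1\partial_2 b_2+b_2\partial_2 b_1$ is harmless, and $b_1\partial_1 b_2$ belongs to the $u_2$-equation of the previous lemma.

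Your rule that "some factor carries a $\partial_2$ derivative" is not sufficient. The test function $\partial_2 b_1$ already supplies one full dissipative factor. In $\int\frac{\rho}{\rho+1}\,\partial^\alpha\partial_1 b_2\,\partial^\alpha\partial_2 b_1\,dx$, the coefficient $\rho$ (or $b_2$) contributes only $\|\rho\|_{H^1}^{1/2}\|\partial_2\rho\|_{H^1}^{1/2}$ via Proposition \ref{prop-anisotropic-est}. So the integrand carries only three halves powers of dissipative norms, and its time integral is not bounded uniformly in $t$.

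Your proposed remedies do not fix this. Rewriting $\partial_1 b_2=\partial_2 b_1-\nabla^\perp\cdot b$ is circular. Integrating by parts in $x_1$ merely moves the bad $\partial_1$ onto the coefficient, producing e.g. $\int\partial_1\big(\frac{\rho}{\rho+1}\big)\,b_2\,\partial_2 b_1\,dx$, in which $\partial_1\rho\,b_2$ again gives only half a power of dissipation. At top order the companion term contains $\partial_1^3\partial_2 b_1=-\partial_1^2\partial_2^2 b_2$, which lies beyond the $H^2$ dissipation in $\mathcal{E}_2$.

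The missing idea is to put the $\partial_2$ on the nonlinearity rather than on the test function, i.e. to estimate $-\langle\partial_2 N,b_1\rangle_{H^2}$. You then reuse for $N$ the identity behind your linear term, $\partial_2\nabla^\perp\cdot b=\Delta b_1$, together with $\|\nabla b_1\|_{H^2}=\|\partial_2 b\|_{H^2}$. This gives
\[
\partial_2\Big(\frac{\rho}{\rho+1}\nabla^\perp\cdot b\Big)=\partial_2\Big(\frac{\rho}{\rho+1}\Big)\nabla^\perp\cdot b+\frac{\rho}{\rho+1}\Delta b_1 ,
\]
and each piece has a full dissipative factor. The partner $b_1$ is dissipative in $\dot H^1\cap\dot H^2$; at top order you shift one derivative onto it, so only $\nabla\partial_2 N\in L^2$ is needed. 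At the $L^2$ level, the bound $\|b_1\|_{L^\infty_{x_1}L^2_{x_2}}\lesssim\|b_1\|_{L^2}^{1/2}\|\partial_1 b_1\|_{L^2}^{1/2}$ supplies the missing half power. The term $\frac{b_2}{\rho+1}\nabla^\perp\cdot b$ is handled identically.

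This is how the paper obtains $K_2+K_5\lesssim\|(\rho,b)\|_{H^3}\|\partial_2(\rho,b)\|_{H^2}^2$ without $\mathcal{E}_4$. With this step added, your outline closes.
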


\begin{proof}
In a parallel vein, we first rearrange the momentum equation for $u_1$ to extract the magnetic curl term:
\begin{equation}\nonumber
 - \nabla^\bot \cdot b =  -\partial_t u_1-\frac{\rho}{\rho + 1} \nabla^\bot \cdot b - u \cdot \nabla u_1 + \frac{1}{\rho+1}\Big(\Delta u_1 - \partial_1 P  + b \cdot \nabla b_1 - \frac{1}{2} \partial_1 |b|^2\Big).
\end{equation}
Applying $\partial_2$ to the above equation and taking the $H^2$ inner product with $b_1$, and noting the crucial structural identity
\begin{equation}\nonumber
\langle \partial_2 \nabla^\bot \cdot b, b_1 \rangle_{H^2} = \langle \Delta b_1, b_1 \rangle_{H^2} = -\|\nabla b_1\|_{H^2}^2 =- \|\partial_2 b\|_{H^2}^2,
\end{equation}
we arrive at the following relation:
\begin{equation}\nonumber
\|\partial_2 b\|_{H^2}^2 = \sum_{i = 1}^5 K_i,
\end{equation}
where
\begin{equation}\nonumber
\begin{split}
K_1 =& \langle \partial_t u_1, \partial_2 b_1 \rangle_{H^2}, \\
K_2 =& -\langle \partial_2 \Big(\frac{\rho}{\rho+1} \nabla^\bot \cdot b\Big), b_1 \rangle_{H^2}, \\
K_3 =& \langle u \cdot \nabla u_1 - \frac{1}{\rho + 1} \Delta u_1, \partial_2 b_1 \rangle_{H^2}, \\
K_4 =& - \langle \partial_2 \Big(\frac{1}{\rho+1} \partial_1 P\Big), b_1 \rangle_{H^2}, \\
K_5 =& \langle \partial_2 \Big(\frac{1}{\rho+1}\Big(b\cdot \nabla b_1 - \frac{1}{2} \partial_1  |b|^2\Big)\Big), b_1 \rangle_{H^2}.
\end{split}
\end{equation}

$\bullet$  \textbf{Estimate of $K_1$:}
For the time-derivative term $K_1$, we perform integration by parts in time and invoke the induction equation for $b_1$, which leads to
\begin{equation}\nonumber
\begin{split}
K_1 =& \frac{d}{dt}\langle u_1, \partial_2 b_1 \rangle_{H^2} + \langle \partial_2 u_1, \partial_t b_1 \rangle_{H^2} \\
    =& \frac{d}{dt}\langle u_1, \partial_2 b_1 \rangle_{H^2} + \langle \partial_2 u_1, \partial_2 u_1 - u \cdot \nabla b_1 + b \cdot \nabla u_1 - b_1 \nabla \cdot u \rangle_{H^2}.
\end{split}
\end{equation}
Consequently, standard Sobolev inequalities ensure that
\begin{equation}\label{K1}
\Big|K_1 -\frac{d}{dt}\langle u_1, \partial_2 b_1 \rangle_{H^2}\Big| \lesssim \|\partial_2 u_1\|_{H^2}(\|\partial_2 u_1\|_{H^2} + \|u\|_{H^3} \|\nabla b_1\|_{H^2} + \|b\|_{H^3} \|\nabla u\|_{H^2}).
\end{equation}

$\bullet$ \textbf{Estimate of $K_2$:}
Invoking the anisotropic bounds from Propositions \ref{prop-anisotropic-est} and \ref{prop-nonlinear-func} yields
\begin{equation}\label{K2}
\begin{split}
K_2 \lesssim& (\|\partial_2\rho\|_{L^2} \|\nabla^\bot \cdot b\|_{L_{x_1}^2L_{x_2}^\infty} +  \|\rho\|_{L_{x_1}^2L_{x_2}^\infty} \|\partial_2 \nabla^\bot \cdot b\|_{L^2}) \|b_1\|_{L_{x_1}^\infty L_{x_2}^2} \\
&  + (\|\partial_2\rho\|_{H^2} \|b\|_{H^3} + \|\rho\|_{H^3} \|\partial_2 b\|_{H^2}) \|\nabla b_1\|_{H^2} \\
\lesssim& \|(\rho, b)\|_{H^3} \|\partial_2(\rho, b)\|_{H^2}^2.
\end{split}
\end{equation}

$\bullet$  \textbf{Estimate of $K_3$:}
A direct application of H\"{o}lder's inequality gives
\begin{equation}\label{K3}
K_3 \lesssim \|(\rho, u)\|_{H^3} \|\nabla u\|_{H^3} \|\partial_2 b_1\|_{H^2}.
\end{equation}

$\bullet$  \textbf{Estimate of $K_4$:}
Let us introduce the auxiliary function $w(\rho) = \int_{0}^\rho \frac{P'(r+1)}{r + 1} \; dr$. Then, shifting the spatial derivatives via integration by parts, we observe that
\begin{equation}\label{K4}
\begin{split}
K_4 =& - \langle \partial_1 \partial_2 w(\rho), b_1 \rangle_{H^2}\\
 =& -\langle \partial_2 w(\rho), \partial_2 b_2 \rangle_{H^2} \\
 \lesssim& \, \|\partial_2 \rho\|_{H^2} \|\partial_2 b_2 \|_{H^2}.
\end{split}
\end{equation}

$\bullet$  \textbf{Estimate of $K_5$:}
Appealing once again to Propositions \ref{prop-anisotropic-est} and \ref{prop-nonlinear-func}, we infer that
\begin{equation}\label{K5}
\begin{split}
K_5 =& \langle \partial_2 \Big(\frac{1}{\rho+1}\Big(b\cdot \nabla b_1 - \frac{1}{2} \partial_1  |b|^2\Big)\Big), b_1 \rangle_{L^2} \\
 & + \langle \partial_2 \Big(\frac{1}{\rho+1}\Big(b\cdot \nabla b_1 - \frac{1}{2} \partial_1  |b|^2\Big)\Big), b_1 \rangle_{\dot H^1 \cap \dot H^2} \\
\lesssim& \, (\|\partial_2 \rho\|_{L^2}\|b\|_{L_{x_1}^2L_{x_2}^\infty}\|\nabla b\|_{L^\infty} + \|\partial_2 (b, \nabla b)\|_{L^2}\|(b, \nabla b)\|_{L_{x_1}^2 L_{x_2}^\infty})\|b_1\|_{L_{x_1}^\infty L_{x_2}^2}\\
 &+ (\|\partial_2 b\|_{H^2} + \|\partial_2 \rho\|_{H^2})\|b\|_{H^3} \|\nabla b_1\|_{H^2} \\
\lesssim& \, (\|\partial_2 b\|_{H^2} + \|\partial_2 \rho\|_{H^2})\|b\|_{H^3} \|\partial_2 b\|_{H^2}.
\end{split}
\end{equation}
Aggregating the bounds for $K_1$ through $K_5$, namely \eqref{K1}, \eqref{K2}, \eqref{K3}, \eqref{K4} and \eqref{K5}, we obtain the desired integral estimate:
\begin{equation}\nonumber
\begin{split}
  \int_0^t \|\partial_2 b\|_{H^2}^2 \; d\tau \lesssim& \sup_{0\leq \tau \leq t} \big|\langle u_1, \partial_2 b_1 \rangle_{H^2}\big| + \int_0^t \|\partial_2 u_1\|_{H^2}(\|\partial_2 u_1\|_{H^2} + \|u\|_{H^3} \|\nabla b_1\|_{H^2}\\
   &+ \|b\|_{H^3} \|\nabla u\|_{H^2}) \; d\tau \\
  & + \sup_{0 \leq \tau \leq t} \|(\rho, b, u)\|_{H^3} \int_0^t \|\partial_2(\rho, b)\|_{H^2}^2 + \|\nabla u\|_{H^3}^2 \; d\tau \\
  & + \int_0^t \|\partial_2 \rho\|_{H^2} \|\partial_2 b_2 \|_{H^2} \; d\tau \\
  \lesssim& \;\mathcal{E}_0(t) + \mathcal{E}_1(t) + \mathcal{E}_{\mathrm{total}}^\frac{3}{2}(t).
\end{split}
\end{equation}
This concludes the proof of Lemma 5.2 also finishes this section.
\end{proof}

\vskip .3in
\section{Dissipation of modified effective viscous flux and critical time-decay terms}
\label{estimate-omega}

This section is devoted to establishing the estimates for the modified effective viscous flux $(\Gamma, \Omega)$ and the critical time-decay terms, the structural motivations of which have been elaborated in Subsection \ref{subsection-dissipative-structure}.

\begin{lemma}
For the energies defined in \eqref{energy-set}, the following estimate holds for all $t > 0$:
\begin{equation}\nonumber
\begin{split}
 \mathcal{E}_3(t) \triangleq \int_0^t \big(\|\Omega\|_{H^2}^2 + \|\nabla \Gamma\|_{L^2}^2\big) \, d\tau
 \lesssim \mathcal{E}_0(t) + \mathcal{E}_1(t) + \mathcal{E}_2(t)+ \mathcal{E}_{\mathrm{total}}^\frac{3}{2}(t).
\end{split}
\end{equation}
\end{lemma}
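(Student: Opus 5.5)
We estimate the two pieces of $\mathcal{E}_3(t)$ separately, mirroring the proofs for $\mathcal{E}_1$ and $\mathcal{E}_2$. In each case we isolate the target quantity from the momentum equation, pair it with itself, and move the time derivative onto the other factor.

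\textbf{The $\Omega$ part.} Rewrite \eqref{equ1} as
\[
\Omega=\partial_t u_1+u\cdot\nabla u_1-\Delta u_1+\frac{\rho}{\rho+1}\big(\Delta u_1+\Omega\big).
\]
Take the $H^2$ inner product with $\Omega$, so that $\|\Omega\|_{H^2}^2$ equals a sum of four terms.

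The term $\langle \partial_t u_1,\Omega\rangle_{H^2}$ is written as
\[
\frac{d}{dt}\langle u_1,\Omega\rangle_{H^2}-\langle u_1,\partial_t\Omega\rangle_{H^2}.
\]
We then substitute \eqref{eqomega} for $\partial_t\Omega$. This gives
\[
\langle u_1,-u\cdot\nabla\Omega+2\partial_1^2u_1+\partial_2^2u_1+\partial_1\partial_2u_2+Q\rangle_{H^2}.
\]
Each linear term has at least one derivative available to move onto $u_1$, so after integrating by parts it is bounded by $\|\nabla u\|_{H^3}^2$. The transport term is handled by putting one derivative back on $u_1$. Every term of $Q$ contains a factor $\nabla u$; after integrating by parts once, these terms are bounded by $\|(\rho,u,b)\|_{H^3}^2\|\nabla u\|_{H^3}$ times an $H^3$ norm, and the bound is closed by Young. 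The remaining terms $\langle u\cdot\nabla u_1-\Delta u_1,\Omega\rangle_{H^2}$ are bounded by $C\|\nabla u\|_{H^3}^2+\tfrac14\|\Omega\|_{H^2}^2$ plus a cubic remainder. The term $\langle \frac{\rho}{\rho+1}\Omega,\Omega\rangle_{H^2}$ is bounded by $\|\rho\|_{H^3}\|\Omega\|_{H^2}^2$. We have $\Omega\in H^2$ because $\Omega$ is built from first derivatives of $(\rho,b)\in H^3$. Integrating in time gives
\[
\int_0^t\|\Omega\|_{H^2}^2\,d\tau\lesssim \sup_\tau|\langle u_1,\Omega\rangle_{H^2}|+\mathcal{E}_0(t)+\mathcal{E}_{\mathrm{total}}^{3/2}(t)\lesssim \mathcal{E}_0(t)+\mathcal{E}_{\mathrm{total}}^{3/2}(t).
\]

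\textbf{The $\nabla\Gamma$ part.} Use the elliptic identity \eqref{eq-div-rho+b2}, and test it against $\Gamma$ in $L^2$; equivalently, work with the gradient form $\nabla\Gamma=-\partial_t u+\Delta u+\cdots$ obtained from the momentum equation. This yields
\[
\|\nabla\Gamma\|_{L^2}^2=\langle \nabla\cdot u_t,\Gamma\rangle+\langle\nabla\Delta\cdot u,\nabla\Gamma\rangle-\langle\mathcal{H},\Gamma\rangle
\]
after integrating by parts. The term $\langle \nabla\cdot u_t,\Gamma\rangle$ becomes
\[
\frac{d}{dt}\langle\nabla\cdot u,\Gamma\rangle-\langle\nabla\cdot u,\partial_t\Gamma\rangle.
\]
From the $\rho$ and $b_2$ equations,
\[
\partial_t\Gamma=-\partial_1u_1-P'(\tilde\rho)\tilde\rho\nabla\cdot u+\text{(quadratic)}.
\]
Hence $|\langle\nabla\cdot u,\partial_t\Gamma\rangle|\lesssim\|\nabla u\|_{L^2}^2$ plus a cubic remainder. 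The viscous term is bounded by $C\|\nabla u\|_{H^2}^2+\tfrac14\|\nabla\Gamma\|_{L^2}^2$.

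\textbf{Main obstacle.} The hardest step is $\langle\mathcal{H},\Gamma\rangle$, where $\mathcal{H}$ is in divergence form. After integrating by parts we must bound $\langle b\cdot\nabla b-\tfrac12\nabla|b_1|^2,\nabla\Gamma\rangle$ using only $\partial_2$-dissipation of $b$. The first step is to rewrite
\[
b\cdot\nabla b-\tfrac12\nabla|b_1|^2=\big(b_1\partial_1b_1+b_2\partial_2b_1-\partial_1b_1\,b_1,\ \ldots\big).
\]
Using $\partial_1b_1=-\partial_2b_2$, every component should then carry a $\partial_2$-derivative or be of the form $b_1\partial_2(\cdot)$. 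These components are bounded, via Proposition \ref{prop-anisotropic-est}, by
\[
\|b\|_{H^2}\|\partial_2 b\|_{H^1}\|\nabla\Gamma\|_{L^2}.
\]
The $\frac{\rho}{\rho+1}(-\Omega-\Delta u_1,\ \partial_2P-\cdots)$ terms are handled with $\|\rho\|_{L^\infty}(\|\Omega\|_{L^2}+\|\nabla u\|_{H^1}+\|\partial_2(\rho,b)\|_{L^2})$; the $\Omega$ contribution comes from the first part. The term $\nabla\cdot(u\cdot\nabla u)$ is bounded directly.

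\textbf{Conclusion.} Collecting these bounds, absorbing the small multiples of $\|\nabla\Gamma\|_{L^2}^2$, and integrating in time gives
\[
\int_0^t\|\nabla\Gamma\|_{L^2}^2\,d\tau\lesssim\mathcal{E}_0(t)+\mathcal{E}_1(t)+\mathcal{E}_2(t)+\mathcal{E}_{\mathrm{total}}^{3/2}(t),
\]
where $\sup_\tau|\langle\nabla\cdot u,\Gamma\rangle|\lesssim\mathcal{E}_0(t)$. Adding this to the $\Omega$ estimate completes the plan.
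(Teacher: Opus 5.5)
\textbf{Gap in the $\Omega$ part.} Your $\Omega$ estimate follows the paper's Part 1 (pair \eqref{equ1} with $\Omega$ in $H^2$, move $\partial_t$ onto $\Omega$, substitute \eqref{eqomega}). However, the step for $\langle u_1,Q\rangle_{H^2}$ fails as written at the $L^2$ level.

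Not every term of $Q$ is in divergence form. The terms $\partial_1u\cdot\nabla b_2$ and $\partial_1u\cdot\nabla P$ produce contributions $\int u_1\,\partial_1u_1\,\partial_1 b_2\,dx$ and $\int u_1\,\partial_1u_1\,\partial_1P\,dx$. In these, only $\partial_1u_1$ is dissipated; $u_1$, $\partial_1 b_2$ and $\partial_1P$ are controlled only in $L^\infty_t$.

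Integrating by parts does not create a second dissipative factor; for example, $\int u_1\partial_1u_1\partial_1P\,dx=-\tfrac12\int u_1^2\,\partial_1^2P\,dx$. Any bound of the form $\|(\rho,u,b)\|_{H^3}^k\|\nabla u\|_{H^3}$ is time-integrable on $[0,\infty)$ only if $\|\nabla u\|_{H^3}\in L^1_t$, which is exactly the decay this paper avoids assuming. Young's inequality cannot repair this, because the leftover non-dissipative power integrates to something of size $t$.

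What is needed is Proposition \ref{prop-anisotropic-est}, as the paper uses:
\[
\Big|\int_{\mathbb{R}^2} u_1\,\partial_1u_1\,\partial_1P\,dx\Big|\lesssim \|u_1\|_{L^2}^{\frac12}\|\partial_1u_1\|_{L^2}^{\frac12}\,\|\partial_1P\|_{L^2}^{\frac12}\|\partial_1\partial_2P\|_{L^2}^{\frac12}\,\|\partial_1u_1\|_{L^2}.
\]
Each of the two non-dissipated factors then contributes half a power of $\|\partial_1u\|_{L^2}$ or $\|\partial_2\rho\|_{H^1}$. Hölder in time ($\int f^{3/2}g^{1/2}\le \|f\|_{L^2_t}^{3/2}\|g\|_{L^2_t}^{1/2}$) gives $\lesssim\mathcal{E}_{\mathrm{total}}^{3/2}$. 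The same applies to the ``cubic remainder'' in your $\langle\nabla\cdot u,\partial_t\Gamma\rangle$, which contains $\langle\nabla\cdot u,\,u_1\partial_1(\rho,b_2)\rangle$.

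\textbf{Different route for $\nabla\Gamma$.} Here you depart from the paper, which uses no PDE at this step. From \eqref{defomega} and \eqref{defgamma} one has the pointwise identities
\[
\partial_1\Gamma=(1+b_2)\partial_2b_1-\Omega,\qquad \partial_2\Gamma=(1+b_2)\partial_2b_2+P'(1+\rho)\partial_2\rho .
\]
So $\int_0^t\|\nabla\Gamma\|_{L^2}^2\,d\tau$ is bounded by $\int_0^t\|\Omega\|_{L^2}^2\,d\tau+\mathcal{E}_1+\mathcal{E}_2$ plus cubic terms; this is why $\mathcal{E}_1$ and $\mathcal{E}_2$ appear on the right-hand side.

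Your elliptic argument (test \eqref{eq-div-rho+b2} against $\Gamma$, move $\partial_t$ onto $\Gamma$, substitute the $\rho$ and $b_2$ equations) is also sound. The linear magnetic forcing $\partial_2 b$ has already dropped out of \eqref{eq-div-rho+b2} because $\nabla\cdot b=0$. All terms of $\mathcal{H}$ are in divergence form, so after one integration by parts each is paired with $\nabla\Gamma$. The component $b_1\partial_1b_2$ is not literally of the form $b_1\partial_2(\cdot)$, but Proposition \ref{prop-anisotropic-est} with $\partial_1b_1=-\partial_2b_2$ still gives your stated bound.

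Your route even shows that $\int_0^t\|\nabla\Gamma\|_{L^2}^2\,d\tau$ depends linearly only on $\mathcal{E}_0$. The cost is a time-boundary term and several nonlinear remainders, each again needing the anisotropic bookkeeping above. The paper's identity is two lines once $\int_0^t\|\Omega\|_{L^2}^2\,d\tau$, $\mathcal{E}_1$ and $\mathcal{E}_2$ are available.
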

\begin{proof}
The proof is naturally partitioned into two parts.

{\bf Part 1: Estimate of $\Omega$}

We commence by recalling the structural identity
\begin{equation}\nonumber
  \begin{split}
    \partial_t u_1 + u \cdot \nabla u_1 - \Delta u_1 - \Omega = -\frac{\rho}{\rho+1}\big(\Delta u_1 + \Omega \big).
  \end{split}
\end{equation}
Taking the $H^2$ inner product of the above equation with $\Omega$, we obtain
\begin{equation}\nonumber
\|\Omega\|_{H^2}^2 = \sum_{i = 1}^3 M_i,
\end{equation}
where
\begin{equation}\nonumber
\begin{split}
M_1 &= \langle \partial_t u_1, \Omega \rangle_{H^2}, \\
M_2 &= \langle u \cdot \nabla u_1-\Delta u_1, \Omega \rangle_{H^2}, \\
M_3 &= \Big\langle \frac{\rho}{\rho + 1}(\Delta u_1 + \Omega), \Omega \Big\rangle_{H^2}.
\end{split}
\end{equation}

$\bullet$  \textbf{Estimate of $M_1$:}
Shifting the temporal derivative via integration by parts yields
\begin{equation}\nonumber
\begin{split}
M_1 &= \frac{d}{dt}\langle u_1, \Omega \rangle_{H^2} - \langle u_1, \partial_t \Omega \rangle_{H^2} \triangleq M_{1,1} + M_{1,2}.
\end{split}
\end{equation}
Substituting the evolution equation for $\Omega$ from \eqref{eqomega}, the term $M_{1,2}$ can be recast as
\begin{equation}\nonumber
\begin{split}
M_{1,2} &= -\langle u_1, 2 \partial_1^2 u_1 + \partial_2^2 u_1 + \partial_1 \partial_2 u_2 \rangle_{H^2} -\langle u_1, - u \cdot \nabla \Omega \rangle_{H^2} -\langle u_1, Q \rangle_{H^2} \\
&\triangleq M_{1,2,1} + M_{1,2,2} + M_{1,2,3}.
\end{split}
\end{equation}
Here, $Q$ is defined in \eqref{Q}, namely,
\begin{equation}\nonumber
\begin{split}
Q &= \partial_1 u\cdot \nabla b_2 - \frac{1}{2} \partial_1 u \cdot \nabla (|b|^2 )+ \partial_1 u \cdot \nabla P + \nabla^\perp \cdot (b \cdot \nabla u) \\
&\quad - \partial_2 u \cdot \nabla b_1 - \partial_1 (b \cdot \nabla^\perp u_1) + b \cdot \nabla (\partial_2 u_1) + \nabla^\perp u_1 \cdot \nabla b_1 \\
&\quad + b \cdot \nabla(b \cdot \nabla u_1)- \partial_1 \big( b \cdot (b \cdot \nabla u) \big)- \nabla^\perp \cdot (b \nabla \cdot u) + \partial_1 (|b|^2 \nabla \cdot u) \\
&\quad + \partial_1 \Big\{ \big(P'(\tilde \rho) \tilde \rho - 1 \big) \nabla \cdot u \Big\} - \nabla \cdot u (b \cdot \nabla b_1) - b \cdot \nabla (b_1 \nabla \cdot u).
\end{split}
\end{equation}
For $M_{1,2,1}$ and $M_{1,2,2}$, executing integration by parts gives
\begin{equation}\nonumber
\begin{split}
M_{1,2,1} + M_{1,2,2} &= \langle \p_1 u_1, 2 \partial_1 u_1 + \partial_2 u_2 \rangle_{H^2} + \langle \p_2 u_1, \partial_2 u_1 \rangle_{H^2}\\
& =-\langle u_1, \nabla \cdot u \, \Omega \rangle_{H^2} - \langle \nabla u_1, u \, \Omega \rangle_{H^2} \\
&\lesssim \|\nabla u\|_{H^2}^2 + \|u\|_{H^2} \|\nabla u\|_{H^2} \|\Omega\|_{H^2}.
\end{split}
\end{equation}
For $M_{1,2,3}$, we observe that each term in $Q$ contains at least one velocity derivative $\partial_i u_j$. Hence,
\begin{equation}\nonumber
\begin{split}
M_{1,2,3} &= -\langle u_1, Q \rangle_{L^2} - \langle u_1, Q \rangle_{\dot H^1 \cap \dot H^2} \\
&\lesssim \|u_1\|_{L_{x_1}^\infty L_{x_2}^2} \|(\rho, b, \nabla \rho, \nabla b)\|_{L_{x_1}^2 L_{x_2}^\infty} \|\nabla u\|_{L^2} + \|\nabla u_1\|_{H^3} \|Q\|_{L^2} \\
&\lesssim \|(\rho, u, b)\|_{H^2} \|\partial_1 u_1\|_{L^2}^\frac{1}{2} \|\partial_2 (\rho, b)\|_{L^2}^\frac{1}{2} \|\nabla u\|_{L^2} + \|(\rho, u, b)\|_{H^3} \|\nabla u\|_{H^3}^2.
\end{split}
\end{equation}
Consequently, integrating these bounds in time yields
\begin{equation}\label{M1}
\begin{split}
  \int_0^t M_1 \, d\tau &\lesssim \sup_{0\leq \tau \leq t} \|u_1\|_{H^2} \|\Omega\|_{H^2} + \int_0^t \|\nabla u\|_{H^2}^2 \, d\tau  + \sup_{0 \leq \tau \leq t} \|u\|_{H^2} \int_0^t \|\nabla u\|_{H^2} \|\Omega\|_{H^2} \, d\tau \\
   &\quad + \sup_{0 \leq \tau \leq t} \|(\rho, u, b)\|_{H^3} \int_0^t \|\nabla u\|_{H^3}^2 + \|(\p_2 \rho, \p_2 b)\|_{L^2}^2 \, d\tau \\
   &\lesssim \mathcal{E}_0(t) + \mathcal{E}_{\mathrm{total}}^\frac{3}{2}(t).
\end{split}
\end{equation}

$\bullet$ \textbf{Estimates of $M_2$ and $M_3$:}
We bound $M_2$ and $M_3$ simultaneously:
\begin{equation}\nonumber
|M_2 + M_3| \lesssim (1 + \|u\|_{H^2})\|\nabla u\|_{H^3} \|\Omega\|_{H^2} + \|\rho\|_{H^2}(\|\nabla u\|_{H^3} + \|\Omega\|_{H^2})\|\Omega\|_{H^2}.
\end{equation}
Applying Young's inequality and integrating in time, we obtain
\begin{equation}\label{M23}
\int_0^t (M_2 + M_3) \, d\tau \leq C \sup_{0 \leq \tau \leq t} (1 + \|(\rho, u)\|_{H^2}) \int_0^t \|\nabla u\|_{H^3}^2 \, d\tau + \frac{1}{2}\int_0^t \|\Omega\|_{H^2}^2 \, d\tau.
\end{equation}
Synthesizing the above estimates \eqref{M1}, \eqref{M23} and invoking the \textit{a priori} assumption \eqref{ansatz}, we complete the proof for $\Omega$.

{\bf Part 2: Estimate of $\Gamma$}

  $\bullet$ \textbf{Estimate of $\partial_1 \Gamma$:}
  Utilizing the identity
   \begin{equation}\nonumber
\begin{split}
\Omega - \partial_2 b_1 = - \partial_1 \Gamma + b_2 \partial_2 b_1,
\end{split}
\end{equation}
   a direct computation yields
\begin{equation}\nonumber
\begin{split}
  \int_0^t \|\partial_1 \Gamma\|_{L^2}^2 \, d\tau &\lesssim \int_0^t \|\Omega\|_{L^2}^2 \, d\tau + \int_0^t \|\partial_2 b\|_{H^2}^2 \, d\tau+ \sup_{0 \leq \tau \leq t} \|b_2\|_{H^2}^2 \int_0^t \|\p_2 b \|_{L^2}^2 \, d\tau.
\end{split}
\end{equation}

$\bullet$ \textbf{Estimate of $\partial_2 \Gamma$:}
Recalling the definition of $\Gamma$, we have
$\partial_2 \Gamma = \partial_2 (b_2 + P(\rho + 1) - P(1)) + b_2 \partial_2 b_2.$
Appealing to H\"older's inequality and Proposition \ref{prop-nonlinear-func}, we readily obtain
\begin{equation}\nonumber
\begin{split}
  \int_0^t \|\partial_2 \Gamma\|_{L^2}^2 \, d\tau &\lesssim \sup_{0 \leq \tau \leq t} (1 + \|(\rho, b_2)\|_{H^2}) \int_0^t \|\partial_2(\rho, b_2)\|_{L^2}^2 \, d\tau.
\end{split}
\end{equation}
Aggregating all the estimates derived in Part 1 and Part 2 successfully completes the proof of this lemma.
\end{proof}

\begin{lemma}
For the energies defined in \eqref{energy-set}, the following estimate holds for all $t > 0$:
\begin{equation}\nonumber
\begin{split}
\mathcal{E}_4(t) \triangleq \int_0^t \int_{\mathbb{R}^2} (|b_1 |^2 + |u_1|^2)|\partial_1^3 b_2|^2 \,dx\,d\tau
    \lesssim \mathcal{E}_{\mathrm{total}}^2(t).
  \end{split}
\end{equation}
\end{lemma}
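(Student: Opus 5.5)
The plan is to never bound the weights $|b_1|^2$, $|u_1|^2$ pointwise by $L^\infty$ norms, since the failed embedding $\|b_1\|_{L^\infty}\not\lesssim\|\partial_2 b\|_{L^2}$ rules this out. Instead I would use the pseudo-negative-derivative representation in the $x_2$-variable and substitute the equations, so that every term in the weighted integral carries two dissipative factors. That is what produces $\mathcal{E}_{\mathrm{total}}^2$ rather than $\mathcal{E}_{\mathrm{total}}^{3/2}$.

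\textbf{Step 1: representation of the weights.} For fixed $x_1$ I write
\[
|b_1|^2(x_1,x_2)=2\int_{-\infty}^{x_2}b_1\,\partial_2 b_1(x_1,y_2)\,dy_2,\qquad |u_1|^2(x_1,x_2)=2\int_{-\infty}^{x_2}u_1\,\partial_2 u_1(x_1,y_2)\,dy_2 .
\]
I then replace $\partial_2 b_1$ using the $u_1$-equation \eqref{equ1} together with $\Omega=\partial_2 b_1(1+b_2)-\partial_1\Gamma$, namely
\[
\partial_2 b_1=\partial_t u_1+u\cdot\nabla u_1-\Delta u_1+\partial_1\Gamma-b_2\partial_2 b_1+\frac{\rho}{\rho+1}(\Delta u_1+\Omega).
\]
This produces three kinds of contributions.
\begin{itemize}
\item The $\Delta u_1$, $\partial_1\Gamma$ and $\Omega$ pieces are each dissipative. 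They are paired against $b_1$ in the inner $dy_2$ integral.
\item The nonlinear pieces are small and carry extra factors.
\item The $\partial_t u_1$ piece is handled by integrating by parts in time.
\end{itemize}

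\textbf{Step 2: the $\partial_t u_1$ piece.} This term becomes
\[
\frac{d}{dt}\int\!\Big(\int_{-\infty}^{x_2}b_1u_1\,dy_2\Big)|\partial_1^3 b_2|^2\,dx
\]
plus remainders. The total derivative is bounded at the endpoints by $\sup_\tau\|(u,b)\|_{H^3}^4\le\mathcal{E}_0^2$. The remainders involve $\partial_t b_1=\partial_2u_1+\cdots$ from the induction equation, which is dissipative, and $\partial_t|\partial_1^3b_2|^2$. The dangerous part of the latter is $\partial_1^3b_2\,\partial_1^4u_1$, and I would treat it through Lemma~\ref{lemma-p13u1}, trading $\partial_1^4u_1$ for $\frac12\partial_t\partial_1^2\Omega$ modulo admissible terms. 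For the $|u_1|^2$ weight I use the same idea in reverse: since $\partial_2u_1=\partial_t b_1+u\cdot\nabla b_1-b\cdot\nabla u_1+b_1\nabla\cdot u$, the factor $u_1\partial_2u_1$ is turned into $u_1\partial_t b_1$ plus terms that already contain a dissipative velocity gradient. The time derivative is moved onto $u_1$ and onto $|\partial_1^3b_2|^2$ exactly as above.

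\textbf{Step 3: the remaining terms.} In every leftover term I apply Cauchy--Schwarz in $y_2$ first and then take $\sup_{x_1}$ on the weight factor, using
\[
\|f\|_{L^\infty_{x_1}L^2_{x_2}}\lesssim\|f\|_{L^2}^{1/2}\|\partial_1 f\|_{L^2}^{1/2}
\]
from the proof of Proposition~\ref{prop-anisotropic-est}, together with $\partial_1b_1=-\partial_2b_2$. For instance,
\[
\int_{\mathbb{R}^2}\Big(\int_{\mathbb{R}}|b_1||\partial_1\Gamma|\,dy_2\Big)|\partial_1^3b_2|^2dx\lesssim\|b\|_{H^1}^{1/2}\|\partial_2 b\|_{H^1}^{1/2}\,\|\partial_1\Gamma\|_{L^2}^{1/2}\|\partial_1^2\Gamma\|_{L^2}^{1/2}\,\|b_2\|_{H^3}^2 .
\]
Here $\partial_1^2\Gamma$ is controlled via $\Omega$ and $\partial_2 b$ in $H^1$. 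After time integration and Young's inequality, this term is bounded by $\mathcal{E}_0\cdot(\mathcal{E}_2+\mathcal{E}_3)\lesssim\mathcal{E}_{\mathrm{total}}^2$. The $\Delta u_1$ pieces are handled the same way with $\mathcal{E}_0$. The $\partial_t$ norms in $\mathcal{E}_6$ absorb the leftover time derivatives.

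\textbf{Main obstacle.} I expect the hardest part to be the derivative count in Step 3. Once $\sup_{x_1}$ is placed on the weight, the top-order factor $\partial_1^3b_2$ must stay in plain $L^2$, because $\partial_1^4b_2$ is not available. At the same time, each weight must still supply two dissipative half-powers, since only $L^2_t$ dissipation is available and integrability of a $3/2$ power in time fails on $[0,\infty)$. If a term appears with only one dissipative factor, the first fallback is to feed it back into the Step 2 mechanism: substitute the equation again, integrate by parts in time, and accept a further quartic boundary term. The second fallback is to exploit the Lemma~\ref{lemma-p13u1} cancellation $\mathfrak{P}=-\mathcal{P}_{3,3}$ to remove the $u\cdot\nabla\partial_1^2\Omega$ derivative loss.
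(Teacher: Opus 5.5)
Your Step 1 and the time integration by parts in Step 2 are exactly how the paper begins. The proposal breaks down in Step 3, which is where the real content of the lemma lies.

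\textbf{The dissipation count in Step 3 fails for the linear terms.} The interpolation $\|b_1\|_{L^\infty_{x_1}L^2_{x_2}}\lesssim\|b_1\|_{L^2}^{1/2}\|\p_2 b_2\|_{L^2}^{1/2}$ gives the weight only one half-power of dissipation. A \emph{linear} dissipative factor $X\in\{\Delta u_1,\p_1\Gamma\}$ contributes $\|X\|_{L^2}^{1/2}\|\p_1 X\|_{L^2}^{1/2}$, which is one full power. The total is $3/2$, not $2$.

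Your own displayed example shows this. The exponents on $\|\p_2 b\|_{H^1}$, $\|\p_1\Gamma\|_{L^2}$ and $\|\p_1^2\Gamma\|_{L^2}$ are all $\frac12$. After time integration you face a product of three $L^4_t$ functions, which is not bounded by $\mathcal{E}_0(\mathcal{E}_2+\mathcal{E}_3)$ uniformly in $t$. The $\Delta u_1$ piece behaves the same way.

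\textbf{Your fallbacks do not repair this.}
\begin{enumerate}
\item Re-substituting the $u_1$/$b_1$ equations and integrating by parts in time just runs Step 1 backwards.
\item Lemma~\ref{lemma-p13u1} concerns local weights $\p_1^l r\,\p_1^m s\,f(\rho,b_2)$, not the nonlocal weight $\int_{-\infty}^{x_2}b_1(\cdot)\,dy_2$.
\item Lemma~\ref{lemma-p13u1} is also unnecessary for the $\p_1^4 u_1$ term. There the weight $\int_{-\infty}^{x_2} b_1 u_1\,dy_2$ already carries $\|\p_2 b_2\|_{L^2}^{1/2}\|\p_1 u_1\|_{L^2}^{1/2}$, and $\p_1^4 u_1$ is itself dissipative.
\end{enumerate}

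\textbf{The $|u_1|^2$ weight is misread.} In Step 2, the $\p_2 u_1$ part of $\p_t b_1$ produces exactly
$-\int_{\mathbb{R}^2}\big(\int_{-\infty}^{x_2}u_1\,\p_2 u_1\,dy_2\big)|\p_1^3 b_2|^2\,dx=-\frac12\int_{\mathbb{R}^2}|u_1|^2|\p_1^3 b_2|^2\,dx$.
This is not a dissipative remainder; it is as critical as the term you started from. It has the good sign and goes to the left-hand side, and that is where the $|u_1|^2$ half of $\mathcal{E}_4$ comes from. Your ``reverse'' treatment of the $|u_1|^2$ weight reproduces the same identity and adds nothing.

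\textbf{What the paper uses instead.} It never estimates these linear terms by size alone; it exploits their structure.
\begin{enumerate}
\item For $\p_2^2 u_1$ and $\p_1\p_2 u_2$, it integrates by parts in $y_2$. The boundary term $b_1\nabla u\,|\p_1^3 b_2|^2$ is absorbed by Young's inequality into $\frac1{16}\int|b_1|^2|\p_1^3 b_2|^2$. The interior term carries $\p_2 b_1$ with full dissipation.
\item For $\p_1^2 u_1$, it uses \eqref{eqomega} to write $2\p_1^2 u_1=\p_t\Omega+u\cdot\nabla\Omega-\p_2^2 u_1-\p_1\p_2 u_2-Q$. It then integrates by parts in time, so that $\p_t b_1\approx\p_2 u_1$ is paired with $\Omega$, each with full power.
\item For $\p_1\Gamma$, it uses the elliptic equation \eqref{eq-div-rho+b2}: $\p_1\Gamma=(-\Delta)^{-1}\p_1\nabla\cdot u_t+\p_1\nabla\cdot u-(-\Delta)^{-1}\p_1\mathcal{H}$.
\item The $u_t$ part is handled through the double primitive $\int_{-\infty}^{x_2}\int_{-\infty}^{x_1}$, writing $b_1 X=\int_{-\infty}^{x_1}(\p_1 b_1\,X+b_1\,\p_1 X)\,dy_1$. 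Since $\big|\int_{-\infty}^{x_2}\int_{-\infty}^{x_1}fg\,dy\big|\le\|f\|_{L^2}\|g\|_{L^2}$, the factors $\p_1 b_1=-\p_2 b_2$ and $u_t$ count with full power. After one more time integration by parts, so do $\p_t b_1$ and $\nabla\cdot u$.
\end{enumerate}
This pseudo-negative-derivative mechanism, together with the $\Omega$- and $\Gamma$-equations, is the missing ingredient; without it the estimate does not close.
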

\begin{proof}
We begin by recalling the equation for $u_1$ from \eqref{equ1}, which furnishes the following identity:
\begin{equation}\nonumber
\partial_2 b_1 = \partial_t u_1 + u\cdot\nabla u_1 - \Delta u_1 - (\Omega - \p_2 b_1) + \frac{\rho}{\rho+1}\big(\Delta u_1 + \Omega \big).
\end{equation}
Multiplying the above identity by $b_1$, integrating with respect to $y_2$ over $(-\infty, x_2)$, and then taking the $L^2$ inner product with $|\partial_1^3 b_2|^2$, we obtain
\begin{equation}\nonumber
\begin{split}
\frac{1}{2}\int_{\mathbb{R}^2} |b_1 |^2 |\partial_1^3 b_2|^2 \,dx = \sum_{i=1}^5 J_i,
\end{split}
\end{equation}
where
\begin{equation}\nonumber
\begin{split}
J_1 &= \int_{\mathbb{R}^2} \int_{-\infty}^{x_2} b_1 \partial_t u_1 \, dy_2 \, |\partial_1^3 b_2|^2 \,dx,\\
J_2 &= -\int_{\mathbb{R}^2} \int_{-\infty}^{x_2} b_1 \Delta u_1 \, dy_2 \, |\partial_1^3 b_2|^2 \,dx,\\
J_3 &= -\int_{\mathbb{R}^2} \int_{-\infty}^{x_2} b_1 (\Omega - \partial_2 b_1) \, dy_2 \, |\partial_1^3 b_2|^2 \,dx,\\
J_4 &= \int_{\mathbb{R}^2} \int_{-\infty}^{x_2} b_1 u \cdot \nabla u_1 \, dy_2 \, |\partial_1^3 b_2|^2 \,dx,\\
J_5 &= \int_{\mathbb{R}^2} \int_{-\infty}^{x_2} b_1 \frac{\rho}{\rho + 1} (\Delta u_1 + \Omega) \, dy_2 \, |\partial_1^3 b_2|^2 \,dx.
\end{split}
\end{equation}

$\bullet$ \textbf{Estimate of $J_1$:}
Performing integration by parts in time yields
\begin{equation}\nonumber
\begin{split}
J_1 &= \frac{d}{dt}\int_{\mathbb{R}^2} \int_{-\infty}^{x_2} b_1  u_1 \, dy_2 \, |\partial_1^3 b_2|^2 \,dx - \int_{\mathbb{R}^2} \int_{-\infty}^{x_2} \partial_t b_1  u_1 \, dy_2 \, |\partial_1^3 b_2|^2 \,dx \\
&\quad - 2 \int_{\mathbb{R}^2} \int_{-\infty}^{x_2}  b_1  u_1 \, dy_2 \, \partial_t \partial_1^3 b_2 \, \partial_1^3 b_2 \,dx \\
&= \frac{d}{dt}\int_{\mathbb{R}^2} \int_{-\infty}^{x_2} b_1  u_1 \, dy_2 \, |\partial_1^3 b_2|^2 \,dx \\
&\quad - \int_{\mathbb{R}^2} \int_{-\infty}^{x_2} (\partial_2 u_1 - u \cdot \nabla b_1 + b \cdot \nabla u_1 - b_1 \nabla \cdot u)   u_1 \, dy_2 \, |\partial_1^3 b_2|^2 \,dx \\
&\quad - 2 \int_{\mathbb{R}^2} \int_{-\infty}^{x_2}  b_1  u_1 \, dy_2  \, \partial_1^3 (-\partial_1 u_1 - u \cdot \nabla b_2 + b \cdot \nabla u_2 - b_2 \nabla \cdot u)\partial_1^3 b_2 \,dx \\
&\triangleq J_{1,1} + J_{1,2} + J_{1,3}.
\end{split}
\end{equation}
For $J_{1,2}$, we deduce that
\begin{equation}\nonumber
\begin{split}
  &J_{1,2} + \frac{1}{2} \int_{\mathbb{R}^2}  |u_1|^2 |\partial_1^3 b_2|^2 \,dx \\
  &\lesssim \|(- u \cdot \nabla b_1 + b \cdot \nabla u_1 - b_1 \nabla \cdot u)   u_1\|_{L_{x_1}^\infty L_{x_2}^1} \|\partial_1^3 b_2\|_{L^2}^2 \\
  &\lesssim \big(\|u\|_{L^\infty}\|\nabla b_1\|_{L_{x_1}^\infty L_{x_2}^2} + \|b\|_{L^\infty}\|\nabla u_1\|_{L_{x_1}^\infty L_{x_2}^2} + \|b_1\|_{L^\infty}\|\nabla \cdot u \|_{L_{x_1}^\infty L_{x_2}^2}\big) \|u_1\|_{L_{x_1}^\infty L_{x_2}^2}\|\partial_1^3 b_2\|_{L^2}^2 \\
  &\lesssim \big(\|u\|_{H^1} \|\nabla u\|_{H^1} \|\partial_2 b\|_{H^1} + \|b\|_{H^1}^\frac{1}{2}\|\partial_2 b\|_{H^1}^\frac{1}{2} \|\nabla u\|_{H^1} \|u_1\|_{L^2}^\frac{1}{2}\|\partial_1 u_1\|_{L^2}^\frac{1}{2}\big) \|\partial_1^3 b_2\|_{L^2}^2.
\end{split}
\end{equation}
For $J_{1,3}$, we decompose it as
\begin{equation}\nonumber
\begin{split}
J_{1,3}  &= - 2 \int_{\mathbb{R}^2} \int_{-\infty}^{x_2}  b_1  u_1 \, dy_2  \, \partial_1^3 (-\partial_1 u_1 + b \cdot \nabla  u_2 - b_2 \nabla \cdot u)\partial_1^3 b_2 \,dx \\
&\quad +2 \int_{\mathbb{R}^2} \int_{-\infty}^{x_2}  b_1  u_1 \, dy_2  \, \sum_{k = 1}^3 \mathcal{C}_{3}^k \partial_1^k u \cdot \nabla  \partial_1^{3-k} b_2 \, \partial_1^3 b_2 \,dx \\
&\quad +  2 \int_{\mathbb{R}^2} \int_{-\infty}^{x_2}  b_1  u_1 \, dy_2  \, u \cdot \nabla \partial_1^3 b_2 \, \partial_1^3 b_2 \,dx\\
\triangleq& \, J_{1,3,1} + J_{1,3,2} + J_{1,3,3}.
\end{split}
\end{equation}
For the first two terms $J_{1,3,1}$ and $J_{1,3,2}$, we establish that
\begin{equation}\nonumber
\begin{split}
J_{1,3,1} + J_{1,3,2} &\lesssim \|b_1 u_1\|_{L_{x_1}^\infty L_{x_2}^1}
\|\partial_1^3(\partial_1 u_1 - b \cdot \nabla u_2 + b_2 \nabla \cdot u)\|_{L^2} \|\partial_1^3 b_2\|_{L^2} \\
&\quad + \|b_1 u_1\|_{L_{x_1}^\infty L_{x_2}^1}
\Big\|\sum_{k = 1}^3 \mathcal{C}_{3}^k \partial_1^k u \cdot \nabla  \partial_1^{3-k} b_2\Big\|_{L^2} \|\partial_1^3 b_2\|_{L^2} \\
&\lesssim \|b_1\|_{L^2}^\frac{1}{2}\|\partial_1 b_1\|_{L^2}^\frac{1}{2}\|u_1\|_{L^2}^\frac{1}{2}\|\partial_1 u_1\|_{L^2}^\frac{1}{2} \big(\|\partial_1^4 u_1\|_{L^2} + \|b\|_{H^3} \|\nabla u\|_{H^3}\big) \|\partial_1^3 b_2\|_{L^2}.
\end{split}
\end{equation}
For the last term $J_{1,3,3}$, integration by parts gives
\begin{equation}\nonumber
\begin{split}
J_{1,3,3} =
& - \int_{\mathbb{R}^2} \int_{-\infty}^{x_2}  b_1  u_1 \, dy_2  \, \nabla \cdot u |\partial_1^3 b_2|^2 \,dx  - \int_{\mathbb{R}^2} ( b_1  u_1)  u_2 |\partial_1^3 b_2|^2 \,dx \\
& - \int_{\mathbb{R}^2} \int_{-\infty}^{x_2} \partial_1( b_1  u_1) \, dy_2  \, u_1 |\partial_1^3 b_2|^2 \,dx \\
\lesssim& \|b_1 u_1\|_{L_{x_1}^\infty L_{x_2}^1} \|\nabla \cdot u\|_{L^\infty} \|\partial_1^3 b_2\|_{L^2}^2  + \|b_1\|_{L^\infty} \|u_1\|_{L^\infty} \|u_2\|_{L^\infty} \|\partial_1^3 b_2\|_{L^2}^2 \\
& + \big(\|\partial_1 b_1\|_{L_{x_1}^\infty L_{x_2}^2} \|u_1\|_{L_{x_1}^\infty L_{x_2}^2} +
\|\partial_1 u_1\|_{L_{x_1}^\infty L_{x_2}^2} \|b_1\|_{L_{x_1}^\infty L_{x_2}^2}\big)\|u_1\|_{L^\infty} \|\partial_1^3 b_2\|_{L^2}^2 \\
\lesssim& \|b_1\|_{L^2}^\frac{1}{2} \|\partial_1 b_1\|_{L^2}^\frac{1}{2}\|u_1\|_{L^2}^\frac{1}{2} \|\partial_1 u_1\|_{L^2}^\frac{1}{2} \|\nabla \cdot u\|_{H^2} \|\partial_1^3 b_2\|_{L^2}^2\\
& + \|(b_1, u)\|_{L^2}^\frac{3}{4} \|\nabla (b_1, u)\|_{H^1}^\frac{9}{4}\|\partial_1^3 b_2\|_{L^2}^2\\
& + \big(\|\partial_1 b_1 \|_{H^1} \|u_1\|_{L^2}^\frac{1}{2} \|\partial_1 u_1\|_{L^2}^\frac{1}{2} + \|\partial_1 u_1 \|_{H^1} \|b_1\|_{L^2}^\frac{1}{2} \|\partial_1 b_1\|_{L^2}^\frac{1}{2}\big) \\
& \cdot \|u_1\|_{L^2}^\frac{1}{2} \|\nabla u_1\|_{H^1}^\frac{1}{2} \|\partial_1^3 b_2\|_{L^2}^2.
\end{split}
\end{equation}
Therefore,
\begin{equation}\label{J1}
  \begin{split}
    &\Big|\int_0^t J_1 \, d\tau + \frac{1}{2}\int_0^t\int_{\mathbb{R}^2}|u_1|^2|\partial_1^3 b_2|^2 \,dx d\tau \Big| \\
    \lesssim& \sup_{0 \leq \tau \leq t} \|(b, u)\|_{H^3}^4 +
    \sup_{0 \leq \tau \leq t} \|(b, u)\|_{H^3}^2 \int_0^t \big(\|\partial_2 b\|_{H^1}^2 + \|\nabla u\|_{H^2}^2\big) \,d\tau.
  \end{split}
\end{equation}

$\bullet$ \textbf{Estimate of $J_2$:}
Invoking the equation for $\Omega$ from \eqref{eqomega}, we reformulate $J_2$ as
\begin{equation}\nonumber
\begin{split}
J_2 &= -\int_{\mathbb{R}^2} \int_{-\infty}^{x_2} b_1 \partial_1^2 u_1 \, dy_2 \, |\partial_1^3 b_2|^2 \,dx -\int_{\mathbb{R}^2} \int_{-\infty}^{x_2} b_1 \partial_2^2 u_1 \, dy_2 \, |\partial_1^3 b_2|^2 \,dx \\
&= -\int_{\mathbb{R}^2} \int_{-\infty}^{x_2} b_1  \frac{1}{2}\big(\partial_t \Omega + u\cdot \nabla \Omega - \partial_2^2 u_1 - \partial_1 \partial_2 u_2 + Q\big) \, dy_2 \, |\partial_1^3 b_2|^2 \,dx \\
 &\quad -\int_{\mathbb{R}^2} \int_{-\infty}^{x_2} b_1 \partial_2^2 u_1 \, dy_2 \, |\partial_1^3 b_2|^2 \,dx \\
 &= -\frac{1}{2}\int_{\mathbb{R}^2} \int_{-\infty}^{x_2} b_1 (\partial_2^2 u_1 - \partial_1 \partial_2 u_2) \, dy_2 \, |\partial_1^3 b_2|^2 \,dx \\
 &\quad -\frac{1}{2}\int_{\mathbb{R}^2} \int_{-\infty}^{x_2} b_1 Q \, dy_2 \, |\partial_1^3 b_2|^2 \,dx \\
 &\quad -\frac{1}{2}\int_{\mathbb{R}^2} \int_{-\infty}^{x_2} b_1(\partial_t \Omega + u\cdot \nabla \Omega) \, dy_2 \, |\partial_1^3 b_2|^2 \,dx \\
 &\triangleq J_{2,1} + J_{2,2} + J_{2,3}.
\end{split}
\end{equation}
Here, $Q$ is defined in \eqref{Q}, which represents a nonlinear term containing at least one velocity derivative $\partial_i u_j$.

\quad $\diamond$ \textbf{Estimate of $J_{2,1}$:}
Integrating by parts and applying Young's inequality yield
\begin{equation}\nonumber
\begin{split}
J_{2,1} &=- \frac{1}{2}\int_{\mathbb{R}^2}  b_1 (\partial_2 u_1 - \partial_1 u_2) |\partial_1^3 b_2|^2 \,dx + \frac{1}{2}\int_{\mathbb{R}^2} \int_{-\infty}^{x_2} \partial_2 b_1 (\partial_2 u_1 - \partial_1 u_2) \, dy_2 \, |\partial_1^3 b_2|^2 \,dx \\
&\leq \frac{1}{16} \int_{\mathbb{R}^2}  |b_1|^2 |\partial_1^3 b_2|^2 \,dx + C \int_{\mathbb{R}^2}  |\nabla u|^2  |\partial_1^3 b_2|^2 \,dx + C \|\partial_2 b_1\|_{L_{x_1}^\infty L_{x_2}^2} \|\nabla u\|_{L_{x_1}^\infty L_{x_2}^2} \|\partial_1^3 b_2\|_{L^2}^2 \\
&\leq \frac{1}{16} \int_{\mathbb{R}^2}  |b_1|^2 |\partial_1^3 b_2|^2 \,dx + C \|\nabla u\|_{H^2}^2 \|\partial_1^3 b_2\|_{L^2}^2 + C \|\partial_2 b_1\|_{H^1} \|\nabla u\|_{H^1} \|\partial_1^3 b_2\|_{L^2}^2.
\end{split}
\end{equation}

\quad $\diamond$ \textbf{Estimate of $J_{2,2}$:}
Standard anisotropic Sobolev embeddings ensure that
\begin{equation}\nonumber
\begin{split}
J_{2,2} &\lesssim \|b_1\|_{L_{x_1}^\infty L_{x_2}^2} \|Q\|_{L_{x_1}^\infty L_{x_2}^2} \|\partial_1^3 b_2\|_{L^2}^2 \\
&\lesssim \|b_1\|_{L^2}^\frac{1}{2} \|\partial_1 b_1\|_{L^2}^\frac{1}{2} \|(\rho, b)\|_{H^3}^\frac{1}{2} \|\partial_2(\rho, b)\|_{H^2}^\frac{1}{2} \|\nabla u\|_{H^3} \|\partial_1^3 b_2\|_{L^2}^2.
\end{split}
\end{equation}

\quad $\diamond$ \textbf{Estimate of $J_{2,3}$:}
We now address the following challenging term by decomposing it into four components:
\begin{equation}\nonumber
\begin{split}
J_{2,3} &= - \frac{1}{2}\frac{d}{dt} \int_{\mathbb{R}^2} \int_{-\infty}^{x_2} b_1  \Omega \, dy_2 \, |\partial_1^3 b_2|^2 \,dx \\
&\quad + \frac{1}{2}\int_{\mathbb{R}^2} \int_{-\infty}^{x_2} \partial_t b_1  \Omega \, dy_2 \, |\partial_1^3 b_2|^2 \,dx \\
&\quad - \frac{1}{2}\int_{\mathbb{R}^2} \int_{-\infty}^{x_2} b_1 u\cdot \nabla \Omega \, dy_2 \, |\partial_1^3 b_2|^2 \,dx \\
&\quad + \frac{1}{2}\int_{\mathbb{R}^2} \int_{-\infty}^{x_2} b_1  \Omega \, dy_2 \, \frac{d}{dt}|\partial_1^3 b_2|^2 \,dx \\
&\triangleq J_{2,3,1} + J_{2,3,2} + J_{2,3,3} + J_{2,3,4}.
\end{split}
\end{equation}
For $J_{2,3,2}$, substituting the equation for $b_1$ gives
\begin{equation}\nonumber
\begin{split}
J_{2,3,2} &= \frac{1}{2}\int_{\mathbb{R}^2} \int_{-\infty}^{x_2} (\partial_2 u_1 - u \cdot \nabla b_1 + b \cdot \nabla u_1 - b_1 \nabla\cdot u) \Omega \, dy_2 \, |\partial_1^3 b_2|^2 \,dx \\
&\lesssim \|(\partial_2 u_1 - u \cdot \nabla b_1 + b \cdot \nabla u_1 - b_1 \nabla\cdot u)\|_{L_{x_1}^\infty L_{x_2}^2} \|\Omega\|_{L_{x_1}^\infty L_{x_2}^2} \|\partial_1^3 b_2\|_{L^2}^2 \\
&\lesssim \big(\|\partial_2 u_1\|_{H^1} + \|u\|_{H^2} \|\nabla b_1\|_{H^1} + \|b\|_{H^2} \|\nabla u\|_{H^1}\big) \|\Omega\|_{H^1} \|\partial_1^3 b_2\|_{L^2}^2.
\end{split}
\end{equation}
For $J_{2,3,3}$, we infer that
\begin{equation}\nonumber
\begin{split}
J_{2,3,3} &\lesssim \|b_1 u \cdot \nabla \Omega\|_{L_{x_1}^\infty L_{x_2}^1} \|\partial_1^3 b_2\|_{L^2}^2 \\
&\lesssim \|b_1\|_{L^\infty} \|u\|_{L_{x_1}^\infty L_{x_2}^2} \|\nabla \Omega\|_{L_{x_1}^\infty L_{x_2}^2} \|\partial_1^3 b_2\|_{L^2}^2 \\
&\lesssim \|b_1\|_{L^2}^\frac{1}{2} \|\nabla b_1\|_{H^1}^\frac{1}{2} \|u\|_{L^2}^\frac{1}{2}\|\partial_1 u\|_{L^2}^\frac{1}{2} \|\nabla \Omega\|_{H^1} \|\partial_1^3 b_2\|_{L^2}^2.
\end{split}
\end{equation}
For $J_{2,3,4}$, exploiting the following identity
\begin{equation}\nonumber
\begin{split}
 \frac{1}{2} \frac{d}{dt} |\partial_1^3 b_2|^2 = - \partial_1^4 u_1 \partial_1^3 b_2  - \partial_1^3 (u \cdot \nabla b_2) \partial_1^3 b_2 + \partial_1^3 (b \cdot \nabla u_2) \partial_1^3 b_2 - \partial_1^3 (b_2 \nabla \cdot u) \partial_1^3 b_2,
\end{split}
\end{equation}
we can decompose $J_{2,3,4}$ into
\begin{equation}\nonumber
\begin{split}
J_{2,3,4} =& \int_{\mathbb{R}^2} \int_{-\infty}^{x_2} b_1  \Omega \, dy_2 \Big(- \partial_1^4 u_1 - \sum_{k = 1}^3 \mathcal{C}_{3}^k \partial_1^k u \cdot \nabla \partial_1^{3-k} b_2\\
 &+ \partial_1^3 (b \cdot \nabla u_2) - \partial_1^3 (b_2 \nabla \cdot u)\Big) \partial_1^3 b_2 \,dx \\
 & - \frac{1}{2}\int_{\mathbb{R}^2} \int_{-\infty}^{x_2} b_1  \Omega \, dy_2 \, u \cdot \nabla |\partial_1^3 b_2|^2 \,dx \\
\triangleq& \, J_{2,3,4,1} + J_{2,3,4,2}.
\end{split}
\end{equation}
For $J_{2,3,4,1}$, a direct application of H\"older's inequality furnishes
\begin{equation}\nonumber
\begin{split}
J_{2,3,4,1} &\lesssim \|b_1  \Omega\|_{L_{x_1}^\infty L_{x_2}^1} \|\partial_1^3 b_2\|_{L^2} \\
 &\quad \cdot \big\|- \partial_1^4 u_1 + \sum_{k = 1}^3 \mathcal{C}_{3}^k \partial_1^k u \cdot \nabla \partial_1^{3-k} b_2 + \partial_1^3 (b \cdot \nabla u_2) - \partial_1^3 (b_2 \nabla \cdot u)\big\|_{L^2} \\
&\lesssim \|b_1\|_{L^2}^\frac{1}{2}\|\partial_1 b_1\|_{L^2}^\frac{1}{2}\|\Omega\|_{L^2}^\frac{1}{2}\|\partial_1 \Omega\|_{L^2}^\frac{1}{2} \\
&\quad \cdot \big(\|\partial_1^4 u_1\|_{L^2} + \|\partial_1 u\|_{H^3} \|\nabla b_2\|_{H^2} + \|b\|_{H^3} \|\nabla u\|_{H^3}\big) \|\partial_1^3 b_2\|_{L^2}.
\end{split}
\end{equation}
For $J_{2,3,4,2}$, integration by parts yields
\begin{equation}\nonumber
\begin{split}
J_{2,3,4,2} &=  \frac{1}{2}\int_{\mathbb{R}^2} \int_{-\infty}^{x_2} b_1  \Omega \, dy_2 \, \nabla \cdot u |\partial_1^3 b_2|^2 \,dx \\
&\quad + \frac{1}{2}\int_{\mathbb{R}^2} \int_{-\infty}^{x_2} \partial_1(b_1  \Omega) \, dy_2 \, u_1 |\partial_1^3 b_2|^2 \,dx + \frac{1}{2}\int_{\mathbb{R}^2} (b_1  \Omega)  u_2 |\partial_1^3 b_2|^2 \,dx.
\end{split}
\end{equation}
Appealing to H\"older's inequality and the Sobolev embedding theorem, we find
\begin{equation}\nonumber
\begin{split}
J_{2,3,4,2} &\lesssim \|b_1 \Omega\|_{L_{x_1}^\infty L_{x_2}^1} \|\nabla \cdot u\|_{L^\infty} \|\partial_1^3 b_2\|_{L^2}^2 \\
&\quad + \|\partial_1(b_1 \Omega)\|_{L_{x_1}^\infty L_{x_2}^1} \|u_1\|_{L^\infty} \|\partial_1^3 b_2\|_{L^2}^2  + \|b_1 \Omega u_2\|_{L^\infty} \|\partial_1^3 b_2\|_{L^2}^2 \\
&\lesssim \|b_1\|_{H^1} \|\Omega\|_{H^1} \|\nabla \cdot u\|_{H^2} \|\partial_1^3 b_2\|_{L^2}^2 \\
&\quad + \|b_1\|_{H^1}^\frac{1}{2} \|\partial_1 b_1\|_{H^1}^\frac{1}{2} \|\Omega\|_{H^2} \|u_1\|_{L^2}^\frac{1}{2} \|\nabla^2 u_1\|_{L^2}^\frac{1}{2} \|\partial_1^3 b_2\|_{L^2}^2 \\
&\quad + \|b_1\|_{L^2}^\frac{1}{2} \|\nabla^2 b_1\|_{L^2}^\frac{1}{2} \|\Omega\|_{H^2} \|u_2\|_{L^2}^\frac{1}{2} \|\nabla^2 u_2\|_{L^2}^\frac{1}{2} \|\partial_1^3 b_2\|_{L^2}^2.
\end{split}
\end{equation}
Consequently, the time integral of $J_2$ satisfies
\begin{equation}\label{J2}
  \begin{split}
    \int_0^t J_2 \, d\tau &\leq \frac{1}{16} \int_0^t \int_{\mathbb{R}^2}|b_1|^2 |\partial_1^3 b_2|^2 \,dx \,d\tau \\
    &+ C \sup_{0 \leq \tau \leq t} \|(\rho, b, u)\|_{H^3}^2 \int_0^t \big(\|(\partial_2 b, \Omega)\|_{H^2}^2 + \|\nabla u\|_{H^3}^2\big) \, d\tau \\
    &\quad + C \sup_{0 \leq \tau \leq t} \Big| \int_{\mathbb{R}^2} \int_{-\infty}^{x_2} b_1  \Omega \, dy_2 \, |\partial_1^3 b_2|^2 \,dx \Big| \\
    &\leq \frac{1}{16} \int_0^t \int_{\mathbb{R}^2}|b_1|^2 |\partial_1^3 b_2|^2 \,dx \,d\tau \\
    &+ C \sup_{0 \leq \tau \leq t} \|(\rho, b, u)\|_{H^3}^2 \int_0^t \big(\|(\partial_2 b, \Omega)\|_{H^2}^2 + \|\nabla u\|_{H^3}^2\big) \, d\tau \\
    &\quad + C \sup_{0 \leq \tau \leq t} \|(\rho, b)\|_{H^3}^4.
  \end{split}
\end{equation}

$\bullet$ \textbf{Estimate of $J_3$:}
We now turn to the most challenging term $J_3$.
Utilizing the identity
\begin{equation}\nonumber
\begin{split}
\Omega - \partial_2 b_1 = - \partial_1 \Gamma + b_2 \partial_2 b_1,
\end{split}
\end{equation}
we decompose $J_3$ as
\begin{equation}\nonumber
\begin{split}
J_{3} &= \int_{\mathbb{R}^2} \int_{-\infty}^{x_2} b_1 \partial_1 \Gamma \, dy_2 \, |\partial_1^3 b_2|^2 \,dx \\
&\quad + \int_{\mathbb{R}^2} \int_{-\infty}^{x_2} b_1 ( b_2 \partial_2 b_1) \, dy_2 \, |\partial_1^3 b_2|^2 \,dx \\
&\triangleq J_{3,1} + J_{3,2}.
\end{split}
\end{equation}

\quad $\diamond$ \textbf{Estimate of $J_{3,1}$:}
According to \eqref{eq-div-rho+b2}, $\Gamma$ obeys the following elliptic equation:
\begin{equation}\nonumber
\begin{split}
 \Delta \Gamma
=& -\nabla \cdot u_t + \Delta \nabla \cdot u + \mathcal{H}.
\end{split}
\end{equation}
Here,
\begin{equation}\nonumber
\begin{split}
\mathcal{H} &= - \nabla \cdot (u \cdot \nabla u) + \nabla \cdot \big( b\cdot \nabla b - \tfrac{1}{2} \nabla |b_1|^2\big) \\
&\quad + \nabla \cdot \Big[\frac{\rho}{\rho + 1}\left(
		\begin{array}{c}
			-\Omega - \Delta u_1 \\
			\partial_2 P - \Delta u_2 - b \cdot \nabla b_2 + \tfrac{1}{2} \partial_2 |b|^2 \\
		\end{array}
		\right) \Big].
\end{split}
\end{equation}
Thus, $J_{3,1}$ can be expressed as
\begin{equation}\nonumber
\begin{split}
J_{3,1} &= \int_{\mathbb{R}^2} \int_{-\infty}^{x_2} b_1 (-\Delta)^{-1}\partial_1\nabla \cdot u_t \, dy_2 \, |\partial_1^3 b_2|^2 \,dx \\
&\quad + \int_{\mathbb{R}^2} \int_{-\infty}^{x_2} b_1 \partial_1 \nabla \cdot u \, dy_2 \, |\partial_1^3 b_2|^2 \,dx \\
&\quad - \int_{\mathbb{R}^2} \int_{-\infty}^{x_2} b_1 (-\Delta)^{-1}\partial_1 \mathcal{H} \, dy_2 \, |\partial_1^3 b_2|^2 \,dx \\
&\triangleq J_{3,1,1} + J_{3,1,2} + J_{3,1,3}.
\end{split}
\end{equation}
For $J_{3,1,1}$, we first rewrite it as follows to circumvent dealing with $|\nabla|^{-1} u_t$:
\begin{equation}\nonumber
\begin{split}
J_{3,1,1} &= \int_{\mathbb{R}^2} \int_{-\infty}^{x_2} \int_{-\infty}^{x_1} \partial_1 b_1 (-\Delta)^{-1}\partial_1\nabla \cdot u_t \, dy_1 dy_2 \, |\partial_1^3 b_2|^2 \,dx \\
&\quad + \int_{\mathbb{R}^2} \int_{-\infty}^{x_2} \int_{-\infty}^{x_1} b_1 (-\Delta)^{-1}\partial_1^2 \nabla \cdot u_t \, dy_1 dy_2 \, |\partial_1^3 b_2|^2 \,dx \\
&\triangleq J_{3,1,1,1} + J_{3,1,1,2}.
\end{split}
\end{equation}
By virtue of the Calder\'on--Zygmund theorem and $\nabla \cdot b = 0$, we can derive
\begin{equation}\nonumber
\begin{split}
J_{3,1,1,1} &\lesssim \|\partial_1 b_1\|_{L^2}\| (-\Delta)^{-1}\partial_1\nabla \cdot u_t\|_{L^2} \|\partial_1^3 b_2\|_{L^2}^2 \\
&\lesssim \|\partial_2 b_2\|_{L^2}\|u_t\|_{L^2} \|\partial_1^3 b_2\|_{L^2}^2.
\end{split}
\end{equation}
For $J_{3,1,1,2}$, integration by parts gives
\begin{equation}\nonumber
\begin{split}
J_{3,1,1,2} &= \frac{d}{dt}\int_{\mathbb{R}^2} \int_{-\infty}^{x_2} \int_{-\infty}^{x_1} b_1 (-\Delta)^{-1}\partial_1^2 \nabla \cdot u \, dy_1 dy_2 \, |\partial_1^3 b_2|^2 \,dx \\
&\quad - \int_{\mathbb{R}^2} \int_{-\infty}^{x_2} \int_{-\infty}^{x_1} \partial_t b_1 (-\Delta)^{-1}\partial_1^2 \nabla \cdot u \, dy_1 dy_2 \, |\partial_1^3 b_2|^2 \,dx \\
&\quad - 2\int_{\mathbb{R}^2} \int_{-\infty}^{x_2} \int_{-\infty}^{x_1} b_1 (-\Delta)^{-1}\partial_1^2 \nabla \cdot u \, dy_1 dy_2 \, \partial_t \partial_1^3 b_2 \, \partial_1^3 b_2 \,dx \\
&\triangleq J_{3,1,1,2,1} + J_{3,1,1,2,2} + J_{3,1,1,2,3}.
\end{split}
\end{equation}
For $J_{3,1,1,2,2}$, arguing in a similar vein as for $J_{3,1,1,1}$, we have
\begin{equation}\nonumber
\begin{split}
J_{3,1,1,2,2} &\lesssim \| \partial_t b_1\|_{L^2}\| (-\Delta)^{-1}\partial_1^2 \nabla \cdot u\|_{L^2} \|\partial_1^3 b_2\|_{L^2}^2 \\
&\lesssim \| \partial_t b_1\|_{L^2}\| \nabla \cdot u\|_{L^2} \|\partial_1^3 b_2\|_{L^2}^2.
\end{split}
\end{equation}
For $J_{3,1,1,2,3}$, we observe that
\begin{equation}\nonumber
  \partial_t \partial_1^3 b_2 \, \partial_1^3 b_2 = - u \cdot \nabla \partial_1^3 b_2 \, \partial_1^3 b_2 + \mathcal{U},
\end{equation}
where
$$\mathcal{U} \triangleq - \partial_1^3 b_2 \, \partial_1^4 u_1 - \sum_{k = 1}^3 \mathcal{C}_{3}^k \partial_1^k u \cdot \nabla \partial_1^{3-k} b_2 \, \partial_1^3 b_2 + \partial_1^3 b_2 \, \partial_1^3(b \cdot \nabla u_2 - b_2 \nabla \cdot u).$$
Integrating by parts, we can rewrite $J_{3,1,1,2,3}$ as
\begin{equation}\nonumber
\begin{split}
J_{3,1,1,2,3} =& 2\int_{\mathbb{R}^2} \int_{-\infty}^{x_2} \int_{-\infty}^{x_1} b_1 (-\Delta)^{-1}\partial_1^2 \nabla \cdot u \, dy_1 dy_2 \, u\cdot \nabla \partial_1^3 b_2 \, \partial_1^3 b_2 \,dx \\
&\quad - 2\int_{\mathbb{R}^2} \int_{-\infty}^{x_2} \int_{-\infty}^{x_1} b_1 (-\Delta)^{-1}\partial_1^2 \nabla \cdot u \, dy_1 dy_2  \mathcal{U} \,dx \\
=& - \int_{\mathbb{R}^2} \int_{-\infty}^{x_2} \int_{-\infty}^{x_1} b_1 (-\Delta)^{-1}\partial_1^2 \nabla \cdot u \, dy_1 dy_2 \, \nabla \cdot u \partial_1^3 b_2 \, \partial_1^3 b_2 \,dx  \\
&- \int_{\mathbb{R}^2} \int_{-\infty}^{x_2} b_1 (-\Delta)^{-1}\partial_1^2 \nabla \cdot u \,  dy_2 \, u_1 |\partial_1^3 b_2|^2 \,dx \\
& - \int_{\mathbb{R}^2} \int_{-\infty}^{x_1} b_1 (-\Delta)^{-1}\partial_1^2 \nabla \cdot u \, dy_1  \, u_2 |\partial_1^3 b_2|^2 \,dx \\
& - 2\int_{\mathbb{R}^2} \int_{-\infty}^{x_2} \int_{-\infty}^{x_1} b_1 (-\Delta)^{-1}\partial_1^2 \nabla \cdot u \, dy_1 dy_2  \mathcal{U} \,dx.
\end{split}
\end{equation}
Applying H\"older's inequality and the Sobolev embedding theorem, we bound $J_{3,1,1,2,3}$ as
\begin{equation}\nonumber
\begin{split}
J_{3,1,1,2,3} &\lesssim \|b_1\|_{L^2}\|\nabla \cdot u\|_{L^2}\|\nabla \cdot u\|_{L^\infty}\|\p_1^3 b_2\|_{L^2}^2\\
&\quad +\|b_1\|_{L_{x_1}^\infty L_{x_2}^2} \| (-\Delta)^{-1}\partial_1^2 \nabla \cdot u \|_{L_{x_1}^\infty L_{x_2}^2} \|u_1\|_{L^\infty} \|\partial_1^3 b_2\|_{L^2}^2 \\
&\quad + \|b_1\|_{L_{x_2}^\infty L_{x_1}^2} \| (-\Delta)^{-1}\partial_1^2 \nabla \cdot u \|_{L_{x_2}^\infty L_{x_1}^2} \|u_2\|_{L^\infty} \|\partial_1^3 b_2\|_{L^2}^2 \\
&\quad + \|b_1\|_{L^2} \|\nabla \cdot u\|_{L^2} \|\mathcal{U}\|_{L^1} \\
&\lesssim \|b_1\|_{L^2}^\frac{1}{2} \|\partial_1 b_1\|_{L^2}^\frac{1}{2} \| \nabla \cdot u \|_{L^2}^\frac{1}{2} \| \partial_1 \nabla \cdot u \|_{L^2}^\frac{1}{2} \|u_1\|_{L^2}^\frac{1}{2}\|\nabla u_1\|_{H^1}^\frac{1}{2} \|\partial_1^3 b_2\|_{L^2}^2 \\
&\quad + \|b_1\|_{L^2}^\frac{1}{2} \|\partial_2 b_1\|_{L^2}^\frac{1}{2} \| \nabla \cdot u \|_{L^2}^\frac{1}{2} \| \partial_2 \nabla \cdot u \|_{L^2}^\frac{1}{2} \|u_2\|_{L^2}^\frac{1}{2}\|\nabla u_2\|_{H^1}^\frac{1}{2} \|\partial_1^3 b_2\|_{L^2}^2 \\
&\quad + \|b_1\|_{L^2}\|\nabla \cdot u\|_{L^2} \big(\|b\|_{H^3} + \|b\|_{H^3}^2\big)\|\nabla u\|_{H^3}.
\end{split}
\end{equation}
For $J_{3,1,2}$, we expand it as
\begin{equation}\nonumber
\begin{split}
J_{3,1,2} &= \int_{\mathbb{R}^2} \int_{-\infty}^{x_2} b_1 \partial_1^2 u_1 \, dy_2 \, |\partial_1^3 b_2|^2 \,dx + \int_{\mathbb{R}^2} \int_{-\infty}^{x_2} b_1 \partial_1 \partial_2 u_2 \, dy_2 \, |\partial_1^3 b_2|^2 \,dx \\
&= \int_{\mathbb{R}^2} \int_{-\infty}^{x_2} b_1 \partial_1^2 u_1 \, dy_2 \, |\partial_1^3 b_2|^2 \,dx \\
&\quad + \int_{\mathbb{R}^2} b_1 \partial_1 u_2 |\partial_1^3 b_2|^2 \,dx  - \int_{\mathbb{R}^2} \int_{-\infty}^{x_2} \partial_2 b_1 \partial_1 u_2 \, dy_2 \, |\partial_1^3 b_2|^2 \,dx \\
&\triangleq J_{3,1,2,1} + J_{3,1,2,2} + J_{3,1,2,3}.
\end{split}
\end{equation}
The estimate for $J_{3,1,2,1}$ follows from an identical rationale as that for $J_2$, we have
\begin{equation}\nonumber
\begin{split}
\int_0^t J_{3,1,2,1} \, d\tau &\leq \frac{1}{16} \int_0^t \int_{\mathbb{R}^2}|b_1|^2 |\partial_1^3 b_2|^2 \,dx \,d\tau \\
&+ C \sup_{0 \leq \tau \leq t} \|(\rho, b, u)\|_{H^3}^2 \int_0^t \big(\|(\partial_2 b, \Omega)\|_{H^2}^2 + \|\nabla u\|_{H^3}^2\big) \, d\tau \\
    &\quad + C \sup_{0 \leq \tau \leq t} \|(\rho, b)\|_{H^3}^4.
\end{split}
\end{equation}
For $J_{3,1,2,2}$, Young's inequality gives
\begin{equation}\nonumber
\begin{split}
J_{3,1,2,2} \leq \frac{1}{16}\int_{\mathbb{R}^2} |b_1|^2  |\partial_1^3 b_2|^2 \,dx + \|\partial_1 u_2\|_{H^2}^2\|\partial_1^3 b_2\|_{L^2}^2.
\end{split}
\end{equation}
For $J_{3,1,2,3}$, direct computation yields
\begin{equation}\nonumber
\begin{split}
J_{3,1,2,3} &\lesssim \|\partial_2 b_1 \partial_1 u_2\|_{L_{x_1}^\infty L_{x_2}^1} \|\partial_1^3 b\|_{L^2}^2 \\
&\lesssim \|\partial_2 b_1\|_{H^1} \|\partial_1 u_2\|_{H^1} \|\partial_1^3 b\|_{L^2}^2.
\end{split}
\end{equation}
For $J_{3,1,3}$, by virtue of anisotropic-type estimates, we can derive
\begin{equation}\nonumber
\begin{split}
J_{3,1,3} &\lesssim \|b_1\|_{L_{x_1}^\infty L_{x_2}^2} \|(-\Delta)^{-1} \partial_1 \mathcal{H}\|_{L_{x_1}^\infty L_{x_2}^2} \|\partial_1^3 b_2\|_{L^2}^2 \\
&\lesssim \|b_1\|_{L^2}^\frac{1}{2}\|\p_2 b_2\|_{L^2}^\frac{1}{2}\|(-\Delta)^{-1} \partial_1 \mathcal{H}\|_{L^2}^\frac{1}{2}\|(-\Delta)^{-1} \partial_1^2 \mathcal{H}\|_{L^2}^\frac{1}{2}\|\partial_1^3 b_2\|_{L^2}^2.
\end{split}
\end{equation}
Recalling the definition of $\mathcal{H}$ and appealing to the Calder\'on--Zygmund theorem, we deduce that
\begin{equation}\label{est-H}
\begin{split}
\|(-\Delta)^{-1} \partial_1 \mathcal{H}\|_{L^2} \lesssim& \|u \cdot \nabla u\|_{L^2} + \sum_{i = 1}^2 \|\p_2 (b_2 b_i)\|_{L^2} + \|\p_1 (b_1 b_1)\|_{L^2} + \|\nabla |b_1|^2\|_{L^2}\\
 &+ \|\frac{\rho}{\rho+1}(\Delta u_1 + \Omega)\|_{L^2} \\ &+\|\frac{\rho}{\rho+1}(\p_2 P - \Delta u_2 - b \cdot \nabla b_2 + \frac{1}{2} \p_2 |b|^2)\|_{L^2} \\
 \lesssim& \|(\rho, b, u)\|_{L^2}^\frac{1}{2} \|(\p_2 \rho, \p_2 b, \Delta u, \Omega)\|_{H^2}^\frac{3}{2}.
\end{split}
\end{equation}
Applying the identical methodology to bound $\|(-\Delta)^{-1} \partial_1^2 \mathcal{H}\|_{L^2}$, it follows that
\begin{equation}\nonumber
\begin{split}
J_{3,1,3}
&\lesssim \|(\rho, b, u)\|_{H^3}^2\big( \|(\partial_2\rho, \partial_2 b, \Omega)\|_{H^2}^2 + \|\nabla u\|_{H^3}^2\big).
\end{split}
\end{equation}
Here, we have used the \textit{a priori} assumption \eqref{ansatz}.

\quad $\diamond$ \textbf{Estimate of $J_{3,2}$:}
For $J_{3,2}$, anisotropic-type estimates yield
\begin{equation}\nonumber
\begin{split}
J_{3,2} &\lesssim \|b_1(b_2 \partial_2 b_1)\|_{L_{x_1}^\infty L_{x_2}^1} \|\partial_1^3 b_2\|_{L^2}^2 \\
&\lesssim \big(\|b_1\|_{L_{x_1}^\infty L_{x_2}^2}\| b_2\|_{L^\infty} \|\partial_2 b_1\|_{L_{x_1}^\infty L_{x_2}^2} \big) \|\partial_1^3 b_2\|_{L^2}^2 \\
&\lesssim \|b_1\|_{L^2}^\frac{1}{2} \|\partial_1 b_1\|_{L^2}^\frac{1}{2} \| b_2\|_{H^2}^\frac{1}{2} \|\partial_2 b_2\|_{H^2}^\frac{1}{2} \|\partial_2 b_1\|_{H^2} \|\partial_1^3 b_2\|_{L^2}^2.
\end{split}
\end{equation}
Consolidating the above estimates for $J_3$, we obtain
\begin{equation}\label{J3}
\begin{split}
\Big|\int_0^t J_{3} \,d\tau\Big| &\leq \frac{1}{16} \int_0^t \int_{\mathbb{R}^2}|b_1|^2 |\partial_1^3 b_2|^2 \,dx \,d\tau + C \sup_{0 \leq \tau \leq t} \|(\rho, b, u)\|_{H^3}^4 \\
    &\quad + C \sup_{0 \leq \tau \leq t} \|(\rho, b, u)\|_{H^3}^2 \int_0^t \big(\|(\p_2 \rho, \partial_2 b, \Omega)\|_{H^2}^2 + \|\nabla u\|_{H^3}^2 + \|(u_t, b_t)\|_{L^2}^2\big) \, d\tau.
\end{split}
\end{equation}

$\bullet$ \textbf{Estimate of $J_4$:}
For $J_4$, invoking anisotropic-type estimates once more gives
\begin{equation}\label{J4}
\begin{split}
J_4 &\lesssim \|b_1 u\cdot \nabla u\|_{L_{x_1}^\infty L_{x_2}^1} \|\partial_1^3 b_2\|_{L^2}^2 \\
&\lesssim \|b_1\|_{L_{x_1}^\infty L_{x_2}^2} \|u\|_{L^\infty} \|\nabla u\|_{L_{x_1}^\infty L_{x_2}^2} \|\partial_1^3 b_2\|_{L^2}^2 \\
&\lesssim \|b_1\|_{L^2}^\frac{1}{2} \|\partial_1 b_1\|_{L^2}^\frac{1}{2} \|u\|_{L^2}^\frac{1}{2} \|\nabla^2 u\|_{L^2}^\frac{1}{2} \|\nabla u\|_{H^1} \|\partial_1^3 b_2\|_{L^2}^2.
\end{split}
\end{equation}

$\bullet$ \textbf{Estimate of $J_5$:}
In a parallel vein, for $J_5$, by virtue of Propositions \ref{prop-anisotropic-est} and \ref{prop-nonlinear-func}, we have
\begin{equation}\label{J5}
\begin{split}
J_5 &\lesssim \Big\|b_1 \frac{\rho}{\rho + 1} (\Omega + \Delta u_1)\Big\|_{L_{x_1}^\infty L_{x_2}^1} \|\partial_1^3 b_2\|_{L^2}^2 \\
&\lesssim \|b_1\|_{L_{x_1}^\infty L_{x_2}^2} \Big\|\frac{\rho}{\rho + 1}\Big\|_{L^\infty} \|(\Omega + \Delta u_1)\|_{L_{x_1}^\infty L_{x_2}^2} \|\partial_1^3 b_2\|_{L^2}^2 \\
&\lesssim \|b_1\|_{L^2}^\frac{1}{2} \|\partial_1 b_1\|_{L^2}^\frac{1}{2} \|\rho\|_{H^1}^\frac{1}{2} \|\partial_2 \rho\|_{H^1}^\frac{1}{2} \|(\Omega, \Delta u_1)\|_{H^1} \|\partial_1^3 b_2\|_{L^2}^2.
\end{split}
\end{equation}

Finally, collecting all the above estimates namely \eqref{J1}, \eqref{J2}, \eqref{J3}, \eqref{J4} and \eqref{J5}, invoking the \textit{a priori } assumption \eqref{ansatz}, we arrive at
\begin{equation}\nonumber
\begin{split}
\frac{1}{2}\int_{\mathbb{R}^2} (|b_1 |^2 + |u_1|^2)|\partial_1^3 b_2|^2 \,dx
    \leq& \frac{1}{8} \int_0^t \int_{\mathbb{R}^2}|b_1|^2 |\partial_1^3 b_2|^2 \,dx \,d\tau + C \sup_{0 \leq \tau \leq t} \|(\rho, b, u)\|_{H^3}^4 \\
    &+ C \sup_{0 \leq \tau \leq t} \|(\rho, b, u)\|_{H^3}^2 \\
    &\quad \cdot \int_0^t \big(\|(\partial_2 b, \partial_2\rho, \Omega)\|_{H^2}^2 + \|\nabla u\|_{H^3}^2 + \|(u_t, b_t)\|_{L^2}^2\big) \, d\tau.
\end{split}
\end{equation}
This successfully completes the proof of this lemma.
\end{proof}

\begin{lemma}
For the energies  defined in \eqref{energy-set}, the following estimate holds for all $t > 0$:
\begin{equation}\nonumber
\begin{split}
  \mathcal{E}_5(t) \triangleq \int_0^t \int_{\mathbb{R}^2} \Gamma^2 |\p_1^3 (\rho, b_2)|^2 \, dx d\tau
    \lesssim  \mathcal{E}_{\mathrm{total}}^\frac{3}{2}(t).
\end{split}
\end{equation}
\end{lemma}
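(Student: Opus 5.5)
We bound $\mathcal{E}_5$ by exploiting the elliptic equation \eqref{eq-div-rho+b2} for $\Gamma$. The aim is to write $\Gamma$ as a sum of time derivatives, dissipative quantities and nonlinear terms, and then to pair it against $\Gamma\,|\p_1^3(\rho,b_2)|^2$ rather than estimating $\Gamma$ pointwise. Inverting the Laplacian in \eqref{eq-div-rho+b2} gives
\[
\Gamma = (-\Delta)^{-1}\nabla\cdot u_t + \nabla\cdot u - (-\Delta)^{-1}\mathcal{H},
\]
and hence, with $r\in\{\p_1^3\rho,\p_1^3 b_2\}$,
\[
\int_{\mathbb{R}^2}\Gamma^2 r^2\,dx = \int_{\mathbb{R}^2}\Gamma\,(-\Delta)^{-1}\nabla\cdot u_t\, r^2\,dx + \int_{\mathbb{R}^2}\Gamma\,\nabla\cdot u\, r^2\,dx - \int_{\mathbb{R}^2}\Gamma\,(-\Delta)^{-1}\mathcal{H}\, r^2\,dx .
\]

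The second term is handled by Young's inequality:
\[
\int_{\mathbb{R}^2}\Gamma\,\nabla\cdot u\, r^2\,dx \le \tfrac14\int_{\mathbb{R}^2}\Gamma^2 r^2\,dx + \|\nabla\cdot u\|_{L^\infty}^2\|r\|_{L^2}^2 .
\]
The first piece is absorbed into the left-hand side. The remainder is bounded by $\|\nabla u\|_{H^2}^2\|(\rho,b)\|_{H^3}^2$, whose time integral is $\lesssim\mathcal{E}_{\mathrm{total}}^2$.

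For the $\mathcal{H}$ term, we follow the strategy used for $J_{3,1,3}$. Every piece of $\mathcal{H}$ is a divergence of a quadratic quantity or of $\frac{\rho}{\rho+1}(\cdots)$, so $(-\Delta)^{-1}\mathcal{H}$ is a Riesz-type operator applied to quadratic terms. We place $\Gamma$ in $L^\infty$, using Proposition \ref{prop-anisotropic-est} and $\|\Gamma\|_{L^\infty}\lesssim \|(\rho,b_2)\|_{H^1}^{1/2}\|\p_2(\rho,b_2)\|_{H^1}^{1/2}$. We place $(-\Delta)^{-1}\mathcal{H}$ in $L^\infty$ via $L^2\cap\dot H^{2}$ bounds as in \eqref{est-H}. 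This yields
\[
\|(\rho,b,u)\|_{H^3}^3\,\big(\|(\p_2\rho,\p_2 b,\Omega)\|_{H^2}^2+\|\nabla u\|_{H^3}^2\big)
\]
up to harmless powers, which is admissible.

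\textbf{Main obstacle.} The main obstacle is the term containing $u_t$, since $(-\Delta)^{-1}\nabla\cdot u_t$ has no smallness from dissipation alone. We integrate by parts in time:
\[
\int_{\mathbb{R}^2}\Gamma\,(-\Delta)^{-1}\nabla\cdot u_t\,r^2\,dx=\frac{d}{dt}\int_{\mathbb{R}^2}\Gamma\,(-\Delta)^{-1}\nabla\cdot u\, r^2\,dx-\int_{\mathbb{R}^2}\p_t\Gamma\,(-\Delta)^{-1}\nabla\cdot u\,r^2\,dx-\int_{\mathbb{R}^2}\Gamma\,(-\Delta)^{-1}\nabla\cdot u\,\p_t r^2\,dx .
\]
\begin{itemize}
\item The boundary term is $\lesssim\sup_\tau\|(\rho,b,u)\|_{H^3}^4$.
\item The $\p_t\Gamma$ term is controlled by $\|\p_t\Gamma\|_{L^2}\|(-\Delta)^{-1}\nabla\cdot u\|_{L^\infty}\|r\|_{L^4}^2$, using $\mathcal{E}_6$ together with $\|(-\Delta)^{-1}\nabla\cdot u\|_{L^\infty}\lesssim\|u\|_{L^2}^{1/2}\|\nabla u\|_{L^2}^{1/2}$. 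If the low-frequency $L^\infty$ bound for $(-\Delta)^{-1}\nabla\cdot u$ is delicate, we instead use the anisotropic $L^\infty_{x_1}L^2_{x_2}$ norm, paired with $r^2\Gamma$ or $\p_t\Gamma$ in $L^1_{x_1}L^2_{x_2}$.
\item For the $\p_t r^2$ term, we write $\p_t r^2=-u\cdot\nabla r^2+\mathcal{U}$ as for $J_{3,1,1,2,3}$. The transport part is integrated by parts onto $\Gamma(-\Delta)^{-1}\nabla\cdot u$, which costs only one derivative on $u$ and $\Gamma$. In $\mathcal{U}$, the dangerous piece is $\p_1^4u_1\,r$ (or $\p_1^3\nabla\cdot u$ when $r=\p_1^3\rho$), to which Lemma \ref{lemma-p13u1} applies with $f$ replaced by the smooth coefficient $\Gamma(-\Delta)^{-1}\nabla\cdot u$. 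The remaining pieces of $\mathcal{U}$ are bounded directly.
\end{itemize}

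Collecting these bounds gives $\mathcal{E}_5\le\frac14\mathcal{E}_5+C\mathcal{E}_{\mathrm{total}}^{3/2}$ under \eqref{ansatz}, which proves the lemma.
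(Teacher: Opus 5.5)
\textbf{There is a genuine gap: the $u_t$ term.} Your treatment of the $\nabla\cdot u$ piece (Young's inequality, absorbing $\tfrac14\int\Gamma^2r^2\,dx$) is fine and is essentially what the paper does with $\mathcal{V}$ in $W_2$. For the $(-\Delta)^{-1}\mathcal{H}$ piece you should also use Young rather than $\|\Gamma\|_{L^\infty}\|(-\Delta)^{-1}\mathcal{H}\|_{L^\infty}\|r\|_{L^2}^2$, because $\|\Gamma\|_{L^\infty}$ supplies only half a power of dissipation.

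After your integration by parts in time, every resulting term contains $(-\Delta)^{-1}\nabla\cdot u$ with no derivative in front. That is an operator of order $-1$ acting on the velocity itself, and on $\mathbb{R}^2$ it is not controlled by any $H^s$ norm because of the low frequencies.
\begin{itemize}
\item Your bound $\|(-\Delta)^{-1}\nabla\cdot u\|_{L^\infty}\lesssim\|u\|_{L^2}^{1/2}\|\nabla u\|_{L^2}^{1/2}$ is false. Under $u\mapsto u(\lambda\,\cdot\,)$ the left side scales like $\lambda^{-1}$ and the right side like $\lambda^{-1/2}$, so it fails as $\lambda\to0$.
\item The fallback $L^\infty_{x_1}L^2_{x_2}$ norm scales like $\lambda^{-3/2}$, which is even worse.
\item For $u_0\in H^3$ with $\hat u_0(\xi)=\frac{i\xi}{|\xi|^2\log(1/|\xi|)}$ near $\xi=0$, the Fourier transform of $(-\Delta)^{-1}\nabla\cdot u_0$ is not even locally integrable.
\end{itemize}
So already your boundary term at $\tau=0$ would require a negative-order (or $L^1$) assumption on $u_0$, which is exactly what the theorem excludes.

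Two further steps also fail. First, $\|r\|_{L^4}$ with $r=\p_1^3\rho$ needs $3\tfrac12$ derivatives; with $\p_t\Gamma$ only in $L^2$ and $r^2$ only in $L^1$ there is no pointwise way to pair them. Second, Lemma \ref{lemma-p13u1} only admits bounded coefficients $f(\rho,b_2)$ whose time derivative is expressed through $\rho_t$ and $\p_t b_2$. It does not cover $\Gamma\,(-\Delta)^{-1}\nabla\cdot u$.

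\textbf{The missing idea is the paper's primitive (pseudo-negative-derivative) device.} It writes $\int\Gamma^2r^2\,dx=2\int\!\int_{-\infty}^{x_1}\p_1\Gamma\,\Gamma\,dy_1\,r^2\,dx$. The elliptic representation then enters only through $\p_1\Gamma=\p_1(-\Delta)^{-1}\nabla\cdot u_t+\p_1\mathcal{V}$, where $\p_1(-\Delta)^{-1}\nabla\cdot$ has order zero.

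Next, it rewrites the $x_1$-primitive as an integral over the quadrant $\{y_1<x_1,\ y_2<x_2\}$. This produces $\p_1\p_2(-\Delta)^{-1}\nabla\cdot u_t\,\Gamma+\p_1(-\Delta)^{-1}\nabla\cdot u_t\,\p_2\Gamma$, and the paper integrates by parts in time only in the first term.

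A quadrant integral is bounded in $L^\infty_x$ by the $L^1(\mathbb{R}^2)$ norm of its integrand, so everything inside needs only $L^2$ control:
\begin{itemize}
\item the boundary term is $\lesssim\|\nabla\cdot u\|_{L^2}\|\Gamma\|_{L^2}\|r\|_{L^2}^2$, using $\|\p_1\p_2(-\Delta)^{-1}\nabla\cdot u\|_{L^2}\le\|\nabla\cdot u\|_{L^2}$;
\item the $\Gamma_t$ term is $\lesssim\|\nabla\cdot u\|_{L^2}\|\Gamma_t\|_{L^2}\|r\|_{L^2}^2$;
\item the second piece is $\lesssim\|u_t\|_{L^2}\|\p_2\Gamma\|_{L^2}\|r\|_{L^2}^2$, which is integrable in time via $\mathcal{E}_3$ and $\mathcal{E}_6$;
\item the $r_t$ term is handled through $\p_t r^2=-u\cdot\nabla r^2+\mathcal{Z}$, with $\|\mathcal{Z}\|_{L^1}$ bounded directly and no appeal to Lemma \ref{lemma-p13u1}.
\end{itemize}
This is precisely what removes any need for low-frequency information on $u$.
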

\begin{proof}
Recalling the definition of $\Gamma$ given in \eqref{defgamma}, namely $\Gamma = b_2 + P(\rho + 1) - P(1) + \frac{1}{2} |b_2|^2$, and invoking \eqref{eq-div-rho+b2}, we obtain
  \begin{equation}\nonumber
\begin{split}
\Gamma =& (-\Delta)^{-1}\nabla \cdot u_t + \mathcal{V}.
\end{split}
\end{equation}
Here,
\begin{equation}\label{defv}
\begin{split}
\mathcal{V} \triangleq
& \nabla \cdot u + (-\Delta)^{-1}\big[\nabla \cdot (u \cdot \nabla u) - \nabla \cdot ( b\cdot \nabla b - \frac{1}{2} \nabla |b_1|^2)\big]\\
&+ (-\Delta)^{-1} \nabla \cdot \Big[\frac{\rho}{\rho + 1}\left(
	\begin{array}{c}
		\Omega + \Delta u_1 \\
		-\p_2 P + \Delta u_2 + b \cdot \nabla  b_2 - \frac{1}{2} \p_2 |b|^2 \\
	\end{array}
	\right)  \Big]. \\
\triangleq& \mathcal{V}_1 + \mathcal{V}_2 + \mathcal{V}_3.
\end{split}
\end{equation}
Let $r \in \{\p_1^3 \rho, \p_1^3 b_2\}$. Then
\begin{equation}\nonumber
  \begin{split}
    \int_{\mathbb{R}^2} \Gamma^2 r^2\,dx = 2 \int_{\mathbb{R}^2} \int_{-\infty}^{x_1} \p_1 \Gamma \, \Gamma \, dy_1 \, r^2\,dx \triangleq {W}_1 + {W}_2.
  \end{split}
\end{equation}

$\bullet$ \textbf{Estimate of ${W}_1$:}
We decompose ${W}_1$ as follows:
\begin{equation}\nonumber
  \begin{split}
    {W}_1 =&2 \int_{\mathbb{R}^2}\int_{-\infty}^{x_1} \p_1 (-\Delta)^{-1} \nabla \cdot u_t \Gamma \,dy_1 \, r^2\,dx \\
    =&2 \int_{\mathbb{R}^2}\int_{-\infty}^{x_2} \int_{-\infty}^{x_1}\p_1 \p_2 (-\Delta)^{-1} \nabla \cdot u_t  \Gamma \,dy \, r^2\,dx \\
    & + 2\int_{\mathbb{R}^2}\int_{-\infty}^{x_2} \int_{-\infty}^{x_1}\p_1 (-\Delta)^{-1} \nabla \cdot u_t  \p_2\Gamma \,dy \, r^2\,dx \\
        \triangleq& \; {W}_{1,1} + {W}_{1,2}.
  \end{split}
\end{equation}
For the term ${W}_{1,1}$, integrating by parts in time yields
\begin{equation}\nonumber
  \begin{split}
    {W}_{1,1} =& 2 \frac{d}{dt}\int_{\mathbb{R}^2}\int_{-\infty}^{x_2} \int_{-\infty}^{x_1}\p_1 \p_2 (-\Delta)^{-1} \nabla \cdot u  \Gamma \,dy \, r^2\,dx \\
    & -2 \int_{\mathbb{R}^2}\int_{-\infty}^{x_2} \int_{-\infty}^{x_1}\p_1 \p_2 (-\Delta)^{-1} \nabla \cdot u  \Gamma_t \,dy \, r^2\,dx \\
    & -4 \int_{\mathbb{R}^2}\int_{-\infty}^{x_2} \int_{-\infty}^{x_1}\p_1 \p_2 (-\Delta)^{-1} \nabla \cdot u  \Gamma \,dy \, r \, r_t\,dx \\
    \triangleq& \, {W}_{1,1,1} + {W}_{1,1,2} + {W}_{1,1,3}.
  \end{split}
\end{equation}
By H\"older's inequality and the Calder\'on--Zygmund theorem, we have
\begin{equation}\nonumber
  \begin{split}
    {W}_{1,1,2} \lesssim \|\nabla \cdot u\|_{L^2} \|\Gamma_t\|_{L^2} \|r\|_{L^2}^2.
  \end{split}
\end{equation}
For the term ${W}_{1,1,3}$, we exploit the identity
\begin{equation}\nonumber
  \begin{split}
    \p_t r^2 = - u \cdot \nabla r^2 + \mathcal{Z},
  \end{split}
\end{equation}
where, for $r = \p_1^3 \rho$,
\begin{equation}\nonumber
\begin{split}
  \mathcal{Z} =&
    - 2\p_1^3 \rho \cdot\sum_{k = 1}^3 \mathcal{C}_3^k \p_1^k u\cdot \nabla \p_1^{3-k} \rho - 2\p_1^3\rho \p_1^3 \nabla \cdot u - 2\p_1^3\rho \p_1^3(\rho \nabla \cdot u),
\end{split}
\end{equation}
and for $r = \p_1^3 b_2$,
\begin{equation}\nonumber
\begin{split}
  \mathcal{Z} =&
    - 2\p_1^3 b_2 \cdot\sum_{k = 1}^3 \mathcal{C}_3^k \p_1^k u\cdot \nabla \p_1^{3-k} b_2 - 2\p_1^3b_2 \p_1^4 u_1 \\
    &+ 2 \p_1^3b_2 \p_1^3(b \cdot \nabla u_2) - 2\p_1^3b_2 \p_1^3(b_2 \nabla \cdot u).
\end{split}
\end{equation}
Hence, by integration by parts,
\begin{equation}\nonumber
  \begin{split}
    {W}_{1,1,3} =
    & \, 2 \int_{\mathbb{R}^2}\int_{-\infty}^{x_2} \int_{-\infty}^{x_1}\p_1 \p_2 (-\Delta)^{-1} \nabla \cdot u \cdot \Gamma \,dy \, u \cdot \nabla r^2 \,dx \\
    & -2 \int_{\mathbb{R}^2}\int_{-\infty}^{x_2} \int_{-\infty}^{x_1}\p_1 \p_2 (-\Delta)^{-1} \nabla \cdot u \cdot \Gamma \,dy \, \mathcal{Z} \,dx \\
    =
    & -2 \int_{\mathbb{R}^2}\int_{-\infty}^{x_2} \int_{-\infty}^{x_1}\p_1 \p_2 (-\Delta)^{-1} \nabla \cdot u \cdot \Gamma \,dy  \nabla \cdot u \, r^2 \,dx \\
    & - 2 \int_{\mathbb{R}^2} \int_{-\infty}^{x_1}\p_1 \p_2 (-\Delta)^{-1} \nabla \cdot u \cdot \Gamma \,dy_1  u_2 \, r^2 \,dx \\
    & - 2 \int_{\mathbb{R}^2} \int_{-\infty}^{x_2}\p_1 \p_2 (-\Delta)^{-1} \nabla \cdot u \cdot \Gamma \,dy_2  u_1 \, r^2 \,dx \\
    & -2 \int_{\mathbb{R}^2}\int_{-\infty}^{x_2} \int_{-\infty}^{x_1}\p_1 \p_2 (-\Delta)^{-1} \nabla \cdot u \cdot \Gamma \,dy \, \mathcal{Z} \,dx.
  \end{split}
\end{equation}
Now, by virtue of H\"older's inequality and Proposition \ref{prop-anisotropic-est}, we deduce that
\begin{equation}\nonumber
  \begin{split}
    {W}_{1,1,3} \lesssim& \|\nabla \cdot u\|_{L^2} \|\Gamma\|_{L^2} \|\nabla \cdot u\|_{L^\infty} \|r\|_{L^2}^2 \\
    & + \|\nabla \cdot u\|_{H^1} \|\Gamma\|_{L^2}^\frac{1}{2}\|\p_2 \Gamma\|_{L^2}^\frac{1}{2} \|u_2\|_{L^2}^\frac{1}{2}\|\nabla^2 u_2\|_{L^2}^\frac{1}{2} \|r\|_{L^2}^2 \\
    & + \|\nabla \cdot u\|_{H^1} \|\Gamma\|_{L^2}^\frac{1}{2}\|\p_1 \Gamma\|_{L^2}^\frac{1}{2} \|u_1\|_{L^2}^\frac{1}{2}\|\nabla^2 u_1\|_{L^2}^\frac{1}{2} \|r\|_{L^2}^2 \\
    & + \|\nabla \cdot u\|_{L^2} \|\Gamma\|_{L^2} \|\mathcal{Z}\|_{L^1} \\
    \lesssim& \, \|(\rho, b, u)\|_{H^3} (\|\nabla \Gamma\|_{L^2}^2 + \|\nabla u\|_{H^3}^2).
  \end{split}
\end{equation}
Here, we have used \eqref{ansatz} and the fact that each term in $\mathcal{Z}$ contains at least one factor of $\partial_i u_j$. For the remaining term $W_{1,2}$, similar to the estimate of $W_{1,1,2}$, we have
\begin{equation}\nonumber
W_{1,2} \lesssim \|u_t\|_{L^2} \|\p_2 \Gamma\|_{L^2} \|r\|_{L^2}^2.
\end{equation}
Synthesizing the above estimates for ${W}_1$, we obtain
\begin{equation}\label{W1}
  \begin{split}
{W}_1
   \lesssim& \sup_{0 \leq \tau \leq t} \Big |\int_{\mathbb{R}^2}\int_{-\infty}^{x_2} \int_{-\infty}^{x_1}\p_1 \p_2 (-\Delta)^{-1} \nabla \cdot u  \Gamma \,dy \, r^2\,dx \Big| \\
  & + \sup_{0 \leq \tau \leq t} \|(\rho, b, u)\|_{H^3} \cdot \int_0^t ( \|\p_2 b\|_{H^2}^2 + \|(\Gamma_t, \nabla \Gamma)\|_{L^2}^2 + \|\nabla u\|_{H^3}^2) \, d\tau \\
  \lesssim& \sup_{0 \leq \tau \leq t} \|\nabla \cdot u\|_{L^2}  \|\Gamma\|_{L^2} \|(\p_1^3 b_2, \p_1^3\rho)\|_{L^2}^2 \\
  & +  \sup_{0 \leq \tau \leq t} \|(\rho, b, u)\|_{H^3} \cdot \int_0^t ( \|\p_2 b\|_{H^2}^2 + \|(\Gamma_t, \nabla \Gamma)\|_{L^2}^2 + \|\nabla u\|_{H^3}^2) \, d\tau \\
  \lesssim& \sup_{0 \leq \tau \leq t} \|(\rho, b, u)\|_{H^3}^3\\
  & +  \sup_{0 \leq \tau \leq t} \|(\rho, b, u)\|_{H^3} \cdot \int_0^t ( \|\p_2 b\|_{H^2}^2 + \|(\Gamma_t, \nabla \Gamma)\|_{L^2}^2 + \|\nabla u\|_{H^3}^2) \, d\tau.
  \end{split}
\end{equation}
Here, we have used \eqref{ansatz}, which implies $\|\Gamma\|_{L^2} \lesssim 1$.

$\bullet$ \textbf{Estimate of ${W}_2$:}
Executing integration by parts,
\begin{equation}\nonumber
  \begin{split}
    {W}_2 =& \,2 \int_{\mathbb{R}^2}\int_{-\infty}^{x_1} \p_1 \mathcal{V} \Gamma \, d y_1 \, r^2 \,dx\\
    =& \,2\int_{\mathbb{R}^2}  \mathcal{V} \Gamma  r^2 \,dx  -2\int_{\mathbb{R}^2} \int_{-\infty}^{x_1} \mathcal{V} \p_1 \Gamma \, d y_1  r^2 \,dx.
  \end{split}
\end{equation}
By Young's inequality and H\"older's inequality, we have
\begin{equation}\nonumber
  \begin{split}
    {W}_2 \leq& \,\frac{1}{2}\int_{\mathbb{R}^2}  \Gamma^2 r^2 \,dx + 8 \int_{\mathbb{R}^2}  \mathcal{V}^2 r^2 \,dx + C\|\mathcal{V}\|_{L_{x_1}^2 L_{x_2}^\infty} \|\p_1 \Gamma\|_{L_{x_1}^2 L_{x_2}^\infty} \|r\|_{L^2}^2 \\
    \leq& \,\frac{1}{2}\int_{\mathbb{R}^2}  \Gamma^2 r^2 \,dx + C \|\mathcal{V}\|_{\dot H^{1-\delta} \cap \dot H^{1+\delta}}^2 \|r\|_{L^2}^2 \\
    & + C (\|\mathcal{V}_1\|_{L^2} \|\p_2 \mathcal{V}_1\|_{L^2}+ \|\mathcal{V}_2\|_{L^2} \|\p_2 \mathcal{V}_2\|_{L^2}+ \|\mathcal{V}_3\|_{\dot H^{\frac{1}{2}-\delta} \cap \dot H^{\frac{1}{2} + \delta}}+ \|\p_1\Gamma\|_{L^2}\|\p_1 \p_2\Gamma\|_{L^2})\|r\|_{L^2}^2.
  \end{split}
\end{equation}
Recalling the definitions of $\Gamma$ in \eqref{defgamma} and $\mathcal{V}$ in \eqref{defv}, and arguing similarly to \eqref{est-H}, we find that for sufficiently small $\delta$, there holds
\begin{equation}
  \begin{split}
    \|\mathcal{V}\|_{\dot H^{1-\delta} \cap \dot H^{1+\delta}} \lesssim& \|\nabla \cdot u\|_{H^2} + \|(u \cdot \nabla u, b \cdot \nabla b, \nabla|b_1|^2) \|_{\dot H^{-\delta} \cap \dot H^{\delta}} \\
    & + \|\frac{\rho}{\rho + 1} ( \Omega + \Delta u_1)\|_{\dot H^{-\delta} \cap \dot H^{\delta}} \\
    &+ \|\frac{\rho}{\rho + 1} (-\p_2 P + \Delta u_2 + b \cdot \nabla  b_2 - \frac{1}{2} \p_2 |b|^2)\|_{\dot H^{-\delta} \cap \dot H^{\delta}}, \\
    \lesssim& \|(\nabla u, \p_2 b, \p_2 \rho, \Omega)\|_{H^2},\\
    \|\mathcal{V}_1\|_{L^2}\|\p_2 \mathcal{V}_1\|_{L^2} \lesssim& \|\nabla \cdot u\|_{H^1}^2 ,\\
    \|\mathcal{V}_2\|_{L^2}\|\p_2 \mathcal{V}_2\|_{L^2} \lesssim& \|(u \otimes u, b \otimes b)\|_{L^2}\|\p_2 (u \otimes u, b \otimes b)\|_{L^2} \\
    \lesssim& \|(\p_2 u, \p_2 b)\|_{H^2}^2,\\
  \end{split}
\end{equation}
and
\begin{equation}
  \begin{split}
    \|\mathcal{V}_3\|_{\dot H^{\frac{1}{2}-\delta} \cap \dot H^{\frac{1}{2} + \delta}} \lesssim& \|\frac{\rho}{\rho + 1}(\Omega + \Delta u_1 )\|_{\dot H^{-\frac{1}{2}-\delta} \cap \dot H^{-\frac{1}{2} + \delta}}\\
     &+ \|\frac{\rho}{\rho + 1}(-\p_2 P + \Delta u_2 + b \cdot \nabla  b_2 - \frac{1}{2} \p_2 |b|^2 )\|_{\dot H^{-\frac{1}{2}-\delta} \cap \dot H^{-\frac{1}{2} + \delta}} \\
    \lesssim& \|(\Omega, \nabla u, \p_2 \rho, \p_2 b)\|_{H^2}.
  \end{split}
\end{equation}
Here, we have used \eqref{ansatz}. Hence,
\begin{equation}\label{W2}
  \begin{split}
   {W}_2
    \leq& \frac{1}{2}\int_{\mathbb{R}^2}  \Gamma^2 r^2 \,dx  + C\|(\rho, b, u)\|_{H^3}^2\|(\p_2\rho, \p_2 b, \nabla u, \Omega)\|_{H^2}^2.
  \end{split}
\end{equation}
Aggregating the estimates for ${W}_1$ and ${W}_2$, namely \eqref{W1} and \eqref{W2}, we successfully complete the proof of this lemma.
\end{proof}

\section{Dissipation estimates for time derivatives}
\label{estimate-time-derivative}

\begin{lemma}
For the energies defined in \eqref{energy-set}, the following estimate holds for all $t > 0$:
  \begin{equation}\nonumber
  \begin{split}
    \mathcal{E}_6(t) \triangleq \int_0^t \|\p_t(\rho, b, u, \Omega, \Gamma)\|_{L^2}^2 \,d\tau
     \lesssim \sum_{i = 0}^3 \mathcal{E}_i(t).
  \end{split}
  \end{equation}
\end{lemma}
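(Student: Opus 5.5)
The plan is to read each time derivative off the equations in \eqref{mhd1}, \eqref{eqomega} and the definition \eqref{defgamma}. Each one becomes a sum of spatial linear terms already in the dissipation of $\mathcal{E}_0$--$\mathcal{E}_3$, plus nonlinear terms. We estimate these pointwise in time in $L^2$, square, and integrate over $[0,t]$. The nonlinear terms should be controlled by $\sup_\tau\|(\rho,u,b)\|_{H^3}^2$ times a dissipation quantity, which is $\lesssim \mathcal{E}_0(t)$ times a dissipation, and hence $\lesssim\sum_{i=0}^3\mathcal{E}_i(t)$ under \eqref{ansatz}.

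\textbf{Velocity and density.} From the continuity equation, $\partial_t\rho=-\nabla\cdot u-\nabla\cdot(\rho u)$. This gives $\|\partial_t\rho\|_{L^2}\lesssim\|\nabla u\|_{L^2}+\|u\|_{L^\infty}\|\nabla\rho\|_{L^2}+\|\rho\|_{L^\infty}\|\nabla u\|_{L^2}$. The potentially bad term is $u\cdot\nabla\rho=u_1\partial_1\rho+u_2\partial_2\rho$. For $u_2\partial_2\rho$ we have the bound $\|u\|_{L^\infty}\|\partial_2\rho\|_{L^2}$. For $u_1\partial_1\rho$ we use Proposition \ref{prop-anisotropic-est}:
\[
\|u_1\partial_1\rho\|_{L^2}\lesssim \|u_1\|_{L^2}^{1/2}\|\partial_1u_1\|_{L^2}^{1/2}\|\partial_1\rho\|_{L^2}^{1/2}\|\partial_1\partial_2\rho\|_{L^2}^{1/2},
\]
whose square is bounded by $\|(\rho,u)\|_{H^3}^2(\|\nabla u\|_{L^2}^2+\|\partial_2\rho\|_{H^1}^2)$. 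For $\partial_t u$ we write, using the $u$-equation,
\[
\partial_t u=\Delta u+(\nabla^\perp\cdot b,0)^T-\tfrac{1}{1+\rho}\nabla P+\text{nonlinear terms}.
\]
Here $\Delta u$ is in $\mathcal{E}_0$. The term $\nabla^\perp\cdot b=\partial_2b_1-\partial_1b_2=\partial_2b_1+\partial_1\partial_2\Delta^{-1}\ldots$ is better handled through the combination $\Omega$. Indeed, the first component of the linear forcing is exactly $\Omega$ up to nonlinear terms by \eqref{equ1}: $\partial_tu_1=\Delta u_1+\Omega-u\cdot\nabla u_1-\frac{\rho}{\rho+1}(\Delta u_1+\Omega)$. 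So $\|\partial_tu_1\|_{L^2}^2$ is bounded by $\mathcal{E}_0+\mathcal{E}_3$ dissipation plus quadratic terms. For the second component, the linear forcing is $-\partial_2\rho$, plus nonlinear terms such as $b\cdot\nabla b_2-\frac12\partial_2|b|^2=b_1\partial_1b_2-b_1\partial_2b_1$. These we bound using $\partial_1b_1=-\partial_2b_2$ and anisotropic estimates by $\|b\|_{H^2}\|\partial_2b\|_{H^1}$, so they lie in $\mathcal{E}_1+\mathcal{E}_2$.

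\textbf{Magnetic field and $\Gamma$.} For $b$, $\partial_tb=\nabla^\perp u_1-u\cdot\nabla b+b\cdot\nabla u-b\nabla\cdot u$. Every term carries either $\nabla u$ or $u\cdot\nabla b$; the latter is treated exactly like $u\cdot\nabla\rho$, splitting $u_1\partial_1b$ and $u_2\partial_2b$ and using Proposition \ref{prop-anisotropic-est}. For $\Gamma$, the chain rule gives
\[
\partial_t\Gamma=(1+b_2)\partial_tb_2+P'(1+\rho)\partial_t\rho,
\]
so $\|\partial_t\Gamma\|_{L^2}\lesssim\|(\partial_t\rho,\partial_tb_2)\|_{L^2}$ by \eqref{ansatz} and Proposition \ref{prop-nonlinear-func}, and this reduces to the previous step.

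\textbf{Main obstacle: $\partial_t\Omega$.} From \eqref{eqomega}, $\partial_t\Omega=-u\cdot\nabla\Omega+2\partial_1^2u_1+\partial_2^2u_1+\partial_1\partial_2u_2+Q$. The linear part is in $\mathcal{E}_0$, and $\|u\|_{L^\infty}\|\nabla\Omega\|_{L^2}$ is in $\mathcal{E}_3$. The difficulty lies in $Q$ from \eqref{Q}. Every term contains a factor $\partial_iu_j$ or $\partial_i\partial_ju_k$, multiplied by $\nabla(\rho,b)$ or by $(\rho,b)$, for example $\partial_1u\cdot\nabla b_2$ and $b\cdot\nabla(\partial_2u_1)$. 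We place the velocity factor in $L^\infty$ via $\|\nabla u\|_{H^2}$ and the other factor in $L^2$ via $\|(\rho,b)\|_{H^3}$. This gives $\|Q\|_{L^2}\lesssim\|(\rho,b)\|_{H^3}\|\nabla u\|_{H^3}$, and the squared time integral is $\lesssim\mathcal{E}_0^2\le\mathcal{E}_0$. The subtle point is to make sure that no term in $Q$ has only $\rho$- or $b$-derivatives; the structure of \eqref{Q} guarantees this. One must also avoid any loss of derivatives, which is harmless since only $L^2$ is needed. Summing the squared bounds and integrating over time yields the claim.
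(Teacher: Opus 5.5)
Your proposal is correct and follows essentially the same route as the paper. You read each time derivative off the corresponding equation: the continuity and induction equations, the reformulated $u_1$-equation with $\Omega$, the $u_2$-momentum equation, the $\Omega$-evolution equation, and the chain rule for $\Gamma$. You then place the linear terms in the dissipation of $\mathcal{E}_0$--$\mathcal{E}_3$ and bound the nonlinear terms by $\sup_\tau\|(\rho,u,b)\|_{H^3}^2$ times dissipation, using the anisotropic splitting of $u\cdot\nabla(\rho,b)$ via Proposition~\ref{prop-anisotropic-est}; under $\mathcal{E}_0\le 1$ this closes exactly as in the paper.
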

\begin{proof}
To establish the desired bound for the temporal dissipation, we proceed by estimating each time derivative term individually.

$\bullet$ \textbf{Estimate of $\p_t\rho$:}
Invoking the mass conservation law (continuity equation), we have the structural relation
  \[
  \p_t \rho = - \nabla \cdot u - \nabla \cdot (\rho u).
  \]
A direct application of the anisotropic bounds from Proposition \ref{prop-anisotropic-est} furnishes
  \begin{equation}\nonumber
  \begin{split}
    \int_0^t \|\p_t \rho\|_{L^2}^2 \,d\tau \lesssim& \int_0^t \|\nabla u\|_{L^2}^2 \,d\tau + \sup_{0 \leq \tau \leq t} \|(\rho, u)\|_{H^1}^2 \int_0^t \|(\p_2 \rho, \p_1 u)\|_{H^1}^2 \,d\tau.
  \end{split}
  \end{equation}

$\bullet$ \textbf{Estimate of $\p_t b$:}
Recalling the evolution equation for the magnetic field (induction equation), we observe that
  \[
  b_t = \nabla^{\bot}u_1 - u\cdot\nabla b + b\cdot\nabla u - b\nabla\cdot u.
  \]
  Consequently, there is
  \begin{equation}\nonumber
  \begin{split}
    \int_0^t \|\p_t b\|_{L^2}^2 \,d\tau \lesssim& \int_0^t \|\nabla^{\bot}u_1\|_{L^2}^2 \,d\tau + \sup_{0 \leq \tau \leq t} \|(b, u)\|_{H^1}^2 \int_0^t \|(\p_2 b, \p_1 u)\|_{H^1}^2 \,d\tau.
  \end{split}
  \end{equation}

$\bullet$ \textbf{Estimate of $\p_t u$:}
Due to the distinct dissipative structures associated with the horizontal and vertical velocity components, we analyze $\p_t u_1$ and $\p_t u_2$ separately.

\quad $\diamond$ \textbf{Estimate of $\p_t u_1$:}
Turning to the momentum equation for the first velocity component, we recall that
  \begin{equation}\nonumber
  \begin{split}
    \partial_t u_1 + u \cdot \nabla u_1 - \Delta u_1 - \Omega = -\frac{\rho}{\rho+1}\big(\Delta u_1 + \Omega \big).
  \end{split}
  \end{equation}
Appealing to H\"older's inequality and Proposition \ref{prop-nonlinear-func}, we readily deduce
  \begin{equation}\nonumber
  \begin{split}
    \int_0^t \|\p_t u_1\|_{L^2}^2 \,d\tau \lesssim& \sup_{0 \leq \tau \leq t} (1+\|\rho\|_{L^\infty}^2) \cdot \int_0^t \|(\Delta u_1, \Omega)\|_{L^2}^2 \,d\tau + \sup_{0 \leq \tau \leq t} \|u\|_{L^\infty}^2 \int_0^t \|\nabla u_1\|_{L^2}^2 \,d\tau.
  \end{split}
  \end{equation}

\quad $\diamond$ \textbf{Estimate of $\p_t u_2$:}
For the second component, the momentum equation yields
  \[ \p_t u_2 = - u \cdot \nabla u_2 + \frac{1}{\rho + 1} \big(\Delta u_2 - \p_2 P + b \cdot \nabla b_2 - \tfrac{1}{2} \p_2 |b|^2\big). \]
  This structural relation directly implies that
  \begin{equation}\nonumber
  \begin{split}
    \int_0^t \|\p_t u_2\|_{L^2}^2 \,d\tau \lesssim& \sup_{0 \leq \tau \leq t} \|u\|_{L^\infty}^2 \int_0^t \|\nabla u\|_{L^2}^2 \, d\tau +  \int_0^t \|(\Delta u, \p_2 \rho)\|_{L^2}^2 \, d\tau\\
     &+ \sup_{0 \leq \tau \leq t} \|b\|_{L^\infty}^2\int_0^t \|\p_2 b\|_{L^2}^2 \, d\tau.
  \end{split}
  \end{equation}
Aggregating the estimates for $\p_t u_1$ and $\p_t u_2$, and invoking the \textit{a priori} bound \eqref{ansatz}, we obtain
  \begin{equation}\nonumber
  \begin{split}
    \int_0^t \|\p_t u\|_{L^2}^2 \,d\tau \lesssim& \int_0^t \big(\|(\p_2\rho, \p_2 b, \Omega)\|_{L^2}^2 + \|\nabla u\|_{H^1}^2\big)\, d\tau.
  \end{split}
  \end{equation}

$\bullet$ \textbf{Estimate of $\p_t \Omega$:}
Exploiting the evolution identity for the effective viscous flux $\Omega$ given in \eqref{eqomega},
    \begin{equation}\nonumber
    \begin{split}
     \Omega_t = - u \cdot \nabla \Omega + 2 \partial_1^2 u_1 + \partial_2^2 u_1 + \partial_1\partial_2 u_2 + Q,
    \end{split}
    \end{equation}
    it is straightforward to bound this term by
    \begin{equation}\nonumber
    \begin{split}
     \int_0^t \|\p_t \Omega\|_{L^2}^2 \,d\tau \lesssim& \sup_{0 \leq \tau \leq t} \|u\|_{L^\infty} \cdot \int_0^t \|\nabla \Omega\|_{L^2}^2 \, d\tau + \int_0^t \|(\nabla^2 u, Q)\|_{L^2}^2 \, d\tau.
    \end{split}
    \end{equation}
   Noting that every term in $Q$ contains at least one factor of $\partial_i u_j$, it follows from \eqref{ansatz} that
    \begin{equation}\nonumber
    \begin{split}
     \int_0^t \|\p_t \Omega\|_{L^2}^2 \,d\tau \lesssim& \int_0^t \|\nabla \Omega\|_{L^2}^2 \, d\tau + \int_0^t \|\nabla u\|_{H^3}^2 \, d\tau.
    \end{split}
    \end{equation}

$\bullet$ \textbf{Estimate of $\p_t \Gamma$:}
Differentiating the definition of the modified effective flux $\Gamma$ in \eqref{defgamma} with respect to time, we find
  \[ \p_t \Gamma = \p_t b_2 + P'(\rho + 1) \p_t \rho + b_2 \p_t b_2, \]
which immediately leads to the bound
   \begin{equation}\nonumber
    \begin{split}
     \int_0^t \|\p_t \Gamma\|_{L^2}^2 \,d\tau \lesssim& \sup_{0\leq \tau \leq t} (1 + \|(\rho, b_2)\|_{L^\infty}^2) \int_0^t \|(\p_t b_2, \p_t \rho)\|_{L^2}^2 \, d\tau.
    \end{split}
    \end{equation}

Substituting the previously established bounds into the inequality above, and synthesizing all the temporal estimates derived in this section under the \textit{a priori} assumption \eqref{ansatz}, we successfully conclude the proof of the lemma.
\end{proof}

\section{Proof of Theorem \ref{thm1}}

This section brings together the energy estimates derived in the previous lemmas to finalize the proof of Theorem \ref{thm1}.

\begin{proof}[Proof of Theorem \ref{thm1}]
The local-in-time well-posedness of \eqref{mhd1} in $H^3(\mathbb{R}^2)$ follows from standard theory (see, e.g., \cite{MaBe}). It remains to establish a uniform global bound for $(\rho, u, b)$ in $H^3(\mathbb{R}^2)$. To this end, we employ a standard bootstrapping argument (see, e.g., \cite{Tao}). Collecting the estimates from the preceding lemmas, we obtain
\begin{equation}\nonumber
\begin{split}
   \mathcal{E}_0(t) &\triangleq \sup_{0 \leq \tau \leq t} \big(\|u(\tau)\|_{H^3}^2 + \|b(\tau)\|_{H^3}^2 + \|\rho(\tau)\|_{H^3}^2\big) + \int_0^t \|\nabla u(\tau)\|_{H^3}^2 \,d\tau \\
   &\lesssim \mathcal{E}_4(t) + \mathcal{E}_5(t)+\mathcal{E}_{\mathrm{total}}^\frac{3}{2}(t),\\
   \mathcal{E}_1(t) &\triangleq  \int_0^t \|\partial_2 \rho\|_{H^2}^2 \, d\tau \lesssim \mathcal{E}_0(t) + \mathcal{E}_4(t) + \mathcal{E}_{\mathrm{total}}^\frac{3}{2}(t),\\
   \mathcal{E}_2(t) &\triangleq \int_0^t \|\partial_2 b\|_{H^2}^2 \, d\tau \lesssim \mathcal{E}_0(t) + \mathcal{E}_1(t) + \mathcal{E}_{\mathrm{total}}^\frac{3}{2}(t),\\
   \mathcal{E}_3(t) &\triangleq \int_0^t \big(\|\Omega\|_{H^2}^2 + \|\nabla \Gamma\|_{L^2}^2\big) \, d\tau \lesssim \mathcal{E}_0(t) + \mathcal{E}_1(t) + \mathcal{E}_2(t)+ \mathcal{E}_{\mathrm{total}}^\frac{3}{2}(t),\\
   \mathcal{E}_4(t) &\triangleq \int_0^t \int_{\mathbb{R}^2} (|b_1|^2 + |u_1|^2)|\partial_1^3 b_2|^2 \,dx \, d\tau \lesssim \mathcal{E}_{\mathrm{total}}^2(t),\\
   \mathcal{E}_5(t) &\triangleq \int_0^t \int_{\mathbb{R}^2} \Gamma^2 |\p_1^3 (\rho, b_2)|^2 \, dx d\tau \lesssim  \mathcal{E}_{\mathrm{total}}^\frac{3}{2}(t),\\
   \mathcal{E}_6(t) &\triangleq \int_0^t \|\p_t(\rho, b, u, \Omega, \Gamma)\|_{L^2}^2 \,d\tau \lesssim \sum_{i = 0}^3 \mathcal{E}_i(t).
\end{split}
\end{equation}
Multiplying each inequality by an appropriate coefficient and summing them, we deduce that for some absolute constant $C^* > 0$,
\begin{equation}\label{ee}
	\mathcal{E}_{\mathrm{total}}(t) \leq C^* \|(\rho_0, u_0, b_0)\|_{H^3}^2+ C^* \mathcal{E}_{\mathrm{total}}^\frac{3}{2}(t).
\end{equation}
\vskip .1in
We now apply a bootstrapping argument to \eqref{ee}.
Suppose that the following \textit{a priori} bound holds:
\begin{equation}\label{an}
\mathcal{E}_{\mathrm{total}}(t) \le 4 C^* \|(\rho_0, u_0, b_0)\|_{H^3}^2.
\end{equation}
Substituting this ansatz into \eqref{ee} yields
\[
\mathcal{E}_{\mathrm{total}}(t) \le C^*\|(\rho_0, u_0, b_0)\|_{H^3}^2 + 8(C^*)^\frac{5}{2}\|(\rho_0, u_0, b_0)\|_{H^3}^3.
\]
Assume that the initial data
$(\rho_0, u_0, b_0) \in H^3(\mathbb{R}^2)$ is sufficiently small, i.e.,
$$\|(\rho_0, u_0, b_0)\| \leq \epsilon,$$
which simplifies to
\begin{equation}\label{cc}
\mathcal{E}_{\mathrm{total}}(t) \le 2 C^* \epsilon^2.
\end{equation}
Since the bound in \eqref{cc} is strictly sharper than the ansatz \eqref{an}, the standard bootstrapping argument implies that \eqref{cc} holds globally for all $t > 0$. Consequently, we obtain the desired uniform $H^3$ bound for $(\rho, u, b)$.

Finally, by choosing $C^*$ sufficiently large, we ensure $\mathcal{E}_{\mathrm{total}}(t) < \frac{1}{2}$, which validates the \textit{a priori} assumption \eqref{ansatz}. And This completes the proof of Theorem \ref{thm1}.
\end{proof}

\vskip .2in
\section*{Acknowledgement}
Zhu was partially supported by the National Natural Science Foundation of China (NNSFC) under grant 12271160.

~\\
~\\

\noindent{\bf{Data availability:}} No data was used for the research described in the article.
~\\
~\\
\noindent{\bf{Conflict of interest:}} The authors declare no conflicts of interest.


\vskip .3in
\begin{thebibliography}{89}

	
\bibitem{in1} H. Abidi and P. Zhang, On the global solutions of 3D MHD system with initial data near equilibrium, {\it Commun. Pure. Appl. Math.}, {\bf 70}(8) (2017), 1509--1561.
	
\bibitem{Alex} A. Alexakis, Two-dimensional behavior of three-dimensional magnetohydrodynamic
flow with a strong guiding field, {\it Phys. Rev. E \bf 84} (2011), 056330.

\bibitem{Alf} H.  Alfv\'{e}n, Existence of electromagnetic-hydrodynamic waves, {\it  Nature \bf 150} (1942), 405--406.
	
\bibitem{in2} C. Bardos, C. Sulem and P. Sulem, Longtime dynamics of a conductive fluid in the presence of a strong magnetic field, {\it Trans. Am. Math. Soc.}, {\bf 305} (1988), 175--191.

\bibitem{Bis} D. Biskamp, {\it Nonlinear Magnetohydrodynamics}, Cambridge University Press, Cambridge, 1993.

\bibitem{Bur} P. Burattini, O. Zikanov and B. Knaepen, Decay of magnetohydrodynamic turbulence at
low magnetic Reynolds number, {\it J. Fluid Mech. \bf 657} (2010), 502--538.

\bibitem{in3} Y. Cai and Z. Lei, Global well-posedness of the incompressible magnetohydrodynamics, {\it Arch. Ration. Mech. Anal.}, {\bf 228} (2018), 969--993.

\bibitem{CW1} C. Cao, D. Regmi and J. Wu, The 2D MHD equations with honrizontal dissipation and horizontal magnetic diffusion, {\it J. Differential Equations} {\bf 254} (2013), 2661--2681.

\bibitem{chemin} J. Chemin, D. S. McCormick, J. C. Robinson and J. L. Rodrigo,
Local existence for the non-resistive MHD equations in Besov spaces, {\it Adv. Math. \bf 286} (2016), 1-31.

\bibitem{com1} G. Chen and D. Wang, Gloabl solutions of nonlinear magnetohydrodynamics with large initial data, {\it J. Differential Equations} {\bf 182} (2002), 344--376.


\bibitem{Davi0} P.A. Davidson,  Magnetic damping of jets and vortices, {\it J. Fluid Mech. \bf 299} (1995), 153--186.

\bibitem{Davi1} P.A. Davidson,  The role of angular momentum in the magnetic damping of turbulence,
{\it J. Fluid Mech. \bf 336} (1997), 123-150.

\bibitem{Davi} P.A. Davidson, {\it An Introduction to Magnetohydrodynamics},  Cambridge University Press, Cambridge, England, 2001.


\bibitem{DZ} W. Deng and P. Zhang, Large Time Behavior of Solutions to 3-D MHD System with Initial
Data Near Equilibrium, {\it Arch. Ration. Mech. Anal. \bf 230} (2018), 1017-1102.

\bibitem{dong} B. Dong, J. Wu and X. Zhai,  Global small solutions to a special 21
2-D compressible viscous non-resistive MHD system, {\it J. Nonlinear Sci. \bf 33} (2023) 21.


\bibitem{feffer} C. L. Fefferman, D. S. McCormick, J. C. Robinson and J. L. Rodrigo,
Higher order commutator estimates and local existence for the non-resistive MHD equations and
related models, {\it J. Funct. Anal. \bf 267} (2014) 1035-1056.

\bibitem{feffer2} C. L. Fefferman, D. S. McCormick, J. C. Robinson and J. L. Rodrigo,
Local existence for the non-resistive MHD equations in nearly optimal Sobolev spaces, {\it Arch. Ration. Mech. Anal. \bf 233} (2017) 677-691.


\bibitem{Gall} B. Gallet,  M. Berhanu and N. Mordant, Influence of an external
magnetic field on forced turbulence in a swirling flow of liquid metal,
{\it Phys. Fluids \bf 21} (2009), 085107.

\bibitem{Gall2}  B. Gallet and C.R. Doering, Exact two-dimensionalization of low-magnetic-Reynolds-number flows subject to a strong magnetic field, {\it J. Fluid Mech. \bf 773} (2015), 154--177.

\bibitem{in5} L. He, L. Xu and P. Yu, On global dynamics of three dimensional magnetohydrodynamics: nonlinear stability of Alfven waves, {\it Ann. PDE}, {\bf 4} (2018),  No.5, 105 pp.



\bibitem{hong} G. Hong, X. Hou, H. Peng and C. Zhu, Global existence for a class of large solutions to
three-dimensional compressible magnetohydrodynamic equations with vacuum, {\it SIAM J. Math. Anal. \bf 49} (2017) 2409-2441.



\bibitem{com2} X. Hu and D. Wang, Global existence and large-time behavior of solutions to the three-dimensional equations of compressible magnetohydrodynamic flows,  {\it Arch. Ration. Mech. Anal.}, {\bf 197} (2010), 203--238.

\bibitem{hu} X. Hu and F. Lin, Global existence for two dimensional compressible magnetohydrodynamic flows with zero magnetic diffusivity, arXiv:1405.0274v1 [math.AP], 1 May 2014.

\bibitem{com3} X. Huang and J. Li, Serrin-type blow up criterion for viscous, compressible, and heat conducting Navier-Stokes and magnetohydrodynamics, {\it Commun. Math. Phys.}, {\bf 324} (2013), 147--171.

\bibitem{HuangXinYan} X. Huang, Z. Xin and W. Yan, Finite time blowup of strong solutions to the two dimensional MHD equations, {\it Math. Ann.}, {\bf 392} (2025), no. 2, 2365--2394.

\bibitem{jiangzhang} S. Jiang and J. Zhang, On the non-resistive limit and the magnetic boundary-layer for one-dimensional compressible magnetohydrodynamics, {\it Nonlinearity}, {\bf  30}  (2017), 3587--3612.

\bibitem{JJ} F. Jiang and S. Jiang, Nonlinear stability and instability in the Rayleigh-Taylor problem of stratified compressible MHD fluids,
{\it Calc. Var. Partial Differ. Equ. \bf 58} (2019) 29.

\bibitem{com4} S. Kawashima, Smooth global solutions for two-dimensional equations of electro-magneto-fluid dynamics, {\it Japan J. Appl. Math.} {\bf 1} (1984), 207--222.


\bibitem{HXJ} H. Li, X. Xu and J. Zhang, Global classical solutions to 3D compressible magnetohydrodynamic equations with large oscillations and vacuum, {\it SIAM J. Math. Anal. \bf 45} (2013) 1356-1387.

\bibitem{lisun}Y. Li and Y. Sun, Global weak solutions and long time behavior for 1D compressible
MHD equations without resistivity, {\it J. Math. Phys. \bf 60} (2019) 071511.

\bibitem{lisun2}
Y. Li and Y. Sun, Global weak solutions to a two-dimensional compressible MHD
equations of viscous non-resistive fluids, {\it J. Differ. Equ. \bf 267} (2019) 3827-3851.



\bibitem{LXZ} F. Lin, L. Xu and P. Zhang,   Global small solutions to 2-D incompressible MHD system,  {\it   J. Differential Equations \bf 259} (2015), 5440-5485.


\bibitem{in8} F. Lin and T. Zhang, Global small solutions to a complex fluid model in three dimensiobal,  {\it Arch. Ration. Mech. Anal.} {\bf 216} (2015), 905--920.

\bibitem{liuzhang}
Y. Liu and T. Zhang, Global weak solutions to a 2D compressible non-resistivity MHD
system with non-monotone pressure law and nonconstant viscosity, {\it J. Math. Anal. Appl. \bf 502} (2021) 125244.


\bibitem{MaBe} A. Majda and A. Bertozzi,  {\it Vorticity and Incompressible Flow}, Cambridge University Press, 2002.


\bibitem{Pri} E. Priest and T. Forbes, {\it Magnetic Reconnection, MHD Theory and Applications}, Cambridge University Press, Cambridge, 2000.

\bibitem{in9} X. Ren, J. Wu, Z. Xiang and Z. Zhang, Global existence and decay of smooth solution for the 2-D MHD equations without magnetic diffusion, {\it J. Funct. Anal.} {\bf 267} (2014), 503--541.


\bibitem{tanwang} Z. Tan and Y. Wang, Global well-posedness of an initial-boundary value problem for viscous non-resistive MHD sys-tems, {\it SIAM J. Math. Anal.}  {\bf 50}  (2018), 1432--1470.


\bibitem{Tao} T. Tao, {\it Nonlinear Dispersive Equations: Local and Global Analysis}, CBMS Regional Conference Series in Mathematics, 106, Amercian Mathematical Society, Providence, RI: 2006.


\bibitem{in11} D. Wei and Z. Zhang, Global well-posedness of the MHD equations in a homogeneous magnetic field, {\it Anal. PDE}, {\bf 10} (2017), 1361--1406.

\bibitem{wuguochun}
G. Wu, Y. Zhang and W. Zou, Optimal time-decay rates for the 3D compressible
magnetohydrodynamic flows with discontinuous initial data and large oscillations,
{\it J. London Math. Soc. \bf 103} (2021) 817-845.

\bibitem{WuWu} J. Wu and Y. Wu, Global small solutions to the compressible 2D magnetohydrodynamic system without magnetic diffusion, {\it Adv. Math.}, {\bf 310} (2017), 759--888.

\bibitem{wuzhai}
J. Wu and X. Zhai, Global small solutions to the 3D compressible viscous non-resistive MHD system,
{\it Math. Models Methods Appl. Sci. \bf 33} (2023), no. 13, 2629-2656.

\bibitem{wuzhu}
J. Wu ans Y. Zhu,
Global well-posedness for 2D non-resistive compressible MHD system in periodic domain,
{\it J. Funct. Anal. \bf 283} (2022), no. 7, Paper No. 109602, 49 pp.


\bibitem{zhong}
 X. Zhong, On local strong solutions to the 2D Cauchy problem of the compressible non-resistive magnetohydrodynamic equations with vacuum,
 {\it J. Dyn. Differ. Equ. \bf 32} (2020) 505-526.




























\end{thebibliography}
\end{document}